\DeclareMathAlphabet{\mathpzc}{OT1}{pzc}{m}{it}
\author{Jingchen Hu}
\newtheorem{theorem}{Theorem}[section]
\newtheorem{problem}[theorem]{Problem}
\newtheorem{proposition}[theorem]{Proposition}
\newtheorem{remark}[theorem]{Remark}
\newtheorem{lemma}[theorem]{Lemma}
\newtheorem{definition}[theorem]{Definition}
\numberwithin{equation}{section}
\newcommand{\ER}{\mathbb{R}}
\newcommand{\EC}{\mathbb{C}}
\newcommand{\MR}{\mathcal{R}} 
\newcommand{\MV}{\mathcal{V}} 
\newcommand{\MH}{\mathcal{H}}
\newcommand{\MG}{\mathcal{G}}
\newcommand{\ML}{\mathcal{L}}
\newcommand{\ddbar}{\partial\overline\partial}
\newcommand{\tautaubar}{{\tau\overline{\tau}}}
\newcommand{\taubar}{{\overline{\tau}}}
\newcommand{\abbar}{{\alpha\overline{\beta}}}
\newcommand{\ijbar}{{i\overline j}}
\newcommand{\dzijbar}{dz^i\otimes\overline{dz^j}}
\newcommand{\gammabar}{{\overline{\gamma}}}
\newcommand{\alphabar}{{\overline{\alpha}}}
\newcommand{\betabar}{{\overline{\beta}}}
\newcommand{\mubar}{{\overline{\mu}}}
\newcommand{\nubar}{{\overline{\nu}}}
\newcommand{\zetabar}{{\overline{\zeta}}}
\newcommand{\etabar}{{\overline{\eta}}}
\newcommand{\zbetabar}{\overline{z^{\beta}}}
\newcommand{\jbar}{{\overline{j}}}
\newcommand{\ibar}{{\overline{i}}}
\newcommand{\zzbar}{{z\overline{z}}}
\newcommand{\zbar}{{\overline z}}
\newcommand{\tr}{\text{tr}}
\newcommand{\Bbar}{{\overline{B}}}
\newcommand{\MP}{\mathcal{P}}
\newcommand{\MU}{{\mathcal{U}}}
\newcommand{\MF}{\mathcal{F}}
\newcommand{\diag}{\text{diag}}
\newcommand{\Abarinverse}{\overline{A^{-1}}}
\newcommand{\Ainverse}{{A^{-1}}}
\newcommand{\iu}{\sqrt{-1}}
\newcommand{\Ree}{\text{Re}}
\newcommand{\Imm}{\text{Im}}
\newcommand{\bb}{\mathcal{G}}
\newcommand{\spaceQ}{{\mathscr{Q}}}
\newcommand{\cQ}{{\mathcal{Q}}}
\newcommand{\ca}{{\mathfrak{a}}}
\newcommand{\cb}{{\mathfrak{b}}}
\newcommand{\cQphi}{{\mathcal{Q}_\Phi}}
\newcommand{\caphi}{{\mathfrak{a}_\Phi}}
\newcommand{\cbphi}{{\mathfrak{b}_\Phi}}
\newcommand{\Phitauzbar}{\Phi_{\tau\overline z}}
\newcommand{\Phiztaubar}{\Phi_{z\overline \tau}}
\newcommand{\Phitauz}{\Phi_{\tau z}}
\newcommand{\Phitautaubar}{\Phi_{\tau\overline \tau}}
\newcommand{\Phizzbar}{\Phi_{z\overline z}}
\newcommand{\Phizz}{\Phi_{z z}}
\newcommand{\Phitautau}{\Phi_{\tau \tau}}
\newcommand{\zetazetabar}{{\zeta\zetabar}}
\newcommand{\slopev}{\mathfrak{v}_\theta}
\newcommand{\eqe}{{\varepsilon}}
\newcommand{\ese}{{\epsilon}}
\newcommand{\qbar}{{\overline{q}}}
\newcommand{\pbar}{{\overline{p}}}
\newcommand{\vbar}{{\overline{v}}}
\newcommand{\ubar}{{\overline{u}}}
\newcommand{\sbar}{{\overline{s}}}
\newcommand{\kbar}{{\overline{k}}}
\newcommand{\lbar}{{\overline{l}}}
\newcommand{\bbform}{{\omega_{\mathcal{G}}}}
\newcommand{\MY}{\mathcal{Y}}
\newcommand{\nbar}{{\overline{n}}}
\newcommand{\nnbar}{{n\overline{n}}}
\newcommand{\zjbar}{\overline{z^j}}
\newcommand{\cConvexity}{$\EC$-Convexity}
\newcommand{\cconvex}{$\EC$-convex}
\newcommand{\scconvex}{strongly $\EC$-convex}
\newcommand{\cconvexity}{$\EC$-convexity}
\newcommand{\scconvexity}{strong $\EC$-convexity}
\newcommand{\sqcconvex}{strongly $\mathfrak{q}\EC$-convex}
\newcommand{\qcconvexity}{$\mathfrak{q}\EC$-convexity}
\newcommand{\sqcconvexity}{strong \qcconvexity}
\newcommand{\MA}{{\mathcal{A}}}
\newcommand{\MB}{{\mathcal{B}}}
\newcommand{\MK}{{\mathcal{K}}}
\newcommand{\Onep}{{\text{I}_\Phi}}
\newcommand{\Twop}{{\text{II}_\Phi}}
\newcommand{\Threep}{{\text{III}_\Phi}}
\newcommand{\po}{{\bf p}}
\newcommand{\plainA}{{\mathscr{A}}}
\newcommand{\plainB}{{\mathscr{B}}}
\newcommand{\plainL}{{\mathscr{L}}}
\newcommand{\plainK}{{\mathscr{K}}}
\newcommand{\MMB}{{\mathfrak{B}}}
\newcommand{\uglb}{{C_\MR^g}}
\newcommand{\bglb}{{C_{\partial\MR}^g}}
\newcommand{\tred}{\mathfrak{t}}
\newcommand{\sigmalamb}{{\widetilde{\sigma}}}
\newcommand{\tH}{{\widetilde{\MH}}}
\newcommand{\Omegabar}{{\overline{\Omega}}}
\newcommand{\trho}{{\widetilde\rho}}
\newcommand{\ab}{{\alpha\beta}}
\newcommand{\tct}{{\widetilde{C_2}}}
\newcommand{\MT}{{\mathcal{T}}}
\newcommand{\spaceL}{{\mathscr{L}}}
\newcommand{\scs}{\sigma}
\newcommand{\plane}{{\mathfrak{l}}}
\newcommand{\na}{{\bf a}}
\newcommand{\dist}{{\text{dist}}}
\newcommand{\sigmal}{{\underline\sigma}}
\newcommand{\MW}{\mathcal{W}}
\title{A Maximum Rank Theorem for Solutions to the Homogenous Complex Monge-Amp\`ere Equation in a $\EC$-Convex Ring}
\author{Jingchen Hu}
\begin{document}
\maketitle

\begin{abstract}
	Suppose $\Omega_0,\Omega_1$ are two bounded strongly \cconvex\ domains in $\EC^n$, with $n\geq 2$ and  $\Omega_1\supset\overline{\Omega_0}$. Let $\MR=\Omega_1\backslash\overline{\Omega_0}$. We call $\MR$ a \cconvex \ ring. We will show that for a solution  $\Phi$ to the homogenous complex Monge-Amp\`ere equation in $\MR$, with $\Phi=1$ on $\partial\Omega_1$ and $\Phi=0$ on $\partial\Omega_0$, $\iu\ddbar\Phi$ has rank $n-1$ and the level sets of $\Phi$ are strongly \cconvex.
\end{abstract}
\section{Introduction}
\subsection{Related Research and Our Main Results}
The convexity of level sets of the solution to a partial differential equation in a ring-shaped domain has been studied for a long time. In general, the following Dirichlet problem is considered:
\begin{problem}
	[The Dirichlet Problem in a Ring-Shaped Domain]
	\label{prob:General_Problem_ring}
	Suppose $\Omega_0, \Omega_1$ are bounded domains with smooth boundaries in $\ER^n$, with $\overline{\Omega_0}\subset \Omega_1$.  Let $\MR=\Omega_1\backslash\overline{\Omega_0}$ and let $\MF$ be a partial differential operator on $\MR$. Find $\Phi$ satisfying
	\begin{align}
		\MF(\Phi)=0\ \ \ \ \ \ &\text{ in }\MR,\\
   		\Phi=0\ \ \ \ \ \  &\text{ on }\partial \Omega_0,\\
		\Phi=1\ \ \ \ \ \  &\text{ on }\partial \Omega_1.
	\end{align}
	
\end{problem}
Let $\Omega_t=\Omega_0\cup\{\Phi<t\}$, for $t\in(0,1)$. We ask does $\Omega_t$ inherit some properties from $\Omega_0$ and $\Omega_1$? In many cases, the answer is affirmative. 

One particular case, which attracts many attentions, is when $\Omega_0$ and $\Omega_1$ are both convex and $\MF$ is the Laplacian (or the $p$-Laplacian).  In this case, we can show $\Omega_t$ are strongly convex for $t\in(0,1)$, and there are several different approaches. 

 First, there is a ``macroscopic" approach, where a ``macroscopic" auxiliary function is introduced to gauge the convexity. For example, extending the idea of
 \cite{Gabriel_Newzealand_ExtendedPrincipleMaxi} \cite{GabrielNewzealand_3d_harmonic} \cite{GabrielNewzealand_3d_harmonic_Third}, in \cite{Lewis_Convexity_pLaplace} the following function is considered:
\begin{align}
	\tilde \Phi(x)=\inf_{x\in \  \overline{yz}}\max\{\Phi(y), \Phi(z)\},
\end{align}
where the infimum is taken over all $y, z \in \ER^n$ such that $x$ lies on the closed line segment from $y$ to $z$. Here $\Phi$ is considered to be a function on $\ER^n$ with $\Phi=0$ in $\Omega_0$ and $\Phi=1$ in $\Omega_1^c$.  It's easy to see that the convexity of the level sets is equivalent to $\tilde \Phi=\Phi$.
In \cite{RosayRudin1989Michigan}, Rosay and Rudin considered the following function 
\begin{align}
	Q(x,y)=\frac{\Phi(x)+\Phi(y)}{2}-\Phi(\frac{x+y}{2})-c|x-y|^2,
	\label{711105_RR}
\end{align}where $c$ is a constant. In \cite{Weikove_two_point},
 the following two-point function and its variants are considered:
\begin{align}
	Q(x,y)=(\nabla \Phi(x)-\nabla\Phi(y))\cdot (x-y);
\end{align}it's obvious that when restricted to the set 
\begin{align}
	\Sigma=\{(x,y)\in \overline{\MR}| \Phi(x)=\Phi(y)\}
\end{align}   the strong convexity of the level sets is equivalent to the positivity of $Q$.

The second approach uses the constant rank theorem, and it is often referred to as a ``microscopic" approach.
In \cite{Korevaar_convex_Ring}, it was shown that the second fundamental form of the level sets of $\Phi$ has constant rank, providing the level sets are convex; then combing with a deformation argument, the strong convexity can be proved.
The method has been studied and generalized by many authors, for example  \cite{BianGuanXuMa} \cite{GuanXu_convexity}  \cite{BianGuan09_Invention} \cite{Garbo_Weinkove_ConstantRank}.

The third approach is to directly estimate the Gaussian or principle curvatures of the level sets.  It was first proved by
\cite{Longinetti} and \cite{Curvature_LevelCurvature_Canada}
 that when $n=2$, the curvature of level curves of $\Phi$ attains minimum on $\partial\MR$. In general dimension, \cite{MaOuZhang_CPAM} proved when $\MF$ is the $p$-Laplacian and  providing $p, n$ satisfy some conditions,
 \begin{align}
 	|\nabla\Phi|^{n+1-2p}K,\ \ 
 	 |\nabla \Phi|^{1-p}K\text{\ \  or } K 
 \end{align} attain their minimums on $\partial\MR$. Here, $K$ is the Gaussian curvature of the level sets of $\Phi.$ As a consequence, the convexity of $\Omega_0$ and $\Omega_1$ implies the  convexity of $\Omega_t$. A similar  estimate for the principle curvature was proved in \cite{Zhangwei_ZhangTing_PrincipleCurvature}.

Beyond the Laplacian and the real convexity in Euclidean spaces, there are many possible directions to generalize the forementioned results.

First, we may ask if $\ER^n$ is replaced by a space form, can we have similar results. This was first proved by \cite{Papadimi} in the case of $n=2$ in a Poincare disc and in the case of general dimension by  \cite{XinanMa_ZhangyongBing}.

Another question is when $\MF(\Phi)=0$ is the minimal surface equation, does the convexity of $\Omega_0$ and $\Omega_1$ imply the convexity of $\Omega_t$. The answer is affirmative. Let $S$ be a minimal surface in the 3 dimensional Euclidean space bounded by two plane
curves $r_1, r_2$, lying in parallel planes.  The following result is proved in \cite{Shiffman}:
If $r_1, r_2$ are convex curves, then the intersection of $S$ by a plane parallel
to the planes of $r_1, r_2$ is again a convex curve. In the situation of general dimension and that $S$ is a minimal graph, it was proved by \cite{Korevaar_convex_Ring}; it has also been generalized to minimal graphs over space forms by \cite{ZhangDekai_MinimalGraph}.

Using the constant rank theorems developed by \cite{BianGuanXuMa} and 
\cite{GuanXu_convexity}, we know when $\MF$ satisfies some structural conditions, the convexity of $\Omega_0$ and $\Omega_1$ implies the convexity of $\Omega_t$. This can be proved via a continuity argument. But these structure conditions exclude some important equations, for example the real and complex homogenous Monge-Amp\`ere equations. We may ask, if $\Omega_0$ and $\Omega_1$ are both strongly convex and $\MF$ is the real Monge-Amp\`ere operator, is it true that $\Omega_t$ are all strongly convex? In this case, a smooth solution can be constructed explicitly, and we can answer the question positively. Using methods from complex analysis, in \cite{Duval_deux_Convexes} and \cite{Convexity_LevelSet_PSH_extremal}  it was proved that when $\MF$ is the complex Monge-Amp\`ere operator and $\Omega_0$ and $\Omega_1$ are both convex, then $\Omega_t$ are all convex.

We may also ask, instead of the real convexity, when $\Omega_0$ and $\Omega_1$ satisfy other convexity conditions, does $\Omega_t$ satisfy the same convexity condition? For example, in complex spaces the following concepts  are natural generalizations of the convexity in real spaces:
\begin{definition}
	[$\EC$-Convexity and Linear Convexity \cite{EncycolopediaC_Convexity}\cite{Zelenski}\cite{Book_Cconvexity}\cite{JaquePhdThesis}]
	In $\EC^n$, a domain $E$ is said to be \cconvex\ if for any complex line $l\in\EC^n$, the intersection $E\cap l$ is both connected and simply connected; it is called linearly convex if $E^c$ is a union of complex hyperplanes.
\end{definition}
\begin{remark}
	When $n=1$, any connected and simply connected domain in $\EC$ is \cconvex;  in this case, a hyperplane is a point, so any domain in $\EC$ is considered linearly convex. When $n>1$ and $E$ is a bounded domain with a $C^1$ boundary, the two notions are equivalent.
\end{remark}

We can ask when $\Omega_0$ and $\Omega_1$ are both \cconvex\ with smooth boundaries and $\MF$ is some linear or non-linear operator, is $\Omega_t$ \cconvex?  Or more generally, we may consider the geometric convexity introduced by \cite{HarveyLawson_GeometricConvexity}. It seems there has been no result in this direction.

In this paper, we are concerned with the situation where $\MF$ is the  complex Monge-Amp\`ere operator and $\Omega_0$ and $\Omega_1$ are both \scconvex. The definition of \scconvexity\ is in the following:
\begin{definition}
	Suppose $E$ is a bounded connected domain in $\EC^n$ with a $C^{1,1}$ boundary. Then, at any point $\po\in\partial E$, $\partial E$ can be locally represented as the graph of a $C^{1,1}$ function:
	after a linear change of coordinates, which makes $\po=0$, we can find $\delta>0$ and a $C^{1,1}$ function $\rho_\po$, so that
	\begin{align}
		&\partial E\cap 
		\left\{
		|\Ree(z^n) | <\delta, |\Imm(z^n)|^2+\sum_{\alpha=1}^{n-1}|z^\alpha|^2<\delta^2
		\right\}\\
		=&
		\left\{
		(z^\alpha, \rho_\po(z^\alpha, s)+\iu s)\big| \sum_{\alpha=1}^{n-1}|z^\alpha|^2+s^2<\delta^2
		\right\},
		\end{align}
		with $\rho_\po(0)=0$ and $\nabla \rho_\po(0)=0$. We say $E$ is \scconvex\ if for any $\po$ we can find $\mu>0$, so that
		\begin{align}
			\rho_\po(z^\alpha,0)\geq \mu\sum_{\alpha=1}^{n-1} |z^\alpha|^2.\label{0514111}
		\end{align}
\end{definition}

\begin{remark}
	If a bounded connected domain in $\EC^n$ with a $C^{1,1}$ boundary is \scconvex, then it's \cconvex; this can be proved by Proposition 2.5.8 of \cite{Book_Cconvexity}. We also note that in (\ref{0514111}) if we require 
		\begin{align}
		\rho_\po(z^\alpha, s)\geq \mu (\sum_{\alpha=1}^{n-1}|z^\alpha|^2+s^2),
	\end{align}then this is just the usual notion of strong convexity.
\end{remark}

We study the following Dirichlet problem:
\begin{problem}
	[The Dirichlet Problem for the Homogenous Complex Monge-Amp\`ere Equation]
	\label{prob:Main_Problem_HCMA_ring}
	Suppose $\Omega_0, \Omega_1$ are two strongly \cconvex \ domains with smooth boundaries in $\EC^n$, $n\geq 2$, with $\overline{\Omega_0}\subset \Omega_1$.  Let $\MR=\Omega_1\backslash\overline{\Omega_0}$. Find a continuous $\Phi : \ \overline\MR\rightarrow \ER$, plurisubharmonic in $\MR$,  satisfying
	\begin{align}
		\left(\sqrt{-1}\ddbar\Phi\right)^n=0\ \ \ \ \ \ &\text{ in }\MR,
		\label{eq:main_HCMA_equation_form}
		\\
		\Phi=0\ \ \ \ \ \  &\text{ on }\partial \Omega_0,\\
		\Phi=1\ \ \ \ \ \  &\text{ on }\partial \Omega_1.
	\end{align}
\end{problem}

The homogenous complex Monge-Amp\`ere equation in a ring shaped domain has been studied by many authors, including \cite{MoriyonProceeding} \cite{MoriyonMadrid} \cite{GuanAnnuals}.  In particular, it was proved in \cite{GuanAnnuals} that Problem \ref{prob:Main_Problem_HCMA_ring} has a $C^{1,1}$ solution providing it has a subsolution, which essentially depends on the fact that $\Omega_0$ is holomorphically convex in $\Omega_1$. This condition is satisfied in our case since $\Omega_0$ is strongly \cconvex\ and, therefore, polynomially convex; we explain this in Appendix \ref{sec:B_boundary_Barrier}.

\begin{remark} 	Since the solution $\Phi$ to the Dirichlet problem above may only be a $C^{1,1}$ function, its level sets $\Omega_t$ may only have $C^{1,1}$ boundaries. Therefore it's necessary to introduce the \scconvexity\ for domains with $C^{1,1}$ boundaries.
\end{remark}

\begin{remark} The Dirichlet problems in a ring-shaped domain for other nonlinear equations has also been studied. For example, the real homogenous sigma-$k$ equation was studied in \cite{Ma_Zhang_Dekai_RealSigmak_Ring}; the complex homogenous sigma-$k$ equation was studied in \cite{Gao_MA_Zhang_complex_Sigmk}. Using the estimates in a ring-shaped domain, they derived the estimates for the corresponding Green's function.
\end{remark}

We  have the following result:
\begin{theorem}
	[\cConvexity\ of Subevel Sets and the Maximum Rank Property]
	\label{theorem:main_Convexity_HCMA}
	Suppose $\Phi$ is a solution to Problem \ref{prob:Main_Problem_HCMA_ring}. Then $|\nabla\Phi|$ has a positive lower bound, and $\sqrt{-1}\ddbar\Phi$ has rank $n-1$ in the weak sense. Let
	\begin{align}
		\Omega_t=\{\Phi\leq t\}\cup \Omega_0, \ \ \ \ \ \  \text{for} \ t\in(0,1).
	\end{align}
	Then $\Omega_t$ are all strongly \cconvex.
\end{theorem}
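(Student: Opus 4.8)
The plan is to prove the theorem by a continuity (deformation) argument anchored at the model configuration of two concentric balls. Choose a smooth family $(\Omega_0^s,\Omega_1^s)$, $s\in[0,1]$, of nested strongly $\EC$-convex pairs with $(\Omega_0^1,\Omega_1^1)=(\Omega_0,\Omega_1)$ and $(\Omega_0^0,\Omega_1^0)$ a pair of concentric balls; such a family exists because strong $\EC$-convexity is a localizable and stable condition. For each $s$, Problem~\ref{prob:Main_Problem_HCMA_ring} has a solution $\Phi^s$, unique by the comparison principle, which is $C^{1,1}$ by \cite{GuanAnnuals} together with the polynomial convexity of $\Omega_0^s$ (Appendix~\ref{sec:B_boundary_Barrier}); a uniform subsolution obtained from the uniform strong $\EC$-convexity makes these $C^{1,1}$ bounds uniform in $s$, and then uniqueness makes $s\mapsto\Phi^s$ continuous in $C^{1,\alpha}$. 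Set
\[
E=\bigl\{\,s\in[0,1]:\ \operatorname{rank}(\sqrt{-1}\ddbar\Phi^s)\equiv n-1\ \text{in }\MR^s\text{, and every }\Omega_t^s,\ 0<t<1,\text{ is strongly }\EC\text{-convex}\,\bigr\}.
\]
At $s=0$ the solution is the explicit radial one and every assertion of the theorem is immediate, so $0\in E$; it therefore suffices to prove that $E$ is open and closed in $[0,1]$.

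First I would record the gradient non-degeneracy. On $\partial\MR^s$ one gets $|\nabla\Phi^s|\ge c_0>0$, uniformly in $s$, by sandwiching $\Phi^s$ between the explicit barriers adapted to the strong $\EC$-convexity of $\Omega_0^s$ and $\Omega_1^s$ (Appendix~\ref{sec:B_boundary_Barrier}). In the interior, for $s\in E$ the Monge--Amp\`ere foliation of $\Phi^s$ is a genuine holomorphic foliation: $\MR^s$ is filled by proper analytic annuli, each with one boundary circle on $\partial\Omega_0^s$ and one on $\partial\Omega_1^s$, on which $\Phi^s$ is harmonic. Since $\Phi^s\equiv0$ on the whole inner circle and $\equiv1$ on the whole outer circle, uniformizing a leaf to a round annulus $\{1<|\zeta|<R\}$ gives $\Phi^s=\log|\zeta|/\log R$ on it, which has no critical point; since the conformal moduli of the leaves are bounded away from $0$ and $\infty$ by the uniform geometry of the rings, $|\nabla\Phi^s|\ge c_1>0$ throughout $\MR^s$, uniformly for $s\in E$, and every level set $\{\Phi^s=t\}$ is a $C^{1,1}$ hypersurface. (For $s$ merely near a point $s_0$ of $E$, $|\nabla\Phi^s|$ is still positive on all of $\MR^s$ — on a collar of $\partial\MR^s$ by the barriers, on the complementary compact set by $C^{1,\alpha}$-closeness to $\Phi^{s_0}$ — so the tools below apply to all such $s$.)

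The heart of the matter is a maximum rank theorem for the degenerate operator: if $\Phi$ is a $C^{1,1}$ solution of $(\sqrt{-1}\ddbar\Phi)^n=0$ with $\nabla\Phi$ nowhere zero, then $\operatorname{rank}(\sqrt{-1}\ddbar\Phi)$ is constant, and, quantitatively, $w:=\sigma_{n-1}(\sqrt{-1}\ddbar\Phi)$ has a positive lower bound on compact subsets depending only on the $C^{1,1}$ norm of $\Phi$, the lower bound for $|\nabla\Phi|$, and the boundary behavior. Since $\det=0$ forces $\operatorname{rank}\le n-1$, the content is that $\{w>0\}$ is both open and closed. On the open set $U=\{w>0\}$ the kernel of $\sqrt{-1}\ddbar\Phi$ is a complex line field, which I would show to be integrable (the Monge--Amp\`ere foliation); passing to coordinates adapted to the foliation — in which $\Phi$ is harmonic along the leaves and the transverse $(n-1)\times(n-1)$ block of $\sqrt{-1}\ddbar\Phi$ is positive definite — one expresses the holomorphy of the foliation and the harmonicity of $\Phi$ as a genuinely elliptic system in the $n-1$ transverse variables, and a Bian--Guan / Caffarelli--Friedman--Spruck-type computation yields a differential inequality $\mathcal{L}w\le C(w+|\nabla w|)$ for a suitable operator $\mathcal{L}$ (degenerate-elliptic, strictly elliptic transverse to the leaves); the strong minimum principle then forces $\{w=0\}$ to be open. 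Two genuine difficulties appear here and form the main obstacle: (i) the homogeneous complex Monge--Amp\`ere operator violates the structural hypotheses of the existing constant-rank theorems, so the reduction via the foliation is indispensable and must be done with care; (ii) $\Phi$ is only $C^{1,1}$, so ``rank $n-1$'' is meant in the weak sense and the whole computation must be carried out on sup-convolution (or viscosity) approximations with uniform control. Granting this theorem, openness and closedness of $E$ both follow: a $C^{1,1}$-small perturbation of $\Phi^{s_0}$ — respectively a $C^{1,\alpha}$-limit of $\Phi^{s_j}$, $s_j\in E$ — still satisfies $w\ge c>0$ with $c$ uniform in $s$, hence has rank exactly $n-1$; the point is that maximality of the rank is not a priori stable, since second derivatives converge only weakly.

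It remains to upgrade ``$\operatorname{rank}(\sqrt{-1}\ddbar\Phi^s)\equiv n-1$'' to strong $\EC$-convexity of every $\Omega_t^s$. Maximal rank together with $\nabla\Phi^s\neq0$ already makes the Levi form of each $\{\Phi^s=t\}$ positive definite; the remaining point is to dominate the holomorphic-Hessian part of its second fundamental form, i.e.\ to establish $(\ddbar\Phi^s)(w,\bar w)-|(\partial\partial\Phi^s)(w,w)|\ge\mu|w|^2$ for $w$ in the complex tangent space, which is exactly strong $\EC$-convexity of the level set. This quantity is strictly positive on $\partial\MR^s$ by the strong $\EC$-convexity of $\Omega_0^s$ and $\Omega_1^s$, and I would propagate it into $\MR^s$ by a maximum-principle argument along the foliation, of the same flavor as — and as stable as — the rank estimate; moreover the remaining, global requirement in the definition of $\EC$-convexity (connectedness and simple connectedness of the slices of $\Omega_t^s$ by complex lines) cannot fail while this local inequality stays uniformly strict along the deformation. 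Hence ``every $\Omega_t^s$ strongly $\EC$-convex'' is open and closed in $s$ as well, so $E=[0,1]$; in particular $1\in E$, which is the theorem. I expect the weak maximum rank theorem of the third step — a constant-rank phenomenon for a degenerate equation outside the scope of existing results, and at $C^{1,1}$ regularity — to be by far the hardest part.
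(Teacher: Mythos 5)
There is a genuine gap, and it sits exactly where you place your two ``difficulties.'' First, your gradient bound and your propagation of convexity both rest on the claim that for a $C^{1,1}$ maximal-rank solution the Monge--Amp\`ere foliation consists of proper analytic annuli joining $\partial\Omega_0$ to $\partial\Omega_1$, on which $\Phi$ uniformizes to $\log|\zeta|/\log R$. That picture is not available here: for the ring problem the leaves need not be compact annuli at all (they can be infinite strips that never close up against both boundary components), and this failure is precisely the known obstruction to smoothness of solutions of Problem \ref{prob:Main_Problem_HCMA_ring}; moreover, for a solution that is only $C^{1,1}$ the kernel line field of $\iu\ddbar\Phi$ is defined only a.e.\ and cannot be integrated into a foliation with enough regularity to run a leafwise strong maximum principle. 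So both the interior bound $|\nabla\Phi^s|\ge c_1$ and the ``maximum-principle argument along the foliation'' for the level-set inequality are unsupported. Second, the heart of your scheme --- a constant-rank/minimum-principle statement $\mathcal{L}w\le C(w+|\nabla w|)$ for $w=\sigma_{n-1}(\iu\ddbar\Phi)$ at $C^{1,1}$ regularity, carried out on sup-convolutions --- is asserted rather than proved, and you yourself flag it as the hardest step; since the homogeneous operator violates the structural hypotheses of the existing constant-rank theorems, no known result can be cited to fill it, and the viscosity/sup-convolution regularization does not interact well with the fully nonlinear degenerate structure (sup-convolutions of a solution are not solutions, and the rank functional is not monotone under the relevant perturbations).

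The paper's proof circumvents both points rather than confronting them. It never works with the degenerate equation or its foliation directly: it regularizes by the Hessian quotient equation $(\iu\ddbar\Phi)^n=\eqe(\iu\ddbar\Phi)^{n-1}\wedge\bbform$, which has smooth solutions with $C^{1,1}$ bounds uniform in $\eqe$, and for these it proves maximum-principle inequalities for $-\log S$ (giving the gradient lower bound) and for $\tr(I-\MK)^{-1}$, where $\MK=\MB\overline{\MA^{-1}}\,\overline{\MB}\MA^{-1}$ is built from the restriction of the Hessian to complex tangent planes of level sets; the error terms are explicitly of size $O(\eqe)$ against the good quadratic terms. The quantity controlled is thus the \scconvexity\ of the level sets (not the rank itself), measured through the ``robustness of \qcconvexity'' --- stability under subtracting pluriharmonic quadratic polynomials --- and this robustness bookkeeping is what closes the continuity argument in the domain-deformation parameter. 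The rank statement is then a corollary: a uniform lower bound on the tangential eigenvalues plus the equation gives $(\iu\ddbar\Phi^\eqe)^{n-1}\wedge\bbform\ge c\,\bbform^n$ and $\iu\ddbar e^{\Phi^\eqe}\ge c\,\bbform$, and these pass to the limit $\eqe\to0$ in the distributional sense, yielding the weak-sense rank $n-1$ and the strong \cconvexity\ of the $\Omega_t$. If you want to salvage your outline, the missing ingredient is exactly this replacement of the unproven $C^{1,1}$ constant-rank step by an estimate for smooth solutions of a nondegenerate approximation that survives the limit.
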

\begin{remark}
	Since we only know $\Phi$ is $C^{1,1}$,  $\text{rank}(\iu\ddbar\Phi)=n-1 $ should be understood in the weak sense: In Section \ref{sec:Approximate_HCMA},  we can show 
	\begin{align}
		\left(
		\iu\ddbar\Phi
		\right)^{n-1}\wedge \omega_0 > 0
		\label{623117}
	\end{align} in the sense of distribution, where 
	\begin{align}
		\omega_0 =\iu \delta_{\ijbar}dz^i\wedge \overline{dz^j}.
	\end{align} (\ref{623117}) says the rank of $\sqrt{-1}\partial\overline{\partial}\Phi$ is at least $n-1$ in the weak sense, and (\ref{eq:main_HCMA_equation_form}) says the  the rank of $\sqrt{-1}\partial\overline{\partial}\Phi$ is at most $n-1$, also in the weak sense. Therefore, we say the rank of $\sqrt{-1}\partial\overline{\partial}\Phi$ is  $n-1$ in the weak sense.
	
	In addition, we also prove that 
	\begin{align}
		\iu\ddbar e^{\Phi} >0,
	\end{align}i.e. $e^\Phi$ is strictly plurisubharmonic.
\end{remark}
\begin{remark}
	In the case of $n=1$, a \cconvex \ domain is a connected and simply connected domain, and there is no essential content to the name \scconvex\ domain. The Monge-Amp\`ere operator in this case is simply the Laplacian. Actually, we can prove the following result. Suppose $\Omega_0$ and $\Omega_1$ are both connected and simply connected domains in $\EC$, with smooth boundary. Then $\nabla \Phi\neq 0$ everywhere, and $\Omega_t$ are all connected and simply connected. The proof is easy; we only need to consider the function
	$z\Phi_z$, and notice that \begin{align}
		(z\Phi_z)_{\zzbar}=0\ \ \ \ \  \ \ \ \text{ on }\EC,
	\end{align}where $z$ is the coordinate on $\EC$. When $\Omega_0$ and $\Omega_1$ are concentric balls, $z\Phi_z$ is non-zero. Also, $z\Phi_z$ can not be zero on the $\partial\Omega_0$ and $\partial\Omega_1$ because of the boundary gradient estimate. Therefore, we can prove the result using a deformation argument.
\end{remark}

Another closely related topic is the convexity of level sets of Green's functions. The earliest result known to the author is Caratheodory's proof that level curves of the Green's function for a bounded convex sets in the plane are convex; the proof can be found in the book of Ahlfors \cite{AhlforsBook}. The $3$-dimensional case was proved  by Gabriel 
\cite{GabrielNewzealand_3d_harmonic}.
In the case of general $n$ dimension, suppose $G$ is the Green's function in $\Omega$ with $G<0$ in $\Omega$ and $G=0$ on $\partial\Omega$; Jia-Ma-Shi proved that $(-G)^{\frac{1}{2-n}}$ is  strictly convex in $\Omega\backslash \{0\}$ for $n\geq 3$, where $0$ is the singular point of $G$ \cite{Shishujun_Green_LevelSets}  \cite{Jia_Ma_Shi_2022}. In the case of pluricomplex Green's function, it was proved by Lempert (page 462 of \cite{Lempert81_La_metrique}, Lemma 5.3 of \cite{Lempert85_Duke_Symmetries}) that in a strongly convex (or strongly linearly convex) domain with a smooth boundary the sublevel sets of  pluricomplex Green's functions are convex (or strongly linearly convex). It's interesting  that in this case level sets of Green's functions are geodesic balls with respect to the Kobayashi metric.
 
 
There has been a large amount of research on the convexity of level sets of solutions of  partial differential equations; we refer to \cite{Xinan_Ou_Summary_English} \cite{XinanMa_Chinese} \cite{ChuanQiangMa} for better surveys.
 \subsection{Main Ideas of the Proof}
 First, we observe that on $\EC^{n-1}$, a quadratic function 
 \begin{align}
 	f(z)=A_{\alpha\betabar}z^{\alpha}z^\betabar+\Ree\left(
 	B_{\alpha\beta}z^\alpha z^\beta
 	\right),
 	\label{711121}
 \end{align}with $\alpha, \beta\in \{1,\ ...\ ,n-1\}$,
 is strongly convex if and only  if $A>0$ and 
 \begin{align}
 	\text{ the maximum eigenvalue of  } B\overline{A^{-1}}\ \overline{B}A^{-1}<1.
 \end{align}
 Here, for strongly convex we mean the real  Hessian of $f$ is strictly positive definite.
 This is the main idea of \cite{Hu22Nov} and \cite{HuC2Perturb}, where we proved that in a product  manifold, providing that the boundary value satisfies a convexity condition, the solution to the homogenous complex Monge-Amp\`ere equation has Hessian of the maximum rank. In this paper, we apply this idea to the study of the convexity of level sets of solutions to the homogenous complex Monge-Amp\`ere equation in a ring-shaped domain in $\EC^n$.
 
 Suppose $\nabla \Phi\neq 0$ at a point $\po$. We choose a set of coordinates $\left\{
 z^\alpha,\tau
 \right\}$, where $\alpha, \beta\in \{1,\ ...\ ,n-1\}$ and $\tau $ is the $n$-th coordinate, so that $\Phi_\alpha=0$ and $\Phi_\tau\neq0$  at $\po$ .  Then, at $\po$, we let
 \begin{align} \label{516120_reduced}
 	\MA_{\alpha\betabar}=\Phi_{\alpha\betabar},
 	\ \ \ \ \ \ \ 
 	 	\MB_{\alpha\beta}=\Phi_{\alpha\beta}.
 \end{align}
 When the level sets of $\Phi$ are strongly pseudoconvex, we have $\MA>0$, and we let
 \begin{align}
 	 \MK_{\alpha}^\theta=\MB_{\alpha\mu}\MA^{\mu\etabar}\ \overline{\MB_{\eta\zeta}}\MA^{\theta\zetabar}.
 \end{align}
Using matrix notation, denoting $\MA=(\MA_{\abbar}),\ \MB=(\MB_{\ab})$ and $\MK=(\MK_{\alpha}^\beta)$,  the above is equivalent to
 \begin{align}
 	\MK=\MB\overline{\MA^{-1}}\ \overline{\MB}\MA^{-1}.
 \end{align} 
 When the maximum eigenvalues of $\MK$ is smaller than 1, we let
 \begin{align}
 	\sigma_\Phi=\tr(I-\MK)^{-1}.
 \end{align}
 We note that $\MA$, $\MB$ and $\MK$ all depend on the choice of coordinate, but the eigenvalues of $\MK$ are invariant under a complex linear change of coordinates; therefore,  $\sigma_\Phi$
is invariant under a complex linear change of coordinates. So $\sigma_\Phi$ is a well-defined function in $\overline{\MR}$. The upper bound of $\sigma_\Phi$ provides an estimate on the \cconvexity\  of the level sets of $\Phi$. How to quantitatively gauge the \cconvexity\ will be discussed in Section \ref{sec:AuxiliaryFunction_Introduce}.

If we assume the solution to Problem \ref{prob:Main_Problem_HCMA_ring} is smooth and do a formal computation, which is what we did in Section \ref{sec:Formal_Computation_2dim}, we will find 
\begin{align}
	\sigma_\Phi\leq \max_{\partial\MR}\sigma_\Phi \ \ \ \ \  \ \ \ \text{ in }\MR.
\end{align} We can consider the above estimate as a weak version of the constant rank theorem: providing the level sets of $\Phi$ are \scconvex, we have an estimate for the \cconvexity. 

However, the solution $\Phi$ to  Problem \ref{prob:Main_Problem_HCMA_ring} is only $C^{1,1}$ in general. So we need to consider a perturbed Dirichlet problem, which has a smooth solution. 
Instead of considering the usually used non-degenerate Monge-Amp\`ere equation
\begin{align}
	\det(\Phi_{\ijbar})=\eqe,
\end{align} we consider the following Hessian quotient equation:
\begin{problem}
	[The Dirichlet Problem for a Hessian Quotient Equation]
	\label{prob:Perturbed_Problem_HessianQuotient}  Let $\MG_{\ijbar}dz^i\otimes \overline{dz^j}$ be a constant coefficient Hermitian metric on $\EC^n$; then, $\bbform =\sqrt{-1}\bb_{ij}dz^i\wedge \overline{dz^j}$ is the K\"ahler form.
	Find $\Phi$ satisfying
	\begin{align}
		\left(\sqrt{-1}\ddbar\Phi\right)^n&=\eqe	\left(\sqrt{-1}\ddbar\Phi\right)^{n-1}\wedge\bbform \ \ \ \ \ \ &\text{in }\MR,\ \ \label{eq:SigmaQuotient_Problem_Perturbation}\\
		\sqrt{-1}\ddbar\Phi&>0\ \ \ \ \ \ &\text{in }\MR,\ \ 
		\label{cond:positive_ddbar}\\
		\Phi&=0\ \ \ \ \ \  &\text{\ \ on }\partial \Omega_0,\\
		\Phi&=1\ \ \ \ \ \  &\text{\ \ on }\partial \Omega_1.
	\end{align}
Here $\eqe$ is a positive constant.
\end{problem}
It was proved by \cite{GuanAnnuals} that the problem above has a subsolution providing $\overline{\Omega_0}$ is holomorphically convex in $\Omega_1$; in Appendix \ref{sec:B_boundary_Barrier}, we argue that this condition is satisfied in our situation.
Then applying Theorem 1.2 of \cite{GuanLi2012}, we know this problem has a unique and smooth solution. Using the method of \cite{Guan98}, we know the solution $\Phi$ has a uniform $C^{2}$ estimate which is independent of the lower bound of $\eqe$; we include a proof of this fact in Appendix \ref{sec:Appendix_C11Estimate} since it's not clearly stated elsewhere to the knowledge of the author. So if $\Phi^\epsilon$ is the solution to the problem above, we know $\Phi^\epsilon$ converges to a solution to Problem \ref{prob:Main_Problem_HCMA_ring} as $\epsilon$ goes to $0$.  Therefore, an estimate for the solution to Problem \ref{prob:Main_Problem_HCMA_ring} can be derived from that of Problem \ref{prob:Perturbed_Problem_HessianQuotient}.

\subsection{Notation and Conventions of the Paper}
\label{sec:notation}
In this paper, we will only allow complex linear change of coordinates on $\EC^n$. We use Roman letters to denote indices run from $1$ to $n$. Sometimes, we need to specialize an $n-1$ dimensional complex subspace. For example, we may specialize the complex tangent plane to the level sets of a function. In this situation, we use the Greek letters, except $\tau$, to denote indices go from $1$ to $n-1$; the letter $\tau$ is used to denote the $n$-th coordinate.

We will introduce a convenient notation to indicate terms in equations. The term (*.*)$_k$ indicate the $k$-th term in equation (*.*), including the sign. For example, 
\begin{align}
	(\ref{711121})_2=A_{\alpha\betabar}z^{\alpha}z^\betabar,\ \ \ \ \  \ \ \ 
		(\ref{711105_RR})_4=-c|x-y|^2.
\end{align}

For matrices with real eigenvalues, we denote that
\begin{align}
	c_1>M (c_1\geq M)\ \ \text{ or } M>c_2 (M\geq c_2)
\end{align} if the maximum eigenvalues of $M<c_1(\leq c_1)$  or the minimum eigenvalues of $M>c_2(\geq c_2)$.

Suppose $\Phi$ is a smooth function with $\nabla\Phi\neq0$ around a point $\po$. Then the level set $\{\Phi=\Phi(\po)\}$ is a smooth hypersurface around $\po$. We denote the complex tangent plane of  $\{\Phi=\Phi(\po)\}$ at $\po$ by 
\begin{align}
	T^\EC_{\Phi, \po}.
\end{align}

For a ring-shaped domain $\MR=\Omega_1\backslash\overline{\Omega_0}$, we call
\begin{align}
	\min
	\left\{
	\min_{x\in\partial\Omega_0, y\in\partial \Omega_1}|x-y|,\ 
	\max
	\left\{
	r|B_r(z)\subset\overline{\MR} \text{ for some }z\in\MR 
	\right\}
	\right\}
\end{align} the thickness of $\MR$. Let $\Phi^\eqe$ be the solution to Problem \ref{prob:Perturbed_Problem_HessianQuotient}, and let $\MG_\ijbar dz^i\otimes \overline{dz^j}$ be the Hermitian metric on $\EC^n$. If a constant $C$ depends on 
\begin{itemize}
	\item the modulus of \cconvexity\ of $\MR$,
	\item the diameter of $\MR$,
	\item the thickness of $\MR$,
	\item  the upper bound of $\max_{\overline\MR}|D^2\Phi^\eqe|_\MG$, providing $\eqe$ small enough, derived in Appendix \ref{sec:Appendix_C11Estimate},
	\item  the lower bound of $\min_{\partial \MR} |\nabla \Phi^\eqe|_{\MG}$, providing $\eqe$ small enough, derived in Appendix \ref{sec:B_boundary_Barrier},
\end{itemize} we say $C$ depends on the geometry of $\MR$.
\section{The $\EC$-Convexity and The Auxiliary Functions}
\label{sec:AuxiliaryFunction_Introduce}

In this section, we will introduce some concepts which quantitatively characterize the \cconvexity\  of sublevel sets of $\Phi$ and introduce the auxiliary functions.
\subsection{Quantitative Characterizations of the $\EC$-Convexity}
First, we introduce the convexity characterization of quadratic polynomials on $\EC^{n-1}$.
\begin{definition}
	Suppose that $g_{\alpha\betabar}dz^\alpha\otimes dz^\betabar$ is a constant coefficient Hermitian metric on $\EC^{n-1}$. Then we say the {\bf modulus of convexity} of a quadratic polynomial
	\begin{align}
		P(z)=A_{{\alpha\betabar}}z^{\alpha} z^{\betabar}+\Ree \left(
		B_{\alpha\beta} z^\alpha z^{\beta}
		\right)+L
	\end{align}
	is greater than $\mu$, for $\mu\geq 0$, if 
	\begin{align}
		P(z)-L>\mu \left(
		g_{\alpha\betabar} z^\alpha z^\betabar  
		\right)\ \ \ \ \  \ \ \  \text{ on } \EC^{n-1}\backslash 
		\left\{
		0
		\right\}.
	\end{align}Here $A$ is a Hermitian matrix, $B$ is a complex valued symmetric matrix, and $L$ is a linear function. We say that $P$ is strongly convex if and only if the modulus of convexity of $P$ is greater than $0$.
	
 We say the {\bf degree of convexity} of $P$
 is greater than $\delta$, for $\delta\geq 0$, if for any
 \begin{align}
 	\MW(z)=\Ree
 	\left(
 	W_{{\alpha\beta}}z^\alpha z^\beta
 	\right),
 \end{align} 
 with 
 \begin{align}\left(
 	W_{\alpha\beta} g^{\beta\gammabar}\ \overline{W_{\gamma\nu}} g^{\theta\nubar}
 	\right)\leq \delta^2,
 \end{align}
 $P-\MW$ is strongly convex.
\end{definition}
\begin{remark}
	The degree and the modulus of convexity are equivalent; this is proved by Lemma A.2 of \cite{Hu22Nov}. In Appendix  \ref{sec:Algebra_Lemma}, we will provide an independent proof: Lemma \ref{lemma:Equivalent_Quadratic_Modulus_Degree}.
\end{remark}
 In literature,  a function with convex level sets is often called quasi-convex. Therefore, we introduce the following concept of strongly quasi-\cconvexity:
\begin{definition}
	Suppose that $\Omega$ is a bounded domain with smooth boundaries and that $\Phi$ is a smooth function defined on $\overline{\Omega}$. For a point $\po\in\overline{\Omega}$, we say $\Phi$ is {\bf strongly quasi-\cconvex} at $\po$ if 
	\begin{align}
		\nabla{\Phi}(\po)\neq 0
	\end{align}
	and 
	\begin{align}\text{the second order Taylor expansion of }
		\Phi|_{T^\EC_{\Phi, \po}} \text{ at $\po$ is strongly convex.}
	\end{align} Here, 
	$	{T^\EC_{\Phi, \po}} $ is the complex tangent plane of $\left\{
	\Phi=\Phi(\po)
	\right\}$ at $\po$. We say $\Phi$ is strongly quasi-\cconvex\ on $\Gamma\subset\overline{\Omega}$ if it's strongly quasi-\cconvex \ at any $\po\in\Gamma$.
\end{definition}
\begin{remark}The main goal of this paper is to prove the solution $\Phi$ to Problem \ref{prob:Perturbed_Problem_HessianQuotient} is strongly quasi-\cconvex.
	In the following, for convenience, we will denote quasi-\cconvexity \ by {\bf \qcconvexity}. 
\end{remark}
To gauge the \qcconvexity, we introduce the following concepts:
\begin{definition}
	[Modulus of \qcconvexity]
	Suppose $g_\ijbar dz^i\otimes \overline{dz^j}$ is a constant coefficient metric on $\EC^n$. 
	We say the {\bf modulus of \qcconvexity } of $\Phi$ is greater than $\delta\geq0$ if
	\begin{align}
		|\nabla \Phi|_g >\delta
	\end{align}
	and 
		\begin{align}
			\begin{split}
			\text{the modulus of convexity of the second order Taylor expansion}&\\
			\text{ of }
		\Phi|_{T^\EC_{\Phi, \po}} & \text{ at $\po$ is greater than $\delta$.}
			\end{split}
	\end{align} 
\end{definition}
\begin{definition}
	[Robustness of \qcconvexity]
	Suppose $g_\ijbar dz^i\otimes \overline{dz^j}$ is a constant coefficient Hermitian metric on $\EC^n$.  Let
	\begin{equation}
		\spaceQ_\epsilon=
		\left\{
		q\  | \ \text{pluriharmonic quadratic polynomials with }|\nabla q|_g\leq \epsilon
		\right\}.
	\end{equation}
	We say the {\bf robustness of \qcconvexity} of $\Phi$ is greater than $\epsilon\geq0$ if
	for any $q\in \spaceQ_\epsilon$, $\Phi-q$ is \sqcconvex. In this paper, when we say quadratic functions we include linear functions.
\end{definition}
\begin{remark}
	An easy and useful observation is 
	\begin{align}
		\spaceQ_{\eqe_1}+\spaceQ_{\eqe_2}=\spaceQ_{\eqe_1+\eqe_2}.
	\end{align}So if the robustness of \qcconvexity\ of $\Phi$ is greater than $\eqe$ and $q\in\spaceQ_\delta$ with $\delta<\eqe$, then the robustness of \qcconvexity\ of $\Phi-q$ is greater than $\eqe-\delta$.
\end{remark}
\begin{remark} The name robustness means how robust the \qcconvexity\ is under perturbations. 
	If the modulus of \qcconvexity\ of $\Phi$ is greater than $0$, than the robustness of \qcconvexity\ is also greater than $0$, and vise versa; the two measurements are actually comparable in the sense of Lemma \ref{lemma:ModulusPositive_implies_RobustnessPositive} and Lemma \ref{lemma:RobustnessPositive_implies_ModulusPositive}.
\end{remark}

For the quantitative characterization of the \cconvexity\ of a domain, we have the following concept of the modulus of \cconvexity:
\begin{definition}[Modulus of \cconvexity\ of a Domain]	Suppose $\Omega$ is a bounded domain in $\EC^n$ with a $C^{1,1}$ boundary and that $ dz^i\otimes \overline{dz^i}$ is the metric on $\EC^n$. 
	Then, at any point $\po\in\partial E$, $\partial E$ can be locally represented as the graph of a $C^{1,1}$ function:
	after a unitary change of coordinates, which makes $\po=0$, we can find $\delta>0$ and a $C^{1,1}$ function $\rho_\po$, so that
	\begin{align}
		&\partial E\cap 
		\left\{
		|\Ree(z^n) | <\delta, |\Imm(z^n)|^2+\sum_{\alpha=1}^{n-1}|z^\alpha|^2<\delta^2
		\right\}\\
		=&
		\left\{
		(z^\alpha, \rho_\po(z^\alpha, s)+\iu s)\big| \sum_{\alpha=1}^{n-1}|z^\alpha|^2+s^2<\delta^2
		\right\},
	\end{align}
	with $\rho_\po(0)=0$ and $\nabla \rho_\po(0)=0$. We say the modulus of \cconvexity \ of $E$ is greater than $\mu\geq0$ if
	\begin{align}
		\rho_\po(z^\alpha, 0) > \mu \sum_\alpha|z^\alpha|^2,\label{0607213}
	\end{align}for $(z^\alpha)\neq 0$ when $\delta$ is small enough.
\end{definition}
\begin{remark}
	A domain is \scconvex\ if and only if its modulus of \cconvexity\ is greater than $0$. If  the inequality (\ref{0607213}) is replaced by 
	\begin{align}
		\rho_\po(z^\alpha, s)> \mu(\sum_\alpha |z^\alpha|^2+s^2),
	\end{align} then $\mu$ can be any number smaller than the minimum principle curvature of $\partial \Omega$. In Section \ref{sec:Algebra_Lemma}, we will show if the modulus of \qcconvexity \ of $\Phi$ is greater than $0$, then the modulus of \cconvexity\ of level sets of $\Phi$ is greater than $0$; that's to say, the \sqcconvexity\ of $\Phi$ implies the \scconvexity\ of sublevel sets of $\Phi$.
\end{remark}
\subsection{Auxiliary Functions}
\label{sec:AuxiliaryFunctions}
Let $\Phi$ be a solution to equation (\ref{eq:SigmaQuotient_Problem_Perturbation}). We denote $\MH=(\Phi_{\ijbar})$ and $\MG=(\MG_{\ijbar})$. Then (\ref{eq:SigmaQuotient_Problem_Perturbation}) becomes
\begin{align}
	\label{eq:Matrix_Form_Main_Perturbation_Equation}
	\tr (\MH^{-1}\MG)=\frac{1}{\eqe}.
\end{align}
We apply $\partial_i$ to the equation above and get
\begin{align}
	\label{eq:Matrix_Form_Main_Perturbation_Equation_partial_i}
	\tr (\MH^{-1}\MH_i\MH^{-1}\MG)=0.
\end{align}Let
\begin{align}
	L^{p\qbar}=\eqe^2 \Phi^{p\vbar}\MG_{u\vbar} \Phi^{u\qbar};
\end{align}
then, (\ref{eq:Matrix_Form_Main_Perturbation_Equation_partial_i}) becomes
\begin{align}
	L^{p\qbar}(\Phi_{i})_{p\qbar}=0,
	\label{eq:SigmaQuotientDerivative=0}
\end{align}
 and $\ML=L^{\ijbar}\partial_{\ijbar}$ is the linearization operator of (\ref{eq:SigmaQuotient_Problem_Perturbation}). Moreover, $\ML$ has bounded coefficient, independent of  $\eqe$. Let $\MG=(\delta_{\ij})$ and $\MH=\diag(\lambda_1,\ ...\ , \lambda_n)$. Then equation (\ref{eq:Matrix_Form_Main_Perturbation_Equation}) gives
\begin{align}
	\sum_{i=1}^n\frac{1}{\lambda_i}=\frac{1}{\eqe},
\end{align} so $\lambda_i>\eqe$ for any $i$. With this coordinate, 
$L^{\ijbar}=\delta_{ij} \frac{\eqe^2}{\lambda_i^2}<1$. 
In the following, we will construct some quantities related to the estimates and apply $\ML$ to them.

For the lower bound estimate of $|\nabla\Phi|_{\MG}$, we let
\begin{align}
	S=\eqe \Phi_i\Phi_{\jbar}\Phi^{\ijbar};
\end{align}we can show $|\nabla \Phi|_{\MG}\geq S$. So a positive lower bound for $S$ implies a positive lower bound for $|\nabla\Phi|_{\MG}$.

For the convexity estimate, we construct a tensor $\MK$.
If $\Phi$ is \sqcconvex, we know $\nabla \Phi\neq 0$ anywhere. For a point $\po\in\overline{\MR}$, we choose a set of coordinates $\left\{
z^\alpha,\tau
\right\}$, where $\alpha, \beta\in \{1,\ ...\ ,n-1\}$ and $\tau $ is the $n$-th coordinate, so that $\Phi_\alpha=0$ and $\Phi_\tau\neq0$  at $\po$ .  Then, around $\po$, we let
\begin{align} \label{516120}
	\MA_{\alpha\betabar}=\Phi_{\alpha\betabar} -\frac{\Phi_{\alpha}}{\Phi_\tau}\Phi_{\tau\betabar}
	-\frac{\Phi_{\betabar}}{\Phi_\taubar}\Phi_{\alpha\taubar}
	+\frac{\Phi_{\alpha}\Phi_{\betabar}}{|\Phi_{\tau}|^2}\Phi_{\tautaubar},\\
	\label{516121}
	\MB_{\alpha\beta}=\Phi_{\alpha\beta} -\frac{\Phi_{\alpha}}{\Phi_\tau}\Phi_{\tau\beta}
	-\frac{\Phi_{\beta}}{\Phi_\tau}\Phi_{\alpha\tau}
	+\frac{\Phi_{\alpha}\Phi_{\beta}}{(\Phi_{\tau})^2}\Phi_{\tau\tau}.
\end{align} 
Obviously, at $\po$ we have
\begin{align}
	\MA_{\abbar}=\Phi_{\abbar},\ \ \ \ \  \ \ \ \MB_{\ab}=\Phi_{\ab}.
\end{align}
Let $\MA, \MB$ be the matrices
\begin{align}
	\MA=(\MA_{\abbar}),\ \ \ \ \  \ \ \ \MB=(\MB_{\ab}).
\end{align}
Since $\Phi$ is \sqcconvex, the second order Taylor expansion of $\Phi|_{{T^\EC_{\Phi,\po}}}$ at $\po$ is strongly convex; then, we have $\MA>0$ around $\po$, and we let
\begin{align}
	\MK_{\alpha}^\theta=\MB_{\alpha\mu}\MA^{\mu\etabar}\ \overline{\MB_{\eta\zeta}}\MA^{\theta\zetabar}.
\end{align}
Using matrix notation, the above is equivalent to
\begin{align}
	\MK=\MB\overline{\MA^{-1}}\ \overline{\MB}\MA^{-1},
\end{align} where $\MK$ is the matrix $(\MK_{\alpha}^\theta)$. $\MK$ can also be considered as a tensor: at $\po$, $\MK$ is a complex linear automorphism of ${T^\EC_{\Phi,\po}}$. Therefore, eigenvalues of $\MK$ are independent of the choice of coordinate. 

Using Lemma \ref{lemma:Robustness_positive_implies_K<1}, we know that the modulus of \qcconvexity\ of $\Phi$ is greater than $\delta>0$ implies $\MK<1-\frac{\delta}{C}$, where $C$ is a constant; using Lemma \ref{lemma_Metric_and_Q_convert_to_Degree}, we know $\MA>\delta$ and $\MK<1-\delta$ implies that the modulus of \qcconvexity\ of $\Phi$ is greater than $\frac{\delta^2}{C}$ for a constant $C$.  So an estimate for the minimum eigenvalue of $I-\MK$ implies an estimate for the robustness of \qcconvexity\ of $\Phi$.

In the following, we will provide an upper bound estimate for $\tr(I-\MK)^{-1}$, which implies a positive lower bound estimate for the minimum eigenvalue of $I-\MK$. Actually, we can show the estimate is still valid if we add a pluriharmonic quadratic polynomial to $\Phi$; this provides an estimate for the robustness of \qcconvexity\ of $\Phi$.
\section{The Main Computation Result}
The main goal of this section is to prove the following computation results. These results allow us to use the maximum principle to derive the corresponding estimates in Section \ref{sec:Lower_Bound_Gradient_Norm} and Section \ref{sec:Convexity_Estimate_GeneralDim_Def+Perturb}.
\begin{proposition}
	\label{prop:Main_Computation}
	Suppose that $\Phi$ is a solution to equation (\ref{eq:SigmaQuotient_Problem_Perturbation}) and that
	\begin{align}
		\text{the robustness of \qcconvexity \ of $\Phi$ is greater than $\delta$. }
		\label{Assumption_robust_ComputationProposition}
	\end{align} Then if $\eqe$ is small enough, depending on $\delta$, we have
	\begin{align}
	&	L^{p\qbar}\left(
		-\log(S)+\sqrt{\eqe}\left[W+\MG_{\ijbar}z^i\zjbar\right]
		\right)_{p\qbar}\geq 0,
		\label{eq:-logs_main_computation}\\
&		L^{p\qbar}\left(
		\tr\left[I-\MK\right]^{-1}+\sqrt{\eqe}(W+\MG_{\ijbar}z^i\zjbar)
		   													+\eqe^{\frac{3}{4}}V
		\right)_{p\qbar}\geq 0.
		\label{eq:sigma_Q_main_computation}
	\end{align}
	In above, $L,\ S$ and $\MK$ are introduced in section \ref{sec:AuxiliaryFunctions}, and 
	$W=\Phi_{\ijbar}\MG^{\ijbar}$, $V=\Phi_{\ij}\MG^{j\kbar}\ \overline{\Phi_{kl}}\MG^{i\lbar}$.
\end{proposition}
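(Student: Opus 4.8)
The plan is to compute $L^{p\qbar}(\cdot)_{p\qbar}$ of each quantity directly, working at a fixed point $\po$ in coordinates normalized so that $\MG=(\delta_{\ijbar})$ and $\MH=\diag(\lambda_1,\dots,\lambda_n)$ with all $\lambda_i>\eqe$, and additionally so that $\Phi_\alpha(\po)=0$, $\Phi_\tau(\po)\neq 0$ (this is the coordinate choice used to define $\MA,\MB,\MK$). The two estimates are proved by the same mechanism: the leading term in each Laplacian-type expression is a nonnegative ``good'' term (a sum of squares coming from the concavity of the operators $-\log$ and $X\mapsto\tr(I-X)^{-1}$ composed with the constraint (\ref{eq:SigmaQuotientDerivative=0})), and the remaining ``bad'' terms are controlled, after multiplying by $\sqrt\eqe$, by the strictly positive contribution $\sqrt\eqe\, L^{p\qbar}(\MG_{\ijbar}z^i\zjbar)_{p\qbar} = \sqrt\eqe\,\tr L = \sqrt\eqe\sum_i \eqe^2/\lambda_i^2$, together with the terms $\sqrt\eqe\, W$, $\eqe^{3/4}V$ whose linearizations supply additional positivity. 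The role of the hypothesis (\ref{Assumption_robust_ComputationProposition}) is to guarantee, via Lemma \ref{lemma:Robustness_positive_implies_K<1}, a uniform bound $\MK<1-\delta/C$ and $\MA>\delta/C$, so that all the matrix inverses appearing ($\MA^{-1}$, $(I-\MK)^{-1}$) and the quantities built from them are uniformly bounded in terms of $\delta$ and the geometry of $\MR$; this is what makes ``$\eqe$ small enough, depending on $\delta$'' meaningful.

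For (\ref{eq:-logs_main_computation}): write $S=\eqe\,\Phi_i\Phi_\jbar\Phi^{\ijbar}$ and compute $L^{p\qbar}(-\log S)_{p\qbar} = -\frac{1}{S}L^{p\qbar}S_{p\qbar} + \frac{1}{S^2}L^{p\qbar}S_p S_\qbar$. The second term is the manifestly nonnegative Cauchy–Schwarz/concavity term. For the first, differentiate $S$ twice; the key simplification is that $\Phi^{\ijbar}_{\ ,p}$ and $\Phi^{\ijbar}_{\ ,p\qbar}$ can be re-expressed using the differentiated equations (\ref{eq:SigmaQuotientDerivative=0}) and its $\partial_\qbar$-derivative, which kills the would-be third-order terms against $L^{p\qbar}$ and leaves only: (a) a nonnegative quadratic form in $\Phi_{ip\qbar}$-type quantities, and (b) bad terms that are linear or quadratic in $D^2\Phi$ with bounded coefficients. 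Since $L^{p\qbar}$ itself is $O(\eqe^2)$ in size while $S\geq \eqe\cdot(\text{something bounded below})$, the factor $1/S$ costs one power of $\eqe^{-1}$, so the bad terms are $O(\eqe)$; they are absorbed by $\sqrt\eqe\,\tr L \sim \eqe^{5/2}$... more carefully, one arranges the absorption so that the dangerous bad terms are actually $O(\eqe\cdot\eqe)=O(\eqe^2)$ relative to $\sqrt\eqe\cdot\eqe^2$ after using $\lambda_i>\eqe$, and the genuinely first-order bad terms are dominated by completing the square against the good term (b). This bookkeeping of powers of $\eqe$ is the technical heart.

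For (\ref{eq:sigma_Q_main_computation}): set $F(X)=\tr(I-X)^{-1}$, a convex increasing function of the Hermitian matrix $X$ on $\{X<1\}$, and apply it to $\MK=\MB\overline{\MA^{-1}}\,\overline{\MB}\,\MA^{-1}$. Using the tensorial nature of $\MK$ (its eigenvalues are coordinate-independent), compute $L^{p\qbar}(\tr(I-\MK)^{-1})_{p\qbar}$ by the chain rule: the second-derivative (Hessian of $F$) term is nonnegative by convexity of $F$, and the first-derivative term $\langle DF(\MK), L^{p\qbar}\MK_{p\qbar}\rangle$ requires computing $L^{p\qbar}\MK_{p\qbar}$. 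Here one expands $\MK$ via (\ref{516120})–(\ref{516121}), commutes derivatives, and substitutes the equation and its derivatives to eliminate third-order terms along $L^{p\qbar}$; what survives is again a nonnegative square-term plus bad $D^2\Phi$-terms with bounded (in terms of $\delta$) coefficients, now absorbed by the combination $\sqrt\eqe(W+\MG_{\ijbar}z^i\zjbar)+\eqe^{3/4}V$. The point of including $V=\Phi_{\ij}\MG^{j\kbar}\overline{\Phi_{kl}}\MG^{i\lbar}$ with weight $\eqe^{3/4}$ is that $L^{p\qbar}V_{p\qbar}$ contains, besides bounded bad terms, a good term of the form (positive constant)$\cdot\eqe^2\cdot|\Phi_{\ij p}|^2$-type that dominates precisely the bad cross terms in the $\MK$-computation that are quadratic in third derivatives but not captured by the $-\log S$ or $W$ good terms; the intermediate weight $\eqe^{3/4}$ (between $\sqrt\eqe$ and $\eqe$) is chosen so this domination works while $\eqe^{3/4}L^{p\qbar}V_{p\qbar}$'s own bad part stays subordinate to $\sqrt\eqe\,\tr L$.

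The main obstacle I anticipate is the second computation: correctly tracking all the terms in $L^{p\qbar}\MK_{p\qbar}$ — in particular the interaction between the $\MB$ and $\overline{\MA^{-1}}$ factors and the non-diagonal structure of $\MK$ under $DF(\MK)$ — and then verifying that the specific three-scale hierarchy $1 \gg \sqrt\eqe \gg \eqe^{3/4} \gg \eqe$ makes every bad term absorbable. The $-\log S$ estimate is structurally the same but strictly easier since $S$ is scalar-valued. I would first do the gradient estimate (\ref{eq:-logs_main_computation}) in full to calibrate the power-counting conventions, then mirror the argument for (\ref{eq:sigma_Q_main_computation}), introducing $V$ only at the stage where an uncancelled bad term forces it.
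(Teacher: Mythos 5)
Your overall plan follows the paper's skeleton (pointwise normalized coordinates, the linearized operator $L$, the good quadratic forms $L^{p\qbar}W_{p\qbar}=\Onep$ and $L^{p\qbar}V_{p\qbar}$ controlling third derivatives, the weights $\sqrt\eqe$ and $\eqe^{3/4}$, and Lemma \ref{lemma:Robustness_positive_implies_K<1} to turn (\ref{Assumption_robust_ComputationProposition}) into uniform bounds on $\MA^{-1}$ and $(I-\MK)^{-1}$), but two of your quantitative claims are wrong or missing as stated. First, the normalization ``$\MG=\delta_{ij}$, $\MH$ diagonal, and $\Phi_\alpha=0$'' is in general not achievable by one linear change of coordinates; the paper instead imposes $\Phi_\alpha=0$, $\Phi_\tau=1$, $\Phi_{\abbar}=\delta_{\alpha\beta}$ with $(\Phi_{\ijbar})$ diagonal and accepts that $\MG$ is only uniformly comparable to $I$ as in (\ref{51539}), which is where the constants $C_\delta$ enter. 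Second, and more seriously, your estimate $\sqrt\eqe\,\mathrm{tr}\,L\sim\eqe^{5/2}$ would destroy the absorption: since $\eqe\,\mathrm{tr}(\MG\MH^{-1})=1$ and $\Phi_{\tautaubar}$ is comparable to $\eqe$ (see (\ref{Phitt_comparable_epsilon})), Cauchy--Schwarz gives $L^{p\qbar}(\MG_{\ijbar}z^i\zjbar)_{p\qbar}=\eqe^2\mathrm{tr}(\MG\MH^{-1}\MG\MH^{-1})\geq\frac1n$, an order-one constant; it is this $\sqrt\eqe\cdot\frac1n$, together with $\sqrt\eqe\,\Onep$ and $\eqe^{3/4}\Twop$, that dominates the error terms of size $\eqe\,C_\delta(1+\Onep+\Twop)$. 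The vague ``more careful rearrangement'' you offer in its place is not the actual mechanism. Note also that the crude bounds on $\Phi_{\alpha\tau\taubar}$ and $\Phi_{\tau\tau\taubar}$ coming from $\Onep$ alone are too weak; the paper regains the extra factor of $\eqe$ (estimates (\ref{516318}), (\ref{516320})) by feeding the once-differentiated equation back in, a step absent from your sketch.

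For (\ref{eq:sigma_Q_main_computation}) there is a genuine gap beyond bookkeeping: convexity of $F(X)=\mathrm{tr}(I-X)^{-1}$ plus ``computing $L^{p\qbar}\MK_{p\qbar}$'' does not close the argument. $\MK$ is not Hermitian away from the normalization point, the first-order term $\langle DF(\MK),L^{p\qbar}\MK_{p\qbar}\rangle$ has no sign a priori, and after substituting the structure identities (\ref{RH_curvy_A})--(\ref{RH_curvy_B}) (themselves obtained through the intermediate quantities $A=\Phi_{\abbar}$, $B=\Phi_{\ab}$ and a chain of comparison estimates) one is left with an explicit quadratic form in the first derivatives $(\MMB_p,\overline\MB_p)$ whose coefficient matrices $M^1_{\alpha\beta}+M^2_{\alpha\beta}+M^3_{\alpha\beta}$ must be checked to be positive definite, and this holds precisely because the eigenvalues $\lambda_\alpha$ of $\MB$ satisfy $\lambda_\alpha<1$, i.e. $\MK<1$; see (\ref{7143114}). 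This algebraic verification is the crux inherited from the constant-rank-type argument, and it is exactly the step your proposal asserts (``what survives is again a nonnegative square-term'') without supplying the idea. You correctly identify it as the main obstacle, but as written the proposal does not contain the argument that overcomes it.
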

Through this whole section, we only need to do the computation at a point $\po$. To simplify the computation, we choose  coordinates so that
$\Phi_{\alpha}(\po)=0$, for $\alpha\in \{1,\ ...\ ,n-1\}$, and $(\Phi_{\ijbar})(\po)$ is diagonal. 
Actually, we can make $\Phi_{\abbar}=\delta_{\alpha\beta}$ and $\Phi_\tau=1$, where $\tau$ denotes the $n$-th coordinate. This will be explained in the next paragraph.
We note that after this choice of coordinate, $\MG$ cannot keep diagonal; we can find $C_\delta>0$ so that
	\begin{align}
		C_\delta I\geq \MG \geq \frac{1}{C_\delta} I,
		\label{51539}
	\end{align} 
where $I$ is the identity matrix and $C_\delta$ depends on $\delta$ and the $C^2$ norm of $\Phi$. 
To prove this, we need to use $(\Phi_{\alpha\betabar})>\delta (\MG_{\alpha\betabar})$ and $|\Phi_\tau|_\MG>\delta$, which follow from the apriori assumption (\ref{Assumption_robust_ComputationProposition}).

At a point $\po$, we choose coordinates $\{\widetilde{z^\alpha}, \tilde{\tau}\}$, so that
$\Phi_{\widetilde{z^\alpha}}=0$, $\Phi_{\tilde{\tau}}\neq0$, $\Phi_{{\widetilde{z^\alpha}}\overline{\widetilde{z^\beta}}}=\delta_{\alpha\beta}\lambda_\alpha$ and
	$g_{\ijbar}=\delta_{ij}$, with $\lambda_\alpha>0$. This can be done because of the apriori assumption (\ref{Assumption_robust_ComputationProposition}).
Then we find $\{z^\alpha, \tau\}$, with
\begin{align}
	&{\widetilde{z^\alpha}}=A_\mu^\alpha z^\mu +B^\alpha \tau,\\
	&\widetilde{\tau}=E_\gamma z^\gamma+D\tau.
\end{align}
 so that
 \begin{align}
 	\Phi_{\alpha}=0, \label{condition36}\\
 	\Phi_\tau=1, \label{condition37}\\
 	\Phi_{\alpha\betabar}=\delta_{\alpha\beta} \label{condition38}
 \end{align}
and
\begin{align}
	\left(
	\Phi_{\ijbar} 
	\right)\text{ is diagonal.} \label{condition_H_diagonal}
\end{align}
Condition (\ref{condition36}) requires
\begin{align}
	0=\Phi_\alpha=\Phi_{{\widetilde{z^\mu}}}
	\frac{\partial {\widetilde{z^\mu}}}{\partial {{z^\alpha}}} +\Phi_{\widetilde{\tau}} 
		\frac{\partial {\widetilde{\tau}}}{\partial {{z^\alpha}}} =\Phi_{\widetilde{\tau}} E_\alpha,
\end{align}so we choose $E_\alpha=0$. Condition (\ref{condition37}) requires
\begin{align}
	1=\Phi_\tau=\Phi_{{\widetilde{z^\mu}}}
	\frac{\partial {\widetilde{z^\mu}}}{\partial {{\tau}}} +\Phi_{\widetilde{\tau}} 
	\frac{\partial {\widetilde{\tau}}}{\partial {{\tau}}} =\Phi_{\widetilde{\tau}} D,
\end{align}so we choose $D=\frac{1}{\Phi_{\widetilde{\tau}}}$.
Condition (\ref{condition38}) requires 
\begin{align}
	\delta_{\alpha\beta}=\Phi_{\abbar}=
	\Phi_{\widetilde{z^\mu}\overline{\widetilde{z^\gamma}}} 
	A^{\mu}_{\alpha} \overline{\left(
		A^\gamma_\beta
		\right)}.
\end{align}Such a matrix $A$ can be find since 
$\left(	\Phi_{\widetilde{z^\mu}\overline{\widetilde{z^\gamma}}}\right)>\delta I $, according to (\ref{Assumption_robust_ComputationProposition}). 
Condition (\ref{condition_H_diagonal}) requires 
\begin{align}
	0=\Phi_{{{z^\alpha}}\overline{ {\tau}}}=
	\left(
	\Phi_{\widetilde{z^\mu}}\frac{\partial  {\widetilde{z^\mu}}}{ \partial z^\alpha}
	\right)_\taubar
	=
	\Phi_{ {\widetilde{z^\mu}} \overline{\widetilde{z^\gamma}} }A^\mu_\alpha \overline{B^\gamma} +\Phi_{{\widetilde{z^\mu}} \overline{ \widetilde{\tau}} }A^\mu_\alpha \overline D. 
\end{align}Since $\Phi_{ {\widetilde{z^\mu}} \overline{\widetilde{z^\gamma}} }=\delta_{\mu\gamma}\lambda_\mu$, we choose 
 \begin{align}
 	B^\gamma=-D \frac{\Phi_{ \overline{\widetilde{z^\mu}} { \widetilde{\tau}}}}{\lambda_\mu}.
 \end{align}
Since the constants $A, B, D, E$ only depend on $\delta$ and the second order derivatives of $\Phi$, the estimate (\ref{51539}) is valid.

Denoting $\Phi_{\tau\taubar}=T$,  the equation becomes
\begin{align}
	\frac{\MG_{\tautaubar}}{T}+\sum_\alpha \MG_{\alpha\alphabar}=\frac{1}{\eqe}.
\end{align}
Therefore $T$ is comparable to $\eqe$, providing $\eqe$ is small enough.
More precisely, we have
\begin{align}
	\frac{\eqe}{C_{\delta}}\leq T \leq  \eqe C_{\delta},
	\label{Phitt_comparable_epsilon}
\end{align}providing $\eqe\leq \frac{1}{ C_{\delta}}$ for a large enough $C_{\delta}$ depending on $\delta$ and the $C^2$ norm of $\Phi$.

During the computation, most importantly, we need to control third derivatives of $\Phi$. In Section \ref{sec:Computing_Gradient} and  \ref{sec:subsec:Convexity_Estimate_computation}, we will find  quadratic forms $\Onep, \Twop$ of third derivatives  and use them to control third derivatives: 
we will prove
\begin{align}
	|\Phi_{\alpha\beta\gammabar}|\eqe+	|\Phi_{\alpha\beta\taubar}|	+|\Phi_{\alpha\betabar\tau}|
	+	\frac{1}{\eqe}|\Phi_{\alpha\tau\taubar}|+\frac{1}{\eqe}|\Phi_{\tau\tau\taubar}|
	\leq C_\delta \sqrt{\Onep} \label{51534}
\end{align}
and
\begin{align}
	|\Phi_{\alpha\beta\tau}|+|\Phi_{\alpha\tau\tau}|+|\Phi_{\tau\tau\tau}|\leq C_\delta (C_\delta  \Onep+ \Twop)^{\frac{1}{2}}.
	\label{610317}
\end{align} Here, $C_\delta$ is a constant depending on the $\delta$  of (\ref{Assumption_robust_ComputationProposition}) and the $C^2$ norm of $\Phi$.
For clarity, we list the estimates for third derivatives in the following table. 

\begin{center}
	\begin{tabular}{|c|c|c|}
		\hline
		&Pure Holomorphic Derivatives& Mixed Derivatives\\
		\hline
		Containing No $\tau$ & $\Phi_{\alpha\beta\gamma} $ Not Used& $\eqe|\Phi_{\alpha\beta\gammabar}|\leq C_\delta \Onep^{\frac{1}{2}}$\\
		\hline
	Containing One $\tau$ & $|\Phi_{\alpha\beta\tau}|\leq C_\delta (C_\delta  \Onep+ \Twop)^{\frac{1}{2}} $  & $|\Phi_{\alpha\beta\taubar}|+|\Phi_{\alpha\betabar\tau}|\leq C_\delta \Onep^{\frac{1}{2}}$\\
		\hline
	Containing Two $\tau$'s & $|\Phi_{\alpha\tau\tau}|\leq C_\delta (C_\delta  \Onep+  \Twop)^{\frac{1}{2}} $ & $\frac{1}{\eqe}|\Phi_{\alpha\tau\taubar}|+|\Phi_{\alphabar\tau\tau}|\leq C_\delta \Onep^{\frac{1}{2}}$\\
			\hline
	Containing Three $\tau$'s & $|\Phi_{\tau\tau\tau}|\leq C_\delta (C_\delta \Onep+ \Twop)^{\frac{1}{2}} $ & $|\Phi_{\tau\tau\taubar}|\leq \eqe C_\delta \Onep^{\frac{1}{2}}$\\
	\hline
	\end{tabular}
	\captionof{table}{Third Derivatives}
\label{tab:Three_Group_Third_Derivatives}  
\end{center}

\subsection{Proving (\ref{eq:-logs_main_computation})}
\label{sec:Computing_Gradient}
In this section, we prove  (\ref{eq:-logs_main_computation}). First, we compute $L^{p\qbar}W_{p\qbar}$; this will produce a positive quadratic form, which can be used to control third derivatives of mixed type.  Actually, we will see the only third derivatives involved in section \ref{sec:Computing_Gradient} are of mixed type.

We apply $\partial_{\jbar}$ to  (\ref{eq:Matrix_Form_Main_Perturbation_Equation_partial_i}) and get
\begin{align}
	\tr \left(
	\MH^{-1} \MH_{\ijbar}\MH^{-1} \MG	\right)
	=\tr\left(
	\MH^{-1} \MH_i \MH^{-1} \MH_{\jbar}\MH^{-1} \MG+
	\MH^{-1} \MH_\jbar \MH^{-1} \MH_{i}\MH^{-1} \MG
	\right).
\end{align}
Using index summation and operator $\ML$, this becomes
\begin{align} 
	L^{p\qbar}(\Phi_{\ijbar})_{p\qbar}=
		L^{p\qbar} \Phi_{p\vbar i}\Phi^{u\vbar} \Phi_{\qbar u\jbar}+
			L^{p\qbar} \Phi_{p\vbar \jbar}\Phi^{u\vbar} \Phi_{\qbar u i}.
			\label{eq:SigmaQuotient_double_derivative_indexform}
\end{align}
Since $\MG^{\ijbar} $ are constants, we get
\begin{align} \label{51537}
		L^{p\qbar}W_{p\qbar}=
	L^{p\qbar} \Phi_{p\vbar i}\Phi^{u\vbar} \Phi_{\qbar u\jbar}\MG^{\ijbar}+
	L^{p\qbar} \Phi_{p\vbar \jbar}\Phi^{u\vbar} \Phi_{\qbar u i}\MG^{\ijbar}.
\end{align}
We denote
\begin{align}
	L^{p\qbar}W_{p\qbar}=\Onep.
\end{align}

Using (\ref{51539}) and the expression of $L^{\ijbar}$, we know
\begin{align}
	\eqe^2 C_\delta \MH^{-2}\geq (L^{\ijbar}) \geq \eqe^2 \frac{1}{C_\delta} \MH^{-2}.
	\label{711324}
\end{align}
We plug (\ref{711324}) and (\ref{51539}) into (\ref{51537}) and get
\begin{align}
	\Onep\geq \frac{\eqe^2}{ C_{\delta}}\delta_{ij}\delta_{uv}\delta_{pq}\Phi^{u\ubar} \Phi^{p\pbar}\Phi^{q\qbar}  \left(
	\Phi_{p\vbar i}\Phi_{\qbar u\jbar}
	+\Phi_{p\vbar \jbar} \Phi_{\qbar u i}
	\right).
\end{align} Choosing $p=i=u=\tau$, we get
\begin{align}
	|\Phi_{\tau\tau\taubar}|^2\leq  \eqe C_{\delta} \Onep;
\end{align}
choosing $p=i=\tau,\ u=\alpha $, we get
 \begin{align}
 		|\Phi_{\alpha\tau\taubar}|^2+	|\Phi_{\alphabar\tau\tau}|^2\leq   C_{\delta} \Onep;
 		\label{516313}
 \end{align}
choosing $p=\tau,\ i=\beta, \ u=\alpha $, we get
\begin{align}\label{516314}
	|\Phi_{\alpha\beta\taubar}|^2+	|\Phi_{\alpha\betabar\tau}|^2\leq   C_{\delta} \Onep;
\end{align}
choosing $p=\gamma,\ i=\beta,\ u=\alpha $, we get
\begin{align}\label{516315}
	|\Phi_{\alpha\beta\gammabar}|^2\leq \frac{ 1}{\eqe^2}   C_{\delta} \Onep.
\end{align}
However, the estimates for $\Phi_{\tau\tautaubar}$ and $\Phi_{\alpha\tautaubar}$ are weaker than we want (comparing to (\ref{51534})).  To get a better estimate, we need to use the equation of $\Phi$. 
In (\ref{eq:Matrix_Form_Main_Perturbation_Equation_partial_i}), we let $i=\nu\in \{1,\ ...\ ,n-1\}$ and get
\begin{align}
	\label{711330}
	\frac{\eqe^2}{T^2}\MG_{\tautaubar} \Phi_{\nu\tau\taubar} 
	+\frac{\eqe^2}{T}\MG_{\alpha\taubar} \Phi_{\nu\tau\alphabar}  
	+\frac{\eqe^2}{T}\MG_{\tau\alphabar} \Phi_{\nu\taubar\alpha}  
	+\eqe^2\MG_{\alpha\mubar} \Phi_{\nu\mu\alphabar}  =0.
\end{align}
According to the estimates (\ref{516314}) and (\ref{516315}), we have
\begin{align}
	|\Phi_{\nu\tau\alphabar}  |+|\Phi_{\nu\taubar\alpha}  |+\eqe|\Phi_{\nu\mu\alphabar}  |\leq 
 C_{\delta}	\sqrt{\Onep}.
\end{align}Plugging these and the estimate (\ref{Phitt_comparable_epsilon}) into (\ref{711330}), when $\eqe$ is small enough, we have
\begin{align}
	|\Phi_{\nu\tau\taubar} |\leq C_{\delta}\eqe \sqrt{\Onep}.\label{516318}
\end{align}
Similarly, in (\ref{eq:Matrix_Form_Main_Perturbation_Equation_partial_i}), we let $i=n$ and get
\begin{align}
    \frac{\eqe^2}{T^2}\MG_{\tautaubar} \Phi_{\tau\tau\taubar} 
    +\frac{\eqe^2}{T}\MG_{\alpha\taubar} \Phi_{\tau\tau\alphabar}  
      +\frac{\eqe^2}{T}\MG_{\tau\alphabar} \Phi_{\tau\taubar\alpha}  
        +\eqe^2\MG_{\alpha\mubar} \Phi_{\tau\mu\alphabar}  =0.
\end{align}
Then using estimates (\ref{516313}) (\ref{516314}) and (\ref{516318}), we get
\begin{align}
		|\Phi_{\tau\tau\taubar} |\leq C_{\delta}\eqe \sqrt{\Onep}.\label{516320}
\end{align}

The term $\MG_{\ijbar} z^i\zjbar$ in  (\ref{eq:-logs_main_computation}) is used to produce a positive constant term. We have
\begin{align}
	L^{p\qbar}(\MG_{\ijbar} z^i\zjbar )_{p\qbar}=L^{p\qbar}\MG_{p\qbar}
	=\eqe^2 \tr\left(
     \MG	\MH^{-1} \MG	\MH^{-1}
	\right).
\end{align} 
We apply the Cauchy inequality and get
\begin{align}
	 \tr\left(
	\MG	\MH^{-1} \MG	\MH^{-1}
	\right) \geq \frac{1}{n} 
	\tr(\MG \MH^{-1})^2=\frac{1}{n\eqe^2}.
\end{align}In the last equality, we used equation (\ref{eq:Matrix_Form_Main_Perturbation_Equation}). Therefore, $	L^{p\qbar}(\MG_{\ijbar} z^i\zjbar )_{p\qbar}\geq \frac{1}{n}$.

Then we compute $L^{p\qbar}\left(
-\log(S)
\right)$; we will show it's greater than $-C_{\delta}\eqe(\Onep+1)$. Directly apply $\ML$ to $-\log(S)$, and we get
\begin{align}
	L^{p\qbar}(-\log (S))_{p\qbar}=-\frac{L^{p\qbar }{S_{p\qbar}}}{S}+\frac{L^{p\qbar} S_p S_\qbar}{S^2}.\label{516323}
\end{align}
In the following, we compute the two terms on the right-hand side of (\ref{516323}) separately. First of all, we notice that according to the choice of coordinate, at the point $\po$
\begin{align}
	S= \eqe \Phi_\tau\Phi^{\tautaubar} \Phi_{\taubar}=\frac{\eqe}{T}.
\end{align}

We apply $\partial_p$ to $S$ and get
\begin{align}
	S_p=\eqe\Phi_{ip} \Phi^{\ijbar} \Phi_{\jbar} -\eqe \Phi_i \Phi^{i\vbar} \Phi_{u\vbar p} \Phi^{u\jbar} \Phi_{\jbar}+\eqe \Phi_p.
	\label{516Sp}
\end{align}
At the point $\po$, the expression is simpler:
\begin{align}
	S_p=\frac{\eqe}{T}\Phi_{\tau p} -\frac{\eqe}{T^2}  \Phi_{\tautaubar p} +\eqe \Phi_p.
\end{align}
So, at $\po$, we have
\begin{align}
	\frac{L^{p\qbar} S_pS_\qbar}{S^2}= L^{p\qbar}
	 (\Phi_{\tau p} -\frac{1}{T}  \Phi_{\tautaubar p} +T \Phi_p)
	(\Phi_{\taubar \qbar} -\frac{1}{T}  \Phi_{\tautaubar \qbar} +T \Phi_{\qbar}).
\end{align}
According to the estimate (\ref{51534}) and the $C^1, C^2$ estimate of $\Phi$, 
$\Phi_{\tau p} -\frac{1}{T}  \Phi_{\tautaubar p} +T \Phi_p$ is bounded for any $p$. We also notice that $|L^{p\qbar}|$ is smaller than  $C_{\delta}\eqe$ when $p$ or $q$ is not $n$. So, we have
\begin{align}
	\frac{L^{p\qbar} S_pS_\qbar}{S^2}\geq &
	L^{\tautaubar}
	(\Phi_{\tau \tau} -\frac{1}{T}  \Phi_{\tau\tautaubar} +T )
	(\Phi_{\taubar \taubar} -\frac{1}{T}  \Phi_{\tautaubar \taubar} +T)
	-C_{\delta}\eqe\\
	\geq & L^{\tautaubar}
	(\Phi_{\tau \tau} -\frac{1}{T}  \Phi_{\tau\tautaubar})
	(\Phi_{\taubar \taubar} -\frac{1}{T}  \Phi_{\tautaubar \taubar} )	-C_{\delta}\eqe.
	\label{516329}
\end{align}

Then we compute $\frac{L^{p\qbar} S_{p\qbar}}{S}$. We apply $\frac{1}{S}L^{p\qbar}\partial_{\qbar}$ to the three terms on the right-hand side of (\ref{516Sp}), then sum over $p$. For the first term we have
\begin{align}
\frac{1}{S}L^{p\qbar}(	\eqe\Phi_{ip} \Phi^{\ijbar} \Phi_{\jbar})_{\qbar}
=L^{p\qbar} \Phi_{\tau p\qbar} -L^{p\qbar}\Phi_{ip} \Phi^{i\vbar} \Phi_{\tau\vbar\qbar}
+T L^{p\qbar}  \Phi_{ip} \Phi^{\ijbar} \Phi_{\qbar\jbar}.
\label{516331}
\end{align}
In above, we used that 
$\Phi_{\tautaubar}=T$, $S=\frac{\eqe}{T}$,  $\Phi_\tau=1$ and  $\Phi_{\alpha}=0$. We also observe that the first term on the right-hand side of (\ref{516331}) is $0$ because $L^{\ijbar}\partial_{\ijbar}$ is the linearized operator.

Applying $\frac{1}{S}L^{p\qbar}\partial_{\qbar}$ to the second term  on the right-hand side of (\ref{516Sp}), we get
\begin{align}
\frac{1}{S}L^{p\qbar}(	 -\eqe \Phi_i \Phi^{i\vbar} \Phi_{u\vbar p} \Phi^{u\jbar} \Phi_{\jbar})_{\qbar}=&-L^{p\qbar}\Phi_{\tau p\qbar}
 +\frac{1}{T}L^{p\qbar} \Phi_{\taubar s\qbar}\Phi^{s\vbar} \Phi_{\tau\vbar p}
 -\frac{1}{T} L^{p\qbar}\Phi_{\tautaubar p \qbar} 
 \label{516331new}
 \\
 &\ \ \ \ \  \ \ \ \ \ \ \ \  \ \ \ +\frac{1}{T}L^{p\qbar}  \Phi_{u\taubar p} \Phi^{u\kbar}\Phi_{\tau\kbar \qbar}
 - L^{p\qbar}\Phi_{\taubar u p} \Phi^{u \jbar}  \Phi_{\jbar \qbar}\label{516332}.
\end{align}
Similar to (\ref{516331}), in above we used $\Phi_\alpha=0$, $\Phi_\tau=1$ and $\Phi^{\tautaubar}=\frac{1}{T}$; we also have (\ref{516331new})$_2=0$ because $L^{\ijbar}\partial_{\ijbar}$ is the linearized operator. Here we used the convention introduced in Section \ref{sec:notation}.
We let the $i,\jbar$ of (\ref{eq:SigmaQuotient_double_derivative_indexform}) be $\tau, \taubar$ and get
\begin{align}
	L^{p\qbar}\Phi_{\tautaubar p\qbar}=
	L^{p\qbar} \Phi_{p\vbar \tau}\Phi^{u\vbar} \Phi_{\qbar u\taubar}+
	L^{p\qbar} \Phi_{p\vbar \taubar}\Phi^{u\vbar} \Phi_{\qbar u \tau}.
\end{align}
So 
\begin{align}
(\ref{516331new})_4=
-\frac{1}{T}L^{p\qbar} \Phi_{p\vbar \tau}\Phi^{u\vbar} \Phi_{\qbar u\taubar}-
\frac{1}{T}L^{p\qbar} \Phi_{p\vbar \taubar}\Phi^{u\vbar} \Phi_{\qbar u \tau},
\label{516336}
\end{align}
and we see (\ref{516336})$_2$ cancels with $(\ref{516331new})_3$. 

Applying $\frac{1}{S}L^{p\qbar}\partial_{\qbar}$ to the last term  of (\ref{516Sp}), we get
\begin{align}
\frac{1}{S}L^{p\qbar}(	 \eqe \Phi_p)_{\qbar}=\frac{T}{\eqe} \cdot \eqe\cdot 
			L^{p\qbar}	 \Phi_{p\qbar} =T \cdot \eqe^2 \tr(\MG\MH^{-1}).
\end{align}
In above, we used the definition of $L$ and that $S=\frac{\eqe}{T}$ at $\po$.
Then, using equation (\ref{eq:Matrix_Form_Main_Perturbation_Equation}), we get
\begin{align}
T \cdot \eqe^2 \tr(\MG\MH^{-1})=T\eqe.	
\end{align}

We put the results above together and get
\begin{align}
	\frac{L^{p\qbar} S_{p\qbar}}{S} =T L^{p\qbar}
	\left(
	\Phi_{ip}		-\frac{1}{T} \Phi_{ip\taubar} 
	\right)\Phi^{\ijbar}
	\left(
	\Phi_{\qbar\jbar} -\frac{1}{T}  \Phi_{\tau\jbar\qbar}
	\right)-\frac{1}{T}L^{p\qbar} 
    \Phi_{p\vbar \taubar}\Phi^{u\vbar} \Phi_{\qbar u \tau}
    +T\eqe.
    \label{516339}
\end{align}
A careful but straightforward computation shows
\begin{align}
	(\ref{516339})_2 \leq L^{\tautaubar} 
	\left(
	\Phi_{\tau\tau}		-\frac{1}{T} \Phi_{\tau\tau\taubar} 
	\right)\Phi^{\ijbar}
	\left(
	\Phi_{\taubar\taubar} -\frac{1}{T}  \Phi_{\tau\taubar\taubar}
	\right)+\eqe C_{\delta}(\Onep+1).\label{516340}
\end{align} 
Then (\ref{516340})$_2$ can cancel with (\ref{516329})$_1$.

Summing up, we find
\begin{align}
	L^{p\qbar}\left(
	-\log(S)+\sqrt{\eqe}\left[W+\MG_{\ijbar}z^i\zjbar\right]
	\right)_{p\qbar}\geq -\eqe C_{\delta} (\Onep+1)+\sqrt{\eqe} (\Onep+\frac{1}{n}).
\end{align}Therefore, when we choose $\eqe$ small enough, the above is non-negative.
\begin{remark}
	When $\eqe$ approximates to zero, the $S$ we constructed here approximates to the $S$ of section \ref{sec:A_Lower Bound Estimate for Norms of Gradients}, and the  operator $\ML$ here approximates to the leaf-wise Laplacian there. The formal computation  of section \ref{sec:A_Lower Bound Estimate for Norms of Gradients} shows $S$ is subharmonic along leaves; the computation here suggests the same result.
\end{remark}

\subsection{Proving (\ref{eq:sigma_Q_main_computation})}
\label{sec:subsec:Convexity_Estimate_computation}
Suppose that $\plainA$ and $\plainB$ are matrix-valued functions defined on a domain in $\EC^n$, where $\plainA$ is positive definite and Hermitian and $\plainB$ is symmtric, and suppose 
\begin{align}
	\plainL= \plainL^{p\qbar}\partial_{p\qbar}
\end{align}
is an elliptic operator. If $\plainA$ and $\plainB$ satisfy
\begin{align}
	\plainL^{p\qbar} \partial_{p\qbar}\plainA
	=\plainL^{p\qbar} \partial_{p} \plainA \plainA^{-1}  \partial_{\overline q}\plainA
	   +\plainL^{p\qbar} \partial_{\qbar} \plainB\overline{ \plainA^{-1}}  \partial_{p}\overline{\plainB},
	   \label{RH_plain_A}
	   \\
	   \plainL^{p\qbar} \partial_{p\qbar}\plainB
	   =\plainL^{p\qbar} \partial_{p} \plainA \plainA^{-1}  \partial_{\overline q}\plainB
	   +\plainL^{p\qbar} \partial_{\qbar} \plainB \overline{\plainA^{-1}}  \partial_{p}\overline{\plainA},
	   \label{RH_plain_B}
\end{align}then for 
\begin{align}
	\plainK=\plainB\overline{\plainA^{-1}}\ \overline{\plainB}\plainA^{-1}
\end{align}
we have
\begin{align}
	\plainL^{p\qbar} 
\left[\tr	
	(\plainK^r)
	\right]_{p\qbar}\geq 0
\end{align}providing $\plainK<1-\frac{1}{2r}$. This is the main idea of the paper \cite{Hu22Nov}.

Actually, the following is also true:
\begin{align}
	\plainL^{p\qbar}\left[
	\tr (I-\plainK)^{-1}
	\right]_{p\qbar}\geq 0, \label{RH_plain_tr(I-K)}
\end{align} providing $\plainK<1$.   

In this paper, we generalize this idea to the study of the \cconvexity\ of level sets of $\Phi$. Instead of (\ref{RH_plain_A}) and (\ref{RH_plain_B}), we can prove, for $\MA$ and $\MB$ defined in Section \ref{sec:AuxiliaryFunctions},
\begin{align}
|L^{p\qbar} \partial_{p\qbar}\MA
	-L^{p\qbar} \partial_{p} \MA \MA^{-1}  \partial_{\overline q}\MA
	-L^{p\qbar} \partial_{\qbar} \MB\overline{ \MA^{-1}}  \partial_{p}\overline{\MB}|
	\leq 
	\eqe C_{\delta} (C_{\delta}(1+\Onep)+\Twop),
	\label{RH_curvy_A}
	\\
	|L^{p\qbar} \partial_{p\qbar}\MB
	-L^{p\qbar} \partial_{p} \MA\MA^{-1}  \partial_{\overline q}\MB
	-L^{p\qbar} \partial_{\qbar} \MB \overline{\MA^{-1}}  \partial_{p}\overline{\MA}|
	\leq
	\eqe C_{\delta} (C_{\delta}(1+\Onep)+\Twop);
	\label{RH_curvy_B}
\end{align}
then, similar to (\ref{RH_plain_tr(I-K)}), we can show
\begin{align}
	L^{p\qbar}\left[
	\tr (I-\MK)^{-1}
	\right]_{p\qbar}\geq -\eqe C_{\delta} (C_{\delta}(1+\Onep)+\Twop), \label{RH_curvey_tr(I-K)}
\end{align} providing $\MK<1$.   Here
\begin{align}
	\Twop=L^{p\qbar}
	\left(
	\Phi_{ij}\MG^{j\ubar}\ \overline{\Phi_{uv}} \MG^{i\vbar}
	\right)_{p\qbar}.
\end{align}
In the following, we denote
 \begin{align}
 	V=\Phi_{ij}\MG^{j\ubar}\ \overline{\Phi_{uv}} \MG^{i\vbar}.
 \end{align} If (\ref{RH_curvey_tr(I-K)}) is valid, we have
 \begin{align}
 	\begin{split}
 	L^{p\qbar} 
 &	\left(
 	\tr(I-\MK)^{-1}+\eqe^{\frac{1}{2}}(W+\MG_\ijbar z^i \overline{z^j})+\eqe^{\frac{3}{4}} V
 	\right)_{p\qbar}
 	\geq\\
 	& \ \ \ \ \  \ \ \ \ \ \ \ \  \ \ \ \ \ \ \ \  \ \ \ \ \ \ \ \  \ \ \ -\eqe C_{\delta} (C_{\delta}(1+\Onep)+\Twop)+\eqe^{\frac{1}{2}}(\Onep+\frac{1}{n})+\eqe^{\frac{3}{4}}\Twop.
 	\label{610366}
 	\end{split}
 \end{align}So, when $\eqe$ is small enough, we have that the right-hand side of (\ref{610366}) is non-negative; then, the maximum principle implies a positive lower bound estimate for the minimum eigenvalue of $I-\MK$.
 
 In the following, we first provide computations about $\Twop$ and prove (\ref{610317}); then, we prove (\ref{RH_curvy_A}) and (\ref{RH_curvy_B}); finally, we prove (\ref{RH_curvey_tr(I-K)}).
 
 Simply using Leibniz rule, we have
 \begin{align}
 	L^{p\qbar}V_{p\qbar}=&L^{p\qbar}\Phi_{ijp}\overline{\Phi_{uvq}}\MG^{i\vbar}\MG^{j\ubar}
 											+L^{p\qbar}\Phi_{ij\qbar}\overline{\Phi_{uv\pbar}}\MG^{i\vbar}\MG^{j\ubar}\\
 											&+L^{p\qbar} \Phi_{ijp\qbar}\overline{\Phi_{uv}}\MG^{i\vbar}\MG^{j\ubar}
 											+L^{p\qbar}\Phi_{ij}\Phi_{\ubar\vbar p\qbar} \MG^{i\vbar}\MG^{j\ubar}.
 \end{align}
Let
 \begin{align}
 	\Threep=L^{p\qbar}\Phi_{ijp}\overline{\Phi_{uvq}}\MG^{i\vbar}\MG^{j\ubar}.
 \end{align}Then
 \begin{align}
\Threep\geq \frac{1}{C_{\delta}}
\left(
|\Phi_{\tau\alpha\beta}|^2+|\Phi_{\tau\tau\alpha}|^2+|\Phi_{\tau\tau\tau}|^2
\right), 	
\label{type2_III}
 \end{align}for a constant $C_{\delta}$ large enough depending on $\delta.$ We will show
 \begin{align}
 	|L^{p\qbar} \Phi_{ijp\qbar}\overline{\Phi_{uv}}\MG^{i\vbar}\MG^{j\ubar}|\leq C_{\delta} \Onep.
 	\label{610371}
 \end{align}Then it follows that
 \begin{align}
 	\Twop+C_{\delta} \Onep\geq \Threep
 \end{align} and the estimate (\ref{610317}) is valid. For (\ref{610371}), we have
 \begin{align}
 	L^{p\qbar} \Phi_{ijp\qbar} =L^{p\qbar}\Phi_{iu\qbar} \Phi^{u\vbar} \Phi_{\vbar j p}
 														+L^{p\qbar}\Phi_{ju\qbar} \Phi^{u\vbar} \Phi_{i\vbar p}, 
 \end{align}so we only need to show 
 \begin{align}
 	|L^{p\qbar}\Phi_{iu\qbar} \Phi^{u\vbar} \Phi_{\vbar j p}|\leq C_{\delta}\Onep,
 	\label{610374}\\
 	|L^{p\qbar}\Phi_{ju\qbar} \Phi^{u\vbar} \Phi_{i\vbar p}|\leq C_{\delta}\Onep.
 	\label{610375}
 \end{align}
 We will only prove (\ref{610374}); then, by simply switching $i$ with $j$ we can get (\ref{610375}). Since $(\Phi_{\ijbar})$ is diagonal, we need to show
 \begin{align}
 	|L^{p\qbar}\Phi_{i\alpha\qbar}  \Phi_{\alphabar j p}|\leq C_{\delta}\Onep
 	\label{610376}
 \end{align}and 
 \begin{align}
 	|L^{p\qbar}\Phi_{i\tau\qbar}  \Phi_{\taubar j p}|\leq \eqe C_{\delta}\Onep.
 	\label{610377}
 \end{align}
 According to the definition of operator $L$, the left-hand side of (\ref{610376}) has the following decomposition:
 \begin{align}
 	L^{p\qbar}\Phi_{i\alpha\qbar}\Phi_{\alphabar j p}=
 	\left(
 	\eqe \Phi^{p\vbar}{ \Phi_{\alphabar j p}}
 	\right) 
 	\MG_{u\vbar}
 		\left(
 	\eqe \Phi^{u\qbar}{ \Phi_{i \alpha \qbar}}
 	\right) .
 \end{align}
 We can infer from (\ref{51534}) and (\ref{Phitt_comparable_epsilon}) that both $|	\eqe \Phi^{p\vbar}{ \Phi_{\alphabar j p}}|$ and $|	\eqe \Phi^{u\qbar}{ \Phi_{i \alpha \qbar}}|$ are smaller than $C_{\delta} \Onep^{\frac{1}{2}}$, so (\ref{610376}) is valid.  Similarly the left-hand side of (\ref{610377}) has the following decomposition
  \begin{align}
 	L^{p\qbar}\Phi_{i\tau\qbar}\Phi_{\taubar j p}=
 	\left(
 	\eqe \Phi^{p\vbar}{ \Phi_{\taubar j p}}
 	\right) 
 	\MG_{u\vbar}
 	\left(
 	\eqe \Phi^{u\qbar}{ \Phi_{i \tau \qbar}}
 	\right) .
 \end{align} We have that both
  $|\eqe \Phi^{p\vbar}{ \Phi_{\taubar j p}}|$ 
  and $|\eqe \Phi^{u\qbar}{ \Phi_{i \tau \qbar}}|$ are smaller than $\eqe C_{\delta} \Onep^{\frac{1}{2}}$, so (\ref{610377}) is valid. Therefore, (\ref{type2_III}) is valid and (\ref{610317}) is proved.
  
  To prove (\ref{RH_curvy_A}) and (\ref{RH_curvy_B}), we introduce two intermediate quantities $A$ and $B$ in a neighborhood of $\po$:  let
  \begin{align}
  	A=\Phi_{\alpha\betabar},\ \ \ \ \  \ \ \ B= \Phi_{\alpha\beta}.
  \end{align} Because of the assumption (\ref{Assumption_robust_ComputationProposition}) and that $\Phi\in C^{\infty}(\overline\MR)$, $A$ is invertible in a small neighborhood of $\po$. Here we only need to compute at $\po$ and don't need to  worry about the size of the neighborhood.
  Moreover, we point out that $A=\MA$ and $B=\MB$ at $\po$.
  
  We can show $A, B$  satisfy the estimates similar to (\ref{RH_curvy_A}) and (\ref{RH_curvy_B})
  \begin{align}
  	|L^{p\qbar} \partial_{p\qbar}A
  	-L^{p\qbar} \partial_{p} A A^{-1}  \partial_{\overline q}A
  	-L^{p\qbar} \partial_{\qbar} B\overline{ A^{-1}}  \partial_{p}\overline{B}|
  	\leq 
  	\eqe C_{\delta}\Onep,
  	\label{RH_intermediate_straight_A}
  	\\
  	|L^{p\qbar} \partial_{p\qbar}B
  	-L^{p\qbar} \partial_{p} AA^{-1}  \partial_{\overline q}B
  	-L^{p\qbar} \partial_{\qbar} B \overline{A^{-1}}  \partial_{p}\overline{A}|
  	\leq
  	\eqe C_{\delta}\Onep,
  	\label{RH_intermediate_straight_B}
  \end{align}
  and $A, B$ are close to  $\MA, \MB$ in the sense that  the following relations are valid at $\po$:
\begin{align}
	|L^{i\jbar}\partial_{i\jbar}(A-\MA)|&\leq  \eqe C_{\delta}(1+\Onep),
	\label{D2A-A}\\
		|L^{i\jbar}\partial_{i\jbar}(B-\MB)|&\leq \eqe C_{\delta}( C_{\delta}(1+\Onep)+\Twop),
		\label{D2B-B}\\
		\partial_i A&=\partial_i \MA,
		\label{DA-A}\\
		|\eqe \Phi^{p\qbar} \partial_\qbar \MB-\eqe \Phi^{p\qbar} \partial_\qbar B|&\leq \eqe C_{\delta},
		\label{DB-B}\\
		|\eqe\Phi^{p\vbar} \partial_p\overline{B}|+	|\eqe\Phi^{p\vbar} \partial_p A|&\leq C_\delta \sqrt{\Onep}.
		\label{Third_DerivativeControl_Decomposition}
\end{align}
With the relations  above,  in (\ref{RH_intermediate_straight_A}) and (\ref{RH_intermediate_straight_B}) we can replace $A, B$ by $\MA, \MB$ and get (\ref{RH_curvy_A}) and (\ref{RH_curvy_B}).
For this, we only need to explain how to handle terms containing first derivatives of $A$ and $B$ in (\ref{RH_intermediate_straight_A}) and (\ref{RH_intermediate_straight_B}); indeed, we only explain how to handle $L^{p\qbar}\partial_\qbar B\overline{A^{-1}}\partial_p \overline{B}$, the treatment of other terms is simpler. We have the following decomposition:
\begin{align}
	L^{p\qbar}\partial_\qbar B\overline{A^{-1}}\partial_p \overline{B}
	=\left(
	\eqe \Phi^{u\qbar} \partial_{\qbar}B
	\right) \overline{A^{-1}}
	\left(
	\eqe \Phi^{p\vbar}\partial_p\overline{B}
	\right)
	\MG_{u\vbar}.
\end{align}
Then using (\ref{DB-B}) and (\ref{Third_DerivativeControl_Decomposition}), we find
\begin{align}
	|\left(
	\eqe \Phi^{u\qbar} \partial_{\qbar}B
	\right) \overline{A^{-1}}
	\left(
	\eqe \Phi^{p\vbar}\partial_p\overline{B}
	\right)
	\MG_{u\vbar}
	-\left(
	\eqe \Phi^{u\qbar} \partial_{\qbar}\MB
	\right) \overline{\MA^{-1}}
	\left(
	\eqe \Phi^{p\vbar}\partial_p\overline{\MB}
	\right)
	\MG_{u\vbar}|\leq  \eqe C_\delta (1+\sqrt{\Onep});
\end{align}
therefore
\begin{align}
	|	L^{p\qbar}\partial_\qbar B\overline{A^{-1}}\partial_p \overline{B}
	-	L^{p\qbar}\partial_\qbar \MB\overline{\MA^{-1}}\partial_p \overline{\MB}|
	\leq 
	\eqe C_\delta (1+\sqrt{\Onep}).
\end{align}

In the following, we prove (\ref{RH_intermediate_straight_A})-(\ref{Third_DerivativeControl_Decomposition}) one by one.
First, we prove (\ref{RH_intermediate_straight_A}). We let $i,j$ be $\alpha,\beta$ in (\ref{eq:SigmaQuotient_double_derivative_indexform}) and get
\begin{align} 
	L^{p\qbar}(\Phi_{\alpha\betabar})_{p\qbar}=
	L^{p\qbar} \Phi_{p\vbar \alpha}\Phi^{u\vbar} \Phi_{\qbar u\betabar}+
	L^{p\qbar} \Phi_{p\vbar \betabar}\Phi^{u\vbar} \Phi_{\qbar u \alpha}.
	\label{eq:SigmaQuotient_alphabetabar_derivative_indexform}
\end{align}
Since $\Phi_{\ijbar}$ is diagonal, we have
\begin{align} 
	\begin{split}
	|L^{p\qbar}(\Phi_{\alpha\betabar})_{p\qbar}-
	L^{p\qbar} \Phi_{p\nubar \alpha}\Phi^{\mu\nubar} \Phi_{\qbar \mu\betabar}&-
	L^{p\qbar} \Phi_{p\nubar \betabar}\Phi^{\mu\nubar} \Phi_{\qbar \mu \alpha}|\\
	\leq\frac{1}{T}&
	\left(	L^{p\qbar} \Phi_{p\taubar \alpha} \Phi_{\qbar \tau\betabar}+
	L^{p\qbar} \Phi_{p\taubar \betabar} \Phi_{\qbar \tau \alpha}\right).
	\label{610390}
		\end{split}
\end{align}
So we need to show the right-hand side of (\ref{610390}) can be controlled by $\eqe C_{\delta} \Onep$.  Similar to the proof of (\ref{610377}), we have 
  \begin{align}
	L^{p\qbar}\Phi_{\betabar\tau\qbar}\Phi_{\taubar \alpha p}=
	\left(
	\eqe \Phi^{p\vbar}{ \Phi_{\taubar \alpha p}}
	\right) 
	\MG_{u\vbar}
	\left(
	\eqe \Phi^{u\qbar}{ \Phi_{\betabar \tau \qbar}}
	\right) .
	\label{610391}
\end{align}
Using (\ref{51534}), we know  $|\eqe \Phi^{p\vbar}{ \Phi_{\taubar \alpha p}}|$ and $|	\eqe \Phi^{u\qbar}{ \Phi_{\betabar \tau \qbar}}|$ are both smaller than  $\eqe C_{\delta}\Onep^{\frac{1}{2}}$, so (\ref{610391}) can be controlled by $\eqe^2 C_{\delta}\Onep$. $	L^{p\qbar} \Phi_{p\taubar \betabar} \Phi_{\qbar \tau \alpha}$ can be estimated similarly, and we know (\ref{RH_intermediate_straight_A}) is valid.  The proof of (\ref{RH_intermediate_straight_B}) is almost parallel; we only need to replace $\betabar$ by $\beta$ in the argument above.

For the proof of (\ref{D2A-A}), we have
\begin{align}
	L^{\ijbar} \partial_{\ijbar}(A-\MA)=&L^{\ijbar} \Phi_{\alpha\taubar i }\overline{\Phi_{\beta j}}
    +L^{\ijbar} \Phi_{\alpha\taubar \jbar } {\Phi_{\betabar i }}
    +L^{\ijbar} \Phi_{\tau\betabar i}\Phi_{\alpha \jbar}
    +L^{\ijbar} \Phi_{\tau\betabar \jbar} \Phi_{\alpha i}
    \label{610392}
    \\
    &\ \ \ \ \  \ \ \ +L^{\ijbar}\Phi_{\tautaubar}\Phi_{\alpha i}\overline{\Phi_{\beta j}}
    +L^{\ijbar}\Phi_{\tautaubar}\Phi_{\alpha\jbar} \Phi_{\betabar i}.
    \label{610393}
\end{align}
Terms in (\ref{610393}) can be controlled by $\eqe C_{\delta}$ since second derivatives of $\Phi$ are all bounded by $C_{\delta}$ and $\Phi_{\tautaubar}$ is comparable to $\eqe$.
For the terms in the right-hand side of  (\ref{610392}), we only estimate the first one; 
we have
\begin{align}
	L^{\ijbar}\Phi_{\alpha\taubar i}\overline{\Phi_{\beta j}}
	=(\eqe \Phi^{i \qbar}\Phi_{i\alpha \taubar})\MG_{p\qbar}(\eqe \Phi^{p\jbar }\ \overline{\Phi_{\beta j}}).
	\label{611394}
\end{align}
In above, we notice that
\begin{align}
	|\Phi^{i\qbar} \Phi_{\alpha\taubar i}|\leq C_{\delta} \Onep^{\frac{1}{2}},
\end{align}which follows from (\ref{51534}), and 
\begin{align}
	|\eqe\Phi^{p\jbar}\overline{\Phi_{\beta j}}|\leq C_\delta,
\end{align} which follows from (\ref{Phitt_comparable_epsilon}). So we have that  $|(\ref{611394})|$ is smaller than $\eqe C_{\delta} \Onep^{\frac{1}{2}}$.
Estimates for $ L^{\ijbar} \Phi_{\alpha\taubar \jbar } {\Phi_{\betabar i }}$ $L^{\ijbar} \Phi_{\tau\betabar i}\Phi_{\alpha \jbar}$ and $L^{\ijbar} \Phi_{\tau\betabar \jbar} \Phi_{\alpha i}$ are similar; thus, we proved (\ref{D2A-A}).

For the proof of (\ref{D2B-B}), we have
\begin{align}
	&L^{p\qbar}\partial_{p\qbar}
	\left(
	B_{\alpha\beta}-\MB_{\alpha\beta}
	\right)\\
	=& L^{p\qbar}\Phi_{\alpha\tau p}\Phi_{\beta\qbar}
		+L^{p\qbar}\Phi_{\alpha\tau\qbar}\Phi_{\beta p}
		-L^{p\qbar}\Phi_{\alpha\tau}\Phi_{\beta\qbar} \Phi_{\tau p} 
		-L^{p\qbar}\Phi_{\alpha\tau}\Phi_{\beta p}\Phi_{\tau \qbar}
		\label{611A}
		\\
		&+ L^{p\qbar}\Phi_{\beta\tau p}\Phi_{\alpha\qbar}
		+L^{p\qbar}\Phi_{\beta\tau\qbar}\Phi_{\alpha p}
		-L^{p\qbar}\Phi_{\beta\tau}\Phi_{\alpha\qbar} \Phi_{\tau p} 
		-L^{p\qbar}\Phi_{\beta\tau}\Phi_{\alpha p}\Phi_{\tau \qbar}
		\label{611B}
		\\
		&-L^{p\qbar}\Phi_{\tau\tau }\Phi_{\alpha p}\Phi_{\beta \qbar}
		   -L^{p\qbar}\Phi_{\tau\tau }\Phi_{\beta p}\Phi_{\alpha \qbar}.
		   \label{611C}
\end{align}
We notice that $(\ref{611A})_1$, $(\ref{611A})_3$, $(\ref{611A})_4$, $(\ref{611B})_1$, $(\ref{611B})_3$, $(\ref{611B})_4$ and terms of (\ref{611C}) all contain $ L^{p\qbar}\Phi_{k\qbar}$; using
\begin{align}
	|L^{p\qbar} \Phi_{k \qbar}|=|(\eqe \Phi^{p\vbar} \Phi_{k\qbar})
	(\MG_{u\vbar} \Phi^{u\qbar} \eqe)|\leq \eqe,  
\end{align}and 
(\ref{610317}) we know they can all be controlled by $\eqe C_{\delta}(C_\delta(\Onep+1)+\Twop)^{\frac{1}{2}}$. For the estimates of 
$(\ref{611A})_2$, we have
\begin{align}
|L^{p\qbar}\Phi_{\alpha\tau\qbar}\Phi_{\beta p}|=	|(\eqe \Phi^{k\qbar} \Phi_{\alpha\tau\qbar}) \MG_{k\sbar} (\eqe \Phi^{p\sbar}\Phi_{\beta p})|\leq \eqe  C_{\delta} \Onep^{\frac{1}{2}}.
\end{align} The estimate for  $(\ref{611B})_2$ follows by switching $\alpha$ with $\beta$. Therefore, we proved (\ref{D2B-B}).

The proof of (\ref{DA-A}) is simple: we only need to notice that $\MA-A$ is a second order small quantity around $\po$. This is because $\Phi_{\alpha\taubar}=0$ and $\Phi_{\beta}=0$ at $\po$.

For (\ref{DB-B}), we have
\begin{align}
	\eqe \Phi^{p\qbar} \partial_\qbar(B_{\alpha\beta}-\MB_{\alpha\beta})=
	\eqe \Phi^{p\qbar} (\Phi_{\alpha\tau}\Phi_{\beta\qbar}-\Phi_{\beta\tau}\Phi_{\alpha\qbar}).
	\label{610395_1}
\end{align}
When $q=n$, $\Phi_{\beta\qbar}=\Phi_{\alpha\qbar}=0$; when $q\neq n$, $\Phi^{p\qbar}$ is bounded. So (\ref{610395_1}) can be controlled by $\eqe C_{\delta}$.

For (\ref{Third_DerivativeControl_Decomposition}), we only need to notice that $\eqe \Phi^{\ijbar}$ is bound and use (\ref{51534}).

With all the estimates above, we know (\ref{RH_curvy_A}) and (\ref{RH_curvy_B}) are valid; with them, we  will prove (\ref{RH_curvey_tr(I-K)}) in the following. 
Before the proof, we do some preparations.

We note that 
\begin{align}
	\MK^\dag=(\MB\overline{\MA^{-1}}\ \overline{\MB}\MA^{-1})^\dag
				=\MA^{-1}\MB\overline{\MA^{-1}}\ \overline{\MB}=\MA^{-1} \MK\MA
\end{align} and that $\MK^\dag=\MK$ at $\po$ since $\MA=I$ at $\po$.

To simplify the computation, we let
\begin{align}
	\MMB_k=\MB_k-\MA_k\MA^{-1}\MB-\MB\overline{\MA^{-1}}\ \overline\MA_k;
	\label{6113105}
\end{align} we note that $\MMB_k$ is  symmetric. Then
\begin{align}
	\MA^{-1}\MMB_k\overline{\MA^{-1}}=\partial_k(\MA^{-1}\MB\overline{\MA^{-1}}).
\end{align}
We need to compute  $L^{p\qbar}\partial_\qbar{\MMB_p}$: directly applying $L^{p\qbar}\partial_\qbar$ to (\ref{6113105}) we have
\begin{align}
	L^{p\qbar}\partial_\qbar{\MMB_p}
	&=L^{p\qbar}\partial_{p\qbar}\MB
	 -L^{p\qbar}\MA_p \MA^{-1}\MB_{\qbar}-L^{p\qbar} \MB_{\qbar} \overline{\MA^{-1}}\ \overline\MA_p
	 \\
	 -&L^{p\qbar} \MA_{p\qbar}\MA^{-1}\MB
	 -L^{p\qbar}\MB \overline{\MA^{-1}}\ \overline{\MA}_{p\qbar}
	 +L^{p\qbar}\MA_p\MA^{-1} \MA_{\qbar} \MA^{-1}\MB
	 +L^{p\qbar}\MB\ \overline{\MA^{-1}}\ \overline{\MA_{q}}\ \overline{\MA^{-1}}
	    \ \overline{\MA}_p.
\end{align}
Using (\ref{RH_curvy_A}) and (\ref{RH_curvy_B}), we get
\begin{align}
	|L^{p\qbar}\partial_\qbar{\MMB_p}
	-L^{p\qbar}\MB_\qbar\overline{\MA^{-1} }\ \overline{\MB}_{p} \MA^{-1}\MB
	-L^{p\qbar}\MB\ \overline{\MA^{-1}}\ \overline{\MB}_{p}{\MA^{-1}}
	\ {\MB}_\qbar|\leq C_{\delta}(C_\delta(\Onep+1)+\Twop)  .
	\label{6113109}
\end{align}

For $\plainA$ and $\plainB$ satisfying (\ref{RH_plain_B}), we have
\begin{align}
	L^{p\qbar}\partial_p\left(\plainA^{-1}\plainB_{\qbar}\overline{\plainA^{-1}}\right)=0;
\end{align}similarly, for $\MA, \MB$ satisfying (\ref{RH_curvy_B}), we have
\begin{align}
 |	L^{p\qbar}\partial_p\left(\MA^{-1}\MB_{\qbar}\overline{\MA^{-1}}\right)|\leq C_{\delta}(C_\delta(\Onep+1)+\Twop).
 \label{6113111}
\end{align}

When applying $\partial_\qbar$ and $\partial_p$ to $\MK$, we get
\begin{align}
	\MK_\qbar=\MB_\qbar \overline{\MA^{-1}}\ \MB\MA^{-1}
						+\MB\overline{\MA^{-1}}\ \overline{\MMB_q}\MA^{-1};
						\label{6113112}
\end{align}
its complex conjugation is
\begin{align}
	(\MK^\dag)_p=\MA^{-1} \MMB_p \overline{\MA^{-1}}\ \overline\MB
	   										+\MA^{-1}\MB\overline{\MA^{-1}}\ \overline{\MB}_p.
	   												\label{6113113}
\end{align}

With the preparation above, we can compute $L^{p\qbar}\partial_{p\qbar}\left[
\tr (I-\MK)^{-1}
\right]$. Using (\ref{6113112}), we have
\begin{align}
	&L^{p\qbar}\partial_{p}\partial_{\qbar}\left[
	\tr (I-\MK)^{-1}
	\right]
	\label{6113114}
	\\
	=&L^{p\qbar} \partial_p
	\left[
	\tr
	{(I-\MK)}^{-1} 
	\left(
	\MB_\qbar \overline{\MA^{-1}}\ \overline\MB\MA^{-1}
	+\MB\ \overline{\MA^{-1}}\ \overline{\MMB_q}\MA^{-1}
	\right)
	{(I-\MK)}^{-1} 
	\right].
	\label{6113115}
\end{align}
We insert $\MA$ and $\MA^{-1}$ into the equation above to make $\MK$ into $\MK^\dag$ so that  the computation is simpler when we apply $\partial_p$; we have
\begin{align}
	&\tr
	{(I-\MK)}^{-1} 
	\left(
	\MB_\qbar \overline{\MA^{-1}}\ \overline\MB\MA^{-1}
	+\MB\ \overline{\MA^{-1}}\ \overline{\MMB_q}\MA^{-1}
	\right)
	{(I-\MK)}^{-1} \\
	=&\tr
	\left(\MA^{-1}{(I-\MK)}^{-1}\MA\right) 
	\left(
	 \MA^{-1}\MB_\qbar \overline{\MA^{-1}}\ \overline\MB
	+\MA^{-1}\MB\ \overline{\MA^{-1}}\ \overline{\MMB_q}
	\right)
   \left(\MA^{-1}{(I-\MK)}^{-1}\MA\right) \\
   =&\tr
  {(I-\MK^\dag)}^{-1}
   \left(
   (\MA^{-1}\MB_\qbar \overline{\MA^{-1}})\ \overline\MB
   +(\MA^{-1}\MB\ \overline{\MA^{-1}})\ \overline{\MMB_q}
   \right)
   {(I-\MK^\dag)}^{-1}.
   \label{6113118}
\end{align}
Then we apply $L^{p\qbar}\partial_p$ to (\ref{6113118}) and get that $(\ref{6113115})$ equals to
\begin{align}
	&L^{p\qbar}\left[\tr
	{(I-\MK^\dag)}^{-1}
		\left(
	{\MMB_p}\overline\MB
	+  \MB\overline{\MB}_p
	\right)
	{(I-\MK^\dag)}^{-1}
	\left(
	\MB_\qbar \overline\MB
	+\MB \overline{\MMB_q}
	\right)
	{(I-\MK^\dag)}^{-1}\right]
	\label{draft136}\\
+	&L^{p\qbar}\left[\tr
	{(I-\MK^\dag)}^{-1}
	\left(
	\MB_\qbar \overline\MB
	+\MB \overline{\MMB_q}
	\right)
	{(I-\MK^\dag)}^{-1}
	\left(
	\MMB_p \overline\MB
	+\MB \overline{\MB}_p
	\right)
	{(I-\MK^\dag)}^{-1}\right]
		\label{draft137}
		\\
		+&\tr \left(L^{p\qbar}\partial_p(\MA^{-1}\MB_\qbar\ \overline{\MA^{-1}})
		\overline\MB(I-\MK)^{-2}
		\right)+
		\tr
		\left(\MB (L^{p\qbar}
		\partial_p\overline{\MMB_q})
		(I-\MK)^{-2}\right)
			\label{draft138}
		\\
		+&L^{p\qbar}\tr\left(\MB_\qbar\overline{\MB}_p(I-\MK)^{-2}\right)
		+L^{p\qbar}\tr\left(\MMB_p\overline{\MMB_q}(I-\MK)^{-2}\right).
		\label{draft139}
\end{align}
To get (\ref{draft136}) and (\ref{draft137}), we apply $L^{p\qbar}\partial_p$ to the $(I-\MK)^{-1}$'s of (\ref{6113118}) and  use equation (\ref{6113113}); to get (\ref{draft138}), we apply $L^{p\qbar}\partial_p$  to $ \MA^{-1}\MB_\qbar \overline{\MA^{-1}}
$ and $\overline{\MMB_q}$ of (\ref{6113118}); to get (\ref{draft139}), we apply $L^{p\qbar}\partial_p$  to $\overline\MB$ and $\MA^{-1}\MB\ \overline{\MA^{-1}}$. In above, we also used $\MA=I$ and $\MK=\MK^\dagger$ at $\po$.  Because of the apriori assumption (\ref{Assumption_robust_ComputationProposition}), we have $(I-\MK)^{-1}\leq C_{\delta}$. So we can use 
(\ref{6113109}) and (\ref{6113111}) to simplify (\ref{draft138}) and get
\begin{align}
	|(\ref{draft138})-L^{p\qbar}\tr
	\left(
	\MB(\overline{\MB}_p\MB_\qbar\overline{\MB}+\overline\MB\MB_\qbar\overline\MB_p)
	(I-\MK)^{-2}
	\right)
	|\leq C_{\delta}(C_\delta(\Onep+1)+\Twop).
	\label{6113123}
\end{align}

In the following, we use index summation  to simplify (\ref{draft136})-(\ref{draft139}). First, with a unitary change of coordinate in the direction of $<\partial_{z^1}, \ ...\ , \partial_{z^{n-1}}>_\EC$, we make 
\begin{align}
	\MB=\diag(\lambda_1,\ ...\ ,\lambda_{n-1});
\end{align}
this can be done with Autonne-Takagi factorization (Corollary 4.4.4(c) of \cite{HornMatrix}). Then at $\po$ 
\begin{align}
	I-\MK=I-\MB\overline{\MB} =\diag(1-\lambda_1^2, \ ...\ , 1-\lambda_{n-1}^2).
\end{align}We denote
\begin{align}
	\overline\MB_p=(\overline\MB_{p;\alpha\beta}), \ \ \ \ \  \ \ \ \MMB_p=(\MMB_{p;\alpha\beta});
\end{align} since $\MB_\qbar$ and $\MMB_p$ are both symmetric, we have
$\overline\MB_{p;\alpha\beta}=\overline\MB_{p;\beta\alpha}$ and $\MMB_{p;\alpha\beta}=\MMB_{p;\beta\alpha}$.
Then 
\begin{align}
	(\ref{draft136})=\sum_{p, q , \alpha,\beta}
	   L^{p\qbar}
	   \left(
	   \MMB_{p;\alpha\beta}, \overline{\MB}_{p;\alpha\beta}
	   \right) M^1_{\alpha\beta}  
	   \left(
	   \MMB_{q;\alpha\beta}, \overline{\MB}_{q;\alpha\beta}
	   \right)^\dag,
\end{align}
where 
\begin{align}
	M^1_{\alpha\beta}=
	\frac{1}{(1-\lambda_\alpha^2)^2(1-\lambda_\beta^2)}
	\left(
	\begin{array}{cc}
	\lambda_\beta^2	&     \lambda_\alpha\lambda_\beta\\
	 \lambda_\alpha\lambda_\beta	&\lambda_\alpha^2	
	\end{array}
	\right).
\end{align}
In (\ref{draft137}), we transpose $	{(I-\MK^\dag)}^{-1}
\left(
\MB_\qbar \overline\MB
+\MB \overline{\MMB_q}
\right)
{(I-\MK^\dag)}^{-1}
\left(
\MMB_p \overline\MB
+\MB \overline{\MB}_p
\right)
{(I-\MK^\dag)}^{-1}$ and get
\begin{align}
	(\ref{draft137})=\sum_{p, q , \alpha,\beta}
	L^{p\qbar}
	\left(
	\MMB_{p;\alpha\beta}, \overline{\MB}_{p;\alpha\beta}
	\right) M^2_{\alpha\beta}  
	\left(
	\MMB_{q;\alpha\beta}, \overline{\MB}_{q;\alpha\beta}
	\right)^\dag,
\end{align}
where 
\begin{align}
	M^2_{\alpha\beta}=
	\frac{1}{(1-\lambda_\alpha^2)^2(1-\lambda_\beta^2)}
	\left(
	\begin{array}{cc}
		\lambda_\alpha^2	&     \lambda_\alpha\lambda_\beta\\
		\lambda_\alpha\lambda_\beta	&\lambda_\beta^2	
	\end{array}
	\right).
\end{align}
Finally, (\ref{6113123}) gives us
\begin{align}
	(\ref{draft138})+(\ref{draft139})=\sum_{p, q , \alpha,\beta}
	L^{p\qbar}
	\left(
	\MMB_{p;\alpha\beta}, \overline{\MB}_{p;\alpha\beta}
	\right) M^3_{\alpha\beta}  
	\left(
	\MMB_{q;\alpha\beta}, \overline{\MB}_{q;\alpha\beta}
	\right)^\dag+F,
\end{align}
where 
\begin{align}
	M^3_{\alpha\beta}=
	\frac{1}{(1-\lambda_\alpha^2)^2}
	\left(
	\begin{array}{cc}
		1	&      \\
&1-2\lambda_\alpha^2	
	\end{array}
	\right)
\end{align}  and $F$ is a term which can be controlled by $C_{\delta}(C_\delta(\Onep+1)+\Twop)$.

Summing up, we have
\begin{align}
	L^{p\qbar}\partial_{p\qbar} (\tr (I-\MK)^{-1})=\sum_{p, q , \alpha,\beta}
	L^{p\qbar}
	\left(
	\MMB_{p;\alpha\beta}, \overline{\MB}_{p;\alpha\beta}
	\right) M_{\alpha\beta}  
	\left(
	\MMB_{q;\alpha\beta}, \overline{\MB}_{q;\alpha\beta}
	\right)^\dag+F,
	\label{7143114}
\end{align}
where 
\begin{align}
	M_{\alpha\beta}=	M_{\alpha\beta}^1+	M_{\alpha\beta}^2+	M_{\alpha\beta}^3=
	\frac{1}{(1-\lambda_\alpha^2)^2(1-\lambda_\beta^2)}
	\left(
	\begin{array}{cc}
		1+\lambda_\alpha^2	& 2\lambda_{\alpha}\lambda_\beta     \\
		 2\lambda_{\alpha}\lambda_\beta  &1-\lambda_\alpha^2	+2\lambda_\alpha^2\lambda_\beta^2
	\end{array}
	\right).
\end{align}  We can easily check that when $\lambda_\alpha$ and $\lambda_\beta$ are both positive and smaller than $1$, $M_{\alpha\beta}$ is positive definite. So $(\ref{7143114})_2$ is positive, and (\ref{RH_curvey_tr(I-K)}) follows. With the estimates for $W$ and $V$ we know (\ref{eq:sigma_Q_main_computation}) follows from (\ref{RH_curvey_tr(I-K)}).
\section{Lower Bound Estimates for Norms of Gradients}
\label{sec:Lower_Bound_Gradient_Norm}
In this section, using inequality  (\ref{eq:-logs_main_computation}) of  Proposition 
\ref{prop:Main_Computation}. We derive an apriori estiamte for the lower bound of the norm of the gradient of $\Phi$.  More precisely, we have the following apriori estimate
\begin{proposition}\label{prop:Gradient_Estiamte_Apriori}
	[Apriori Lower Bound Estimate for the Norm of the Gradient] Suppose $\Phi$ is a solution to Problem \ref{prob:Perturbed_Problem_HessianQuotient} with the robustness of \qcconvexity\ greater than $\delta$, and suppose that $\eqe$ is small enough. 
	Then, there is a positive constant $C_\MR^g$ dependent of the geometry of $\MR$ and independent of $\eqe$ and $\delta$  so that	
	\begin{align}
		|\nabla \Phi|_{\MG} \geq {C_{\MR}^g}.
	\end{align}
\end{proposition}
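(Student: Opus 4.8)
The plan is to apply the maximum principle to the function
\[
	F:=-\log S+\sqrt{\eqe}\,\bigl(W+\MG_{\ijbar}z^i\zjbar\bigr),\qquad S=\eqe\,\Phi_i\Phi_\jbar\Phi^{\ijbar},\quad W=\Phi_{\ijbar}\MG^{\ijbar},
\]
which is exactly the quantity appearing in inequality (\ref{eq:-logs_main_computation}). Under the hypothesis that the robustness of \qcconvexity\ of $\Phi$ exceeds $\delta$, $\Phi$ is in particular \sqcconvex\ on $\overline{\MR}$, so $\nabla\Phi\neq 0$ everywhere there and hence $S>0$ and $W>0$; thus $F$ is smooth on $\overline{\MR}$. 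Since moreover $\eqe$ is small depending on $\delta$, Proposition \ref{prop:Main_Computation} applies and (\ref{eq:-logs_main_computation}) reads $L^{p\qbar}F_{p\qbar}\geq 0$. As $L^{p\qbar}\partial_{p\qbar}$ is elliptic with no zeroth-order term, the weak maximum principle gives $F\leq\max_{\partial\MR}F$ on $\overline{\MR}$.

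Next I would estimate $F$ on both sides. Since $W>0$ and $\MG_{\ijbar}z^i\zjbar\geq 0$ we have $F\geq-\log S$ everywhere; conversely, the uniform $C^2$ estimate of Appendix \ref{sec:Appendix_C11Estimate} (independent of $\eqe$) together with the diameter of $\MR$ and the bounds on $\MG$ gives $W+\MG_{\ijbar}z^i\zjbar\leq C_1$ with $C_1$ depending only on the geometry of $\MR$. Combining, on $\overline{\MR}$,
\[
	-\log S\ \leq\ F\ \leq\ \max_{\partial\MR}F\ \leq\ \max_{\partial\MR}(-\log S)+\sqrt{\eqe}\,C_1,
\]
so the whole problem reduces to a lower bound $S\geq c_0>0$ on $\partial\MR$, with $c_0$ depending only on the geometry of $\MR$ and valid for $\eqe$ small.

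This boundary bound is the crux. Appendix \ref{sec:B_boundary_Barrier} furnishes $|\nabla\Phi|_\MG\geq c>0$ on $\partial\MR$ uniformly for $\eqe$ small, but by itself this only yields $S\gtrsim\eqe|\nabla\Phi|_\MG^2$, which degenerates as $\eqe\to 0$; what is missing is control of the inverse Hessian $\Phi^{\ijbar}$ in the gradient direction. Here I would use the \scconvexity\ of $\Omega_0$ and $\Omega_1$ together with the boundary second-order estimate: in a $\MG$-orthonormal frame diagonalizing $(\Phi_{\ijbar})$ the equation is $\sum_i\lambda_i^{-1}=\eqe^{-1}$, and near $\partial\MR$ the level sets of $\Phi$ are $C^2$-close to $\partial\Omega_0,\partial\Omega_1$ (which are strongly pseudoconvex, being strongly \cconvex), so their Levi forms are uniformly positive; by Cauchy interlacing this forces $n-1$ of the $\lambda_i$ to be bounded below, whence the remaining one, $\lambda_n$, is comparable to $\eqe$. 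Its eigendirection tends, as $\eqe\to 0$, to the direction along which the leaves of the foliation of the limiting homogeneous complex Monge-Amp\`ere solution meet $\partial\Omega_0$ and $\partial\Omega_1$; since $\Phi$ is harmonic and non-constant along each such leaf, Hopf's lemma makes these leaves exit the two boundaries transversally. As $\nabla\Phi$ is normal to the level set on $\partial\MR$, it therefore has a component of definite size along that eigendirection, and $S\geq\eqe\,\lambda_n^{-1}\,(\text{that component})^2\geq c_0>0$ on $\partial\MR$.

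Granting this, for $\eqe$ small depending only on $c_0$ and $C_1$ — hence only on the geometry of $\MR$ — the displayed chain gives $-\log S\leq -\log c_0+\sqrt{\eqe}\,C_1\leq-\log(c_0/2)$, i.e.\ $S\geq c_0/2$ on $\overline{\MR}$; by the inequality $|\nabla\Phi|_\MG\geq S$ recorded in Section \ref{sec:AuxiliaryFunctions} this yields $|\nabla\Phi|_\MG\geq c_0/2=:C_{\MR}^g$, a constant depending only on the geometry of $\MR$ and independent of $\eqe$ and $\delta$. The step I expect to be the main obstacle is precisely this boundary lower bound $S\geq c_0$: because $S$ pairs the small factor $\eqe$ against the blowing-up inverse Hessian $\Phi^{\ijbar}$, a crude estimate is worthless and one genuinely has to show the boundary gradient overlaps the nearly-degenerate direction of $(\Phi_{\ijbar})$, which is where the \scconvexity\ of $\partial\Omega_0$ and $\partial\Omega_1$ must be brought in.
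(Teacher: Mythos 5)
Your reduction is the same as the paper's: apply the maximum principle to $-\log S+\sqrt{\eqe}\,(W+\MG_{\ijbar}z^i\zjbar)$ via (\ref{eq:-logs_main_computation}), bound $W+\MG_{\ijbar}z^i\zjbar$ by the uniform $C^2$ estimate, use $|\nabla\Phi|_\MG^2\geq S$, and reduce everything to a positive lower bound for $S$ on $\partial\MR$. You also correctly identify the crux: one must show that $\eqe$ times the inverse Hessian in the gradient direction does not degenerate at the boundary. The gap is in how you close that crux. Your argument invokes the foliation of the limiting homogeneous solution, the convergence of the near-kernel eigendirection of $(\Phi^\eqe_{\ijbar})$ to the leaf direction as $\eqe\to 0$, and a Hopf-lemma transversality of leaves at $\partial\Omega_0,\partial\Omega_1$. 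None of this is available: the limit $\Phi^0$ is only $C^{1,1}$, no foliation (let alone smoothness or boundary transversality of its leaves) is established in this setting — the paper deliberately avoids it for exactly this reason — and an apriori bound that must be uniform in $\eqe$ cannot rest on an unquantified asymptotic statement about eigendirections of $\Phi^\eqe$ approaching structure of the limit, since the limit's gradient bound is itself the output of this proposition. As written, the step ``$\nabla\Phi$ has a component of definite size along the $\lambda_n$-eigendirection'' is a heuristic, not a proof.

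What the paper does instead — and what your own three ingredients (boundary gradient bound from Appendix \ref{sec:B_boundary_Barrier}, Levi-form lower bound $\lambda_\alpha\geq c$ from \scconvexity\ of the boundary, and the uniform $C^2$ bound) already suffice for — is a pointwise linear-algebra computation at each boundary point using only the equation $\tr(\MH^{-1}\MG)=\tfrac{1}{\eqe}$. Choosing coordinates with $\MG_{\ijbar}=\delta_{ij}$, $\Phi_\alpha=0$, $\Phi_{\alpha\betabar}=\delta_{\alpha\beta}\lambda_\alpha$, one has
\begin{align}
	\eqe\,\Phi^{\tautaubar}
	=\frac{1-\eqe\sum_\mu \lambda_\mu^{-1}}
	{1+\sum_\alpha |\Phi_{\tau\alphabar}|^2\lambda_\alpha^{-2}},
\end{align}
whose numerator is close to $1$ for $\eqe$ small (since $\lambda_\mu\geq c$) and whose denominator is bounded above by the $C^2$ estimate and the same $\lambda_\alpha\geq c$; hence $S=\eqe\,|\Phi_\tau|^2\Phi^{\tautaubar}\geq c_0$ on $\partial\MR$ with $c_0$ depending only on the geometry of $\MR$. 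Equivalently, if you prefer your eigenvalue language, the bound $e_n^\ast\MH e_n=\lambda_n\leq C\eqe$ together with $\Phi_{\abbar}\geq c$ and $|\Phi_{\tau\alphabar}|\leq C_2$ forces, by a two-line quadratic-form estimate, the $\lambda_n$-eigenvector to have a complex-normal component bounded below — no leaves, no Hopf lemma, no passage to the limit. Replacing your third paragraph by this computation turns the proposal into essentially the paper's proof.
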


There are three steps in the proof.
  First, we show 
\begin{align}
	|\nabla\Phi|_{\MG}^2\geq S\ \ \ \ \  \ \ \ \text{ in }\MR;
	\label{15642}
\end{align}
then, using  (\ref{eq:-logs_main_computation}),  we show 
\begin{align}
	S\geq \frac{1}{C} \min_{\partial \MR} S\ \ \ \ \  \ \ \ \text{ in }\MR
\end{align}for a positive constant $C$; finally, using the estimate for $|\nabla\Phi|_{\MG}$ on $\partial \MR$, which we derive in Section \ref{sec:B_boundary_Barrier}, we estimate the lower bound for $S$ on $\partial\MR$.
\begin{proof}
	[Proof of Proposition \ref{prop:Gradient_Estiamte_Apriori}]
	
{\bf Step 1.}	We choose a coordinate $(z^\alpha, \tau)$, $\alpha=1,\ ...\ , n-1$, so that $\MG_{\ijbar}=\delta_{ ij}$ and $\Phi_{\ijbar}=\delta_{ij}\lambda_i$
	 for a point $\po\in\MR$. Then 
	 \begin{align}
	 	|\nabla\Phi|_{\MG}^2=\sum_i|\Phi_i|^2,\\
	 	S=\eqe \sum_i \frac{|\Phi_i|^2}{\lambda_i}.
	 \end{align}
	At the point $\po$, equation  (\ref{eq:Matrix_Form_Main_Perturbation_Equation}) is
	\begin{align}
		\sum_i\frac{\eqe}{\lambda_i}=1,
	\end{align}
	so $\frac{\eqe}{\lambda_i}<1$.Therefore, $|\nabla\Phi|_{\MG}^2\geq S$ at $\po$.  Since $\po$ can be any point in $\MR$, and $|\nabla\Phi|_{\MG}, \ S$ are both independent of coordinate, we know (\ref{15642}) is valid.

	{\bf Step 2.} When $\eqe$ is small enough, we can use (\ref{eq:-logs_main_computation}) and the maximum principle and get 
	\begin{align}
		\min_{\partial \MR}\log(S)-\max_{\partial\MR}\sqrt{\eqe}\left[W+\MG_{\ijbar}z^i\zjbar\right]\leq
		\log(S)
		\ \ \ \ \  \ \ \ \text{ in }\MR.
	\end{align}
	The exponential of the inequality above is
	\begin{align}
		S\geq 
		e^{-\sqrt{\eqe}
			\max_{\partial\MR}\left( W+ \MG_{\ijbar} z^i z^\jbar
			\right)}\min_{\partial\MR} S, \ \ \ \ \  \ \ \ \text{ in }\MR.
	\end{align}

	{\bf Step 3.}
To estimate $\min_{\partial \MR} S$, we choose a coordinate so that $\MG_\ijbar=\delta_{ij}$ and 
	$\Phi_\alpha=0$ at a boundary point $\po$. Then 
	\begin{align}
		S=\eqe \Phi_\tau\Phi^{\tautaubar} \Phi_{\taubar},\ \ \ 
		|\nabla\Phi|_{\MG}^2=|\Phi_\tau|^2\ \ \ \ \  \ \ \ \text{ at }\po. 
		\label{51648}
	\end{align}To find the relation between $|\nabla\Phi|_{\MG}^2$ and $S$, we need to estimate
	$\eqe \Phi^{\tautaubar}$.  We further require that $\Phi_{\alpha\betabar}=\delta_{\alpha\beta}\lambda_\alpha$ at $\po$. We have an estimate on the lower bound of $\lambda_\alpha$, depending  on the the \cconvexity\ of the boundary and the lower bound estimate of $|\nabla\Phi|_{\MG}$ at $\po$.  A basic linear algebra computation gives
	\begin{align}
	\Phi^{\tautaubar}=	(\MH^{-1})_{\tautaubar}=\frac{1}{\Phi_{\tautaubar}-\sum_\mu\frac{|\Phi_{\tau\mubar}|^2}{\lambda_\mu}}
	\end{align}  
	and
	\begin{align}
		\Phi^{\alpha\alphabar}=(\MH^{-1})_{\alpha\alphabar}= \frac{1}{\lambda_\alpha}+
		\frac{\frac{|\Phi_{\tau\alphabar}|^2}{\lambda_\alpha^2}}{\Phi_{\tautaubar}-\sum_\mu\frac{|\Phi_{\tau\mubar}|^2}{\lambda_\mu}}.
	\end{align}
	Therefore, at $\po$ the equation (\ref{eq:Matrix_Form_Main_Perturbation_Equation}) is 
\begin{align}
		\left(
	1+
	\sum_\alpha \frac{|\Phi_{\tau\alphabar}|^2}{\lambda_\alpha^2}
	\right)\Phi^{\tautaubar}
	+
	\sum_\mu \frac{1}{\lambda_\mu}
	=\frac{1}{\eqe}.
\end{align} From this, we get
\begin{align}
	\eqe\Phi^{\tautaubar}=
	\frac{1-\eqe \sum_\mu \frac{1}{\lambda_\mu}}
	{	1+
		\sum_\alpha \frac{|\Phi_{\tau\alphabar}|^2}{\lambda_\alpha^2}}.
\end{align} When $\eqe$ is small enough, it's greater than a positive constant.  Then using (\ref{51648}), we know $S$ has a positive lower bound on $\partial\MR$.
\end{proof} 

\section{Convexity Estimate}
\label{sec:Convexity_Estimate_GeneralDim_Def+Perturb}

The main goal of this section is to derive the estimate for  the robustness of \qcconvexity\ of the solution ${\Phi^\eqe}$ to  Problem \ref{prob:Perturbed_Problem_HessianQuotient}. In section \ref{sec:sub:Convexity_Deformation}, by deforming $\Omega_0$ and $\Omega_1$ to concentric balls, we derive an estimate which depends on this deformation procedure; in section \ref{sec:sub:Improving_Convexity}, we improve the estimate so that it only depends on the geometry of $\MR$; in section \ref{sec:sub:Form_Estimate}, we estimate $(\iu\ddbar {\Phi^\eqe})^{n-1}\wedge \bbform$ and $\iu\ddbar \left(
e^{\Phi^\eqe}
\right)$.
\subsection{Convexity Estimate Using Deformation}
\label{sec:sub:Convexity_Deformation}
Using Lemma \ref{lem:Deform_a_strongly_C-convex_domain_to_Ball}, we can construct two families of domains $\left\{
\Omega_0^\lambda
\right\}_{\lambda\in[0,1]}$ and $\left\{
\Omega_1^\lambda
\right\}_{\lambda\in[0,1]}$ connecting $\Omega_0$ and $\Omega_1$ with two concentric balls $B_r$ and $B_R$ both contained in $\Omega_0$. In addition, by properly choosing the parameter $\lambda$,
\begin{align}
	\overline{\Omega^\lambda_0}\subset\Omega_1^\lambda \ \ \ \ \  \ \ \ \text{ for  any }\lambda\in [0,1],
\end{align} can be satisfied. Then we let $\MR^\lambda=\Omega_1^\lambda\backslash\overline{\Omega_0^\lambda}$ and let $\Phi^\lambda_\eqe$ be the solution to the following problem:
\begin{problem}
	\label{prob:Perturbed_Problem_HessianQuotient_VaryingDomain}  
	Suppose that $\bbform =\sqrt{-1}\bb_{ij}dz^i\wedge \overline{dz^j}$ be a constant coefficient non-degenerate K\"ahler form on $\EC^n$ and that $\eqe$ is a positive constant.
	Find ${\Phi_\eqe^\lambda}$ satisfying
	\begin{align}
		\left(\sqrt{-1}\ddbar {\Phi_\eqe^\lambda}\right)^n&=\eqe	\left(\sqrt{-1}\ddbar{\Phi_\eqe^\lambda}\right)^{n-1}\wedge\bbform \ \ \ \ \ \ &\text{in }\MR^\lambda,\ \ \label{eq:SigmaQuotient_Problem_Perturbation_VaryingDomain}
		\\
		\sqrt{-1}\ddbar{\Phi_\eqe^\lambda}&>0\ \ \ \ \ \ &\text{in }\MR^\lambda,\ \ 
		\label{cond:positive_ddbar_in_deformation_Process}\\
	{\Phi_\eqe^\lambda}&=0\ \ \ \ \ \  &\text{\ \ on }\partial \Omega_0^\lambda,\\
	{\Phi_\eqe^\lambda}&=1\ \ \ \ \ \  &\text{\ \ on }\partial \Omega_1^\lambda.
	\end{align}
	
\end{problem} 

According to Lemma \ref{lemma:GradientNorm_Lower_Boundary_DomainFamily}, there is an $\ese_0$ and $\sigmal$ so that for any $\eqe\in(0,\ese_0]$ and any $\lambda\in[0,1]$ $|\nabla {\Phi_\eqe^\lambda}|_\MG$ is greater than $\sigmal$ on $\partial \MR^\lambda$. Since $\Omega_0^\lambda$ and $\Omega_1^\lambda$ are all \scconvex, we can apply Lemma \ref{lemma:Boundary_Convex_implies_Modulus} and get the modulus of \qcconvexity\ of ${\Phi_\eqe^\lambda}$ is greater than ${C}{\sigmal}$ on $\partial \MR^\lambda$ for a small constant $C$. We then apply Lemma \ref{lemma:ModulusPositive_implies_RobustnessPositive} and get that the robustness of \qcconvexity\ of $\Phi^\lambda_\eqe$ is greater than $C^2\sigmal^2$. In above, $C$ depends on the geometry of $\Omega_1^\lambda\backslash\overline{\Omega_0^\lambda}$ for all $\lambda\in[0,1]$.

When $\lambda=0$ and $\eqe=0$, the solution can be written down explicitly:
\begin{align}
	\Phi^0_0=\frac{\log|z|-\log r}{\log R-\log r}.
\end{align}
It's easy to see that $\Phi^0_0$ is \sqcconvex\ since the level sets of $\Phi^0_0$ are all Euclidean balls, and its gradient does not vanish anywhere. This two properties hold for $\Phi^0_\eqe$ with $\eqe$ small enough. Therefore, we can find small constants $\ese_1$ and $\delta_0$ so that the robustness of \qcconvexity\ of $\Phi^0_\eqe$ is greater than $\delta_0$, providing $\eqe\in(0,\ese_1]$.

Let
\begin{align}
	\sigma=\frac{1}{2}\min\left\{C^2\sigmal^2, \ \delta_0,\ C_\MR^g\right\}.
\end{align} Here $C_\MR^g$ is from Proposition \ref{prop:Gradient_Estiamte_Apriori}; we can choose it small enough, so that it does not depend on $\lambda$.
Then for any $\eqe\in(0, \frac{\min\{\ese_0, \ese_1\}}{2})$ we let
\begin{align}
	\begin{split}
	L^\eqe=
&\left\{
	\lambda\in[0,1] \ | \text{ the robustness of \qcconvexity\ of  } \Phi_\eqe^\lambda \text{ is greater than $\sigma$}\right
\},
		\end{split}
\end{align}
and we will show $L^\eqe=[0,1]$, providing $\eqe$ is small enough.
It's easy to see that $0\in L^\eqe$ because   $\sigma< \delta_0$; it's also easy  to see that $L^\eqe$ is open since $\Omega_0$ and $\Omega_1$ change smoothly as $\lambda$ changes. It's crucial to show that $L^\eqe$ is right-closed. Suppose that $[0, l)\subset L^\eqe$. We will show $l\in L^\eqe$; we need to show the robustness of \qcconvexity\ of ${\Phi_\eqe^l}$ is greater than $\sigma$. To do this, we choose $\delta$ to be a very small constant, which is independent of $\eqe$, and apply Proposition \ref{prop:Main_Computation}.  As illustrated in Figure \ref{fig:The_Continuity_Method_General_dim}, there are several steps:

\begin{figure}[h]	\centering  	\includegraphics[height=3.5cm]	{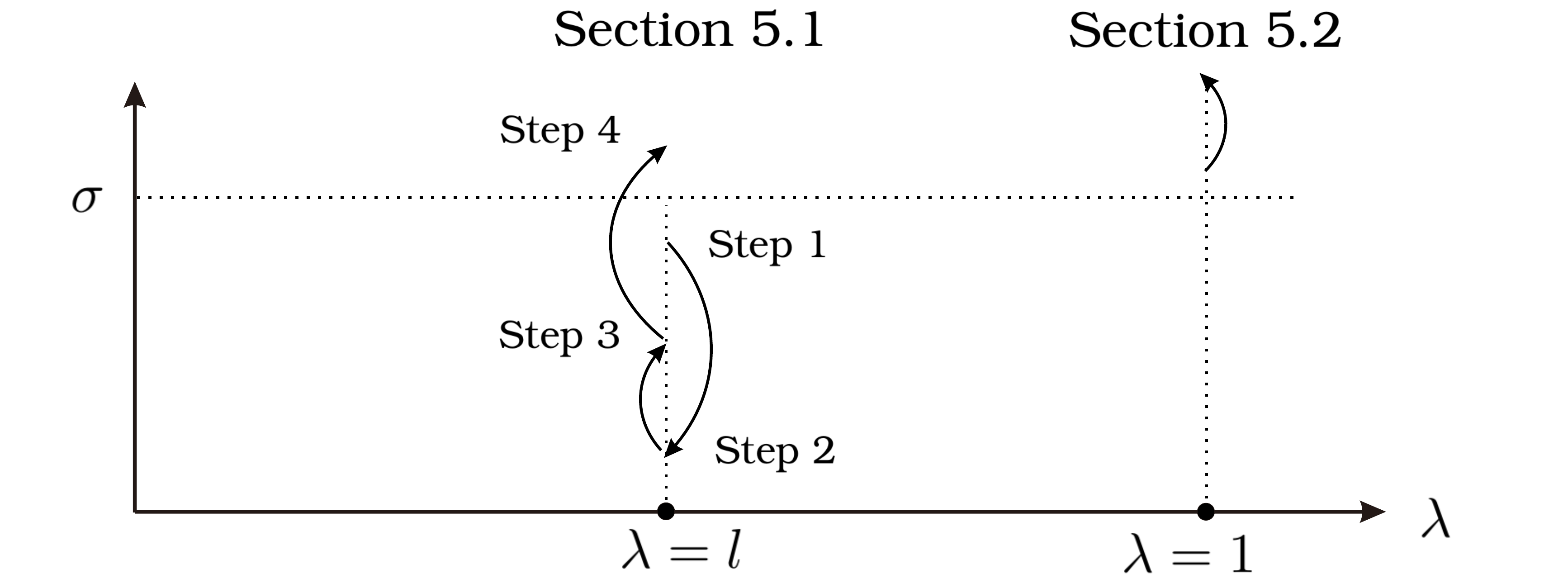}	\caption{Convexity Estimate via a Continuity Argument}	\label{fig:The_Continuity_Method_General_dim}    \end{figure}

In step 1, using Lemma \ref{lemma:Modulus_Estimate_with_Limit} we show the robustness \qcconvexity\ of ${\Phi_\eqe^l}$ is greater than $\sigma-\delta$; in step 2, we show for any $q\in \spaceQ_{\sigma-2\delta}$ the robustness of \qcconvexity\ of ${\Phi_\eqe^l}-q$ is greater than $\delta$; in step 3, we show the  robustness of \qcconvexity\ of ${\Phi_\eqe^l}-q$ is actually larger; in step 4, we show the robustness of \qcconvexity\ of ${\Phi_\eqe^l}$ is greater than $\sigma$.

{\bf Step 1.} According to the assumption, for any $\lambda< l$, the robustness of \qcconvexity\ of ${\Phi_\eqe^\lambda}$ is greater than $\sigma$. So, using Lemma \ref{lemma:Modulus_Estimate_with_Limit}, we know the robustness of \qcconvexity\ of ${\Phi_\eqe^l}$ is greater than $\sigma-\delta$, by letting $\lambda\rightarrow l$. Here we choose $0<\delta<\sigma$.

{\bf Step 2.} Since the robustness of \qcconvexity\ of ${\Phi_\eqe^l}$ is greater than $\sigma-\delta$, 
${\Phi_\eqe^l}-q-q_1$ is \sqcconvex, for any $q\in \spaceQ_{\sigma-2\delta}$ and $q_1\in \spaceQ_{\delta}$. Therefore, the robustness of \qcconvexity\ of ${\Phi_\eqe^l}-q$ is greater than $\delta$. This allows us to apply Proposition \ref{prop:Main_Computation} to ${\Phi_\eqe^l}-q$ and its variants.

{\bf Step 3.} In this step, we fix $q$ and denote ${\Phi_\eqe^l}-q$ by $\Psi$. We will show the robustness of \qcconvexity\ of $\Psi$ is greater than $\frac{\sigma^2}{C}$, for a constant $C$, which is independent of $\sigma,\ \delta$ and $\eqe$.
This  is the most crucial step, and we split it into step 3.1 and step 3.2.

{\bf Step 3.1.} In this step, we fix a point $\po\in\MR$ and choose coordinate so that $\Psi_{\alpha}(\po)=0$, for $\alpha\in\{1,\ ...\ ,n-1\}$, and estimate $\Psi_{\alpha\betabar}(\po)$. Let $h\in \spaceQ_{\frac{\sigma}{2}}$. Here we require that $4\delta<\sigma$, so 
\begin{align}
	\frac{\sigma}{2}<\sigma-2\delta.
\end{align} Therefore, $h\in\spaceQ_{\sigma-2\delta}$.  Let
\begin{align}
	\Psi_m=\Psi+m(q-h)={\Phi_\eqe^l}-(1-m)q-mh,\\
	\Psi^m=\Psi+m(q+h)={\Phi_\eqe^l}-(1-m)q+mh.
\end{align} Since $q$ and $h$ are both elements of $\spaceQ_{\sigma-2\delta}$, the robustness of \qcconvexity\ of $\Psi_m$ and $\Psi^m$ are both greater than $\delta$. Therefore, we can apply Proposition \ref{prop:Main_Computation} and Proposition \ref{prop:Gradient_Estiamte_Apriori} to $\Psi^m$ and $\Psi_m$.

We apply Proposition \ref{prop:Gradient_Estiamte_Apriori} to ${\Phi_\eqe^l}$ and get
\begin{align}
	|\nabla{\Phi_\eqe^l}|\geq \uglb.
\end{align}Then  $|\nabla\Psi|>\frac{\uglb}{2}$ since $\sigma<\frac{\uglb}{2} $.
Since $\Psi_\alpha(\po)=0$ for $\alpha\in\{1,\ ...\ ,n-1\}$, we have
$|\Psi_\tau(\po)|>\frac{\uglb}{2}$. In the following, all the computation for $\Psi^m$ and $\Psi_m$ will be  at $\po$. For $\partial_\tau\Psi_m$ we have
\begin{align}
	|\partial_\tau\Psi_m|=	|\partial_\tau(\Psi+m(q-h))|.
\end{align} We choose $h$, so that $\nabla h=0$ at $\po$.  Then 
\begin{align}
	|\partial_\tau\Psi_m|\geq \frac{\uglb}{2} -m |\nabla q(\po)|\geq \frac{\uglb}{2}-m\sigma.
\end{align} 
The above is positive providing $m<1$ since we already assumed that $\sigma\leq \frac{\uglb}{2}$.
For  $\partial_\tau\Psi^m$,  we have the same result.

Let 
\begin{align}
	v_\alpha^m=\frac{\partial_\alpha(\Psi_m)}{\partial_\tau(\Psi_m)}=\frac{\partial_\alpha(\Psi^m)}{\partial_\tau(\Psi^m)}=\frac{\Psi_\alpha+m q_\alpha}{\Psi_\tau+m q_\tau}.
\end{align}Since $\Phi_\alpha=0$, we have the estimate that
\begin{align}
	|v_\alpha^m|\leq \frac{m \sigma}{\frac{\uglb}{2}-m\sigma}.
	\label{628516}
\end{align}
Also, we know $\partial_\alpha-v_\alpha^m\partial_\tau$ is a tangential direction for the level sets of $\Psi_m$ and $\Psi^m$ at $\po$.
Therefore, we have
\begin{align}
	\frac{
|	[\Psi+m(q-h)]_{\alpha\alpha}-2v_\alpha^m [\Psi+m(q-h)]_{\alpha\tau}+(v_\alpha^m)^2[\Psi+m(q-h)]_{\tau\tau}
|	}{\Psi_{\alpha\alphabar}-v_\alpha^m\Psi_{\tau\alphabar}-\overline{ v_\alpha^m}\Psi_{\alpha\taubar}+|v_\alpha^m|^2\Psi_{\tau\taubar}}<1,
	\label{618515}\\
		\frac{
|		[\Psi+m(q+h)]_{\alpha\alpha}-2v_\alpha^m [\Psi+m(q+h)]_{\alpha\tau}+(v_\alpha^m)^2[\Psi+m(q+h)]_{\tau\tau}
|	}{\Psi_{\alpha\alphabar}-v_\alpha^m\Psi_{\tau\alphabar}-\overline{ v_\alpha^m}\Psi_{\alpha\taubar}+|v_\alpha^m|^2\Psi_{\tau\taubar}}<1.
	\label{618516}
\end{align}
We apply the triangle inequality to (\ref{618515}) and (\ref{618516}) and get
\begin{align}
		\frac{m
	|	h_{\alpha\alpha}-2v_\alpha^m h_{\alpha\tau}+(v_\alpha^m)^2h_{\tau\tau}
	|}{\Psi_{\alpha\alphabar}-v_\alpha^m\Psi_{\tau\alphabar}-\overline{ v_\alpha^m}\Psi_{\alpha\taubar}+|v_\alpha^m|^2\Psi_{\tau\taubar}}<1.
	\label{618517}
\end{align}

Because of the assumption (\ref{cond:positive_ddbar_in_deformation_Process}) and that $q$ is pluriharmonic, we have
\begin{align}
	\left(
	\begin{array}{cc}
	\Psi_{\alpha\alphabar}	&\Psi_{\alpha\taubar}     \\
	\Psi_{\tau\alphabar}	   &\Psi_{\tautaubar} 
	\end{array}
	\right)\geq 0.
\end{align}Therefore, 
\begin{align}
	|\Psi_{\alpha\taubar}|\leq  
	\left(
	\Psi_{\alpha\alphabar} \Psi_{\tautaubar}
	\right)^{\frac{1}{2}}.
\end{align}
Since $q$ is pluriharmonic, $\Psi_{\tautaubar}=\partial_{\tautaubar}{\Phi_\eqe^l}$. Then, using the $C^2$ estimate of Appendix \ref{sec:Appendix_C11Estimate}, we have 
\begin{align}
	0\leq \Psi_\tautaubar\leq C_2,
\end{align} for a constant $C_2$.
So we have
\begin{align}
	|\Psi_{\alpha\taubar}|\leq C_2^{\frac{1}{2}} (\Psi_{\alpha\alphabar})^{\frac{1}{2}}.
\end{align}
We choose $h$, so that, at $\po$, 
\begin{align}
	h_{\alpha\alpha}=\frac{\sigma}{2 \ \text{ Diameter}(\MR)}
	\label{haa}
\end{align}
and 
\begin{align}
	h_{\alpha\tau}=0,\ \ h_{\tau\tau}=0.
\end{align} We remind that we have already required that $\nabla h(\po)=0$. These conditions guarantee $|\nabla h|\leq \frac{\sigma}{2}$ and, therefore, $h\in \spaceQ_{\sigma-2\delta}$.

In the following, we denote $x=\Psi_{\alpha\alphabar}$ and derive a positive lower bound estimate for $x$. From (\ref{618517}), we get
\begin{align}
	\frac{m\sigma}{2\text{ Diameter}(\MR)}\leq 
	\left(
	x^{\frac{1}{2}}+|v_\alpha^m| C_2^{\frac{1}{2}}
	\right)^2.
\end{align}Then a straightforward computation gives
\begin{align}
	\sqrt{m}
	\left(
	  \left(\frac{\sigma}{2\text{ Diameter}(\MR)}\right)^{\frac{1}{2}}
	-\frac{\sqrt{m} \sigma C^{\frac{1}{2}}_2}{\uglb-m\sigma}
	\right)\leq x^{\frac{1}{2}}.
\end{align}
We choose 
\begin{align}
	m=\frac{1}{2}\max\left\{\frac{\uglb}{2\sigma}, \frac{(\uglb)^2}{32 \sigma C_2\text{ Diameter}(\MR)}\right\};
\end{align}then, 
\begin{align}
x\geq \frac{m\sigma}{8 \text{ Diameter}(\MR)}	.
\end{align}
Summing up, we can find a constant $C$ depending on $\text{ Diameter}(\MR)$ and $\uglb$ so that
\begin{align}
	\Psi_{\alpha\alphabar}\geq \frac{\sigma}{C}, 
\end{align} and, therefore,
\begin{align}
	\left(
	\Phi_{\abbar}
	\right)\geq \frac{\sigma}{C}.
	\label{619529}
\end{align}

{\bf Step 3.2.} According to the definition of $\sigma$, the robustness of \qcconvexity\  of ${\Phi_\eqe^l}$ is greater than $2\sigma$ on $\partial \MR$. So the robustness of \qcconvexity\ of $\Psi$ is greater than $\sigma$ on $\partial \MR$ since $q\in \spaceQ_{\frac{\sigma}{2}}$.

Then, at any point ${\bf z}\in \MR$, we can choose coordinate $(z^\alpha, \tau)$, so that 
$\Psi_\tau\neq 0$, and, similar to $\MA, \MB$ we define
 \begin{align} \label{619530}
	\MA^\Psi_{\alpha\betabar}=\Psi_{\alpha\betabar} -\frac{\Psi_{\alpha}}{\Psi_\tau}\Psi_{\tau\betabar}
	-\frac{\Psi_{\betabar}}{\Psi_\taubar}\Psi_{\alpha\taubar}
	+\frac{\Psi_{\alpha}\Psi_{\betabar}}{|\Psi_{\tau}|^2}\Psi_{\tautaubar},\\
	\label{619531}
	\MB^\Psi_{\alpha\beta}=\Psi_{\alpha\beta} -\frac{\Psi_{\alpha}}{\Psi_\tau}\Psi_{\tau\beta}
	-\frac{\Psi_{\beta}}{\Psi_\tau}\Psi_{\alpha\tau}
	+\frac{\Psi_{\alpha}\Psi_{\beta}}{(\Psi_{\tau})^2}\Psi_{\tau\tau},
\end{align}
and 
\begin{align}
	\MK^{\Psi}=\MB^\Psi\overline{(\MA^\Psi)^{-1}}\ \overline{\MB^\Psi}(\MA^\Psi)^{-1}.
\end{align}
Then Lemma \ref{lemma:Robustness_positive_implies_K<1} implies 
\begin{align}
	\MK^\Psi\leq 1-\frac{\sigma}{C}\ \ \ \ \  \ \ \  \text{ on }\partial\MR,
\end{align}for a constant $C>0$. 
Therefore 
\begin{align}
	\tr \left(
	I-\MK^\Psi
	\right)^{-1}\leq \frac{n C}{\sigma}\ \ \ \ \  \ \ \  \text{ on }\partial\MR.
\end{align}
Then, providing $\eqe$ is small enough, we can use Proposition \ref{prop:Main_Computation} and get
\begin{align}
	\tr \left(
	I-\MK^\Psi
	\right)^{-1}\leq \frac{n C {\widetilde{C}}}{\sigma}\ \ \ \ \  \ \ \  \text{ in }\MR.
\end{align}Here ${\widetilde{C}}$ is a constant depending on the $C^2$ norm of $\Psi$.
So we get
\begin{align}
	\MK^\Psi\leq 1-\frac{\sigma}{n C {\widetilde{C}}}.
	\label{619536}
\end{align}
With estimates (\ref{619529}) and (\ref{619536}), we can apply Lemma \ref{lemma_Metric_and_Q_convert_to_Degree} and get
 the robustness of \qcconvexity\ of $\Psi$ is greater than $\frac{\sigma^2}{C_0}$.
 
 {\bf Step 4.} Now, we know for any $q\in\spaceQ_{\sigma-2\delta}$, the robustness of \qcconvexity\ of ${\Phi_\eqe^l}-q$ is greater than $\frac{\sigma^2}{C_0}$. Therefore, the robustness of \qcconvexity\ of ${\Phi_\eqe^l}$ is greater than $\sigma-2\delta+\frac{\sigma^2}{C_0}$. So we choose 
 \begin{align}
 	\delta = \frac{\sigma^2}{4 C_0}.
 \end{align}Then the robustness of \qcconvexity\ of ${\Phi_\eqe^l}$ is greater than $\sigma$. Therefore $L$ is right-closed, and, as a result $L=[0,1]$, providing $\eqe$ is small enough so that the conditions of Proposition \ref{prop:Main_Computation} and \ref{prop:Gradient_Estiamte_Apriori} are satisfied.
 
 We let $\lambda=1$ and get the robustness of \qcconvexity of solution $\Phi^\eqe$ to Problem \ref{prob:Perturbed_Problem_HessianQuotient} is greater than $\sigma$.
 
 \subsection{Improving the Convexity Estimate}
 \label{sec:sub:Improving_Convexity}
 The estimate of section \ref{sec:sub:Convexity_Deformation} depends on the geometry of a family of \cconvex\ rings. In this section, we show that the estimate can be improved so that it only depends on the geometry of $\MR$.
 
 According to Lemma \ref{lemma:GradientNorm_Lower_Boundary}, for a constant $ \bglb$, 
 \begin{align}
 	|\nabla {\Phi^\eqe}| > \bglb\ \ \ \ \  \ \ \ \text{ on }\partial\MR;
 \end{align} then, using Lemma \ref{lemma:Boundary_Convex_implies_Modulus}, the modulus of \qcconvexity\ of ${\Phi^\eqe}$ is greater than a constant $\mu_g$ on $\partial\MR$; then, we apply Lemma \ref{lemma:ModulusPositive_implies_RobustnessPositive} and get the robustness of \qcconvexity\ of ${\Phi^\eqe}$ is greater than $\rho_{\partial\MR}$ on ${\partial\MR}$, for a constant $\rho_{\partial\MR}$ depending on the geometry of $\MR$.
 
 Let 
 \begin{align}
 	\tilde{\sigma}=\frac{1}{2}\min\left\{{\rho_{\partial\MR}},\ \uglb \right\},
 \end{align} and we will show the robustness of \qcconvexity\ of ${\Phi^\eqe}$ is greater than $\tilde{\sigma}$. Let $\delta$ be a small constant in $(0, \min\left\{\frac{\sigma}{4}, \frac{\tilde\sigma}{4}\right\}]$ 
 to be determined, and let
 \begin{align}
 	\begin{split}
 	I^\eqe=\left\{
 	\theta\in[0, \tilde{\sigma}]\ \big| \right.& \text{ the robustness of \qcconvexity\ of ${\Phi^\eqe}-q$ is greater than $\delta$,} \\
&\ \ \ \ \  \ \ \  \ \ \ \ \  \ \ \ \ \ \ \ \  \ \ \ \ \ \ \ \  \ \ \ \ \ \ \ \  \ \ \ \ \ \ \ \  \ \ \ \ \ \ \ \  \ \ \ \ \ \ \ \  \ \ \ \left.	  \text{with $q\in \spaceQ_\theta$ } 
 	\right\};
 	\end{split}
 \end{align}
 we will show $I^\eqe=[0, \tilde{\sigma}]$.  
 
 First, we notice that $I^\eqe\supset [0, \frac{\sigma}{4}]$. It's also easy to see that $I^\eqe$ is open. Most importantly, we can show it's right-closed; then $I^\eqe=[0, \tilde{\sigma}]$ follows.
 
For the right-closeness, we assume $I^\eqe\supset[0, l)$; then we show it contains $l$. This part is completely parallel to the proof of section \ref{sec:sub:Convexity_Deformation}. 
 
 \subsection{Estimates Related to the Rank Estimate }
 \label{sec:sub:Form_Estimate}
 In this section, we show for a positive constant $c$, which depends on the geometry of $\MR$,  
 \begin{align}
 	(\iu\ddbar\Phi^\eqe)^{n-1}\wedge\bbform\geq c\ \bbform^n,
 	\label{628538}
 \end{align} 
 and
 \begin{align}
 	\iu\ddbar \left(
 	e^{\Phi^\eqe}
 	\right)\geq c \ \bbform.
 	\label{715544}
 \end{align}
 When $\eqe$ converges to $0$, these estimates converge to the estimates for the solution $\Phi$ of the homogenous equation; the estimates then indicate the rank of $\iu\ddbar\Phi$ is $n-1$ in the weak sense.
 
We first prove (\ref{628538}). We choose a set of coordinates $(z^{\alpha}, \tau)$ so that at a point $\po$, 
 $\Phi^\eqe_{\alpha}=0$, $\Phi^\eqe_{\abbar}=\delta_{\alpha\beta}\lambda_\alpha$ and $\MG_{\ijbar}=\delta_{\ij}$. Let $\MH=(\Phi^\eqe_{\ijbar})$; then,
 \begin{align}
 	\MH=\left(
 	\begin{array}{cccc}
 		\lambda_1&  &  &\Phi^\eqe_{1\taubar}  \\
 		&  \lambda_2 &   &\Phi^\eqe_{2\taubar} \\
 		& 						  & \ddots  &  \vdots\\
 	\Phi^\eqe_{\tau\overline{1}}	&\Phi^\eqe_{\tau\overline{2}}  &\cdots  &\Phi^\eqe_{\tautaubar}
 	\end{array}
 	\right).
 \end{align}
 Direct computation gives 
 \begin{align}
 	\tr (\MH^{-1})=\frac{1}{\tred} 
 	\left(
 	1+\sum_{\mu=1}^{n-1}\frac{|\Phi^\eqe_{\mu\taubar}|^2}{\lambda_\mu^2}
 	\right) +\sum_{\mu=1}^{n-1}\frac{1}{\lambda_\mu}
 \end{align}
 and
 \begin{align}
 	\det(\MH)=\tred \prod_{\mu=1}^{n-1}\lambda_\mu,
 \end{align}where
 \begin{align}
 	\tred=\Phi^\eqe_{\tautaubar}-\sum_{\mu=1}^{n-1}\frac{|\Phi^\eqe_{\mu\taubar}|^{2}}{\lambda_k}.
 \end{align}
 Then, using equation (\ref{eq:Matrix_Form_Main_Perturbation_Equation}), we get
 \begin{align}
  \frac{1}{\tred} 
 \left(
 1+\sum_{\alpha=1}^{n-1}\frac{|\Phi^\eqe_{\alpha\taubar}|^2}{\lambda_k^2}
 \right)\leq \frac{1}{\eqe}.
 \end{align} Therefore, $\eqe\leq \tred$, and 
 \begin{align}
 	\det(\MH) \geq \eqe  \prod_{\mu=1}^{n-1}\lambda_\mu.
 \end{align}
 
 We have shown the robustness of \qcconvexity\ of ${\Phi^\eqe}$ has a uniform positive lower bound, so $\lambda_\alpha\geq\sigmalamb$, for $\alpha\in\{1,\ ...\ , n-1\}$,  for a constant $\sigmalamb$, which depends on the geometry of $\MR$.  Then, it follows 
 \begin{align}
 	\det(\MH) \geq \eqe  (\sigmalamb)^{n-1}.
 \end{align}Therefore,
 \begin{align}
	(\iu\ddbar\Phi^\eqe)^{n}\geq \eqe  (\sigmalamb)^{n-1}\ \bbform^n.
\end{align} 
Using equation \ref{eq:Matrix_Form_Main_Perturbation_Equation}, we know at the point $\po$
 \begin{align}
	(\iu\ddbar\Phi^\eqe)^{n-1}\wedge\bbform \geq  (\sigmalamb)^{n-1}\ \bbform^n.
	\label{estimate:sigma_n-1}
\end{align} Since the above inequality is independent of the choice of coordinate, we know it's valid anywhere in $\MR$.

We can also show $e^{ {\Phi^\eqe}}$ is a strictly pluri-subharmonic function and derive an estimate for the minimum eigenvalue of the matrix 
$[(e^{\Phi^\eqe})_{\ijbar}]$. 

Direct computation gives
\begin{align}
	(e^{ {\Phi^\eqe}})_{\ijbar}=e^{ {\Phi^\eqe}}
	\left(
	\Phi^\eqe_{\ijbar}+\Phi^\eqe_i \Phi^\eqe_\jbar
	\right).
\end{align} We adopt the coordinates and the notations for the previous estimate.  Then
\begin{align}
	\det [(e^{ {\Phi^\eqe}})_{\ijbar}]=e^{ n{\Phi^\eqe}} \det
	\left(
	\tH
	\right),
	\label{623549}
\end{align}
where
\begin{align}
	\tH=\MH+\diag (0,\ ...\ , |{\Phi^\eqe_\tau}|^2).
\end{align}
Again, by direction computation, we have
\begin{align}
	\det(\tH)\geq |{\Phi^\eqe_\tau}|\prod_{\mu=1}^{n-1}\lambda_\mu. 
\end{align}
According to the construction of the lower solution in Appendix \ref{sec:B_boundary_Barrier}, we have ${\Phi^\eqe}\geq 0$, so the right-hand side of $ (\ref{623549})\geq \det (\tH)$. Therefore, we have
\begin{align}
	\det[(e^{ {\Phi^\eqe}})_{\ijbar}]\geq |{\Phi^\eqe_\tau}|^2 (\sigmalamb)^{n-1}.
	\label{lempert_2_detestimate_epsilon}
\end{align}
Combining with the $C^0$ and $C^2$ estimates for ${\Phi^\eqe}$, we have  the lower bound estimate for the minimum eigenvalue of  $[(e^{ {\Phi^\eqe}})_{\ijbar}]$.

\section{Approximating to the Homogenous Equation}
\label{sec:Approximate_HCMA}
In this section, we let $\eqe$ go to zero and get an estimate for the modulus of \cconvexity \ of level sets of solutions to Problem \ref{prob:Main_Problem_HCMA_ring}.

Previously, we have shown that  when $\eqe$ is small enough,  the robustness of \qcconvexity\ of ${\Phi^\eqe}$ is greater than $\sigmalamb$, for a constant $\sigmalamb$, depending on the geometry of $\MR$. Then using Lemma \ref{lemma:Modulus_implies_levelsets_Convex}, we know the modulus of \cconvexity\ of level sets of $\Phi^\eqe$ is greater than a positive constant ${\sigma}$, which depends on the geometry of $\MR$.  

According to the equation (\ref{eq:SigmaQuotient_Problem_Perturbation}) and the $C^2$ estimate for ${\Phi^\eqe}$ in Appendix \ref{sec:Appendix_C11Estimate}, we have
\begin{align}
	\det({\Phi^\eqe}_{\ijbar})\rightarrow 0\ \ \ \ \  \ \ \ \text{ as } \eqe\rightarrow 0.
\end{align}So $\Phi^\eqe\rightarrow \Phi^0$ at least in $C^0$ norm. Then using interpolation, we know ${\Phi^\eqe}\rightarrow \Phi^0$ in $C^1$ norm. 
Therefore the uniform positive lower bound estimate for $|\nabla\Phi^\eqe|_\MG$ implies a positive lower bound for $|\nabla\Phi^0|_\MG$.

We also know $\Phi^0$ is a $C^{1,1}$ function, because of the uniform $C^{1,1}$ estimate of Appendix \ref{sec:Appendix_C11Estimate}. This together with the lower bound estimate for $|\nabla\Phi^0|_\MG$ implies level sets of $\Phi^0$ are $C^{1,1}$ hypersurfaces. 

At a point $\po\in\MR$, we choose coordinates $(z^\alpha, \tau)$, with $\tau=t+\iu s,$ so that $\po=0$, the metric is $\sum_i dz^i\otimes \overline{dz^i}$ and
\begin{align}
	\partial_{\alpha}\Phi^0=0, \ 	\partial_{s}\Phi^0=0, \ 	\partial_{t}\Phi^0>0,
	\label{lempert_1_sigmaestimate_epsilon}
\end{align} at $\po$; therefore, when $\eqe$ is small enough, 
\begin{align}
	\partial_t\Phi^\eqe>0,
\end{align} around $\po$. Let $\MT_\delta=\left\{\sum_{\alpha}|z^\alpha|^2+s^2 <\delta^2, \ |t|<\delta \right\}$.
For $\eqe$ and $\delta$ small enough, the level set $	\left\{
{\Phi^\eqe}={\Phi^\eqe}(\po)
\right\}\cap {\MT_\delta} $ is the graph of a $C^{1,1}$ function: 
\begin{align}
	\begin{split}
	&\left\{
	{\Phi^\eqe}={\Phi^\eqe}(\po)
	\right\}\cap 
{\MT_\delta} =
	\left\{
	(z^\alpha, \rho^\eqe(z^\alpha, s)+\iu s)\ \big|\   \sum_{\alpha}|z^\alpha|^2 +s^2< \delta^2
	\right\}.
	\end{split}
\end{align} In addition, we know 
\begin{align}
	|\nabla {\Phi^\eqe}| \leq C_2(\eqe+\delta), \ \ \ \ \  \ \ \ \text{ in }T_\delta.
\end{align}

In the following, we will show when $\eqe$  and $\delta$ are small enough,  
\begin{align}
	\rho^\eqe(z^\alpha, 0)\geq \frac{\sigma}{C} \sum_{\alpha}|z^\alpha|^2
\end{align}for a constant $C$ when $\sum_{\alpha}|z^\alpha|^2\leq \delta$. Most importantly, we need to show that $\delta$ is independent of $\eqe$, so when $\eqe$ converges to zero, 
we get the convexity estimate for $\rho^0$.

We choose $q=2\Ree(z^\alpha\partial_\alpha {\Phi^\eqe} )$; then,
\begin{align}
	\partial_\alpha
	\left(
	{\Phi^\eqe}-q
	\right)({\bf z})=0\ \ \ \ \  \ \ \ \text{ for }\alpha\in \{ 1,\ ...\ , n-1 \}.
\end{align}
It follows that
\begin{align}
	{T^\EC_{{\Phi^\eqe}-q, {\bf z}}}=\{\zeta\in\EC^n| \zeta^n-z^n=0\}.
\end{align}Because the robustness of \qcconvexity\ of ${\Phi^\eqe}$ is greater than $\sigma$, ${\Phi^\eqe}-q$ is \sqcconvex\ of robustness greater than $\frac{\sigma}{2}$ providing $\delta$ is small enough. Therefore, the second order Taylor expansion of ${\Phi^\eqe}|_{\{z^n=a\}}$ at any point in $\{|z^\alpha|^2\leq \delta^2\}$, for $|a|<\delta, a\in\EC$,  is convex of modulus greater than $\frac{\sigma}{C_1}$, for  a constant $C_1$ depending on the diameter of $\MR$, when $\delta$ is small enough. The convexity estimate for $\rho^\eqe(z^\alpha, 0)$ then follows since
\begin{align}
	\Phi^\eqe(z^\alpha, \rho^\eqe(z^\alpha, 0))=0.
\end{align}

For the estimate (\ref{estimate:sigma_n-1}), we let $\eqe$ converge to zero. Since $\Phi^\eqe\rightarrow \Phi^0$ in $C^1$ norm and $\Phi^\eqe$ are all plurisubharmonic functions, we have
 \begin{align}
	(\iu\ddbar\Phi^0)^{n-1}\wedge\bbform \geq  \frac{1}{C}\ \bbform^n,
	\label{estimate_sigmaN-1_zero}
\end{align} in the weak sense for a constant $C$ depending on the geometry of $\MR$.

For the estimate (\ref{715544}), since $e^{{\Phi^\eqe}}$ is a plurisubharmonic function and $\Phi^\eqe\rightarrow \Phi^0$ in $C^1$ norm,  we have
\begin{align}
	\iu\ddbar(e^{ {\Phi^0}})\geq \frac{1}{C}\bbform
\end{align}in weak sense for a constant $C$ depending on the geometry of $\MR$.
\appendix

\section{Linear Algebra Lemmas}
\label{sec:Algebra_Lemma}
In this appendix, we prove some linear algebra lemmas. The goal is to establish relations  between different descriptions of the convexity and \qcconvexity. 

 Lemma \ref{lemma:Equivalent_Quadratic_Modulus_Degree} shows that for quadratic polynomials defined on $\EC^{n-1}$ the degree and the modulus of convexity are equivalent. 
\begin{lemma}	\label{lemma:Equivalent_Quadratic_Modulus_Degree}
	The modulus of convexity of a  quadratic polynomial on $\EC^{n-1}$ is greater than $\mu\geq 0$ if and only if its degree of convexity is greater than $\mu$.
\end{lemma}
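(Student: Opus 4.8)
The plan is to reduce the equivalence to a one–variable inequality on the unit sphere and then to exhibit the extremal perturbation explicitly. First I would normalize the metric. Since $(g_{\alpha\betabar})$ is Hermitian positive, a $\EC$-linear change of coordinates $z=Tw$ turns it into the standard metric; under it $A\mapsto T^{*}AT$, $B\mapsto T^{T}BT$, $W\mapsto T^{T}WT$, and $g\mapsto\mathrm{Id}$. Strong convexity of a quadratic is invariant under any real-linear change of coordinates (it is positive-definiteness of the real Hessian), hence so is ``modulus of convexity $>\mu$''. The matrix $\big(W_{\alpha\beta}g^{\beta\gammabar}\,\overline{W_{\gamma\nu}}\,g^{\theta\nubar}\big)$ from the definition of the degree of convexity is exactly the $\MK$-type quantity for the pair $(g,W)$, and for the transformed pair it is conjugate to the original one via $T^{T}$ (equivalently, it is the coordinate-invariant endomorphism recalled in Section~\ref{sec:AuxiliaryFunctions}); thus the change of coordinates maps the ball $\{W:\|W\|_{g}\le\mu\}$ onto the corresponding ball for the standard metric, so ``degree of convexity $>\mu$'' is preserved too. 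Hence we may assume $g_{\alpha\betabar}=\delta_{\alpha\beta}$, and then $\|W\|_{g}$ (the square root of the largest eigenvalue of that matrix) is simply the largest singular value $\sigma_{\max}(W)$, since $W\overline{W}=WW^{*}\ge0$ for symmetric $W$.

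Next I would rewrite both notions pointwise. For a unit vector $z\in\EC^{n-1}$ set $a(z)=A_{\alpha\betabar}z^{\alpha}z^{\betabar}\in\ER$ and $b(z)=B_{\alpha\beta}z^{\alpha}z^{\beta}\in\EC$. Replacing $z$ by $e^{\iu\theta}z$ leaves $a$ unchanged, multiplies $b$ by $e^{2\iu\theta}$, and $\min_{\theta}\Ree\big(e^{2\iu\theta}b(z)\big)=-|b(z)|$ is attained; hence a quadratic $A_{\alpha\betabar}z^{\alpha}z^{\betabar}+\Ree(B_{\alpha\beta}z^{\alpha}z^{\beta})$ with $A$ Hermitian and $B$ symmetric is strongly convex if and only if $a(z)>|b(z)|$ for every unit $z$. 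Applying this to the pairs $(A-\mu\,\mathrm{Id},B)$ and $(A,B-W)$ I get: the modulus of convexity of $P$ exceeds $\mu$ iff $a(z)-\mu>|b(z)|$ for all unit $z$; and the degree of convexity of $P$ exceeds $\mu$ iff $a(z)>\big|b(z)-W_{\alpha\beta}z^{\alpha}z^{\beta}\big|$ for all unit $z$ and all symmetric $W$ with $\sigma_{\max}(W)\le\mu$.

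The implication modulus~$>\mu\ \Rightarrow$ degree~$>\mu$ is then immediate: for unit $z$ and $\sigma_{\max}(W)\le\mu$ the triangle inequality together with $|W_{\alpha\beta}z^{\alpha}z^{\beta}|\le\sigma_{\max}(W)\le\mu$ gives $\big|b(z)-W_{\alpha\beta}z^{\alpha}z^{\beta}\big|\le|b(z)|+\mu<a(z)$. For the converse I would, given a unit vector $z_{0}$, take the rank-one symmetric perturbation $W_{\alpha\beta}=-\mu\,\omega\,\overline{z_{0}^{\alpha}}\;\overline{z_{0}^{\beta}}$, where $\omega=b(z_{0})/|b(z_{0})|$ (any unit number if $b(z_{0})=0$). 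Since $(\overline{z_{0}^{\alpha}})_{\alpha}$ is again a unit vector, $\sigma_{\max}(W)=\mu$, while $W_{\alpha\beta}z_{0}^{\alpha}z_{0}^{\beta}=-\mu\,\omega\,\big(\textstyle\sum_{\alpha}|z_{0}^{\alpha}|^{2}\big)^{2}=-\mu\,\omega$, so $\big|b(z_{0})-W_{\alpha\beta}z_{0}^{\alpha}z_{0}^{\beta}\big|=|b(z_{0})|+\mu$. The degree hypothesis at $z_{0}$ applied to this $W$ then forces $a(z_{0})>|b(z_{0})|+\mu$, i.e.\ $a(z_{0})-\mu>|b(z_{0})|$; as $z_{0}$ was arbitrary this is precisely modulus~$>\mu$. (The case $\mu=0$ is included, both conditions reducing to strong convexity of $P$.)

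The two places I expect to need care are exactly the non-formal points above: verifying in the first paragraph that normalizing the metric does not distort the admissible family $\{W:\|W\|_{g}\le\mu\}$, which rests on the coordinate-invariance of the eigenvalues of the $\MK$-type expression, and recognizing in the third paragraph that the worst quadratic perturbation in a prescribed complex direction $z_{0}$ is the rank-one symmetric form built from $\overline{z_{0}}$. Everything else is the elementary phase optimization and the triangle inequality.
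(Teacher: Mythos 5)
Your proposal is correct and follows essentially the same route as the paper: after normalizing the metric to the identity, everything reduces to showing that the extremal perturbation in the direction of a unit vector $z_0$ is the rank-one symmetric form built from $\overline{z_0}$ (the paper's choice $B_{\alpha\beta}=\overline{z^\alpha z^\beta}/|z|^2$), with the matching upper bound $|W_{\alpha\beta}z^\alpha z^\beta|\le\sigma_{\max}(W)$ (which the paper obtains via the Autonne--Takagi diagonalization and you obtain via Cauchy--Schwarz). Your explicit phase-rotation reformulation $a(z)>|b(z)|$ and the coordinate-invariance check are just more detailed presentations of steps the paper leaves implicit.
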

\begin{proof} We note that any quadratic polynomial $P$ on $\EC^{n-1}$ has the following form
		\begin{align}
		P(z)=A_{\abbar} z^\alpha z^\betabar+\Ree(B_{\alpha\beta} z^\alpha z^\beta)+L,
	\end{align} where
	$A$ is hermitian, $B$ is symmetric and $L$ is a linear function.
	Suppose $G_{\alpha\betabar} dz^\alpha\otimes \overline{dz^\beta}$ is the metric on $\EC^{n-1}$.  We only need to show
	\begin{align}
		\sup_{B\overline{G^{-1}}\ \overline{B}G^{-1}\leq 1}\Ree(B_{\alpha\beta}{ z^\alpha z^\beta}) =G_{\alpha\betabar} z^\alpha z^\betabar.
	\end{align}
	To simplify the computation, we change coordinates, so that $G_{\abbar}=\delta_{\ab}$. Then we need to show 
	\begin{align}
			\sup_{B\overline{B}\leq 1}\Ree(B_{\alpha\beta}{ z^\alpha z^\beta}) = |z|^2.
	\end{align}
	
	First, we fix $(z^\alpha)$ and choose $B_{\alpha\beta}=
	\frac{\overline{z^\alpha z^\beta}}{|z|^2}$, then
	\begin{align}
		\Ree (B_{\alpha\beta} z^\alpha z^\beta)= |z|^2.
	\end{align} So 
		\begin{align}
		\sup_{B\overline{B}\leq 1}\Ree(B_{\alpha\beta }{z^\alpha z^\beta}) \geq |z|^2.
			\label{0607A4}
	\end{align}
	
	Then for any symmetric $B$,  with $B\overline{B}\leq 1$, after a unitary change of coordinates, we can make $B=\diag(\sigma_1, \ ...\ , \sigma_{n-1})$, with $\sigma_\gamma\in\ER^{\geq0}$ and $\sigma_\gamma\leq 1$. This is the Autonne-Takagi factorization, which is Corollary 4.4.4(c) of \cite{HornMatrix} (or see Lemma A.3 of \cite{Hu22Nov}).
	Therefore
	\begin{align}
		\Ree (B_{\alpha\beta} z^\alpha z^\beta)= \sum \sigma_\gamma \Ree ({z^\gamma})^2\leq |z|^2;
	\end{align}as a result,
		\begin{align}
			\sup_{B\overline{B}\leq 1}\Ree(B_{\alpha\beta }{z^\alpha z^\beta}) \leq |z|^2.
			\label{0607A6}
	\end{align}
	(\ref{0607A4}) and (\ref{0607A6}) together gives the equality.
\end{proof}
We would also need the following lemma to estimate the robustness of \qcconvexity. 
\begin{lemma}
	\label{lemma:Algebra_Robustness_Half_perturbation}
	Suppose that $G_{\alpha\betabar} dz^\alpha\otimes \overline{dz^\beta}$ is a constant coefficient metric on $\EC^{n-1}$ and that the modulus of convexity of 
	\begin{align}
		\MP(z^\alpha)=A_{\abbar} z^\alpha \zbetabar+\Ree \left(
		B_{\alpha\beta} z^\alpha z^\beta
		\right)
	\end{align}is greater than $\delta$.  Let $G=(G_{\alpha\betabar} )$. Then, for any  Hermitian matrix $V$ with
	\begin{align}
	V\leq \frac{\delta}{2} G 
	\label{0703A9}
	\end{align} and any symmetric matrix $W$ with
	\begin{align}
	 W\overline{G^{-1}}\ \overline{W}G^{-1} \leq \frac{\delta^2}{4},
	\end{align} we have that
	\begin{align}
		\MP -V_{\abbar} z^\alpha \zbetabar-\Ree \left(
		W_{\alpha\beta} z^\alpha z^\beta
		\right)
	\end{align} is strongly convex.
\end{lemma}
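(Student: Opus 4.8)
The plan is to reduce the claim to the equivalence of modulus and degree of convexity (Lemma~\ref{lemma:Equivalent_Quadratic_Modulus_Degree}) combined with a simple splitting of the perturbation $V_{\alpha\betabar}z^\alpha\zbetabar + \Ree(W_{\alpha\beta}z^\alpha z^\beta)$ into a ``Hermitian half'' and a ``symmetric half,'' each absorbed by half of the available modulus $\delta$. Concretely, first I would observe that since the modulus of convexity of $\MP$ is greater than $\delta$, we have $\MP(z) > \delta\, G_{\alpha\betabar}z^\alpha\zbetabar$ on $\EC^{n-1}\setminus\{0\}$. Writing the target quadratic as
\begin{align}
	\MP - V_{\abbar}z^\alpha\zbetabar - \Ree(W_{\alpha\beta}z^\alpha z^\beta)
	= \Bigl(\tfrac12\MP - V_{\abbar}z^\alpha\zbetabar\Bigr) + \Bigl(\tfrac12\MP - \Ree(W_{\alpha\beta}z^\alpha z^\beta)\Bigr),
\end{align}
it suffices to show each bracketed term is convex (nonnegative on $\EC^{n-1}\setminus\{0\}$ after subtracting linear/constant parts, which here are absent), since a sum of a convex and a strongly convex quadratic is strongly convex.

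For the first bracket, the Hermitian part of $\tfrac12\MP$ dominates $\tfrac{\delta}{2}G_{\alpha\betabar}z^\alpha\zbetabar$ in the sense of forms — more precisely, $\tfrac12\MP(z) \geq \tfrac{\delta}{2}G_{\alpha\betabar}z^\alpha\zbetabar + \tfrac12\Ree(B_{\alpha\beta}z^\alpha z^\beta) + \tfrac12(A-\delta G)_{\abbar}z^\alpha\zbetabar$; but cleaner is to just use the modulus bound directly: $\tfrac12\MP(z) > \tfrac{\delta}{2}G_{\alpha\betabar}z^\alpha\zbetabar \geq V_{\abbar}z^\alpha\zbetabar$ by hypothesis~(\ref{0703A9}), so $\tfrac12\MP - V_{\abbar}z^\alpha\zbetabar$ is actually strongly convex (its real Hessian is positive definite). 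Wait — I must be careful: $\tfrac12\MP$ itself need not be convex because $B$ could be large. The honest argument is: the modulus of convexity of $\tfrac12\MP$ is greater than $\tfrac{\delta}{2}$ (scaling the defining inequality by $\tfrac12$), hence by Lemma~\ref{lemma:Equivalent_Quadratic_Modulus_Degree} its degree of convexity is greater than $\tfrac{\delta}{2}$, so for the given $W$ with $W\overline{G^{-1}}\,\overline{W}G^{-1}\leq \tfrac{\delta^2}{4}$ the quadratic $\tfrac12\MP - \Ree(W_{\alpha\beta}z^\alpha z^\beta)$ is strongly convex. That handles the ``symmetric half.''

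For the ``Hermitian half,'' $\tfrac12\MP - V_{\abbar}z^\alpha\zbetabar$: again the modulus of convexity of $\tfrac12\MP$ is greater than $\tfrac\delta2$, meaning $\tfrac12\MP(z) > \tfrac\delta2 G_{\alpha\betabar}z^\alpha\zbetabar$; subtracting $V_{\abbar}z^\alpha\zbetabar \leq \tfrac\delta2 G_{\alpha\betabar}z^\alpha\zbetabar$ still leaves something $\geq 0$ on $\EC^{n-1}\setminus\{0\}$ — but I want strong convexity, i.e.\ a strictly positive real Hessian, not just positivity of the polynomial. Here I would instead split as $\tfrac14$ and $\tfrac14$ plus a genuinely strongly convex remainder: since modulus of $\MP$ exceeds $\delta$, write $\MP = \MP' + \tfrac\delta2 G_{\alpha\betabar}z^\alpha\zbetabar$ where $\MP'$ has modulus of convexity greater than $\tfrac\delta2 > 0$, hence $\MP'$ is convex; then $\MP - V - \Ree(W\cdot) = \MP' - \Ree(W\cdot) + (\tfrac\delta2 G - V)_{\abbar}z^\alpha\zbetabar$, where $\MP' - \Ree(W\cdot)$ is convex (degree of convexity of $\MP'$ is $\geq \tfrac\delta2 > \tfrac\delta2$... ) — this boundary case $=\tfrac{\delta^2}{4}$ needs the inequality to be non-strict, which matches the ``strongly convex'' conclusion being the sum with the strictly positive term $(\tfrac\delta2 G - V)\geq 0$ only if that term is strictly positive, which it need not be.

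The main obstacle, then, is exactly this bookkeeping of strict versus non-strict inequalities: I will want to use $\tfrac\delta2$ for the symmetric perturbation but keep a strict surplus for strong convexity. The clean fix is to absorb $V$ and $W$ using only ``half of the half'': since the modulus of $\MP$ strictly exceeds $\delta$, there is $\delta' > \delta$ with modulus $> \delta'$; apply the degree–modulus equivalence to $\MP - V_{\abbar}z^\alpha\zbetabar$ (which has modulus $> \delta' - \tfrac\delta2 \geq \tfrac\delta2$, since $V\leq\tfrac\delta2 G$) to conclude that subtracting $\Ree(W_{\alpha\beta}z^\alpha z^\beta)$ with $W\overline{G^{-1}}\,\overline W G^{-1}\leq \tfrac{\delta^2}{4} \leq (\tfrac\delta2)^2$ keeps it strongly convex. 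So the final plan is the two-line argument: (i) $\MP - V_{\abbar}z^\alpha\zbetabar$ has modulus of convexity greater than $\tfrac\delta2$ because $V\leq\tfrac\delta2 G$ and the modulus of $\MP$ exceeds $\delta$; (ii) invoke Lemma~\ref{lemma:Equivalent_Quadratic_Modulus_Degree} to convert this to a degree of convexity greater than $\tfrac\delta2$, and since $W$ satisfies $W\overline{G^{-1}}\,\overline W G^{-1}\leq\tfrac{\delta^2}{4} = (\tfrac\delta2)^2$, the polynomial $\MP - V_{\abbar}z^\alpha\zbetabar - \Ree(W_{\alpha\beta}z^\alpha z^\beta)$ is strongly convex, which is the assertion.
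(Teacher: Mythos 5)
Your final two-step argument is correct and is essentially the paper's own proof: both split the allowance $\delta$ into $\tfrac{\delta}{2}$ for the Hermitian perturbation $V$ and $\tfrac{\delta}{2}$ for the symmetric perturbation $W$, and both rest on the modulus--degree equivalence of Lemma \ref{lemma:Equivalent_Quadratic_Modulus_Degree} (the paper uses the pointwise bound $\Ree(W_{\alpha\beta}z^\alpha z^\beta)\leq \tfrac{\delta}{2}G_{\alpha\betabar}z^\alpha\overline{z^\beta}$ from that lemma's proof together with $V_{\abbar}z^\alpha\zbetabar\leq\tfrac{\delta}{2}G_{\alpha\betabar}z^\alpha\zbetabar$ and positivity of the resulting quadratic, while you invoke the lemma's statement on the intermediate polynomial $\MP-V_{\abbar}z^\alpha\zbetabar$). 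The exploratory detours in your write-up are unnecessary but harmless; the final plan stands.
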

The idea of this lemma is very intuitive in the case of $n=2$. In this case 
$
 A\in \ER^+ \text{ and } B \in \EC.
$ $\MP$ is convex of modulus $>\delta$ if and only if $(B, A)$ stays in the cone $\left\{
A-\delta>|B|
\right\}$, and $\MP$ is convex of degree  $>\delta$ if and only if for any $\beta\in[-\delta, \delta]$, $(B, A)$ stays in the cone $\left\{
A>|B-\beta|
\right\}$.  Lemma \ref{lemma:Equivalent_Quadratic_Modulus_Degree} basically says
\begin{align}
	\left\{
	|B|<A-\delta
	\right\}
	=
	\bigcap_{|\beta|\leq \delta } \left\{
	|B-\beta|< A
	\right\}.
\end{align} For Lemma \ref{lemma:Algebra_Robustness_Half_perturbation}, we need to show when $(V, W)\in \{A\geq 0, |B|\leq \delta -A\}$
\begin{align}
		\left\{
	|B|<A-\delta
	\right\} \subset 	\left\{
	|B-W|<A-V
	\right\}. 
\end{align}As illustrated by Figure \ref{fig:Half_Degree_Perturbation}, the blue cone covers the cone $	\left\{
|B|<A-\delta
\right\}$ providing the corner of the blue cone stays in the cone $\{A\geq 0, |B|\leq \delta -A\}$, the yellow area.

	 \begin{figure}[h]
	\centering  
	\includegraphics[height=3.5cm]	{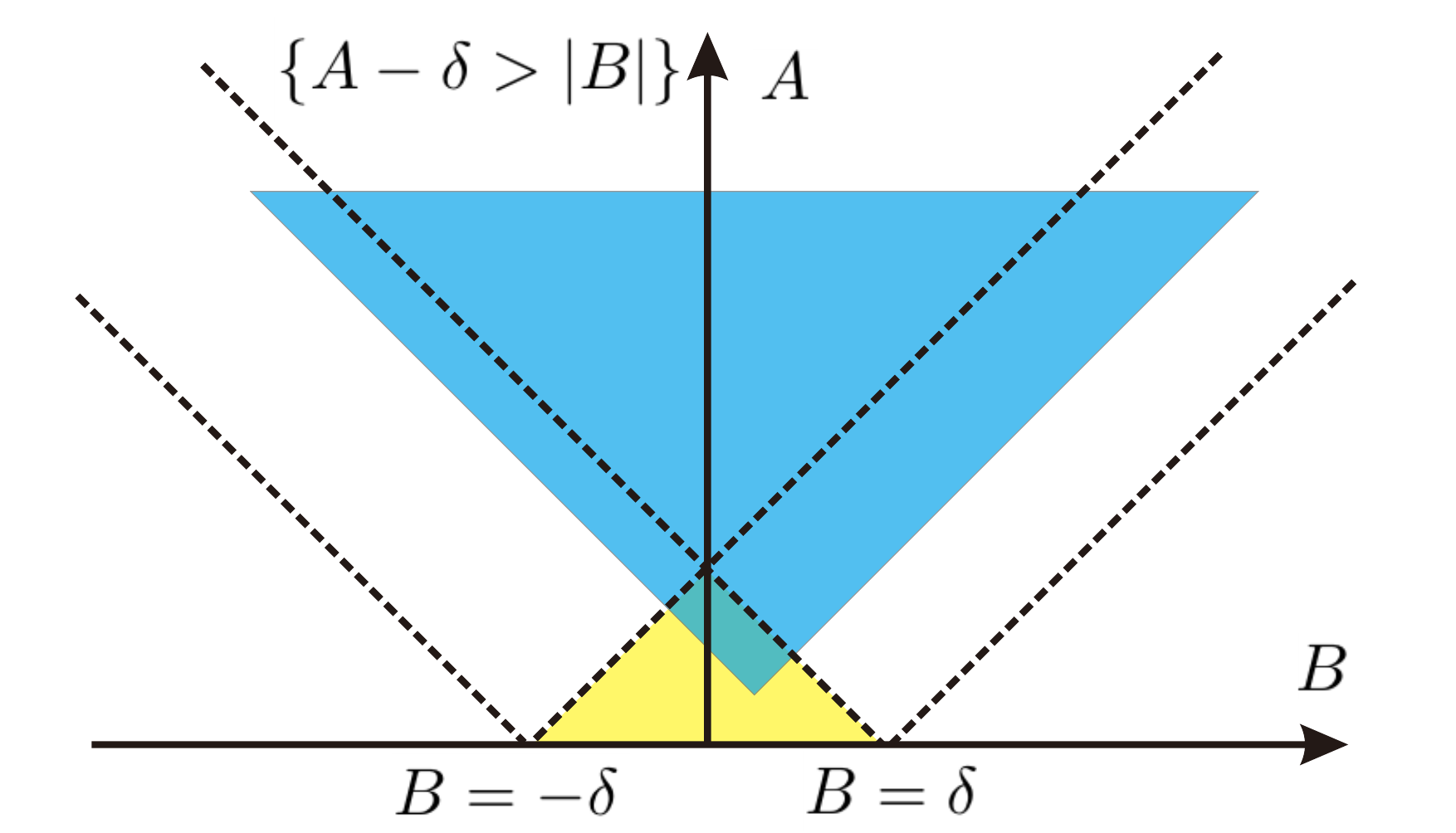}
	\caption{Visualizing the Convexity Estimate: Lemma \ref{lemma:Algebra_Robustness_Half_perturbation}}
	\label{fig:Half_Degree_Perturbation}  
\end{figure}

\begin{proof}
	[Proof of Lemma \ref{lemma:Algebra_Robustness_Half_perturbation}]
	Actually, the proof is quite simple. We denote
	\begin{align}
		\tilde\MP=	
			\MP -V_{\abbar} z^\alpha \zbetabar-\Ree \left(
			W_{\alpha\beta} z^\alpha z^\beta
			\right).
	\end{align}
	Since $\tilde\MP$ is a quadratic polynomial, it's strongly convex if and only if 
$\tilde \MP(z)>0$ for $z\neq 0$. So, we need to show 
\begin{align}
	V_{\abbar} z^\alpha \zbetabar+\Ree \left(
	W_{\alpha\beta} z^\alpha z^\beta
	\right)\leq \delta g_{\alpha\betabar} z^\alpha \overline{z^\beta}. 
	\label{0608A14}
\end{align}
 According to the proof of Lemma \ref{lemma:Equivalent_Quadratic_Modulus_Degree}, we have
 \begin{align}
 	\Ree \left(
 	W_{\alpha\beta} z^\alpha z^\beta
 	\right)\leq  \frac{\delta}{2}  g_{\alpha\betabar} z^\alpha \overline{z^\beta}. 
 \end{align} The following inequality follows from the assumption (\ref{0703A9}):
 \begin{align}
 V_{\abbar} z^\alpha \zbetabar\leq \frac{\delta}{2}	g_{\alpha\betabar} z^\alpha \overline{z^\beta}.
 \end{align} The two inequalities above implies (\ref{0608A14}) is valid. 
\end{proof}
With the preparation above, we can establish the relation between the robustness  and  modulus of \qcconvexity. We have the following Lemma \ref{lemma:ModulusPositive_implies_RobustnessPositive}, which proves the robustness of \qcconvexity\ is positive providing the modulus of \qcconvexity\ is positive, and Lemma \ref{lemma:RobustnessPositive_implies_ModulusPositive}, which proves an estimate in the opposite direction.
\begin{lemma}
	\label{lemma:ModulusPositive_implies_RobustnessPositive}
	 Let $\MR$ be the ring-shaped domain of Problem \ref{prob:Perturbed_Problem_HessianQuotient}. Suppose the modulus of \qcconvexity\ of $\Phi\in C^\infty(\overline{\MR})$ is greater than $m$ at a point $\po\in \overline{\MR}$, for $m>0$. Then the robustness of \qcconvexity\ of $\Phi$ is greater than $\frac{m^2}{C}$ for a constant $C$ depending on the $C^2$ norm of $\Phi$ and the thickness of $\MR$.
\end{lemma}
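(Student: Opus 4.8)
The plan is to reduce the statement, point by point, to the algebraic Lemma \ref{lemma:Algebra_Robustness_Half_perturbation}. Fix a pluriharmonic quadratic polynomial $q$ with $|\nabla q|_g\le\epsilon$ on $\overline\MR$, where $\epsilon\le m^2/C$ for a constant $C$ to be chosen at the end, and set $\Psi=\Phi-q$; we must verify that $\Psi$ is \sqcconvex\ at $\po$, which is precisely the assertion that the robustness of \qcconvexity\ of $\Phi$ at $\po$ exceeds $m^2/C$. First I would normalize coordinates: choose complex linear coordinates $(z^\alpha,\tau)$ centered at $\po$ with $g_{\ijbar}=\delta_{ij}$, $\Phi_\alpha(\po)=0$ for $\alpha\in\{1,\dots,n-1\}$ and $\Phi_\tau(\po)\ne0$; this is a unitary frame change followed by a rotation fixing the $\tau$-direction, so it preserves $g=\delta$. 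Then $T^\EC_{\Phi,\po}=\{\,z^n=0\,\}$ carries the flat metric, and the second order Taylor polynomial of $\Phi|_{T^\EC_{\Phi,\po}}$ at $\po$ equals $P(z)=\Phi_\abbar(\po)\,z^\alpha z^\betabar+\Ree\bigl(\Phi_\ab(\po)\,z^\alpha z^\beta\bigr)$ up to a constant; by hypothesis $P$ has modulus of convexity (with respect to $\delta$) greater than $m$, and since $\Phi_\alpha(\po)=0$ we also get $|\Phi_\tau(\po)|\ge c_0 m$ for a dimensional constant $c_0$.

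Next I would record the two elementary bounds on $q$ that keep the perturbation small. Since $\overline\MR$ contains a ball of radius equal to the thickness $r$ of $\MR$ and $\nabla q$ is an affine vector field of norm $\le\epsilon$ there, its linear part, built from the holomorphic second derivatives $q_\ab$, obeys $|q_\ab|\le C_1\epsilon/r$; and because $\po\in\overline\MR$ we get $|q_\alpha(\po)|,|q_\tau(\po)|\le\epsilon$. The key structural point is that $q$ is pluriharmonic, so $q_{\ijbar}\equiv0$; hence $\Psi_{\ijbar}=\Phi_{\ijbar}$ and $\Psi_{ij}=\Phi_{ij}-q_{ij}$. It follows that $\Psi_\tau(\po)=\Phi_\tau(\po)-q_\tau(\po)$ has modulus at least $c_0 m-\epsilon>0$ and $|\Psi_\alpha(\po)|=|q_\alpha(\po)|\le\epsilon$, so $\nabla\Psi(\po)\ne0$, and the complex tangent plane $T^\EC_{\Psi,\po}=\{\,\zeta:\Psi_\alpha\zeta^\alpha+\Psi_\tau\zeta^n=0\,\}$ is parametrized over $\EC^{n-1}$ by $\zeta^\alpha\mapsto(\zeta^\alpha,-v_\gamma\zeta^\gamma)$ with $v_\gamma=\Psi_\gamma(\po)/\Psi_\tau(\po)$ and $|v_\gamma|\le C_2\epsilon/m$.

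The heart of the argument is then to compute the second order Taylor polynomial $\widetilde P$ of $\Psi|_{T^\EC_{\Psi,\po}}$ in this parametrization. By the same bookkeeping that produces formulas (\ref{516120})--(\ref{516121}) --- equivalently (\ref{619530})--(\ref{619531}) applied to $\Psi$ --- one finds $\widetilde P(\zeta)=\MA^\Psi_\abbar\,\zeta^\alpha\zeta^\betabar+\Ree\bigl(\MB^\Psi_\ab\,\zeta^\alpha\zeta^\beta\bigr)$ up to a constant, where $\MA^\Psi,\MB^\Psi$ are the tilt-corrected tensors of (\ref{619530})--(\ref{619531}) evaluated at $\po$. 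Setting $V=\bigl(\Phi_\abbar(\po)\bigr)-\MA^\Psi$ and $W=\bigl(\Phi_\ab(\po)\bigr)-\MB^\Psi$, one checks directly that $V$ is Hermitian, $W$ is symmetric, and $\widetilde P=P-V_\abbar\zeta^\alpha\zeta^\betabar-\Ree\bigl(W_\ab\zeta^\alpha\zeta^\beta\bigr)$ up to a constant. Every term of $V$ carries a factor $v_\gamma$ (using $\Psi_{\ijbar}=\Phi_{\ijbar}$), and every term of $W$ carries a factor $v_\gamma$ or $q_\ab$ (using $\Psi_{ij}=\Phi_{ij}-q_{ij}$); with the $C^2$ bound on $\Phi$ and the estimates $|v_\gamma|\le C_2\epsilon/m$, $|q_\ab|\le C_1\epsilon/r$ this yields $\|V\|,\|W\|\le C_3\,\epsilon/m$ with $C_3$ depending only on $\|\Phi\|_{C^2}$ and $r$. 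Finally, the metric on $T^\EC_{\Psi,\po}$ pulled back by the parametrization is $\delta_\ab+\bar v_\alpha v_\beta$, which is $\delta$ up to $O(\epsilon^2/m^2)$, so convexity moduli measured against it are comparable to those measured against $\delta$.

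To conclude I would fix $C$ so large that every $\epsilon$ with $\epsilon\le m^2/C$ satisfies both $C_3\epsilon/m\le m/2$ and $\epsilon\le c_0 m/2$. Then $V\le\tfrac m2\,\delta$ and $W\overline{\delta^{-1}}\,\overline W\,\delta^{-1}=W\overline W\le m^2/4$, so Lemma \ref{lemma:Algebra_Robustness_Half_perturbation} applied to $P$ (whose modulus of convexity exceeds $m$) shows that $P-V_\abbar\zeta^\alpha\zeta^\betabar-\Ree\bigl(W_\ab\zeta^\alpha\zeta^\beta\bigr)$, and hence $\widetilde P$, is strongly convex; since the induced metric on $T^\EC_{\Psi,\po}$ is comparable to $\delta$, $\widetilde P$ is strongly convex for it as well. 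Together with $\nabla\Psi(\po)\ne0$ this means $\Psi$ is \sqcconvex\ at $\po$, as required; running the argument at every point where the modulus hypothesis holds yields the stated global estimate. I expect the only real obstacle to be computational: performing the expansion of $\widetilde P$ and verifying the uniform $O(\epsilon/m)$ bounds on the entries of $V$ and $W$ --- all the genuine convexity content is already packaged in Lemma \ref{lemma:Algebra_Robustness_Half_perturbation}, which in turn rests on Lemma \ref{lemma:Equivalent_Quadratic_Modulus_Degree}.
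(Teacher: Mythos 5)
Your proposal is correct and follows essentially the same route as the paper's proof: the same normalization $\Phi_\alpha(\po)=0$, the same use of pluriharmonicity to keep $\Psi_{\ijbar}=\Phi_{\ijbar}$, the same tilt-corrected tensors $\MA^\Psi,\MB^\Psi$ compared with $\Phi_{\abbar},\Phi_{\ab}$, the thickness of $\MR$ to bound $|q_{\ab}|$ by $C\epsilon/r$, and the reduction to Lemma \ref{lemma:Algebra_Robustness_Half_perturbation} with the choice $\epsilon\sim m^2/C$. The only cosmetic difference is that you bound $|D^2q|$ by an elementary affine-field argument where the paper invokes the interior gradient estimate for the harmonic function $Dq$; the substance is identical.
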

 \begin{proof}[Proof of Lemma \ref{lemma:ModulusPositive_implies_RobustnessPositive}]
 	Let the metric on $\EC^n$ be $\sum_idz^i\otimes \overline{dz^i}$.
 	
 	 We need to show for a small positive constant $\rho$, $\Phi-q$ is \sqcconvex\ providing $q\in \spaceQ_\rho$. 
 	 
 	 First, according to the definition of modulus of \qcconvexity, we have $|\nabla\Phi(\po)|>m$, so
 	 \begin{align}
 	 	|\nabla(\Phi-q)(\po)| >\frac{m}{2} 
 	 	\label{626A18}
 	 \end{align} for any $q\in \spaceQ_\rho$, providing $\rho\leq \frac{m}{8}$.
In the following, we show when $\rho$ is small enough, the second order Taylor expansions of $\Phi|_{T^\EC_{\Phi,\po}}$ and  $(\Phi-q)|_{\Phi_{T^\EC_{\Phi-q,\po}}}$ at $\po$ are close to each other. Then the \qcconvexity\ of $\Phi$ implies the \qcconvexity\ of $\Phi-q$.
 
 We choose a coordinate $(z^\alpha, \tau)$, so that $\partial_\alpha\Phi(\po)=0$ and the metric is $\sum_i dz^i\otimes \overline{dz^i}$. Let $\Psi=\Phi-q$; then,
 \begin{align}
 	|\Psi_\tau(\po)|\geq |\Phi_\tau(\po)|-|q_\tau(\po)|\geq|\nabla\Phi(\po)|-|\nabla q(\po)|\geq  m-\frac{m}{8}\geq \frac{m}{2}.
 	\label{0703A19}
 \end{align}
Let
 	\begin{align}
 	\MA_{\abbar}=\Psi_{\abbar}(\po)-\Psi_{\alpha\taubar}(\po)\overline{
 		\left(
 		\frac{ \Psi_\beta(\po)}{ \Psi_\tau(\po)}
 		\right)}
 	-\Psi_{\tau\betabar}(\po)
 	\left(
 	\frac{ \Psi_\alpha(\po)}{ \Psi_\tau(\po)}
 	\right)
 	+\Psi_{\tautaubar}	(\po)\left(
 	\frac{ \Psi_\alpha(\po)}{ \Psi_\tau(\po)}
 	\right)\overline{
 		\left(
 		\frac{ \Psi_\beta(\po)}{ \Psi_\tau(\po)}
 		\right)},
 	\\
 	\MB_{\ab}=\Psi_{\ab}(\po)-\Psi_{\alpha\tau}(\po){
 		\left(
 		\frac{ \Psi_\beta(\po)}{ \Psi_\tau(\po)}
 		\right)}
 	-\Psi_{\tau\beta}(\po)
 	\left(
 	\frac{ \Psi_\alpha(\po)}{ \Psi_\tau(\po)}
 	\right)
 	+\Psi_{\tau\tau}(\po)	\left(
 	\frac{ \Psi_\alpha(\po)}{ \Psi_\tau(\po)}
 	\right){
 		\left(
 		\frac{ \Psi_\beta(\po)}{ \Psi_\tau(\po)}
 		\right)}.
 \end{align}
 Then the second order Taylor expansion of $\Psi|_{{T^\EC_{\Psi,\po}}}$ at $\po$ is
 \begin{align}
 	\MA_{{\alpha\betabar}} z^\alpha\overline{z^\beta}+\Ree(\MB_{\ab}z^\alpha z^\beta).
 \end{align}
 Because $q$ is pluriharmonic, we have $\Psi_{\ijbar}=\Phi_{\ijbar}$. Then using  $\Phi_{\alpha}(\po)=0$, we have
 \begin{align}
 	\MA_{\abbar}=\Phi_{\alpha\betabar} +\Phi_{\alpha\taubar}\overline{
 	\left(
 	\frac{q_\beta}{\Psi_\tau}
 	\right)}
 	+\Phi_{\tau\betabar}{
 		\left(
 		\frac{q_\alpha}{\Psi_\tau}
 		\right)}+	
 		\Phi_{\tau\taubar}
 		\frac{q_\alpha q_\betabar}{|\Psi_\tau|^2}.
 \end{align}We denote that $\MA_{\abbar}=\Phi_{\abbar}+\MU_{{\alpha\betabar}}$. Similarly, for $\MB$, we have
  \begin{align}
 	\MB_{\ab}=\Psi_{\alpha\beta} 
 	+\Psi_{\alpha\tau}{
 		\left(
 		\frac{q_\beta}{\Psi_\tau}
 		\right)}
 	+\Psi_{\tau\beta}{
 		\left(
 		\frac{q_\alpha}{\Psi_\tau}
 		\right)}+	
 	\Psi_{\tau\tau}
 	\frac{q_\alpha q_\beta}{(\Psi_\tau)^2};
 \end{align} we denote
 \begin{align}
 	B_{\ab}=\Phi_{\ab}+\MV_{\ab}.
 \end{align}
 So, 
 \begin{align}
 	&\text{the second order Taylor expansion of }\Psi|_{T^\EC_{\Psi,\po}} \text{ at $\po$}
 	\label{626A25}
 	\\
 	=&\text{the second order Taylor expansion of }\Phi|_{T^\EC_{\Phi,\po}} \text{ at $\po$}+\MU_{\abbar} z^\alpha\overline{z^\beta} +\Ree
 	\left((
 	\MV_{\ab}-q_{\ab})z^\alpha z^\alpha
 	\right).
 \end{align}
 Because $\Phi$ is \sqcconvex\ of modulus greater than $m$, we know the second order Taylor expansion of $\Psi|_{T^\EC_{\Psi,\po}}$ at $\po$ is convex of modulus greater than $m$. Then using Lemma 
 \ref{lemma:Algebra_Robustness_Half_perturbation}, we know when 
 \begin{align}
 	(\MU_{\abbar}) \leq \frac{m }{2} 
 	\label{626A27}
 \end{align}
 and 
 \begin{align}
 \left(
 \sum_\gamma\MV_{\alpha\gamma} \overline{\MV_{\gamma\beta}}
 \right)	\leq \frac{m^2}{4},
 	\label{626A28}
 \end{align}
 (\ref{626A25}) is strongly convex.  In the following, we show (\ref{626A27}) and (\ref{626A28}) are satisfied when $\rho$ is small enough;
we need to use the following linear algebra Lemma:
\begin{lemma}
	\label{lemma:matrix_size_control}
	Suppose $A$ is a $k\times k$ Hermitian matrix. Then $A<1$, providing 
	\begin{align}
		|A_{\ijbar}|<\frac{1}{k^2}
		\label{626Asmall}
	\end{align} for any $i, j\in \{1,\ ...\ ,k\}$. Suppose $B$ is a $k\times k$ complex-valued matrix. Then 
	$B{B}^\dagger<1$ providing $|B_{ij}|<\frac{1}{k^\frac{3}{2}}$ for any $i, j\in \{1,\ ...\ ,k\}$.
\end{lemma}
\begin{proof}
	[Proof of Lemma \ref{lemma:matrix_size_control}]
	Since $B{B}^\dagger$ is Hermitian and 
	\begin{align}
		(B{B}^\dagger)_{\ijbar}=\sum_p B_{ip} \overline{B_{pj}},
	\end{align} we only need to prove the estimate for $A$. Suppose the maximum eigenvalue of $A$ is $\sigma$. Then we can find a unit vector
	\begin{align}
		{\bold v}=(v_1,\ ...\ v_k)
	\end{align}so that
	\begin{align}
		{\bold v}A{\bold v}^\dagger=\sigma.
	\end{align} Here  unit vector means ${\bold v}{\bold v}^\dagger=1$.
It follows that
\begin{align}
	\sigma\leq \sum_{i,j=1}^k|v_i||A_{\ijbar}| |v_j|\leq  \sum_{i,j=1}^k|A_{\ijbar}|\leq k^2 \max|A_{\ijbar}|.
\end{align} Therefore (\ref{626Asmall}) implies $A<1$.
\end{proof} 

With the lemma above, we need to show $\MU_{\abbar}$ and $\MV_{\ab}$ are both small.
For $\MU_{\abbar}$, we have
\begin{align}
	|\MU_{\abbar}|\leq \rho C_2 (\frac{8}{m}+64\frac{\rho}{m^2}),
	\label{626A34}
\end{align}where $C_2$ depends on the $C^2$ norm of $\Phi$. 
In (\ref{626A34}), we used the estimate (\ref{0703A19}). We already required that $\rho\leq m$, so from (\ref{626A34}), we get
\begin{align}
	|\MU_{\abbar}|\leq 100 C_2 \frac{\rho}{m}.
	\label{626A35}
\end{align} Therefore, to satisfy (\ref{626A27}), we require that
\begin{align}
	\rho\leq \frac{m^2}{ 200 C_2}.
\end{align}
For $\MV_{\ab}$, similarly, we have
\begin{align}
	|\MV_{\ab}|\leq 100 \tct \frac{\rho}{m}.
	\label{626A37}
\end{align} 
Here the constant $\tct$ depends on the $C^2$ norm of $\Phi-q$.

To prove (\ref{626A37}), we need to show the $C^2$ derivatives of $q$ can be controlled by $\rho$ and the thickness of $\MR$. Suppose  $B_r(z)\subset \MR$, where $r$ is the thickness of $\MR$. Then we can estimate $D^2q(z)$ since $Dq$ is harmonic in $B_r(z)$. The following estimate is standard:
\begin{align}
	|D^2q (z)|\leq \frac{C_n}{r} \max_{B_r} |\nabla q|,
\end{align} where $C_n$ depends on the dimension $n$. Since $q$ is quadratic, we know $D^2 q$ is constant. Then it follows that 
\begin{align}
	|D^2q|\leq \frac{C_n \rho}{r}. 
	\end{align}

Since $ \rho\leq m$ and $m$ depends on the $C^2$ norm of $\Phi$, we have $\tct$ depends on the $C^2$ norm of $\Phi$ and the thickness of $\MR$. 

Summing up, (\ref{626A35}) and (\ref{626A37}) can both be satisfied by choosing 
	\begin{align}
		\rho=\frac{m^2}{C},
		\label{626A39}
	\end{align}  where $C$ is a constant depending on the $C^2$ norm of $\Phi$ and the thickness of $\MR$. We choose $C$ large enough so that $\frac{m}{C}\leq \frac{1}{8}$ because $\rho$ depends on the $C^2$ norm of $\Phi$. Then (\ref{626A39}) also implies $\rho\leq\frac{m}{8}$. The lemma is proved.
 \end{proof} 
\begin{lemma}
	\label{lemma:RobustnessPositive_implies_ModulusPositive}
	Suppose that $\Omega$ is a bounded domain with smooth boundaries and that the robustness of \qcconvexity\ of $\Phi\in C^{\infty}(\Omegabar)$ is greater than $\rho$ at $\po\in \Omegabar$.  Then the modulus of \qcconvexity\ of $\Phi$ is greater than $\frac{\rho}{C}$ at $\po$, for a constant $C$ depending on the diameter of $\Omega.$
\end{lemma}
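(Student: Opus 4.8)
The plan is to verify directly the two clauses in the definition of ``modulus of \qcconvexity\ greater than $\rho/C$'' at $\po$: a gradient lower bound $|\nabla\Phi(\po)|_g>\rho/C$, and a lower bound $\rho/C$ on the modulus of convexity of the second order Taylor expansion $P$ of $\Phi|_{T^\EC_{\Phi,\po}}$ at $\po$.

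First I would dispose of the gradient bound. If $|\nabla\Phi(\po)|_g\le\rho$, take the linear (hence pluriharmonic, hence ``quadratic'' in the sense of the paper) polynomial $q$ whose constant gradient equals $\nabla\Phi(\po)$; then $q\in\spaceQ_\rho$, so robustness forces $\Phi-q$ to be \sqcconvex\ at $\po$, and in particular $\nabla(\Phi-q)(\po)\ne0$ — contradicting $\nabla(\Phi-q)(\po)=\nabla\Phi(\po)-\nabla q(\po)=0$. Hence $|\nabla\Phi(\po)|_g>\rho$, which is more than enough once $C\ge1$.

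For the convexity clause I would use Lemma \ref{lemma:Equivalent_Quadratic_Modulus_Degree} to replace ``modulus of convexity $>\rho/C$'' by the equivalent ``degree of convexity $>\rho/C$''. After a $g$-unitary change of coordinates (which preserves $g$ and the families $\spaceQ_\epsilon$) I may assume $\po=0$, $\Phi_\alpha(\po)=0$ and $\Phi_\tau(\po)\ne0$, so that $T^\EC_{\Phi,\po}=\{z^\tau=0\}$ and $P(z^\alpha)=\Phi_{\alpha\betabar}(\po)z^\alpha\zbetabar+\Ree\big(\Phi_{\alpha\beta}(\po)z^\alpha z^\beta\big)$. Given any symmetric $W=(W_{\alpha\beta})$ with $W\overline{G^{-1}}\,\overline WG^{-1}\le(\rho/C)^2$, where $G$ denotes $g$ restricted to $T^\EC_{\Phi,\po}$, I would set $q(z)=\Ree(W_{\alpha\beta}z^\alpha z^\beta)$, viewed as a pluriharmonic quadratic polynomial on $\EC^n$ with no $\tau$-entries. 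The key point is that $q$ is pluriharmonic, $\nabla q(\po)=0$ — so that $T^\EC_{\Phi-q,\po}=T^\EC_{\Phi,\po}$ \emph{exactly} — while $q_{\alpha\betabar}=0$ and $q_{\alpha\beta}=W_{\alpha\beta}$; consequently the second order Taylor expansion of $(\Phi-q)|_{T^\EC_{\Phi-q,\po}}$ at $\po$ is precisely $P-\MW$, where $\MW(z^\alpha)=\Ree(W_{\alpha\beta}z^\alpha z^\beta)$.

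It remains only to arrange $q\in\spaceQ_\rho$, and this is where the diameter enters. Since $\nabla q$ is linear and vanishes at $\po=0$, one has $|\nabla q(z)|_g\le C_0\|W\|\,|z-\po|\le C_0\|W\|\,\mathrm{diam}(\Omega)$ for $z\in\overline\Omega$, while the hypothesis $W\overline{G^{-1}}\,\overline WG^{-1}\le(\rho/C)^2$ bounds $\|W\|$ by $\rho/C$ up to a factor depending only on $G$; choosing $C$ to be a sufficiently large multiple of $\mathrm{diam}(\Omega)$ (and $\ge1$) makes $|\nabla q|_g\le\rho$ on $\overline\Omega$. Robustness then gives that $\Phi-q$ is \sqcconvex\ at $\po$, hence $P-\MW$ is strongly convex; as $W$ was arbitrary subject to the stated bound, the degree of convexity of $P$ exceeds $\rho/C$, and Lemma \ref{lemma:Equivalent_Quadratic_Modulus_Degree} converts this into the desired bound on the modulus of convexity. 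I do not expect a real obstacle: the only delicate points are that subtracting $q$ does not move the complex tangent plane (ensured by $\nabla q(\po)=0$) and the routine bookkeeping of the constant $C$ and of the coordinate normalization.
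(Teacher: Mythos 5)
Your proposal is correct and follows essentially the same route as the paper: a linear perturbation to force $|\nabla\Phi(\po)|>\rho$, then a pluriharmonic quadratic perturbation $q$ with $\nabla q(\po)=0$ (so the complex tangent plane is unchanged), scaled by $1/\mathrm{diam}(\Omega)$ to keep $q\in\spaceQ_\rho$, and the degree--modulus equivalence of Lemma \ref{lemma:Equivalent_Quadratic_Modulus_Degree}. The only cosmetic difference is that you allow a general symmetric $W$ while the paper takes diagonal perturbations $q_{\bold v}$ (equivalent after a tangential unitary rotation), so no substantive divergence.
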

\begin{proof}
	[Proof of Lemma \ref{lemma:RobustnessPositive_implies_ModulusPositive}]
	We assume the Hermitian metric on $\Omega$ is $dz^i\otimes \overline{dz^i}$.
	
	First, we estimate $|\nabla\Phi(\po)|$. The assumption that the robustness of \qcconvexity\ of $\Phi$ is greater than $\rho$ at $\po$  implies that for any linear function $l$ with $|\nabla l|\leq \rho$,  
	\begin{align}
		\nabla(\Phi-l)\neq 0\ \ \ \ \  \ \ \ \text{ at }\po.
	\end{align}So we have $|\nabla\Phi(\po)|>\rho$.
	
	For the convexity estimate, we choose a set of coordinates  $(z^\alpha, \tau)$ so that
	$\po=0$, $\Phi_{\alpha}(\po)=0$ and the metric is still $dz^i\otimes \overline{dz^i}$.  Then $\{\tau=0\}$ is the complex tangent plane of  $\{\Phi=\Phi(\po)\}$ at $\po$.  Let
	\begin{align}
		q_{\bold v} =\Ree 
		\left[
		\sum_{\alpha=1}^{n-1}v_\alpha(z^\alpha)^2\frac{1}{4 \text{ Diameter}(\Omega)}
		\right],
	\end{align}
	for any ${\bold v}=(v_\alpha)\in \EC^{n-1}$, with $|v_\alpha|\leq \rho$, for  $\alpha=1,\ ...\ , n-1$. Then, 
	\begin{align}
		|\nabla q_{\bold v}|\leq \rho \ \ \ \ \  \ \ \  \text{ in } \Omega,
	\end{align} and, therefore,
	$q_{\bold v}\in \spaceQ_\rho$. So, $\Phi-q_{\bold v}$ is \sqcconvex, because of  the condition that the robustness of \qcconvexity\ of $\Phi$ is greater than $\rho$.
	It follows that $\nabla(\Phi-q_{\bold v})\neq0$ in $\Omega$, so level sets of $\Phi-q_{\bold v}$ are smooth surfaces. Since $\nabla q_{\bold v}(\po)=0$,  $\{\tau=0\}$ is also the complex tangent plane of 
	\begin{align}
		\left\{
		\Phi-q_{\bold v}
		=
		(\Phi-q_{\bold v})(\po)
		\right\}
	\end{align} at $\po$.
	Therefore, the second order Taylor expansion of 
	$\Phi|_{\{\tau=0\}}-q_{\bold v}$ at $0$ is strongly convex. This says the degree of convexity of  the second order Taylor expansion of 
	$\Phi|_{\{\tau=0\}}$ at $0$ is greater than $\frac{\rho}{4 \text{ Diameter}(\Omega)}$.
	Then it follows that the modulus of \qcconvexity\ of $\Phi$ is greater than
	\begin{align}
		\rho\cdot \min
		\left\{
		1, \frac{1}{4 \text{ Diameter}(\Omega)}
		\right\}.
	\end{align}
\end{proof}
\begin{lemma}
	\label{lemma_Metric_and_Q_convert_to_Degree}
	Let $g_{\ijbar} \dzijbar$ be a constant coefficient Hermitian metric on $\EC^n$. Let $G=(g_{\ijbar})$. Suppose that $A $ is a hermitian matrix and that $B$ is a symmetric matrix, satisfying
	\begin{align}
		A&>\delta G, \label{510conditionA1}\\
		B{\Abarinverse}\ \Bbar \Ainverse&<1-\delta,\label{510conditionA2}
	\end{align}for a positive constant $\delta$, with $\delta<1$. Then the modulus of convexity of
	\begin{align}
		P(z)=A_{\ijbar} z^i \zjbar+\Ree(B_{ij} z^iz^j)
	\end{align} is greater than $\frac{\delta^2}{2}$.
\end{lemma}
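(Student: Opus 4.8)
The plan is to reduce the statement to the algebraic criterion for strong convexity of quadratic forms recalled in the introduction (following \cite{Hu22Nov}), applied to a shifted polynomial, and then to control one eigenvalue bound by the other through a monotonicity argument.

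By the definition of the modulus of convexity (and since $P$ carries no linear term), the assertion is equivalent to strong convexity of the quadratic form
\begin{align}
	P'(z) = P(z) - \tfrac{\delta^2}{2}\,g_{\ijbar}z^i\zjbar = \big(A - \tfrac{\delta^2}{2}G\big)_{\ijbar}z^i\zjbar + \Ree\big(B_{ij}z^iz^j\big).
\end{align}
Write $A' = A - \tfrac{\delta^2}{2}G$. First I would record that $A' > 0$, and in fact $A' > (1-\tfrac{\delta}{2})A$: indeed (\ref{510conditionA1}) gives $G < \tfrac1\delta A$, hence $\tfrac{\delta^2}{2}G < \tfrac{\delta}{2}A$, and $0 < \delta < 1$. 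By the criterion from the introduction, $P'$ is strongly convex precisely when $A' > 0$ and the largest eigenvalue of $\MK' := B\,\overline{(A')^{-1}}\ \Bbar\,(A')^{-1}$ is $<1$; so the whole statement reduces to the bound $\max\operatorname{eig}\MK' < 1$, while (\ref{510conditionA2}) says $\max\operatorname{eig}\big(B\,\overline{A^{-1}}\ \Bbar\,A^{-1}\big) < 1-\delta$.

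To pass from one to the other, for a positive definite Hermitian matrix $N$ set $f(N) = \max\operatorname{eig}\big(B\,\overline N\ \Bbar\, N\big)$, so that the hypothesis reads $f(A^{-1}) < 1-\delta$ and the goal reads $f\big((A')^{-1}\big) < 1$. The key point I would establish is that $f$ is homogeneous of degree $2$ and \emph{monotone increasing}: $N_1 \le N_2$ implies $f(N_1) \le f(N_2)$. Homogeneity is obvious; for monotonicity I would use the identity $f(N) = \| N^{1/2}B\,\overline{N^{1/2}}\|_{\mathrm{op}}^{2}$, which holds because $N^{1/2}B\,\overline{N^{1/2}}$ is complex symmetric and $\big(N^{1/2}B\,\overline{N^{1/2}}\big)\big(N^{1/2}B\,\overline{N^{1/2}}\big)^{*}$ is conjugate to $B\,\overline N\ \Bbar\, N$. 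When $N_1 \le N_2$ one has $\|N_1^{1/2}N_2^{-1/2}\|_{\mathrm{op}} \le 1$, and the factorization
\begin{align}
	N_1^{1/2}B\,\overline{N_1^{1/2}} = \big(N_1^{1/2}N_2^{-1/2}\big)\,\big(N_2^{1/2}B\,\overline{N_2^{1/2}}\big)\,\overline{\big(N_2^{-1/2}N_1^{1/2}\big)}
\end{align}
together with submultiplicativity and conjugation-invariance of the operator norm gives $\|N_1^{1/2}B\,\overline{N_1^{1/2}}\|_{\mathrm{op}} \le \|N_2^{1/2}B\,\overline{N_2^{1/2}}\|_{\mathrm{op}}$, i.e.\ $f(N_1) \le f(N_2)$.

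Granting this, the proof finishes quickly: from $A' > (1-\tfrac\delta2)A$ we get $(A')^{-1} < \tfrac{1}{1-\delta/2}A^{-1}$, so
\begin{align}
	\max\operatorname{eig}\MK' = f\big((A')^{-1}\big) \le f\Big(\tfrac{1}{1-\delta/2}A^{-1}\Big) = \frac{f(A^{-1})}{(1-\delta/2)^2} < \frac{1-\delta}{(1-\delta/2)^2} < 1,
\end{align}
the last inequality being $1-\delta < (1-\tfrac\delta2)^2 = 1-\delta+\tfrac{\delta^2}{4}$ together with $1-\tfrac\delta2 > 0$. The main obstacle is the monotonicity of $f$ — specifically checking that $N^{1/2}B\,\overline{N^{1/2}}$ is symmetric (so that its squared singular values are the eigenvalues of the $\MK$-type matrices) and keeping the conjugates, transposes, and adjoints straight in the factorization above; once that is settled, everything else is routine. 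One could instead run the argument via the degree of convexity together with Lemmas \ref{lemma:Equivalent_Quadratic_Modulus_Degree} and \ref{lemma:Algebra_Robustness_Half_perturbation}, reducing first to the case $G=I$, but the route above seems more direct.
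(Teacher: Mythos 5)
Your proposal is correct and takes essentially the same route as the paper: reduce the claim to strong convexity of $P-\tfrac{\delta^2}{2}g_{\ijbar}z^i\zjbar$, compare $A-\tfrac{\delta^2}{2}G$ with $(1-\tfrac{\delta}{2})A$, and combine degree-two homogeneity with monotonicity of the maximum eigenvalue of $B\,\overline{N}\,\overline{B}\,N$ in the Hermitian argument $N$. The only difference is that you actually prove that monotonicity via the identity $f(N)=\big\|N^{1/2}B\,\overline{N^{1/2}}\big\|_{\mathrm{op}}^{2}$ and submultiplicativity, whereas the paper states the corresponding sub-lemma and omits the proof ("simultaneously diagonalize $K$ and $H$"); your verification of that identity and of the factorization is correct, so this is a complete (indeed slightly more detailed) version of the paper's argument.
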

\begin{proof}
	[Proof of Lemma \ref{lemma_Metric_and_Q_convert_to_Degree}]
	From (\ref{510conditionA2}), we easily get 
	\begin{align}
		B{\Abarinverse}\ \Bbar \Ainverse<(1-\frac{\delta}{2})^2
	\end{align}
	and, consequently,
		\begin{align}
		B{{\left[\overline A(1-\frac{\delta}{2})\right]}^{-1}}\ \Bbar \left[A(1-\frac{\delta}{2})\right]^{-1}<1.   			\label{0510A5}
	\end{align}
	According to (\ref{510conditionA1}), 
	\begin{align}
		A(1-\frac{\delta}{2})< A-\frac{\delta^2}{2}G;
	\end{align} then, (\ref{0510A5}) implies 
	\begin{align}
			B{{\left[\overline {A}-\frac{\delta^2}{2}\overline{G}\right]}^{-1}}\ \Bbar \left[A-\frac{\delta^2}{2}G\right]^{-1}<1.  \label{0510A7}
	\end{align}
	From (\ref{0510A5}) to (\ref{0510A7}), we used the following linear algebra lemma:
	\begin{lemma}
		Suppose $K$ and $H$ are two positive definite Hermitian matrices, with $K>H$. Then for a symmetric matrix $B$ we have
		the maximum eigenvalue of $B\overline{K^{-1}}\ \Bbar K^{-1}$ is smaller or equal to the maximum eigenvalue of $B\overline{H^{-1}}\ \Bbar H^{-1}$. 
	\end{lemma}
	The lemma above can be easily proved by simultaneously diagonalize $K$ and $H$; we omit the proof here.
	
	 Then using (\ref{0510A7}) and $A-\frac{\delta^2}{2}G>0$, we have 
		\begin{align}
		\left(A-\frac{\delta^2}{2}G\right)_{\ijbar} z^i \zjbar+\Ree(B_{ij} z^iz^j)=P-\frac{\delta^2}{2}g_{\ijbar} z^i \zjbar
	\end{align} is a strictly convex function on $\EC^n$. Therefore, the modulus of convexity of $P$ is greater than $\frac{\delta^2}{2}$
\end{proof}

\begin{remark}
	The proof of Lemma \ref{lemma_Metric_and_Q_convert_to_Degree} can be visualized in a simplified case of $n=1$. Suppose $a\in\ER^{+}$ and $b\in \ER$. If $a>\delta$ and $\frac{b^2}{a^2}<1-\delta$, then the modulus (or degree) of convexity of 
	$P(z)=a|z|^2+\Ree(bz^2)$ is greater than $\frac{\delta^2}{2}$. In Figure \ref{fig:Visualizing_Convexity_Estimate} the shadowed area is the set
	\begin{align}
	 F=	\{a>\delta\}\cap \left\{\frac{b^2}{a^2}<1-\delta\right\}
	\end{align}
	 \begin{figure}[h]
		\centering  
		\includegraphics[height=3.5cm]	{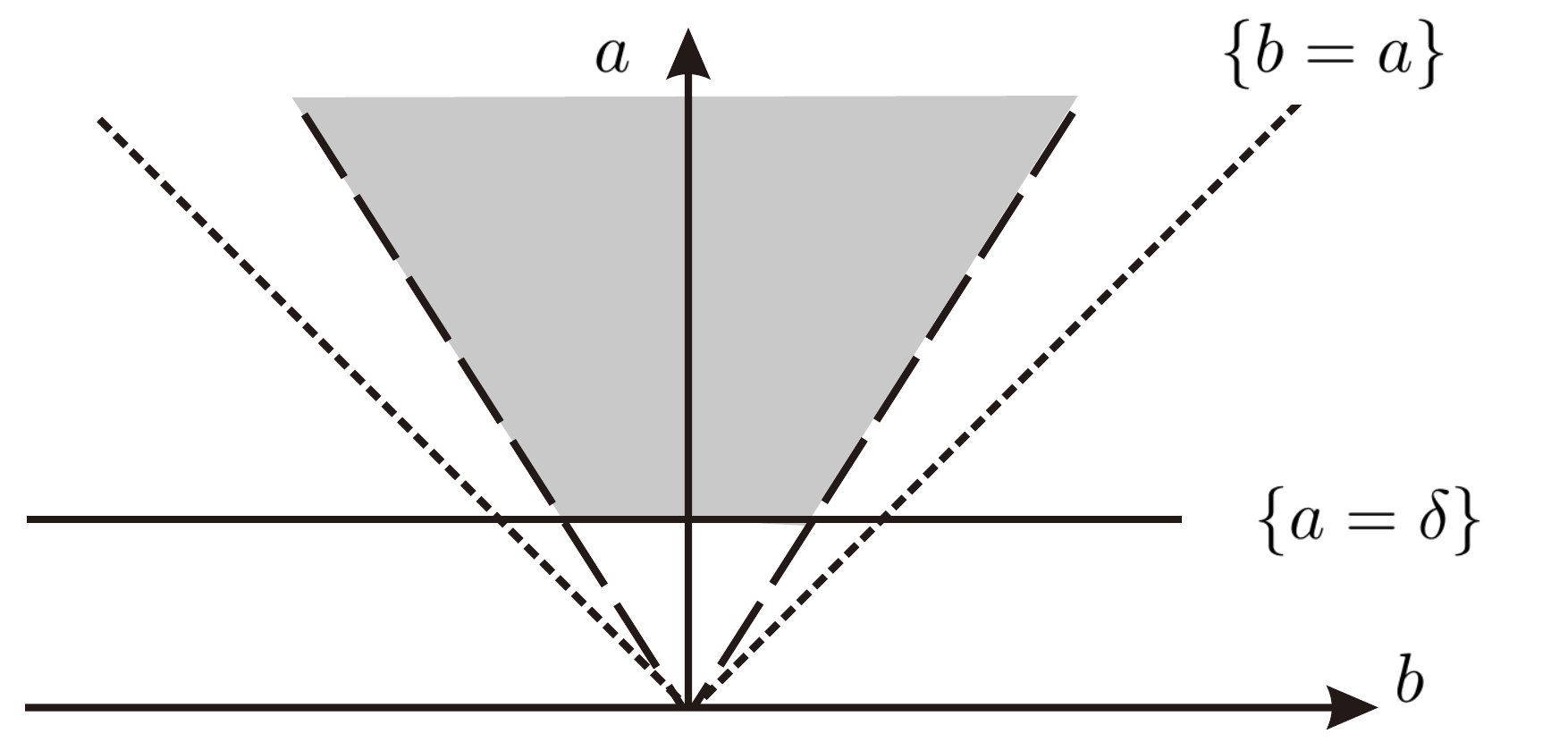}
		\caption{Visualizing the Convexity Estimate}
		\label{fig:Visualizing_Convexity_Estimate}  
	\end{figure}
	We can easily see for a small enough $c>0$, the cone $C=\{|b|<a-c\}$ contains $F$, which says the modulus of convexity of $P$ is greater than $c$.  Through computation, we find we can choose $c=\frac{\delta^2}{2}$.
\end{remark}

Lemma \ref{lemma_Metric_and_Q_convert_to_Degree} proves that for a function 
	\begin{align}
	P(z)=A_{\ijbar} z^i \zjbar+\Ree(B_{ij} z^iz^j)
\end{align} 
the positivity of $A$ and $B\overline{A^{-1}}\ \overline{B}A^{-1}<1$ implies a convexity estimate for $P$; the following lemma proves an estimate in the opposite direction;
\begin{lemma}
	\label{lemma:Robustness_positive_implies_K<1}
	Suppose the modulus (or, equivalently, the degree) of convexity of $P(z)$ is greater than $\delta>0$. Then 
	\begin{align}
		B\overline{A^{-1}}\ \overline{B}A^{-1}<1-\frac{\delta}{C},
	\end{align}where $C$ depends on the $C^2$ norm of $P$.
\end{lemma}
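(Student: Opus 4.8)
The plan is to trace the hypothesis down to the elementary convexity criterion for quadratic polynomials and then turn the $\delta$-gap in the modulus of convexity into a definite gap for the eigenvalues of $B\,\overline{A^{-1}}\,\overline{B}A^{-1}$ by a one-line rescaling of $A$.

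First I would record that, by the very definition of the modulus of convexity, saying it is greater than $\delta$ means precisely that $P(z)-\delta\,g_{i\bar j}z^i z^{\bar j}>0$ for all $z\neq 0$; that is, the quadratic polynomial $\widetilde P:=P-\delta\,g_{i\bar j}z^i z^{\bar j}$ is strongly convex. Its Hermitian part is $A-\delta g$ and its complex bilinear part is still $B$. Hence the ``easy'' half of the convexity criterion for quadratics discussed in the Introduction --- a quadratic $A_{i\bar j}z^i z^{\bar j}+\Ree(B_{ij}z^i z^j)$ which is strongly convex must satisfy $A>0$ and $B\,\overline{A^{-1}}\,\overline{B}A^{-1}<1$, as one sees by minimizing over the phase in $P(e^{\sqrt{-1}\,\theta}z)$ and diagonalizing $A^{-1/2}B\,\overline{A^{-1/2}}$ by Autonne--Takagi --- applied to $\widetilde P$ gives
\[
A-\delta g>0\qquad\text{and}\qquad B\,\overline{(A-\delta g)^{-1}}\,\overline{B}\,(A-\delta g)^{-1}<1 .
\]

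Next, after a linear change of coordinates I may assume the metric is Euclidean, $g=I$, at the cost of a factor in $C$ depending only on $g$. Choose a constant $C_{0}$, controlled by the $C^{2}$ norm of $P$, with $A<C_{0}\,g$ as Hermitian forms; since $A-\delta g>0$ forces $C_{0}>\delta$, the scalar $c:=1-\delta/C_{0}$ lies in $(0,1)$, and $A<C_{0}\,g$ gives $\delta g>(\delta/C_{0})\,A$, hence $0<A-\delta g<cA$. By the monotonicity lemma used in the proof of Lemma~\ref{lemma_Metric_and_Q_convert_to_Degree} --- for symmetric $B$, if $0<H<K$ then the largest eigenvalue of $B\,\overline{K^{-1}}\,\overline{B}K^{-1}$ is at most that of $B\,\overline{H^{-1}}\,\overline{B}H^{-1}$ --- applied with $H=A-\delta g$ and $K=cA$,
\[
B\,\overline{(cA)^{-1}}\,\overline{B}\,(cA)^{-1}\ \le\ B\,\overline{(A-\delta g)^{-1}}\,\overline{B}\,(A-\delta g)^{-1}\ <\ 1 .
\]
Since $c$ is a positive real scalar, $B\,\overline{(cA)^{-1}}\,\overline{B}\,(cA)^{-1}=c^{-2}\,B\,\overline{A^{-1}}\,\overline{B}A^{-1}$, so $B\,\overline{A^{-1}}\,\overline{B}A^{-1}<c^{2}=(1-\delta/C_{0})^{2}\le 1-\delta/C_{0}$, and the lemma follows with $C=C_{0}$.

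The only point I expect to need genuine care is the first one: spelling out the quadratic convexity criterion in exactly the form used (only the trivial implication is needed, and it can be cited from \cite{Hu22Nov}), and checking that the bound $A<C_{0}g$ is genuinely controlled by the $C^{2}$ norm of $P$ together with the fixed metric $g$ after the normalization $g=I$. Everything else is the one-line rescaling above. As an alternative to that rescaling, one may instead feed the admissible perturbation $\Ree(W_{ij}z^i z^j)$ with $W=\frac{\delta}{\|A\|}\,A^{1/2}\widehat M\,\overline{A^{1/2}}$ into the \emph{degree}-of-convexity hypothesis, where $\widehat M$ is the unit-normalized Autonne--Takagi form of $A^{-1/2}B\,\overline{A^{-1/2}}$; the convexity criterion applied to $P+\Ree(W_{ij}z^i z^j)$ then forces $2\epsilon\sqrt{1-t}+\epsilon^{2}<t$, with $\epsilon=\delta/\|A\|$ and $1-t$ the largest eigenvalue of $B\,\overline{A^{-1}}\,\overline{B}A^{-1}$, which again yields $t\ge\delta/C$ after a trivial case split on whether $t\le\tfrac12$. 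Either route is short; the rescaling avoids the case split.
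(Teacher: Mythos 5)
Your proof is correct and follows essentially the same route as the paper: both reduce the hypothesis to strong convexity of $P-\delta g_{\ijbar}z^i\overline{z^j}$, invoke the Autonne--Takagi quadratic criterion, and convert the $\delta$-gap into an eigenvalue gap through a bound $A\leq C_0 G$ controlled by the $C^2$ norm of $P$. The only difference is packaging: the paper normalizes $A=I$, $B=\diag(\lambda_\mu)$ and reads off $\lambda_\mu<1-\delta/C_2$ directly, whereas you transfer the bound via the monotonicity lemma from the proof of Lemma \ref{lemma_Metric_and_Q_convert_to_Degree} together with the scalar rescaling $A-\delta g<(1-\delta/C_0)A$, an equally valid way to finish.
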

\begin{proof}
	[Proof of Lemma \ref{lemma:Robustness_positive_implies_K<1}]
	Using Autonne-Takagi factorization (Corollary 4.4.4(c) of \cite{HornMatrix}, or Lemma A.3 of \cite{Hu22Nov}), we diagonalize $A, B$ simultaneously so that 
	\begin{align}
		A=I,\ B=\diag(\lambda_1,\ ...\ , \lambda_{n-1}),
	\end{align}
	with $\lambda_\mu\geq0.$ Suppose the Hermitian metric on $\EC^{n-1}$ is $G_\abbar dz^\alpha\otimes \overline{dz^\beta}$. 
The modulus of convexity of $P$ is greater than $\delta$ implies $P-\delta G_{\abbar} z^\alpha\overline{z^\beta}$ is strongly convex. Suppose that
\begin{align}
	AG^{-1}\leq C_2.
\end{align}Then $C_2$ depends on $|D^2P|_{G}$. This implies
\begin{align}
	G\geq \frac{1}{C_2}A,
\end{align} and, with the current coordinate, we have
\begin{align}
	G\geq \frac{1}{C_2}I.
\end{align} So, $P-\frac{\delta}{C_2}|z|^2$ is strongly convex. Therefore, 
\begin{align}
	\sigma_\mu<1-\frac{\delta}{C_2},
\end{align}and it follows that
\begin{align}
	B\overline{A^{-1}}\ \overline{B}A^{-1} <
	\left(
	1-\frac{\delta}{C_2}
	\right)^2.
\end{align}
$P-G_{\abbar} z^\alpha\overline{z^\beta}$ is strongly convex implies $A>\delta G$; therefore,  $C_2\geq \delta$. So we can choose $C=2C_2.$

\end{proof}
We also need the following lemma which shows the modulus or robustness of \qcconvexity\ is weakly preserved when taking limit:
\begin{lemma}
	\label{lemma:Modulus_Estimate_with_Limit}
	 Let $\Omega$ be a domain with smooth boundary, and let $\po$ be a point in $\overline{\Omega}$. Let $\{ {\Phi^\eqe}\}_{\eqe\in[0,1)}$ be a family of smooth functions defined on $\overline{\Omega}$, with
	 \begin{align}
	 	{\Phi^\eqe}\rightarrow\Phi^0, \ D{\Phi^\eqe}\rightarrow D\Phi^0,\  D^2{\Phi^\eqe}\rightarrow D^2\Phi^0,\  \text{ at }\po.
	 \label{625_converge_3}
	 \end{align} Suppose that the robustness of \qcconvexity\ of $\Phi^\eqe$ at $\po$ is greater than $\rho$, for any $\eqe\in (0,1)$. 	 Then for any $0\leq \trho < \rho$, the robustness of \qcconvexity\ of $\Phi^0$ is greater than $\trho$ at $\po$.
\end{lemma}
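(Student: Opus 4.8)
The plan is to reduce the statement to a limit in the finitely many data that determine \qcconvexity\ at $\po$ — the $1$-jet and $2$-jet of the function there — and to extract the quantitative uniformity that limit needs by converting ``robustness'' into ``modulus''. Throughout fix the constant coefficient Hermitian metric $g$ on $\EC^n$ with respect to which all notions of \qcconvexity\ are measured, and fix $\trho$ with $0\leq\trho<\rho$; I must produce, for every $q\in\spaceQ_\trho$, the conclusion that $\Phi^0-q$ is \sqcconvex\ at $\po$, so fix such a $q$.

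First I would observe that for each $\eqe\in(0,1)$ the robustness of \qcconvexity\ of $\Phi^\eqe-q$ at $\po$ is greater than $\rho-\trho$; this is exactly the remark following the definition of robustness, applied through $\spaceQ_{\rho-\trho}+\spaceQ_\trho=\spaceQ_\rho$. Then I would apply Lemma \ref{lemma:RobustnessPositive_implies_ModulusPositive} to the smooth function $\Phi^\eqe-q$ on $\overline\Omega$, obtaining that its modulus of \qcconvexity\ at $\po$ is greater than $m:=(\rho-\trho)/C$, where $C$ depends only on the diameter of $\Omega$ and is therefore independent of $\eqe$. This uniform $m>0$ is the crux: without it the strong \qcconvexity\ of the individual $\Phi^\eqe-q$ could degenerate as $\eqe\to0$.

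Next I would unwind the definition of modulus of \qcconvexity\ and pass to the limit. Since $|\nabla(\Phi^0-q)|_g(\po)\geq m>0$ by $(\ref{625_converge_3})$, I may choose a unitary coordinate change so that, with $\tau$ the $n$-th coordinate, $(\Phi^0-q)_\alpha(\po)=0$ for $\alpha\in\{1,\dots,n-1\}$ and $(\Phi^0-q)_\tau(\po)\neq0$; by convergence of the $1$-jet the same nonvanishing of the $\tau$-derivative holds for $\Phi^\eqe-q$ at $\po$ for all small $\eqe$. With these denominators bounded away from $0$, the matrices $\MA,\MB$ formed from $\Phi^\eqe-q$ at $\po$ by the formulas $(\ref{516120})$--$(\ref{516121})$ are fixed rational functions of the $2$-jet of $\Phi^\eqe-q$ at $\po$, so by $(\ref{625_converge_3})$ they converge to the matrices $\MA,\MB$ of $\Phi^0-q$; hence the quadratic form $\MA_{\abbar}z^\alpha z^\betabar+\Ree(\MB_{\ab}z^\alpha z^\beta)$ representing the second order Taylor expansion of $(\Phi^\eqe-q)|_{T^\EC_{\Phi^\eqe-q,\po}}$ converges coefficientwise to the corresponding form for $\Phi^0-q$. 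By the previous step each such form exceeds $m\,g_{\abbar}z^\alpha z^\betabar$ off the origin, so the limiting form is $\geq m\,g_{\abbar}z^\alpha z^\betabar$, which (since $m>0$) is still strongly convex; combined with $\nabla(\Phi^0-q)(\po)\neq0$ this says $\Phi^0-q$ is \sqcconvex\ at $\po$, and as $q\in\spaceQ_\trho$ was arbitrary the robustness of \qcconvexity\ of $\Phi^0$ at $\po$ is greater than $\trho$. The only delicate point — and the one I would check carefully — is this last step: that the coordinate normalization can be fixed uniformly in $\eqe$ (it can, since it is dictated by the limiting gradient, which is bounded below) so that $(\ref{516120})$--$(\ref{516121})$ produce no blow-up, and that a strict modulus bound survives the limit only as a non-strict one, which is harmless. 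Everything else is routine bookkeeping with the definitions, Lemma \ref{lemma:RobustnessPositive_implies_ModulusPositive}, and the $\spaceQ$-sum remark.
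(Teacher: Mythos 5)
Your proposal is correct, and it follows the paper's overall scheme — fix $q\in\spaceQ_\trho$, use $\spaceQ_{\rho-\trho}+\spaceQ_\trho=\spaceQ_\rho$ to get uniform robustness of $\Phi^\eqe-q$, work in coordinates adapted to the limiting gradient (which is bounded below), and pass the $2$-jet data $\MA^\eqe,\MB^\eqe$ to the limit — but it takes a shorter route at the decisive step. The paper converts the uniform robustness of $\Phi^\eqe-q$ into \emph{two} matrix inequalities with $\eqe$-independent constants, a lower bound on $\MA^\eqe$ via Lemma \ref{lemma:RobustnessPositive_implies_ModulusPositive} and the bound $\MK^\eqe\leq 1-\frac{\rho-\trho}{C_K}$ via Lemma \ref{lemma:Robustness_positive_implies_K<1}, passes both to the limit, and then reassembles strong \qcconvexity\ of $\Phi^0-q$ through Lemma \ref{lemma_Metric_and_Q_convert_to_Degree}; you instead use Lemma \ref{lemma:RobustnessPositive_implies_ModulusPositive} once to get a uniform modulus $m=(\rho-\trho)/C$ and pass the scalar inequality for the tangential Taylor quadratic form directly to the limit, observing that strictness degrades to $\geq m$, which is harmless since $m>0$. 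This buys brevity (two auxiliary lemmas are bypassed), at the cost of the explicit limiting bounds on $\MA^0$ and $\MK^0$ that the paper's phrasing produces as a by-product. One small imprecision, not a gap: the modulus bound for $\Phi^\eqe-q$ is against the induced metric of the $\eqe$-dependent tangent plane, i.e.\ (after normalizing the metric to $\delta_{ij}$, which needs a complex linear rather than merely unitary change if $\MG$ is not already Euclidean) against $\delta_{\ab}+v^\eqe_\alpha\overline{v^\eqe_\beta}$ rather than $g_{\abbar}$ itself; since this dominates $\delta_{\ab}$ and converges to the limiting induced metric as $v^\eqe\rightarrow 0$, your limiting inequality and conclusion are unaffected.
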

\begin{proof}
	[Proof of Lemma \ref{lemma:Modulus_Estimate_with_Limit}] 
	We denote the metric on $\EC^n$ by $\MG_\ijbar dz^i\otimes \overline{dz^j}$. In the following, all the computations are performed at the point $\po$.
	
	Because of the \qcconvexity\  assumption for ${\Phi^\eqe}$, for any $q\in \spaceQ_{\trho}$ and $\eqe>0$, ${\Phi^\eqe}-q$ is \sqcconvex\ with the robustness greater than $\rho-\trho$. This implies 
	\begin{align}
		|\nabla({\Phi^\eqe}-q)|_{\MG} >\rho-\trho,
	\end{align}for any $\eqe\in(0,1)$; then, because of (\ref{625_converge_3}),  we have 
		\begin{align}
		|\nabla({\Phi^0}-q)|_{\MG} \geq\rho-\trho>0.
	\end{align} Therefore, we can choose a set of coordinates $(z^\alpha, \tau)$, so that 
	\begin{align}
		\partial_\alpha(\Phi^0-q)=0.
	\end{align} This implies $\partial_\tau(\Phi^0-q)\neq 0$, and when $\eqe$ is small enough, $\partial_\tau(\Phi^\eqe-q)\neq 0$. Let $\Psi^\eqe={\Phi^\eqe}-q$; for each $\Psi^\eqe$, with $\eqe $ small enough, we define
	\begin{align}
		\MA^{\eqe}_{\abbar}=\Psi^\eqe_{\abbar}(\po)-\Psi^\eqe_{\alpha\taubar}(\po)\overline{
		\left(
		\frac{ \Psi^\eqe_\beta(\po)}{ \Psi^\eqe_\tau(\po)}
		\right)}
		                                        -\Psi^\eqe_{\tau\betabar}(\po)
		                                        	\left(
		                                        \frac{ \Psi^\eqe_\alpha(\po)}{ \Psi^\eqe_\tau(\po)}
		                                        \right)
		                                        +\Psi^\eqe_{\tautaubar}	(\po)\left(
		                                        \frac{ \Psi^\eqe_\alpha(\po)}{ \Psi^\eqe_\tau(\po)}
		                                        \right)\overline{
		                                        	\left(
		                                        	\frac{ \Psi^\eqe_\beta(\po)}{ \Psi^\eqe_\tau(\po)}
		                                        	\right)},
		                                        	\\
		                                        		\MB^{\eqe}_{\ab}=\Psi^\eqe_{\ab}(\po)-\Psi^\eqe_{\alpha\tau(\po)}{
		                                        		\left(
		                                        		\frac{ \Psi^\eqe_\beta(\po)}{ \Psi^\eqe_\tau(\po)}
		                                        		\right)}
		                                        	-\Psi^\eqe_{\tau\beta}(\po)
		                                        	\left(
		                                        	\frac{ \Psi^\eqe_\alpha(\po)}{ \Psi^\eqe_\tau(\po)}
		                                        	\right)
		                                        	+\Psi^\eqe_{\tau\tau}(\po)	\left(
		                                        	\frac{ \Psi^\eqe_\alpha(\po)}{ \Psi^\eqe_\tau(\po)}
		                                        	\right){
		                                        		\left(
		                                        		\frac{ \Psi^\eqe_\beta(\po)}{ \Psi^\eqe_\tau(\po)}
		                                        		\right)}.
	\end{align} Because the robustness of \qcconvexity\ of ${\Phi^\eqe}-q$ is greater than $\rho-\trho$, we can use Lemma \ref{lemma:RobustnessPositive_implies_ModulusPositive} and get
	\begin{align}
	(	\MA^{\eqe}_{\abbar} )\geq \frac{\rho-\trho}{C}
	\left[
	\delta_{\ab} +	\left(
	\frac{ \Psi^\eqe_\alpha}{ \Psi^\eqe_\tau}
	\right)\overline{
		\left(
		\frac{ \Psi^\eqe_\beta}{ \Psi^\eqe_\tau}
		\right)}\ 
	\right], \ \ \ \ \  \ \ \ \text{ for any small enough $\eqe>0$,} 
	\end{align} for a constant $C>0$ independent of $\eqe$; using Lemma \ref{lemma:Robustness_positive_implies_K<1}, we also get
	\begin{align}
		 \MB^\eqe\overline{(\MA^{\eqe} )^{-1}}\ \overline{\MB^\eqe}(\MA^{\eqe} )^{-1}\leq 1-\frac{\rho-\trho}{C_K}, \ \ \ \ \  \ \ \ \text{ for any small enough $\eqe>0$,} 
	\end{align}for a constant $C_K$, also independent of $\eqe$. Since $\Psi_\tau^\eqe\neq 0$, for $\eqe$ small enough, condition (\ref{625_converge_3}) implies
	\begin{align}
		\MA^{\eqe} \rightarrow \MA^0,\ \ \ \ \ 	\MB^{\eqe} \rightarrow \MB^0.
	\end{align}Therefore,  we have
		\begin{align}
		(	\MA^{0}_{\abbar} )\geq \frac{\rho-\trho}{C}
		\left[
		\delta_{\ab} 
		\right], \\
		\MB^0\overline{(\MA^{0} )^{-1}}\ \overline{\MB^0}(\MA^{0} )^{-1}\leq 1-\frac{\rho-\trho}{C_K};
	\end{align}then, using Lemma \ref{lemma_Metric_and_Q_convert_to_Degree}, we get $\Psi^0$ is \sqcconvex.
	
	The argument above is valid for any $q\in\spaceQ_\trho$, so the robustness of \qcconvexity\ of $\Phi$ is greater than $\trho$.
\end{proof}

The following Lemmas show the relation between the \qcconvexity\ of a function and the \cconvexity \ of its level sets.
\begin{lemma}
	\label{lemma:Boundary_Convex_implies_Modulus}
	Let $\Omega$ be a \scconvex \ domain with a smooth boundary and with the modulus of \cconvexity\ greater than $\mu$. Suppose that $F\in C^{\infty}(\Omegabar)$ satisfies
	\begin{align}
		F=0\ \ \ \ \  \ \ \ \text{ on }\Gamma,\\
		F_{{\bold n}}>\sigma\ \ \ \ \  \ \ \  \text{ on } \Gamma,
	\end{align}where $\Gamma$ is an open set of $\partial \Omega$, and ${\bold n}$ is the exterior unit normal vector of $\partial \Omega$. Then the modulus of \qcconvexity\ of $F$ is greater than $		\min\{1, \sqrt{2}\mu\} \sigma$ 
on $\Gamma$. 
\end{lemma}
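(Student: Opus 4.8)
The plan is to work at a fixed point $\po\in\Gamma$ and verify, separately, the two conditions in the definition of the modulus of \qcconvexity\ at $\po$: a lower bound on $|\nabla F|_\MG$, and a lower bound on the modulus of convexity of the second order Taylor expansion of $F$ restricted to the complex tangent plane $T^\EC_{F,\po}$. Both will be obtained by relating $F$ near $\po$ to a boundary defining function of $\Omega$ and invoking the hypotheses $F_{\bf n}>\sigma$ and ``modulus of \cconvexity\ of $\Omega$ greater than $\mu$''.

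The gradient bound is immediate. Since $F$ vanishes on the open piece $\Gamma\subset\partial\Omega$ and $F_{\bf n}(\po)>\sigma>0$, the differential $dF(\po)$ is proportional to the exterior conormal of $\partial\Omega$ at $\po$; hence $\nabla F(\po)$ points in the exterior normal direction and $|\nabla F(\po)|_\MG=|F_{\bf n}(\po)|>\sigma$. As $\min\{1,\sqrt2\mu\}\le 1$, this already exceeds $\min\{1,\sqrt2\mu\}\sigma$. It also shows $\nabla F(\po)\ne 0$, so $\{F=F(\po)\}=\{F=0\}$ is a smooth hypersurface near $\po$; as it contains $\Gamma$, it coincides with $\partial\Omega$ near $\po$, and therefore $T^\EC_{F,\po}$ equals the complex tangent plane of $\partial\Omega$ at $\po$.

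Next I would pass to the unitary coordinates $(z^\alpha,\tau)$, $\tau=t+\iu s$, supplied by the definition of the modulus of \cconvexity\ of $\Omega$ at $\po$: $\po=0$, $\partial\Omega$ is the graph $\{(z^\alpha,\rho_\po(z^\alpha,s)+\iu s)\}$ with $\rho_\po(0)=0$, $\nabla\rho_\po(0)=0$, and $\rho_\po(z^\alpha,0)>\mu\sum_\alpha|z^\alpha|^2$ for $(z^\alpha)\ne 0$. In these coordinates $T^\EC_{F,\po}=\{\tau=0\}$, the domain is $\Omega=\{t>\rho_\po(z^\alpha,s)\}$ near $0$, its exterior unit normal at $0$ is $-\partial_t$, and hence $-\partial_t F(0)=F_{\bf n}(\po)>\sigma$. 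Writing $F=a\cdot\bigl(t-\rho_\po(z^\alpha,s)\bigr)$ near $0$ with $a$ smooth and $a(0)=\partial_tF(0)=-F_{\bf n}(\po)$, and restricting to $\{\tau=0\}$, one gets $F(z^\alpha,0)=-a(z^\alpha,0)\,\rho_\po(z^\alpha,0)$. Since $\rho_\po(\cdot,0)$ has vanishing value and vanishing gradient at $0$, the second order Taylor expansion of $F|_{\{\tau=0\}}$ at $0$ equals $F_{\bf n}(\po)$ times the second order Taylor polynomial $Q_\rho$ of $\rho_\po(\cdot,0)$ at $0$.

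Finally, letting $z^\alpha\to 0$ along rays in the inequality $\rho_\po(z^\alpha,0)>\mu\sum|z^\alpha|^2$ forces $Q_\rho(z^\alpha)\ge\mu\sum|z^\alpha|^2$, so $Q_\rho$ has modulus of convexity at least $\mu$; multiplying by $F_{\bf n}(\po)>\sigma$, the second order Taylor expansion of $F|_{T^\EC_{F,\po}}$ has modulus of convexity greater than $\sigma\mu$, which after accounting for the normalization between $\sum|z^\alpha|^2$ and the ambient metric $\MG$ becomes greater than $\sqrt2\mu\sigma$. Combining with the gradient bound, the modulus of \qcconvexity\ of $F$ at $\po$ exceeds $\min\{1,\sqrt2\mu\}\sigma$, and since $\po\in\Gamma$ was arbitrary while $\mu,\sigma$ are uniform, this holds throughout $\Gamma$. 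The computation is elementary; the only slightly delicate points are checking that the coordinates adapted to the \cconvexity\ of $\Omega$ simultaneously present $T^\EC_{F,\po}$ as $\{\tau=0\}$ while preserving the normalization of $F_{\bf n}$ and $\MG$ (automatic, since the change is unitary and $\nabla F(\po)$ is normal), and tracking the normalization constant $\sqrt2$ when transferring the convexity inequality for $\rho_\po$ to its quadratic part — which I expect to be the main, though minor, obstacle.
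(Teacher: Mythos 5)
Your proposal is essentially the paper's proof: place $\po$ at the origin in coordinates adapted to the boundary graph $t=\rho(z^\alpha,s)$, note that near $\po$ the level set $\{F=0\}$ coincides with $\partial\Omega$ so the complex tangent planes agree, and deduce from the identity $F(z^\alpha,\rho(z^\alpha,s)+\iu s)=0$ (the paper differentiates this identity; your Hadamard-type factorization $F=a\cdot(t-\rho)$ is the same computation, and it only needs derivatives of $F$ at $\po$, so the fact that $\{\tau=0\}$ leaves $\overline\Omega$ is harmless) that the second order Taylor expansion of $F|_{\{\tau=0\}}$ at $\po$ is a normal-derivative factor times the second order Taylor expansion of $\rho(\cdot,0)$; then multiply the modulus $\mu$ by the lower bound on the normal derivative and combine with $|\nabla F(\po)|_{\MG}=|F_{\bold n}(\po)|>\sigma$.

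The one place your write-up does not close is the constant $\sqrt{2}$. In the paper's normalization one has $\partial_t=-\sqrt{2}\,{\bold n}$ at $\po$, hence $\partial_t F(\po)=-\sqrt{2}\,F_{\bold n}(\po)$, and the Taylor expansion of $F|_{\{\tau=0\}}$ equals $\sqrt{2}\,|F_{\bold n}|$ times that of $\rho|_{\{s=0\}}$; this is exactly where $\sqrt{2}\mu\sigma$ comes from. You instead declare the exterior unit normal to be $-\partial_t$, which yields only the bound $\mu\sigma$, and then appeal to ``the normalization between $\sum_\alpha|z^\alpha|^2$ and $\MG$'' to restore the $\sqrt{2}$; but the adapted coordinates are unitary, so no factor appears on the complex tangent plane --- the factor lives entirely in the normal direction, through the length of the real vector $\partial_t$ relative to the unit normal in the convention used for real vectors. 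As written, your argument proves the lemma with constant $\min\{1,\mu\}\sigma$ rather than the stated $\min\{1,\sqrt{2}\mu\}\sigma$; to fix it, replace $\partial_tF(\po)=-F_{\bold n}(\po)$ by $\partial_tF(\po)=-\sqrt{2}\,F_{\bold n}(\po)$. This is a constant-tracking slip, not a conceptual gap; everything else matches the paper's argument.
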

\begin{proof}
	[Proof of Lemma \ref{lemma:Boundary_Convex_implies_Modulus}] 
	
	The proof is straightforward, we only need to use the implicit function theorem.

	Let  $\po$ be a point on $\Gamma$.  We choose coordinates $(z^\alpha, \tau)$, with $\tau=t+\iu s,$ so that $\po=0$ and $\{t=0\}$  is the tangential plane of $\partial \Omega$ at $\po$. Suppose that around $\po$ 
	\begin{align}
		\Omega=
		\left\{
		t>\rho(z^\alpha, s)
		\right\}.
	\end{align}
	Then we have $\rho(0)=0$, $\nabla \rho=0$ and $\partial_t=-\sqrt{2}{{\bold n}}$, at $\po$. $F=0$ on $\Gamma$ implies
	\begin{align}
		F(z^\alpha, \rho(z^\alpha, s)+\iu s)=0.
	\end{align} The implicit function theorem gives
	\begin{align}
		\begin{split}
			&\text{the second order Taylor expansion of } F|_{\{\tau=0\}}  \text{ at $\po$}\\
		=&\sqrt{2}|F_{\bold n}|\cdot\text{the second order Taylor expansion of } \rho|_{\{s=0\}}  \text{ at $0$}.
		\end{split}
	\end{align}
	Therefore, the modulus of \cconvexity\ of $\Omega$ is greater than $\mu$ implies the modulus of convexity of the second order Taylor expansion of $ F|_{\{\tau=0\}}  \text{ at $\po$}$ is greater than $\sqrt{2}|F_{\bold n}| \cdot \mu.$ Then the result follows. 
\end{proof}
\begin{lemma}
	\label{lemma:Modulus_implies_levelsets_Convex}
	Let $\Phi$ be a smooth function in $\MR=\Omega_1\backslash\overline{\Omega_0}$, where $\Omega_0$ and $\Omega_1$ are \scconvex\ domains with smooth boundaries and $\overline{\Omega_0}\subset\Omega_1$. Suppose that $\Phi$ satisfies
	\begin{align}
		\Phi=0\ \ \ \ \  \ \ \ \text{ on }\partial \Omega_0,\\
			\Phi=1\ \ \ \ \  \ \ \ \text{ on }\partial \Omega_1,
	\end{align}
	and the modulus of \qcconvexity\ of $\Phi$ is greater than $\mu>0$. Then
	\begin{align}
		\Omega_t=
		\left\{
		\Phi<t
		\right\} \cup\overline{\Omega_0}
	\end{align}are all \scconvex\ with the modulus of \cconvexity\ greater than 
	\begin{align}
		\frac{\mu}{\max_{\overline{\MR}}|\nabla\Phi|}.
	\end{align}
\end{lemma}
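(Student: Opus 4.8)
The plan is to run the implicit function theorem computation from the proof of Lemma~\ref{lemma:Boundary_Convex_implies_Modulus} in the opposite direction, point by point on each level set. First I would unwind the hypothesis: the modulus of \qcconvexity\ of $\Phi$ being greater than $\mu$ forces $|\nabla\Phi|>\mu>0$ on $\overline{\MR}$, so $\Phi$ has no critical points and $\{\Phi=t\}$ is, for each $t\in(0,1)$, a compact smooth hypersurface contained in $\MR$ (it misses $\partial\MR$, where $\Phi\in\{0,1\}$). Since $\{\Phi=t\}\cap\overline{\Omega_0}=\emptyset$ and $\{\Phi=t\}$ is bounded away from $\partial\Omega_1$ (as $\Phi\to 1$ there), near any point of $\{\Phi=t\}$ the hypersurface separates $\{\Phi<t\}$ from $\{\Phi>t\}$ and coincides with $\partial\Omega_t$, with $\Omega_t$ on the $\{\Phi<t\}$ side. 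Hence it suffices to prove the local graphing estimate at an arbitrary $\po\in\{\Phi=t\}$.

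Fix such a $\po$. Because $\nabla\Phi(\po)\neq 0$, I choose unitary coordinates $(z^\alpha,\tau)$, $\tau=\xi+\iu s$, with $\po=0$, the metric $\sum_i dz^i\otimes\overline{dz^i}$, $\Phi_\alpha(\po)=0$ for $\alpha=1,\dots,n-1$, $\Phi_s(\po)=0$, and $\Phi_\xi(\po)<0$; this makes $\{\xi=0\}$ the real tangent plane of $\{\Phi=t\}$ at $\po$, $\{\tau=0\}=T^\EC_{\Phi, \po}$, and $|\Phi_\xi(\po)|=|\nabla\Phi(\po)|$. Since $\Phi_\xi<0$ near $\po$, the implicit function theorem represents $\{\Phi=t\}$ locally as a graph $\xi=\rho_\po(z^\alpha,s)$ with $\rho_\po(0)=0$, $\nabla\rho_\po(0)=0$, and $\Omega_t=\{\xi>\rho_\po(z^\alpha,s)\}$ locally — exactly the representation demanded by the definition of the modulus of \cconvexity.

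The core step is to compute the quadratic part of $z^\alpha\mapsto\rho_\po(z^\alpha,0)$. Setting $g(z^\alpha)=\Phi\bigl(z^\alpha,\rho_\po(z^\alpha,0),0\bigr)\equiv t$ and Taylor-expanding both sides at $\po$, every term carrying a factor of $s$ is killed, every term carrying a factor $\xi=\rho_\po(z^\alpha,0)=O(|z|^2)$ beyond the single linear occurrence $\Phi_\xi(\po)\,\rho_\po$ is $O(|z|^3)$, and the first-order terms vanish because $\Phi_\alpha(\po)=0$; writing $q_\po$ for the quadratic part of $z\mapsto\rho_\po(z,0)$ one is left with
\begin{align}
	0=\MA_{\alpha\betabar}z^\alpha\zbetabar+\Ree\bigl(\MB_{\alpha\beta}z^\alpha z^\beta\bigr)+\Phi_\xi(\po)\,q_\po(z)+o\bigl(|z|^2\bigr),
\end{align}
where $\MA,\MB$ are as in (\ref{516120})--(\ref{516121}) (and $\MA_{\abbar}=\Phi_{\abbar}$, $\MB_{\ab}=\Phi_{\ab}$ at $\po$ since $\Phi_\alpha(\po)=0$). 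Thus $q_\po(z)=\frac{1}{|\nabla\Phi(\po)|}\bigl(\MA_{\alpha\betabar}z^\alpha\zbetabar+\Ree(\MB_{\alpha\beta}z^\alpha z^\beta)\bigr)$, i.e. $q_\po$ equals $\frac{1}{|\nabla\Phi(\po)|}$ times the second-order Taylor expansion of $\Phi|_{T^\EC_{\Phi, \po}}$ at $\po$ — the same relation as in Lemma~\ref{lemma:Boundary_Convex_implies_Modulus}, read backwards. Since that Taylor expansion has modulus of convexity greater than $\mu$ for the induced metric $\sum_\alpha dz^\alpha\otimes\overline{dz^\alpha}$, compactness of the unit sphere gives a $\mu_\po>\mu$ with $\MA_{\alpha\betabar}z^\alpha\zbetabar+\Ree(\MB_{\alpha\beta}z^\alpha z^\beta)\geq\mu_\po\sum_\alpha|z^\alpha|^2$, hence $q_\po(z)\geq\frac{\mu_\po}{|\nabla\Phi(\po)|}\sum_\alpha|z^\alpha|^2\geq\frac{\mu_\po}{\max_{\overline{\MR}}|\nabla\Phi|}\sum_\alpha|z^\alpha|^2$. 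Adding the $o(|z|^2)$ remainder and using $\mu_\po>\mu$ strictly, I obtain $\rho_\po(z^\alpha,0)>\frac{\mu}{\max_{\overline{\MR}}|\nabla\Phi|}\sum_\alpha|z^\alpha|^2$ for $0<\sum_\alpha|z^\alpha|^2<\delta^2$ with $\delta$ small, which is the asserted lower bound on the modulus of \cconvexity\ of $\Omega_t$.

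The only delicate points I would spell out are: (i) the bookkeeping that all $s$-terms and all $\xi$-terms beyond $\Phi_\xi(\po)\,\rho_\po$ are genuinely $O(|z|^3)$, which is what licenses replacing $\rho_\po(\cdot,0)$ by its quadratic part in the estimate; and (ii) keeping the metric normalizations compatible with those in the definitions and in Lemma~\ref{lemma:Boundary_Convex_implies_Modulus}, so that the chain-rule factor produced by the implicit function theorem is exactly $1/|\nabla\Phi(\po)|$ (with $|\nabla\Phi|$ in the statement interpreted in the same convention), with no stray constant. Neither is a real obstacle: $\Phi\in C^\infty(\overline{\MR})$, so all the graphing functions are smooth and the whole argument is local and elementary.
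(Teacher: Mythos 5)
Your proposal is correct and follows essentially the same route as the paper: choose coordinates at $\po$ with $\Phi_\alpha=\Phi_s=0$, represent the level set as a graph via the implicit function theorem, and read off that the quadratic part of the graph function is $1/|\nabla\Phi(\po)|$ times the second order Taylor expansion of $\Phi|_{T^\EC_{\Phi,\po}}$, exactly as in the paper's reduction to the computation of Lemma \ref{lemma:Boundary_Convex_implies_Modulus}. The only difference is that you spell out the $o(|z|^2)$ remainder bookkeeping and the strictness via compactness of the unit sphere, which the paper leaves implicit.
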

\begin{proof}
	[Proof of Lemma \ref{lemma:Modulus_implies_levelsets_Convex}] 
	First,  the condition that $\Phi$ is \sqcconvex\ implies that $\nabla \Phi\neq 0$ anywhere in $\MR$; therefore, 
	\begin{align}
		0< \Phi <1 \ \ \ \ \  \ \ \  \text{ in }\MR, 
	\end{align} and all the level sets of $\Phi$ are smooth. 
	
	Let $\po$ be a point in $\MR$. We choose coordinate $(z^\alpha, \tau)$, with $\tau=t+\iu s$, so that the metric is $dz^i\otimes \overline{dz^i}$ and that at $\po$
	\begin{align}
		\Phi_\alpha=0, \ \ \ \Phi_s=0,\ \ \ \Phi_t=-|\nabla\Phi|.
	\end{align} We also assume that $\po=0.$
	Then, around $\po$, the level set $\{\Phi=\Phi(\po)\}$  is the graph of a function $\rho$:
	\begin{align}
		\{\Phi=\Phi(\po)\}=
		\left\{
		(z^\alpha, \rho(z^\alpha, s)+\iu s)
		\right\},
	\end{align}with $\rho(0)=0$. 
	Similar to the proof of Lemma \ref{lemma:Boundary_Convex_implies_Modulus}, we have
		\begin{align}
		\begin{split}
			&\text{the second order Taylor expansion of } \Phi|_{\{\tau=0\}}  \text{ at $\po$}\\
			=&\Phi(\po)+|\nabla \Phi(\po)|\cdot\text{the second order Taylor expansion of } \rho|_{\{s=0\}}  \text{ at $0$}.
		\end{split}
	\end{align}
	Therefore, the modulus of \qcconvexity \ of $\Phi$ is  than $\mu$ implies the modulus of convexity of the second order Taylor expansion of $\rho|_{\{s=0\}} $ at $0$ is greater than $\frac{\mu}{|\nabla \Phi(\po)|}$. Then we let $\po$ be an arbitrary point in $\overline{\MR}$ and get the result.
\end{proof}
\section{Construction of a Subsolution and Gradient Estimates}
\label{sec:B_boundary_Barrier}
In this appendix, we will construct a subsolution to Problem \ref{prob:Perturbed_Problem_HessianQuotient} (Lemma \ref{lemma:sub_solution_Construction}). The existence of a subsolution is the key to the solvability of Problem \ref{prob:Perturbed_Problem_HessianQuotient} and Problem \ref{prob:Main_Problem_HCMA_ring}. In addition, we will show for the solution ${\Phi^\eqe}$ to Problem \ref{prob:Perturbed_Problem_HessianQuotient} $|\nabla\Phi^\eqe|_\MG$ has a positive lower bound on $\partial\MR$ (Lemma \ref{lemma:GradientNorm_Lower_Boundary}). 

\begin{lemma}
	\label{lemma:sub_solution_Construction}
	Suppose $\MR=\Omega_1\backslash\overline{\Omega_0}$, with $\Omega_0$ and $\Omega_1$ satisfying the requirements of Problem \ref{prob:Perturbed_Problem_HessianQuotient}.  Then for $\scs>0$ small enough, we can find $\Psi\in C^{\infty}(\overline\MR)$ satisfying
	\begin{align}
		\iu\ddbar\Psi
		\geq \scs\bbform,\ \ \ \ \  \ \ \ &\text{ in }\MR,
		\label{627B1}
		\\
		\Psi=0, \ \ \ \ \  \ \ \ &\text{ on }\partial\Omega_0,\\
			\Psi=1, \ \ \ \ \  \ \ \ &\text{ on }\partial\Omega_1,
	\end{align}
	and 
	\begin{align}
		\Psi_{\bold n}\geq \scs\ \ \ \ \ \  \ \ \ \text{ on }\partial\Omega_0,
		\label{628B4}
	\end{align}here ${\bold n}$ is the exterior unit normal vector on $\partial \Omega_0$.
\end{lemma}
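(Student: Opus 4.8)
The plan is to exhibit $\Psi$ as a regularized maximum of a few strictly plurisubharmonic building blocks: a multiple of a defining function of $\Omega_0$ near $\partial\Omega_0$, a multiple of a defining function of $\Omega_1$ near $\partial\Omega_1$, and a global ``bulk'' potential in between. The work is then to choose all the constants so that the regularized maximum is smooth, strictly plurisubharmonic, equals $0$ on $\partial\Omega_0$ and $1$ on $\partial\Omega_1$, and has normal derivative at least $\scs$ on $\partial\Omega_0$.

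First I record where the hypotheses enter. (1) Strong $\EC$-convexity implies strong pseudoconvexity of $\Omega_0$ and $\Omega_1$: with notation as in the definition of \scconvexity, at $\po\in\partial\Omega_i$ the boundary is the graph $t=\rho_\po(z^\alpha,s)$ with $\rho_\po(0)=0$, $\nabla\rho_\po(0)=0$ and $\rho_\po(z^\alpha,0)\ge\mu\sum_\alpha|z^\alpha|^2$; the complex tangent space at $\po$ is the span of the $z^\alpha$-directions, and the Levi form there is the $(1,1)$-part of the real Hessian of $z^\alpha\mapsto\rho_\po(z^\alpha,0)$, which is $\ge\mu/2$. Consequently, replacing a defining function $r$ by $A^{-1}(e^{Ar}-1)$ for $A$ large, $\Omega_0$ and $\Omega_1$ possess defining functions $d_0,d_1$ — negative inside, zero on the boundary, with non-vanishing gradient there — that are strictly plurisubharmonic, $\iu\ddbar d_i\ge c_i\bbform$, on a fixed collar of $\partial\Omega_i$. (2) \scconvexity\ implies \cconvexity\ (the Remark after the definition), so $\overline{\Omega_0}$ is linearly, hence polynomially, convex; by Oka--Weil there is a polynomial $P$ uniformly small on $\overline{\Omega_0}$ and uniformly large on $\partial\Omega_1$, and $w:=|P|^2+\varepsilon|z|^2$ is a strictly plurisubharmonic function on a neighborhood of $\overline\MR$ that is small near $\partial\Omega_0$ and large near $\partial\Omega_1$. (Fact (2) is also how one verifies, as promised in the body of the paper, that $\overline{\Omega_0}$ is holomorphically convex in $\Omega_1$.)

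Now set $g_0:=C_0d_0$, with $C_0:=\scs\max\{c_0^{-1},\nu_0^{-1}\}$ and $\nu_0:=\min_{\partial\Omega_0}|\nabla d_0|$, so that on $\partial\Omega_0$ one has $g_0=0$, $(g_0)_{\bold n}=C_0|\nabla d_0|\ge\scs$, and $\iu\ddbar g_0\ge C_0c_0\bbform\ge\scs\bbform$; set $g_1:=1+C_1d_1$ (so $g_1=1$ on $\partial\Omega_1$ and $\iu\ddbar g_1\ge C_1c_1\bbform$) with $C_1$ large; set $g_*:=\lambda w+e$; and put $\Psi:=\operatorname{rmax}_\theta(g_0,g_*,g_1)$, a regularized maximum with $\theta$ small, where $g_0$ and $g_1$ are only defined on their respective collars and $g_*$ on all of $\overline\MR$. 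Since a regularized maximum of plurisubharmonic functions is plurisubharmonic with $\iu\ddbar$ at least the minimum of those of the arguments, $\Psi$ is strictly plurisubharmonic once each piece is, and $\iu\ddbar\Psi\ge\scs\bbform$ by the choice of $C_0,C_1,\lambda$. One then fixes the remaining constants and the collar and regularization widths so that: $\max(g_*,g_1)<g_0-2\theta$ on $\partial\Omega_0$, forcing $\Psi=g_0$, hence $\Psi=0$ and $\Psi_{\bold n}\ge\scs$, on a neighborhood of $\partial\Omega_0$; $g_*<1-2\theta$ near $\partial\Omega_1$ while $g_0$ is out of range, forcing $\Psi=g_1=1$ on $\partial\Omega_1$; and $g_*$ overtakes $g_0$ strictly inside the collar on which $d_0$ is plurisubharmonic, and overtakes $g_1$ strictly inside the collar of $\partial\Omega_1$, so that every transition of $\operatorname{rmax}_\theta$ occurs where the two pieces involved are both plurisubharmonic and the result is smooth. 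This produces a $\Psi$ with all four required properties.

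The real obstacle is that last sentence — making the constant-choice self-consistent, and in particular forcing $g_*$ to rise past $g_0$ while still inside the thin collar on which $d_0$ is plurisubharmonic, without letting $g_*$ exceed $1$ near $\partial\Omega_1$. This constrains the separating polynomial $P$ (its growth off $\overline{\Omega_0}$ must be controlled, and $w$ must be only moderately larger near $\partial\Omega_1$ than on $\overline{\Omega_0}$) and ties together the collar widths, the weights $C_0,C_1,\lambda$, the shift $e$, and $\theta$; what makes it go through is that $\scs$, and hence $C_0$ together with the admissible collar and regularization widths, may be taken as small as needed. If one can arrange $d_0$ and $d_1$ to be strictly plurisubharmonic on a neighborhood of all of $\overline\MR$ — which holds, for instance, when $\Omega_1$ is not too large relative to $\Omega_0$, and in particular all along a deformation to concentric balls — the bulk piece $g_*$ is unnecessary and one simply takes $\Psi=\operatorname{rmax}_\theta(C_0d_0,\ \lambda d_1+1)$, the transition then occurring harmlessly in the interior of $\MR$; the general case is recovered by inserting $g_*$ to bridge a possibly wide $\MR$.
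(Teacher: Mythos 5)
Your overall architecture is sound and, after unwinding the notation, quite close to the paper's: strictly plurisubharmonic collar pieces coming from the strong pseudoconvexity of the two boundaries, a strictly plurisubharmonic ``bulk'' potential whose existence reflects polynomial convexity of $\overline{\Omega_0}$, a regularized maximum to glue, and the normal-derivative bound (\ref{628B4}) obtained because $\Psi$ coincides with the $\partial\Omega_0$-piece near $\partial\Omega_0$. (In the paper the bulk is $c(V_{\Omega_0}-c+c^2|z|^2)$ built from the pluricomplex Green's function with pole at infinity, the collar pieces are exponentials of the boundary distances, and the gluing is Guan's smooth-max function $h$.) However, two of your stated ingredients are wrong as written. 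First, there is no single polynomial $P$ that is uniformly small on $\overline{\Omega_0}$ and uniformly large on $\partial\Omega_1$, and Oka--Weil cannot produce one: if $P$ is nonconstant and $z_0\in\Omega_0$, then (since $n\geq 2$) the algebraic hypersurface $\{P=P(z_0)\}$ has positive dimension, so its irreducible component through $z_0$ is unbounded and must meet $\partial\Omega_1$, at which point $|P|=|P(z_0)|\leq\sup_{\overline{\Omega_0}}|P|$; hence $\min_{\partial\Omega_1}|P|\leq\sup_{\overline{\Omega_0}}|P|$. What polynomial convexity actually yields is: for any compact set $K\subset\overline{\MR}$ disjoint from $\overline{\Omega_0}$ (e.g.\ the inner edge of your $\partial\Omega_0$-collar), finitely many polynomials $P_1,\dots,P_N$ with $|P_j|<1$ on $\overline{\Omega_0}$ and $\max_j|P_j|>1$ on $K$; then $w=\sum_j|P_j|^{2k}+\varepsilon|z|^2$ with $k$ large is a correct substitute. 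Note also that you never need $w$ large near $\partial\Omega_1$: the $g_*\!\to\!g_1$ transition is handled by taking $C_1$ large, so the bulk piece is only needed to overtake $g_0$ inside the thin $\partial\Omega_0$-collar. (The paper's choice $V_{\Omega_0}=\log\Phi_{\Omega_0}$, the Siciak extremal function, is exactly the supremum over the whole polynomial family, so your corrected bulk is a finite truncation of it.)

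Second, the justification ``linearly convex, hence polynomially convex'' is not a valid implication: the torus $\{|z_1|=|z_2|=1\}$ is linearly convex, yet its polynomial hull is the closed bidisc. Polynomial convexity of $\overline{\Omega_0}$ is true, but it must be derived from \cconvexity\ (connectedness and simple connectedness of the line sections), which is precisely what the paper's Lemma~\ref{lemma:C_Convexity_Implies_PolyConvex} does by continuously moving a complement hyperplane to infinity and invoking Proposition~2.1.9 of \cite{Book_Cconvexity}. With these two repairs, and with the constant bookkeeping you flag carried out in the right order (fix the collars, the polynomials, $\lambda$, $e$, $\theta$ first, and only then take $\scs$, hence $C_0$, small, so that each piece also satisfies $\iu\ddbar\geq\scs\bbform$), your construction does prove the lemma; the chain of inequalities (\ref{0701B17})--(\ref{0701B20}) in the paper plays exactly this bookkeeping role, and its mollification step is the analogue of your needing smooth pieces before applying the regularized maximum.
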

\begin{proof}
	[Proof of Lemma \ref{lemma:sub_solution_Construction}]
	Let $V_{\Omega_0}$ be the pluricomplex Green's function of $\Omega_0$ with pole at infinity. Then
	\begin{align}
		V_{\Omega_0}(z)=\sup \left\{
		u(z)\ | \ u\in\spaceL, u\leq 0 \text{ in }\Omega_0
		\right\},
	\end{align} where $\spaceL$ is the family of all plurisubharmonic functions on $\EC^n$ with 
	\begin{align}
		u(z)-\log|z|\leq O(1) \text{ as } |z|\rightarrow \infty.
	\end{align}Using the method of \cite{Bedford_Taylor_Dirichlet_Problem}, we know $V_{\Omega_0}$ is continuous in $\EC^n$. Actually, using the theory of Lempert, we can even show $V_{\Omega_0}$ is smooth in $\Omega_0^c$ (see Theorem 5.1 of \cite{Lempert85_Duke_Symmetries} and the first remark following the proof of Lemma 5.3); however, we only need to use the continuity.
	We also need to show
	\begin{align}
		V_{\Omega_0}>0\text{ \ \ \ \ \  \ \ \ in } (\overline{\Omega_0})^c.
		\label{626star}
	\end{align}This depends on the fact that $\Omega_0$ is polynomially convex, which is proved in the following lemma:
	\begin{lemma}\label{lemma:C_Convexity_Implies_PolyConvex}
		Suppose $E$ is a bounded \cconvex\ domain with a smooth boundary. Then $E$ is polynomially convex.
	\end{lemma}
	\begin{proof}
		[Proof of Lemma \ref{lemma:C_Convexity_Implies_PolyConvex}]
		We only need to show that any complex hyperplane in $E^c$ can be continuously moved to infinity; then, we can apply Prop 2.1.9 of \cite{Book_Cconvexity} (also see Remark 2.1.10).
		
		Assume $0\in E$, then let
		\begin{align}
			r_0=\sup
			\left\{
			r\ |\ B_r^c\bigcap E\neq \emptyset
			\right\},
		\end{align}where $B_r=\{|z|\leq r\}$. Since $E$ is bounded, $r_0\leq  \text{diameter}(E)$; since $\partial E$ is smooth, $\partial B_{r_0}$ should contact with $\partial E$. Let $\po\in B_{r_0}\cap E^c$.
		
		With the preparation above, we can take three steps to move a complex hyperplane $\plane\subset E^c$ to  infinity.
		
		{\bf Step 1.} We continuously move $\plane$ towards $0$ until it contacts with $\partial E$ at a point $\na$. Here $\na$ may not be unique.
		
		{\bf Step 2.} Let $\gamma=
		\left\{
		\gamma(t)\ | \ t\in [0,1]
		\right\}$ be a smooth curve on $\partial E$ connecting $\po$ with $\na$. Then 
		\begin{align}
			\left\{
			T^\EC_{\partial E, \gamma(t)}
			\right\}_{t\in[0,1]}
		\end{align} is a family of complex hyperplanes connecting 
		$
		T^\EC_{\partial E, \na}$ 
		and  
			$T^\EC_{\partial E, \po}$. Here $T^\EC_{\partial E, {\bf z}}$ denotes the complex tangent plane of $\partial E$ at a point ${\bf z}\in\partial E$.
		
		{\bf Step 3.} The complex tangent plane of $\partial E$ at $\po$ can be moved to infinity since it's also the complex tangent plane of $\partial B_{r_0}$ (we have $B_{r_0}\supset \overline{E}$).
	\end{proof}
	Denote 
	\begin{align}
		\Phi_{\Omega_0}(z)=\sup \left\{
		|p(z)|^{\frac{1}{\deg(p)}} \ \big|\ p\in\MP_{\Omega_0}
		\right\}
	\end{align} where $\MP_{\Omega_0}$ is the space of complex polynomials $p$ on $\EC^n$ with $\deg(p)\geq 1$ and $|p|\leq 1$ on $\Omega_0$. This function is called Siciak's extremal function.  Since $\Omega_0$ is polynomially convex, for any $z\in \left(
	\overline{\Omega_0}
	\right)^c$ we can find $p\in \MP_{\Omega_0}$ so that $|p(z)|>1$; then, it follows that $\Phi_{\Omega_0}(z)>1$.  
	
	According to the theory of Zajarjuta-Siciak (\cite{Siciak_Extremal_Functions}, Theorem 5.1.7 of \cite{Book_Klimek}), 
	\begin{align}
		V_{\Omega_0}=\log \Phi_{\Omega_0};
	\end{align}
	we have $V_{\Omega_0}(z)>0$, for $z\notin \overline{\Omega_0}$.
	
	In the following, applying the method of \cite{GuanAnnuals}, we use $V_{\Omega_0}$ and the distance functions to $\partial \Omega_0$ and $\partial\Omega_1$ to construct a strictly plurisubharmonic function in $\MR$.
	
	Let
	\begin{align}
		F(z)&=c\left(
		V_{\Omega_0}(z)-c+c^2|z|^2
		\right),\\
		f_1(z)&=\frac{1}{c}(e^{-\frac{1}{c}\dist(z, \partial \Omega_1)}-1)+1,\\
		f_0(z) &=e^{-\frac{1}{c^2}}\left(
		e^{\frac{1}{c}\dist(z, \partial\Omega_0)}-1
		\right),
	\end{align}
	where $c$ is a small constant to be determined. Since $V_{\Omega_0}$ is plurisubharmonic, $F$ is strongly plurisubharmonic. Let $\Omega_1^r$ and $\Omega_0^r$ be small neighborhoods of $\partial\Omega_0$ and $\partial\Omega_1$:
	\begin{align}
		\Omega_0^r=
		\left\{
		z\in\MR\ |\ \dist(z, \Omega_0)<r, \ z\in\MR
		\right\},\\
		\Omega_1^r=
		\left\{
		z\in\MR\ |\ \dist(z, \Omega_1^c)<r, \ z\in\MR
		\right\}.
	\end{align}Using Theorem 3.18 of \cite{CLaurentSCV}, we know when $c$ and $r$ are small enough $f_1$ and $f_0$ are  strongly plurisubharmonic in $\Omega_1^r$ and $\Omega_0^r$, respectively.  
	
	As illustrated by Figure \ref{fig:subsolution_construction}, we choose $c$ small enough so that
	\begin{align}
		F<1=f_1\ \ \ \ \  \ \ \ &\text{ on }\partial\Omega_1,
		\label{0701B17}\\
		F>0>f_1\ \ \ \ \  \ \ \ &\text{ on }\partial\Omega_1^r\backslash \partial\Omega_1,
		\label{0701B18}\\
		F>f_0\ \ \ \ \  \ \ \ &\text{ on }\partial\Omega_0^r\backslash \partial\Omega_0,
		\label{0701B19}\\
		F<0=f_0\ \ \ \ \  \ \ \ &\text{ on }\partial\Omega_0.
		\label{0701B20}
	\end{align}
	\end{proof}
	\begin{figure}[h]
	\centering  
	\includegraphics[height=3.5cm]	{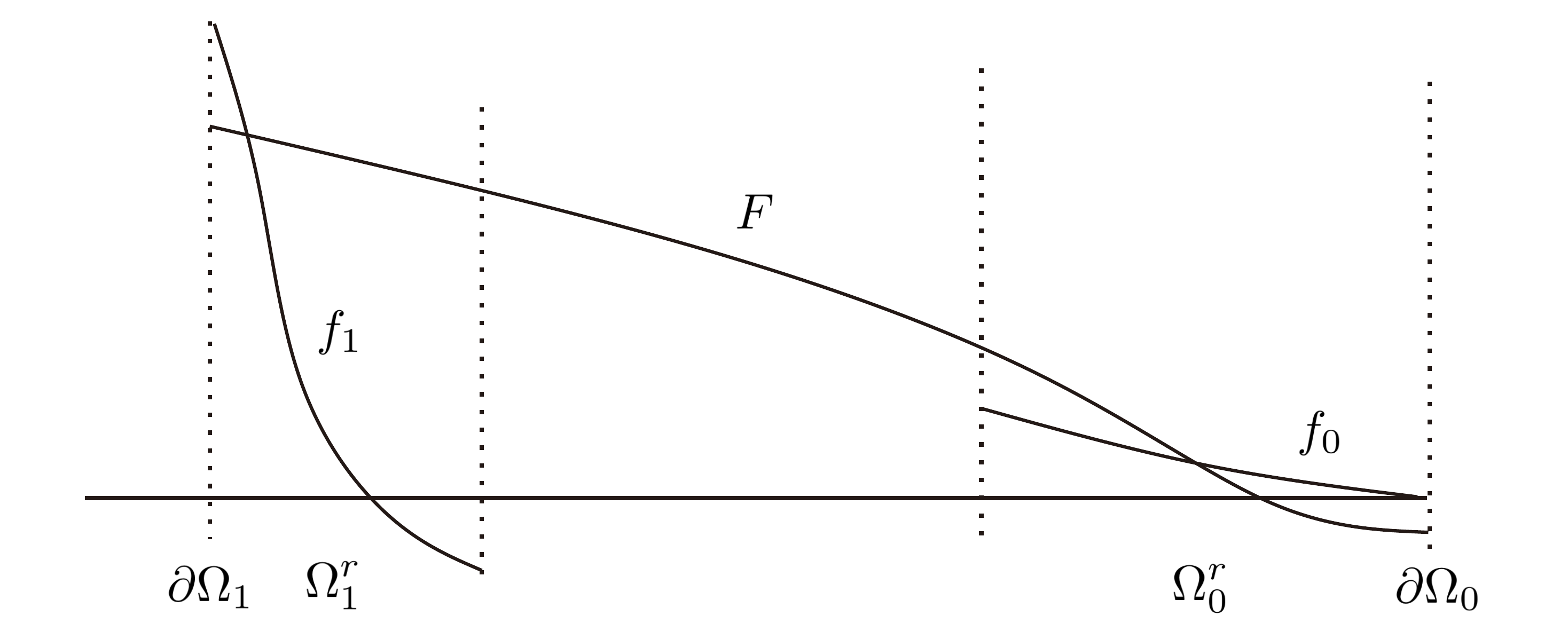}
	\caption{Construction of Subsolution}
	\label{fig:subsolution_construction}  
	\end{figure}
	(\ref{0701B17}) can be satisfied since $V_{\Omega_0}$ is bounded on a compact set and $f_1=1$ on $\partial\Omega_1$; (\ref{0701B18}) can be satisfied since $V_{\Omega_0}>0$ on $\partial\Omega_1^r\backslash \partial\Omega_1$ and $e^{-\frac{1}{c} \dist(z,\partial\Omega_1)}-1$ becomes negative on $\partial \Omega_1^r\cap \MR$ when $c$ is small enough; (\ref{0701B19}) can be satisfied because $V_{\Omega_0}>0$ on $\partial\Omega_0^r\backslash \partial\Omega_0$ and $f_0\leq e^{-\frac{1}{c}}$ providing $c$ small enough; (\ref{0701B20}) can be satisfied since $V_{\Omega_0}$ and $f_0$ both equal to $0$ on $\partial\Omega_0$.
	
	As we mentioned before, according to the theory of Lempert, $V_{\Omega_0}$ is smooth in $\Omega_0^c$ and, therefore, $F$ is smooth in $\Omega_0^c$; however, if we don't want to depend on the theory of Lempert and pretend to be unaware of the smoothness of $F$, then we need to mollify $F$. Note that $V_{\Omega_0}$ is a plurisubharmonic function on $\EC^n$ with $V_{\Omega_0}=0 $ in $\Omega_0$, so $F$ is strictly plurisubharmonic on $\EC^n$ in weak sense. Let $\eta=\eta(|y|)$ be a cut-off function on $\EC^n$, with support in $B_1$ and $\int_{\EC^n} \eta=1$; let
	\begin{align}
		F_\rho(x)=\int_{\EC^n} F(x-y) \eta(\frac{|y|}{\rho}) \frac{1}{\rho^{2n}}dy.
	\end{align}
	Then $F_\rho\rightarrow F$ in $C^0$ norm as $\rho\rightarrow 0$. When $\rho$ is small enough, condition (\ref{0701B17})-(\ref{0701B20}) are still valid after replacing $F$ by $F_\rho$.
	
	Let $h$ be the function constructed in Lemma 3.1 of \cite{GuanAnnuals}, with $\delta$ small enough; then, we let
	\begin{align}
	\Psi=
	\left\{
	\begin{array}{ccc}
		\frac{F_\rho+f_1}{2}+h(\frac{F_\rho-f_1}{2})& \text{ in }\Omega_1^r  , \\
		\frac{F_\rho+f_0}{2}+h(\frac{F_\rho-f_0}{2})& \text{ in }\Omega_0^r,   \\
		F_\rho &    \text{ in }\MR\backslash{(\Omega_1^r\cup \Omega_0^r)}.
	\end{array}
	\right .
	\end{align} According to the computation of Lemma 3.2 of \cite{GuanAnnuals} $\Psi$ is smooth and strictly plurisubharmonic. Therefore for $\scs$ small enough, (\ref{627B1}) is valid.
	We also note that $\Psi=f_0$ in a small neighborhood of $\partial\Omega_0$, proving $\delta$ is small enough so $\partial_n\Psi>0$ on $\partial\Omega_0$; then we can choose $\sigma$ small so that $(\ref{628B4})$ is satisfied.
	
Because of (\ref{627B1}), we can choose $\eqe_0$ small enough so that
\begin{align}
	\scs^n\bbform^n\geq \eqe_0\left(
	\iu\ddbar\Psi
	\right)^{n-1}\wedge\bbform
\end{align}
Then $\Psi$ is a subsolution to Problem \ref{prob:Perturbed_Problem_HessianQuotient} when $\eqe\leq \eqe_0$.

Suppose $\Phi^\eqe$ is a solution to Problem \ref{prob:Perturbed_Problem_HessianQuotient}. Then $|\nabla\Phi^\eqe|_\MG$ has a positive lower bound on $\partial \Omega_0$ since $\Phi^\eqe\geq \Psi$ in $\MR$ and $\Phi^\eqe=\Psi$ on $\partial \Omega_0$. 
The estimate of $|\nabla\Phi^\eqe|_\MG$ on $\partial\Omega_1$ is very simple. Let $H$ be the harmonic function in $\MR$ with 
$H=0$ on $\partial \Omega_0$ and $H=1$ on $\partial \Omega_1$. Then the maximum principle implies
\begin{align}
	H\geq \Phi^\eqe \ \ \ \ \  \ \ \ \text{ in }{\MR}
\end{align} since $\Phi^\eqe$ is subharmonic in $\MR$; the
 Hopf maximum principle implies $H_{{\bold n}}>0$ on $\partial\Omega_1$, where ${\bold n}$ is the exterior unit normal vector on $\partial\Omega_1$. Therefore, $\Phi_{\bold n}^\eqe>0$ on $\partial \Omega_1$.

Summing up, we have the following positive lower bound estimate for $|\nabla\Phi^\eqe|_\MG$:
\begin{lemma}
	\label{lemma:GradientNorm_Lower_Boundary}
	Suppose $\Omega_0$ and $\Omega_1$ are domains satisfying conditions of Problem \ref{prob:Perturbed_Problem_HessianQuotient} . Then there is a positive constant $C_{\partial\MR}^g$ so that when $\eqe$ is small enough, the solution $\Phi^\eqe$ to Problem \ref{prob:Perturbed_Problem_HessianQuotient} satisfies
	\begin{align}
		|\nabla{\Phi^\eqe}|_\MG\geq C_{\partial\MR}^g\ \ \ \ \  \ \ \ \text{ on }\partial\MR.
	\end{align}
\end{lemma}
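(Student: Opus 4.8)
The plan is to estimate $|\nabla\Phi^\eqe|_\MG$ separately on the two boundary components $\partial\Omega_0$ and $\partial\Omega_1$ of $\partial\MR$ by the standard barrier method, using the subsolution of Lemma~\ref{lemma:sub_solution_Construction} near $\partial\Omega_0$ and a harmonic comparison function near $\partial\Omega_1$. Since $\MG$ is a fixed constant-coefficient Hermitian metric, $|\nabla\Phi^\eqe|_\MG$ is uniformly comparable to the Euclidean gradient norm, and at a boundary point the Euclidean gradient dominates the absolute value of the normal derivative; hence it suffices to produce uniform positive lower bounds for the normal derivative of $\Phi^\eqe$ on each piece.

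Near $\partial\Omega_0$ I would argue as follows. Fix $\eqe\le\eqe_0$, with $\eqe_0$ as in the discussion following Lemma~\ref{lemma:sub_solution_Construction}, so that the function $\Psi$ constructed there is a subsolution of Problem~\ref{prob:Perturbed_Problem_HessianQuotient}; both $\Psi$ and $\Phi^\eqe$ are admissible, i.e.\ have strictly positive complex Hessian by \eqref{cond:positive_ddbar} and \eqref{627B1}. The comparison principle for degenerate elliptic operators of Hessian-quotient type then gives $\Phi^\eqe\ge\Psi$ in $\MR$ (in particular $\Phi^\eqe\ge\Psi\ge0$), while $\Phi^\eqe=\Psi=0$ on $\partial\Omega_0$. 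Consequently, along the exterior unit normal ${\bold n}$ of $\Omega_0$ — which points into $\MR$, where both functions are nonnegative — one gets $(\Phi^\eqe)_{\bold n}\ge\Psi_{\bold n}\ge\scs$ on $\partial\Omega_0$ by \eqref{628B4}. This yields $|\nabla\Phi^\eqe|_\MG\ge c_0>0$ on $\partial\Omega_0$ with $c_0$ independent of $\eqe$.

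Near $\partial\Omega_1$ I would compare $\Phi^\eqe$ with the harmonic function $H$ on $\MR$ determined by $H=0$ on $\partial\Omega_0$ and $H=1$ on $\partial\Omega_1$; $H$ is smooth up to $\partial\MR$ since $\partial\MR$ is smooth. Because $\iu\ddbar\Phi^\eqe>0$, the function $\Phi^\eqe$ is subharmonic in $\MR$ and shares the boundary data of $H$, so the maximum principle gives $\Phi^\eqe\le H$ in $\MR$ with equality on $\partial\Omega_1$; hence $(\Phi^\eqe)_{\bold n}\ge H_{\bold n}$ on $\partial\Omega_1$ for the exterior unit normal ${\bold n}$ of $\Omega_1$. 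The strong maximum principle gives $H<1$ in $\MR$, and since $\partial\Omega_1$ is smooth it satisfies an interior sphere condition, so the Hopf boundary point lemma yields $H_{\bold n}>0$ on $\partial\Omega_1$; by continuity and compactness $H_{\bold n}$ is bounded below there by a positive constant, giving $|\nabla\Phi^\eqe|_\MG\ge c_1>0$ on $\partial\Omega_1$, again independent of $\eqe$. Taking $C_{\partial\MR}^g=\min\{c_0,c_1\}$ completes the argument; note that smallness of $\eqe$ is needed only for the $\partial\Omega_0$ estimate.

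The only input here that is not completely elementary is the inequality $\Phi^\eqe\ge\Psi$, that is, the comparison principle for the Hessian-quotient operator on the admissible class; this is classical for equations with this structure and is precisely what the subsolution of Lemma~\ref{lemma:sub_solution_Construction} is built to exploit. The genuinely substantive work — the construction of that subsolution, resting on the polynomial convexity of $\Omega_0$ (Lemma~\ref{lemma:C_Convexity_Implies_PolyConvex}), the positivity of the pluricomplex Green function $V_{\Omega_0}$ off $\overline{\Omega_0}$, and Guan's gluing lemma — has already been carried out above, so for the present statement no real obstacle remains beyond organizing the two one-sided barrier estimates.
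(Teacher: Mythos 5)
Your proposal is correct and follows essentially the same route as the paper: on $\partial\Omega_0$ it uses $\Phi^\eqe\geq\Psi$ in $\MR$ together with $\Phi^\eqe=\Psi$ on $\partial\Omega_0$ and the normal-derivative bound for the subsolution of Lemma \ref{lemma:sub_solution_Construction}, and on $\partial\Omega_1$ it compares with the harmonic function $H$ and applies the Hopf lemma, exactly as in Appendix \ref{sec:B_boundary_Barrier}. The extra details you supply (comparability of $|\cdot|_\MG$ with the Euclidean norm, the comparison principle giving $\Phi^\eqe\geq\Psi$, and compactness for a uniform Hopf constant) are just the steps the paper leaves implicit.
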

Now, for a fixed \cconvex\ ring, we have already constructed the subsolution and derived the desired gradient estimate; however, when considering a family of domains $\{\MR^t\}$, we need to show the constants $ C_{\partial\MR^t}^g$ can be chosen so that they have a uniform positive lower bound and that the $C^3$ norms of $\Psi^t$, the subsolutions in $\MR^t$, have a uniform upper bound. So, we need the following lemma:
\begin{lemma}	\label{lemma:GradientNorm_Lower_Boundary_DomainFamily}
	Let $\{\Omega_0^t\}_{t=0}^1$ and $\{\Omega_1^t\}_{t=0}^1$  be two families of \scconvex\ domains with smooth boundaries and satisfying $\overline{\Omega_0^t}\subset\Omega_1^t$ and condition 3 of Lemma \ref{lem:Deform_a_strongly_C-convex_domain_to_Ball}. Then we can construct a family of smooth subsolutions $F^t$ to Problem \ref{prob:Perturbed_Problem_HessianQuotient} with $\Omega_0=\Omega_0^t$ and $\Omega_1=\Omega_1^t$; more precisely, the following conditions are satisfied:
	\begin{align}
		\iu \ddbar F^t\geq \sigma \bbform \ \ \ \ \  \ \ \ &\text{ in } \Omega_1^t\backslash\overline{\Omega_0^t},\\
		\partial_{{\bold n}} F^t\geq \sigma \ \ \ \ \  \ \ \ &\text{ on }\partial \Omega_0^t,\\
				|F^t|_{C^3}\leq \frac{1}{\sigma},
	\end{align}where $\sigma$ is a positive constant. Let  ${\Phi_\eqe^t}$ be the solution to Problem \ref{prob:Perturbed_Problem_HessianQuotient} with $\Omega_0=\Omega_0^t$ and $\Omega_1=\Omega_1^t$; we have
	\begin{align}
		|\nabla {\Phi^t_\eqe}|\geq C^g\ \ \ \ \  \ \ \ \text{ on }\partial \Omega_0^t\cup\partial\Omega^t_1,
	\end{align}when $\eqe$ is small enough, where $C^g$ is a positive constant.
\end{lemma}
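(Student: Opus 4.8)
The plan is to carry the deformation parameter $t$ through the construction of Lemma~\ref{lemma:sub_solution_Construction} and to use the compactness of $[0,1]$, together with the uniform control of the geometry furnished by condition~3 of Lemma~\ref{lem:Deform_a_strongly_C-convex_domain_to_Ball}, to freeze every small constant that appears. First I would set, for each $t\in[0,1]$,
\begin{align}
	F^t(z)&=c\bigl(V_{\Omega_0^t}(z)-c+c^2|z|^2\bigr),\\
	f_1^t(z)&=\tfrac1c\bigl(e^{-\frac1c\dist(z,\partial\Omega_1^t)}-1\bigr)+1,\\
	f_0^t(z)&=e^{-\frac1{c^2}}\bigl(e^{\frac1c\dist(z,\partial\Omega_0^t)}-1\bigr),
\end{align}
where $V_{\Omega_0^t}$ is the pluricomplex Green function of $\Omega_0^t$ with pole at infinity, and then glue exactly as in Lemma~\ref{lemma:sub_solution_Construction}, using the same fixed gluing function $h$ (and, if one wishes to avoid Lempert's smoothness statement, a uniform mollification of $V_{\Omega_0^t}$). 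The subsolution $F^t$ and the three estimates follow once I establish the following uniformity facts on the compact interval: (a) the map $(z,t)\mapsto V_{\Omega_0^t}(z)$ is jointly continuous, and there is $c_0>0$ with $V_{\Omega_0^t}\ge c_0$ on $\{z:\dist(z,\Omega_0^t)\ge r_0\}$ for all $t$; (b) the signed distance functions to $\partial\Omega_0^t$ and $\partial\Omega_1^t$ are smooth in a two-sided collar of a fixed width $r$ with uniformly bounded $C^3$ norms; (c) for $c$ and $r$ small enough, inequalities (\ref{0701B17})--(\ref{0701B20}) hold for $F^t,f_0^t,f_1^t$ with constants independent of $t$.

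For (a) I would invoke Lemma~\ref{lemma:C_Convexity_Implies_PolyConvex}: its proof uses only $\operatorname{diameter}(\Omega_0^t)$ and the smoothness of $\partial\Omega_0^t$, both controlled uniformly by condition~3, so each $\Omega_0^t$ is polynomially convex with a uniform radius, and the Zajarjuta--Siciak identity $V_{\Omega_0^t}=\log\Phi_{\Omega_0^t}$ then yields joint continuity together with the positive lower bound on collars; alternatively, Lempert's theory gives $V_{\Omega_0^t}$ smooth on $(\overline{\Omega_0^t})^c$ with estimates depending only on the modulus of \cconvexity\ and the $C^3$ bound of $\partial\Omega_0^t$, which is even stronger. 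Fact (b) is the standard regularity of the distance function to a $C^{1,1}$ hypersurface with uniform curvature bounds, which also gives $\iu\ddbar f_i^t\ge\sigma_0\bbform$, $\sigma_0>0$, in the collar uniformly in $t$; fact (c) then follows from (a), (b) and the uniform size of $\MR^t$. Granting (a)--(c), the construction of Lemma~\ref{lemma:sub_solution_Construction} produces a smooth $F^t$ with $\iu\ddbar F^t\ge\sigma\bbform$ in $\Omega_1^t\backslash\overline{\Omega_0^t}$ and $\partial_{\bold n}F^t\ge\sigma$ on $\partial\Omega_0^t$ for a $t$-independent $\sigma>0$; the bound $|F^t|_{C^3}\le 1/\sigma$ comes from the uniform $C^3$ control of $V_{\Omega_0^t}$ (or its mollification) and of the distance functions, $h$ being fixed. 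The threshold $\eqe_0$ of Lemma~\ref{lemma:sub_solution_Construction} is then also uniform, so $F^t$ is a subsolution of Problem~\ref{prob:Perturbed_Problem_HessianQuotient} on $\MR^t$ for every $\eqe\le\eqe_0$.

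The gradient estimate is obtained exactly as in Lemma~\ref{lemma:GradientNorm_Lower_Boundary}. On $\partial\Omega_0^t$: the comparison principle gives $\Phi_\eqe^t\ge F^t$ in $\MR^t$ with equality on $\partial\Omega_0^t$, so $\Phi_\eqe^t-F^t$ is nonnegative and vanishes there, whence $\partial_{\bold n}\Phi_\eqe^t\ge\partial_{\bold n}F^t\ge\sigma$ with ${\bold n}$ the exterior normal of $\Omega_0^t$ (pointing into $\MR^t$), so $|\nabla\Phi_\eqe^t|\ge\sigma$. On $\partial\Omega_1^t$: let $H^t$ be harmonic on $\MR^t$ with $H^t=0$ on $\partial\Omega_0^t$ and $H^t=1$ on $\partial\Omega_1^t$; since $\Phi_\eqe^t$ is plurisubharmonic, hence subharmonic, with the same boundary data, $\Phi_\eqe^t\le H^t$ in $\MR^t$, so $\partial_{\bold n}\Phi_\eqe^t\ge\partial_{\bold n}H^t$ on $\partial\Omega_1^t$ (${\bold n}$ the exterior normal of $\Omega_1^t$), and the Hopf lemma gives $\partial_{\bold n}H^t>0$ there, bounded below by a $t$-independent constant by continuity of $H^t$ in $t$ and the uniform geometry. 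Taking $C^g$ to be the minimum of these two lower bounds finishes the argument.

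The hard part will be fact (a): proving that $\{V_{\Omega_0^t}\}_{t\in[0,1]}$ is equicontinuous and stays uniformly bounded away from $0$ on a collar of fixed width, equivalently that $c$ and $r$ in Guan's construction can be chosen once and for all. This is where genuine input is needed: either a stability statement for pluricomplex Green functions under smooth perturbations of the domain (derivable from the uniform polynomial convexity in Lemma~\ref{lemma:C_Convexity_Implies_PolyConvex}) or the uniform regularity provided by Lempert's theory for strongly \cconvex\ domains. Once this is in hand, the rest is a routine repetition of Lemmas~\ref{lemma:sub_solution_Construction} and \ref{lemma:GradientNorm_Lower_Boundary} together with a compactness argument in $t$.
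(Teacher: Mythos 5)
Your overall architecture (construct a family of subsolutions $F^t$, then run the comparison and Hopf arguments of Lemma \ref{lemma:GradientNorm_Lower_Boundary} with $t$-uniform constants) is reasonable, and the boundary-gradient part at the end is essentially the paper's. But there is a genuine gap at exactly the step you flag yourself: you never establish fact (a), namely the joint continuity of $(z,t)\mapsto V_{\Omega_0^t}(z)$ together with a $t$-uniform positive lower bound on a collar of fixed width, nor the $t$-uniform admissibility of the constants $c$, $r$ and of the mollification parameter $\rho$ in the gluing of Lemma \ref{lemma:sub_solution_Construction}. Polynomial convexity of each $\Omega_0^t$ (Lemma \ref{lemma:C_Convexity_Implies_PolyConvex}) only gives, for each fixed $t$, pointwise positivity of $V_{\Omega_0^t}$ off $\overline{\Omega_0^t}$; upgrading this to equicontinuity or stability of the Siciak--Zaharjuta extremal functions under deformation of the domain (or to Lempert-type regularity with bounds depending only on the modulus of \cconvexity) is a substantive piece of pluripotential theory that your proposal asserts rather than proves. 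Without it, none of the constants in (\ref{0701B17})--(\ref{0701B20}) is known to be uniform in $t$, hence neither $\sigma$ nor the bound $|F^t|_{C^3}\le 1/\sigma$ is obtained, and the subsequent compactness argument has nothing to work with.

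The paper bypasses this difficulty by a different mechanism, which is the actual content of its proof: Lemma \ref{lemma:sub_solution_Construction} is invoked only at a single parameter value $s$, producing $\Psi^s$ with $\iu\ddbar\Psi^s\ge\ese_s\bbform$; this $\Psi^s$ serves as a subsolution of the nondegenerate Monge--Amp\`ere problem (Problem \ref{prob:MA_Family_subsolution}) on $\MR^s$, that problem is solved at $t=s$, and the solution $M^t$ is then propagated to all $t$ with $|t-s|<\delta_s$ by the implicit function theorem, using the smooth dependence of the domains on $t$ (condition 3 of Lemma \ref{lem:Deform_a_strongly_C-convex_domain_to_Ball}). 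The $M^t$, not the glued Green-function construction, are the subsolutions $F^t$, and they come with $\partial_{\bold n}M^t\ge\ese_s/2$ and $|M^t|_{C^3}\le 2/\ese_s$ directly from this smooth dependence; compactness of $[0,1]$ then yields a finite cover and a uniform $\sigma$, and the gradient bound follows as in Lemma \ref{lemma:GradientNorm_Lower_Boundary}. So either supply a genuine stability theorem for $V_{\Omega_0^t}$ under the deformations of Remark \ref{remark:smoothly_deform}, or replace your fact (a) by the implicit-function-theorem step on Problem \ref{prob:MA_Family_subsolution}; as written, the proposal's central uniformity claim is unproven.
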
	  
	  
	  \begin{proof}
	  	Using Lemma \ref{lemma:sub_solution_Construction}, for each $s\in [0,1]$ we can find $\Psi^s$ and ${\ese_s}$ so that 
	  	\begin{align}
	  		\iu\ddbar \Psi^s \geq {\ese_s}\bbform\ \ \ \ \  \ \ \ \text{ in }\Omega_1^s\backslash\overline{\Omega_0^s},	\label{0701B29}
\end{align}
and
\begin{align}
	0<\Psi^s<1, \ \ \ \ \  \ \ \ \text{ in }\Omega_1^s\backslash\overline{\Omega_0^s}.
\end{align}
This implies $\Psi^s$  are subsolutions to the following Problem \ref{prob:MA_Family_subsolution} when $t=s$; therefore, using the method of \cite{GuanAnnuals},	the following problem is solvable when $t=s$:
	  	\begin{problem}
	  		\label{prob:MA_Family_subsolution} 
	  		Find $M^t$ satisfying
	  		\begin{align}
	  			\left[\iu\ddbar M^t\right]^n=\frac{({\ese_s})^n}{2} \bbform^n\ \ \ \ \  \ \ \ &\text{ in }
	  			\Omega_1^t\backslash\overline{\Omega_0^t},\\
	  				M^t=1\ \ \ \ \  \ \ \ &\text{ on } \partial \Omega_1^t,\\
	  				M^t=0\ \ \ \ \  \ \ \ &\text{ on } \partial \Omega_0^t.
	  		\end{align}
	  	\end{problem}
	  	Then we can apply the implicit function theorem and solve the Dirichlet problem above, for $t,$ with $|t-s|<\delta_s$, where $\delta_s$ is a small number. We note that $\delta_s$ may depend on ${\ese_s}$. We can choose $\delta_s$ and ${\ese_s}$ small enough so that
	  	\begin{align}
	  			\partial_{\bold n} M^t\geq \frac{\ese_s}{2} \ \ \ \ \  \ \ \ \text{ on }\partial\Omega_0^s,
	  	\end{align}and $|M^t|_{C^3}\leq \frac{2}{ {\ese_s}}$.

Then we use the compactness of $[0,1]$: We have 
\begin{align}
	\bigcup_{s\in[0,1]} \left\{
	|t-s|<\delta_s
	\right\}\supset [0,1],
\end{align}so we can find finite $s_k, $ for $k\in\{1,\ ...\ ,N\}$, such that 
\begin{align}
	\bigcup_{k=1}^n \left\{
	|t-s_k|<\delta_{s_k}
	\right\}\supset [0,1].
\end{align}Therefore, the solution $M^t$ to Problem \ref{prob:MA_Family_subsolution}, with $\ese_s={\ese_{s_k}}$ and $|t-s_k|<\delta_{s_k}$, are subsolutions  of Problem \ref{prob:Perturbed_Problem_HessianQuotient} satisfying the conditions of Lemma \ref{lemma:GradientNorm_Lower_Boundary_DomainFamily}, with
\begin{align}
	\sigma=\min
	\left\{
	\frac{{\ese_{s_k}}}{2}
	\right\}_{k=1}^N.
\end{align} The estimate for $|\nabla{\Phi^t_\eqe}|$ follows easily.
	  \end{proof}
\section{$C^{1,1}$ Estimate Independent of $\eqe$}\label{sec:Appendix_C11Estimate}
In this section we provide a  $C^{1,1}$ estimate for solutions of Problem \ref{prob:Perturbed_Problem_HessianQuotient} which is independent of $\eqe$, providing $\eqe$ small enough. All the ideas are taken from \cite{Guan98}. 
\begin{proposition}
	\label{prop:Appendix_C2independent_epsilon}
	Suppose that $\Phi$ is a smooth solution to Problem \ref{prob:Perturbed_Problem_HessianQuotient} with $\eqe$ small enough. Then $|\nabla\Phi|_\MG$ and $|D^2\Phi|_\MG$ have upper bounds which depend on the positive lower bound of $|\Phi_n|$ on $\partial\MR$ and the plurisubharmonicity and  the $C^3$ norm of $\Psi$ where $\Psi$ is the subsolution constructed in Section \ref{sec:B_boundary_Barrier}; in particular, the estimates are independent of $\eqe$, providing $\eqe$ small enough.
\end{proposition}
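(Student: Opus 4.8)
The plan is to run the scheme of \cite{Guan98} — a $C^0$ bound, then a $C^1$ bound, then a boundary $C^2$ bound, then a global $C^2$ bound — and to check at each step that the only structural input kept about the subsolution $\Psi$ of Appendix \ref{sec:B_boundary_Barrier} is $\iu\ddbar\Psi\ge\sigma\bbform$ with $\sigma$ and $|\Psi|_{C^3}$ independent of $\eqe$, so that every constant produced is $\eqe$-free. The mechanism behind all the uniformity is that, for the linearized operator $\ML=L^{p\qbar}\partial_{p\qbar}$ of \eqref{eq:SigmaQuotient_Problem_Perturbation}, the pointwise bound $(\Psi_{p\qbar})\ge\sigma(\MG_{p\qbar})$ together with the positivity of $(L^{p\qbar})$ gives
\begin{align}
	\ML\Psi\ \ge\ \sigma\,L^{p\qbar}\MG_{p\qbar}\ =\ \sigma\,\eqe^2\,\tr\!\big((\MH^{-1}\MG)^2\big)\ \ge\ \frac{\sigma}{n},
\end{align}
the last step being the Cauchy--Schwarz estimate already used in the proof of \eqref{eq:-logs_main_computation} together with the equation \eqref{eq:Matrix_Form_Main_Perturbation_Equation}. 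Thus $\ML$, although it degenerates as $\eqe\to0$, still returns an $\eqe$-independent positive quantity when applied to $\Psi$ (note also $\ML\Phi^\eqe=L^{p\qbar}\Phi^\eqe_{p\qbar}=\eqe$, so $\ML(\Psi-\Phi^\eqe)\ge\sigma/(2n)$ for $\eqe$ small), and this is precisely what lets every barrier below be built with $\eqe$-free constants.

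For the $C^0$ bound one has $\Psi\le\Phi^\eqe\le H$ in $\MR$, where $H$ is harmonic with $H=0$ on $\partial\Omega_0$ and $H=1$ on $\partial\Omega_1$: the left inequality by comparison with the subsolution (Lemma \ref{lemma:sub_solution_Construction}), the right one because $\Phi^\eqe$ is subharmonic by \eqref{cond:positive_ddbar}. Since $\Psi$, $\Phi^\eqe$, $H$ coincide on $\partial\MR$, the same squeeze bounds the normal derivative of $\Phi^\eqe$ on $\partial\MR$ from both sides in terms of those of $\Psi$ and $H$, while the tangential part of $\nabla\Phi^\eqe$ on $\partial\MR$ vanishes since the boundary data are locally constant; hence $|\nabla\Phi^\eqe|_\MG$ is bounded on $\partial\MR$ with $\eqe$-free constants (this also recovers Lemma \ref{lemma:GradientNorm_Lower_Boundary}). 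For the interior gradient bound one would apply $\ML$ to $w=|\nabla\Phi^\eqe|_\MG^2\,e^{\gamma(\Psi-\Phi^\eqe)}$: differentiating \eqref{eq:SigmaQuotientDerivative=0}, the first-order terms are dominated, for $\gamma$ large, by $\gamma\,\ML(\Psi-\Phi^\eqe)\ge\gamma\sigma/(2n)>0$, forcing $\max_{\overline\MR}w$ onto $\partial\MR$, where it is already controlled; the constant depends only on $\sigma$, $|\Psi|_{C^2}$ and $\mathrm{diam}(\MR)$. (Alternatively the interior gradient bound is read off at the end from the $C^2$ bound and the $C^0$ bound.)

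The core is the boundary $C^2$ estimate, performed at a fixed $\po\in\partial\MR$ in coordinates $(z^\alpha,\tau)$, $\tau=t+\iu s$, with $\{t=0\}$ tangent to $\partial\MR$ at $\po$ and $\MR=\{t>0\}$ locally. Tangential--tangential: differentiating the identity $\Phi^\eqe=\mathrm{const}$ twice along $\partial\MR$ bounds the $TT$ second derivatives by the product of the second fundamental form of $\partial\MR$ with $|\partial_\nu\Phi^\eqe|$, hence by $\eqe$-free constants. Tangential--normal: for a first-order tangential operator $T$, use the barrier $\Theta=A(\Psi-\Phi^\eqe)+B|z-\po|^2\pm T(\Phi^\eqe-\varphi)$ with $\varphi$ a fixed smooth extension of the boundary data; computing $\ML\Theta$ and using $\ML\Psi\ge\sigma/n$, the lower bound for $|\nabla\Phi^\eqe|$ on $\partial\MR$, the bound $|L^{p\qbar}|\le1$, and the commutator of $T$ with $\ML$, one gets $\ML\Theta\le0$ on a half-ball $\MR\cap B_\rho(\po)$ once $A\gg B\gg1$ are chosen depending only on $\sigma$, $|\Psi|_{C^3}$, $\rho$ and that lower bound — not on $\eqe$; since $\Theta\ge0$ on $\partial(\MR\cap B_\rho(\po))$ and $\Theta(\po)=0$, the Hopf lemma bounds $|T\partial_\nu\Phi^\eqe|(\po)$. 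Double--normal: a second barrier of the same type, now played against $\iu\ddbar\Psi\ge\sigma\bbform$ (using Guan's construction, cf. Lemma 3.1 of \cite{GuanAnnuals}), shows that the complex tangential Hessian $(\Phi^\eqe_{\alpha\betabar})(\po)$ has an $\eqe$-free positive lower bound along $\partial\MR$; writing $\MH$ in block form at $\po$ and inserting this lower bound on the tangential block, together with the already-controlled mixed entries $\Phi^\eqe_{\alpha\taubar}(\po)$, into the cofactor expansion of $\det\MH$, the equation $\det\MH=\eqe\cdot(\text{sum of diagonal cofactors of }\MH)$ bounds $\Phi^\eqe_{\tautaubar}(\po)$: the left side grows linearly in $\Phi^\eqe_{\tautaubar}$ with a positive $\eqe$-free slope (the lower bound on the determinant of the tangential block), while the right side, carrying the factor $\eqe$, is only a small correction once $\eqe$ is small. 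Hence every second derivative of $\Phi^\eqe$ at $\po$, and so $\max_{\partial\MR}|D^2\Phi^\eqe|_\MG$, is bounded by $\eqe$-free constants.

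Finally, the global $C^2$ bound follows from the maximum principle for $W=\log\lambda_{\max}(\MH)+\gamma(\Psi-\Phi^\eqe)$: at an interior maximum, differentiating \eqref{eq:SigmaQuotient_double_derivative_indexform} and using the concavity of the operator $\MH\mapsto\sigma_n(\MH)/\sigma_{n-1}(\MH)=\big(\tr(\MH^{-1}\MG)\big)^{-1}$ on positive Hermitian matrices to absorb the third-order terms, together with $\gamma\,\ML(\Psi-\Phi^\eqe)\ge\gamma\sigma/(2n)$, one obtains $\lambda_{\max}(\MH)\le C\big(1+\max_{\partial\MR}\lambda_{\max}(\MH)\big)$ with $C$ depending only on $\sigma$, $|\Psi|_{C^3}$ and the geometry of $\MR$, and the boundary term was just bounded. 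Combining with the $C^0$ bound (and interpolation, if the gradient bound was not obtained directly) gives Proposition \ref{prop:Appendix_C2independent_epsilon}. The step I expect to be the main obstacle is the double--normal boundary estimate: the equation forces the smallest eigenvalue $\lambda_1(\MH)$ to be of size $\eqe$, so any argument that divides by, or otherwise resolves, $\lambda_1$ generates $\eqe$-dependent constants; the purpose of insisting on the genuinely strict subsolution $\iu\ddbar\Psi\ge\sigma\bbform$ (rather than merely a subsolution with $\big(\tr((\iu\ddbar\Psi)^{-1}\MG)\big)^{-1}\ge\eqe$) is to supply, via $\ML\Psi\ge\sigma/n$ and the tangential-Hessian lower bound, comparison functions that never see $\lambda_1$, and verifying that the block-form/cofactor step likewise never incurs such a division — while $L^{p\qbar}$ itself is degenerating in the $\lambda_1$-direction — is where the care lies.
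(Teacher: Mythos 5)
Your overall architecture coincides with the paper's (the Guan '98 scheme: squeeze $\Psi\le\Phi^\eqe\le$ harmonic for the $C^0$ and boundary gradient bounds; reduce the interior first and second derivative bounds to the boundary; tangential--tangential from the boundary identity; double--normal from the equation; tangential--normal by a barrier), but the tangential--normal barrier, which is the heart of the matter, has a genuine gap as written. First, a sign problem: you need $\ML\Theta\le 0$ in $\MR\cap B_\rho(\po)$ together with $\Theta\ge0$ on the boundary of the half-ball, yet your term $A(\Psi-\Phi^\eqe)$ satisfies $\ML(\Psi-\Phi^\eqe)\ge \sigma/n-\eqe>0$, i.e.\ it pushes $\ML\Theta$ in the wrong direction (the useful ingredient is $\Phi^\eqe-\Psi$, which is $\ge0$, vanishes on $\partial\MR$, and has $\ML(\Phi^\eqe-\Psi)\le\eqe-\sigma/n<0$). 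Second, and more seriously, the commutator terms are not controlled by ``$|L^{p\qbar}|\le1$''. Writing the tangential field near $\po$ as $\MY=Y+\rho_Y\partial_t$, the quantity $\ML(\Phi^\eqe_\MY)$ contains $2\,\Ree\, L^{i\jbar}\rho_{iY}\Phi^\eqe_{t\jbar}$. Splitting $\partial_t=\partial_n+\partial_{\bar n}$, the piece $L^{i\jbar}\Phi^\eqe_{n\jbar}=\eqe^2\Phi^{i\qbar}\MG_{n\qbar}$ is indeed $O(\eqe)$ because the complex Hessian contracts against its inverse; but the piece involving $\Phi^\eqe_{s\jbar}$ (equivalently the $(0,2)$-derivative $\Phi^\eqe_{\bar n\jbar}$) does not contract, and after Cauchy--Schwarz it produces the a priori unbounded quadratic $L^{i\jbar}\Phi^\eqe_{si}\Phi^\eqe_{s\jbar}$. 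Your barrier contains nothing that can absorb it, so no choice of $A\gg B\gg1$ depending only on $\sigma$, $|\Psi|_{C^3}$, $\rho$ yields $\ML\Theta\le0$. This is precisely why the paper's barrier is $h=-\Phi_s^2+A(s^2+\sum_\alpha|z_\alpha|^2)-B\Psi$: since $\ML\Phi_s=0$, one has $\ML(-\Phi_s^2)=-2L^{i\jbar}\Phi_{si}\Phi_{s\jbar}$, which cancels exactly that term, while $-B\,\ML\Psi\le -B\sigma/n$ supplies the $\eqe$-free negative constant that dominates everything else.

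By contrast, the step you single out as the main obstacle --- the double--normal estimate --- is the easy one, and your outline of it is essentially the paper's: the identity $\Phi_{YY}+\rho_{YY}\Phi_t=0$ plus the strong \cconvexity\ of $\partial\MR$ and the boundary gradient lower bound give an $\eqe$-free lower bound on $\det(\Phi_{\alpha\betabar})$ at $\po$, and the equation then bounds $\Phi_{\tau\taubar}$ without ever dividing by the small eigenvalue. Also, for the interior reductions the paper uses mechanisms simpler than yours, which you may as well adopt: differentiating the equation once gives $\ML\Phi_X=0$, so each first derivative obeys the maximum principle with no exponential weight; differentiating twice gives $\ML\Phi_{XX}=2L^{i\jbar}(\Phi_X)_{i\qbar}\Phi^{p\qbar}(\Phi_X)_{p\jbar}\ge0$, which together with plurisubharmonicity, $\Phi_{XX}+\Phi_{JXJX}\ge0$, reduces the full $D^2$ bound to the boundary --- no $\log\lambda_{\max}$/concavity argument is needed. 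Your heavier alternatives for those two steps are plausible, but the tangential--normal barrier as proposed is the step that fails and must be repaired along the lines above.
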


\begin{proof}
	In the following, we only need to do unitary coordinate transformations, so we let $\MG_\ijbar=\delta_{ij}$.
	Let $U$ be the solution to the Laplace equation 
	\begin{align}
		U_{i\ibar}=0,
	\end{align}
	with the same boundary values as those of $\Phi$.
	Then, obviously, we have 
	\begin{align}
		\Phi\leq U\ \ \ \ \ \ \text{ in }\MR \label{eq:E_U_upperbd}
	\end{align}since $\Phi$ is plurisubharmonic and, therefore, subharmonic. 
	Let $\Psi$ be the strictly plurisubharmonic function constructed in Section \ref{sec:B_boundary_Barrier}; suppose that 
	\begin{align}
		\sqrt{-1}\partial\overline{\partial}\Psi\geq \eqe_0\bbform.  \label{427E3}
	\end{align}
	Then $\Psi\leq \Phi$, providing $\epsilon$ small enough. So, we can control the gradient of $\Phi$ on the boundary since $\Psi=\Phi=U$ on $\partial \MR$ and $\Psi\leq \Phi\leq U$ in $\Omega$.
	
	The interior estimate for gradients can be reduced to the boundary estimate. Let $\MG=(\MG_{\ijbar})$ and $\MH=(\Phi_{\ijbar})$.  Then the equation (\ref{eq:SigmaQuotient_Problem_Perturbation}) becomes
	\begin{align}
		\MH^{-1}=\frac{1}{\eqe}.
	\end{align} Let $X$ be a constant vector field on $\EC^n$. We apply $\partial_X$ to the equation above and get
	\begin{align}
		\MH^{-1} \MH_X \MH^{-1}=0;  \label{423E5}
	\end{align}
	using  operator $L$ (introduced in Section \ref{sec:AuxiliaryFunctions}), this is equivalent to
	\begin{align}
		L^{\ijbar}(\Phi_X)_{\ijbar}=0.  \label{427E6}
	\end{align}Since $L$ is an elliptic operator, an estimate on the $C^1$ norm of $\Phi$ in $\overline{\MR}$ follows from the usual maximum principle.
	
	We apply $\partial_X$ to (\ref{423E5}) and get
	\begin{align}
		\MH^{-1} \MH_{XX} \MH^{-1}
		=2\MH^{-1} \MH_{X}\MH^{-1} \MH_{X} \MH^{-1},\label{423E7}
	\end{align} 
	which says
	\begin{align}
		L^{\ijbar}(\Phi_{XX})_{\ijbar}=2(\partial_X\Phi)_{i\qbar} \Phi^{p\qbar} (\partial_X\Phi)_{p\jbar} L^{\ijbar}\geq0.
	\end{align}
	Since $\Phi$ is a plurisubharmonic function, we have
	\begin{align}
		\Phi_{XX}+\Phi_{JXJX}\geq0.
	\end{align}So the estimate of second order derivatives of $\Phi$ can  be reduced to the boundary.
	
	Suppose that $0$ is a boundary point and that around $0$ the boundary of $\MR$ is locally the graph of a function: for a small $r>0$
	\begin{align}
		\partial\MR\cap Q_r=\left\{(z_\alpha, z_n= t+\iu s)| t=\rho(z_\alpha, s)\right\}.
	\end{align}
	Here $Q_r$ is a cube 
	\begin{align}
		Q_r=\left\{|z_\alpha|^2+|s|^2\leq r^2, |t|\leq r\right\},
	\end{align}and $\rho(0)=0, \nabla \rho(0)=0.$ We recall that Greek indices run from $1$ to $n-1$.   In the following, we will estimate second order derivatives of $\Phi$ at $0$.
	
	Let $Y$ be a constant vector field in $\EC^n$ parallel to $\{t=0\}$; let
	\begin{align}
		\MY(z_\alpha, z_n)=Y+\partial_Y\rho(z_\alpha, s)\partial_t.
	\end{align}
	Then the boundary condition $\Phi|_{\partial\MR}=\text{constant}$ implies
	\begin{align}
		\partial_\MY\Phi=\partial_{\MY\MY}\Phi=0 \ \ \ \ \ \ \text{ on }\partial\MR.
	\end{align}
	Therefore, 
	\begin{align}
		\Phi_{YY}+\rho_{YY}\Phi_t=0\ \ \ \ \ \ \text{ at }0;
		\label{427E13}
	\end{align}
	thus, $\Phi_{\alpha\betabar},\ \Phi_{\alpha\beta},\ \Phi_{\alpha s},\ \Phi_{ss}$ can all be controlled by $|\Phi|_{C^1}$ and $|\rho|_{C^2}$, and it remains to estimate $\Phi_{Yt}$ and $\Phi_{tt}$.
	
	The equation (\ref{eq:SigmaQuotient_Problem_Perturbation}) implies the following relation
	\begin{align}
		\Phi_{\nnbar}\cdot \det(\Phi_{\alpha\betabar})=\eqe 	\Phi_{\nnbar}\cdot \sigma_{n-2} (\Phi_{\alpha\betabar})+\text{Terms Consisting of }\Phi_{\alpha\betabar }\text{ and }\Phi_{\alpha \nbar}. 
	\end{align}
	Because of (\ref{427E13}), $\det(\Phi_{\alpha\betabar})$ has a lower bound, depending on the \cconvexity\ of the boundary and the lower bound of $|\Phi_t|$, and $\sigma_{n-2}(\Phi_{\alpha\betabar})$ has an upper bound, depending on the $C^2$ regularity of the boundary and the gradient estimate of $\Phi$. Therefore, when $\eqe$ is small enough (\ref{427E13}) reduces the estimate for $\Phi_{\nnbar}$ to the estimates for $\Phi_{\alpha\nbar}$ and $\Phi_{\alpha\betabar}$. Since $4\Phi_{\nnbar}=\Phi_{tt}+\Phi_{ss}$  and $\Phi_{ss}$ has been estimated, it only remains to estimate $\Phi_{tY}$.
	
	The estimate of $\Phi_{tY}$ depends on the construction of a barrier function in $Q_r\cap \MG$. Let 
	\begin{align}
		h=-\Phi_s^2+A(s^2+\sum_\alpha |z_\alpha|^2)-B\Psi.\label{427E11}
	\end{align}
	We will show, for properly chosen $A$ and $B$, 
	\begin{align}
		L^{\ijbar}h_{\ijbar}\leq L^{\ijbar}(\Phi_\MY)_{\ijbar}  \ \ \ \ \ \ \text{ in }Q_r, \label{427E16}
	\end{align}
	and
	\begin{align}
		h\geq \Phi_\MY  \ \ \ \ \ \ \text{ on }\partial Q_r. \label{427E17}
	\end{align}
	
	First, we compute $L^{\ijbar} (\Phi_{\MY})_{\ijbar}$ and bound it from below. Directly using the definition of $\MY$, we have
	\begin{align}
		L^{\ijbar}(\Phi_{\MY})_{\ijbar}&=L^{\ijbar} (\Phi_Y+\rho_Y \Phi_t)_{\ijbar}\\
		&=L^{\ijbar}\Phi_{Y\ijbar} +L^{\ijbar}\Phi_{t\ijbar}\rho_Y+L^{\ijbar} \rho_{i Y}\Phi_{t\jbar}+L^{\ijbar} \rho_{\jbar Y} \Phi_{ti}+L^{\ijbar} \rho_{Y\ijbar} \Phi_t.
	\end{align} In above, using (\ref{427E6}), we know $L^{\ijbar}\Phi_{Y\ijbar}=L^{\ijbar}\Phi_{t\ijbar}=0$. Since $L$ is an operator with bounded coefficients, we have
	\begin{align}
		L^{\ijbar}\rho_{Y\ijbar}\Phi_{t}  \leq C(|\rho|_{C^3}, |\Phi|_{C^1}).
	\end{align} 
	$\Phi_{t\jbar}$ is relatively harder to control; here, we couple it with $\Phi_{s\jbar}$ and get
	\begin{align}
		L^{\ijbar}\Phi_{t\jbar}\rho_{iY}
		=2L^{\ijbar} \Phi_{n \jbar} \rho_{iY}
		+
		L^{\ijbar}\iu\Phi_{s\jbar}\rho_{iY}.
	\end{align} In above,
	$L^{\ijbar}\Phi_{n\jbar}$ can be well controlled.
	Recall that $L^{\ijbar}=\eqe^2\Phi^{i\kbar} \MG_{l\kbar} \Phi^{l\jbar}$; so, 
	\begin{align}
		L^{\ijbar}\Phi_{n\jbar}=\eqe^2 \Phi^{i\qbar} \MG_{n\qbar}. 
	\end{align} In this section $\MG_{\ijbar}=\delta_{ij}$, so $ \Phi^{i\qbar} \MG_{k\qbar}= \Phi^{i \kbar}$ is a non-negative matrix, with $\Phi^{i\ibar}=\frac{1}{\eqe}$. Therefore, 
	\begin{align}
		|L^{\ijbar} \Phi_{n\jbar}\rho_{iY}|\leq 
		\eqe |\rho|_{C^2}.
	\end{align}
	For $\Phi_{s\jbar}\rho_{iY}L^{\ijbar}$, we use Cauchy inequality and get
	\begin{align}
		L^{\ijbar} \Phi_{s\jbar}\rho_{iY}\geq 
		-\frac{1}{2}L^{\ijbar}\Phi_{s\jbar}\Phi_{si}
		-\frac{1}{2}L^{\ijbar}\rho_{Y\jbar}\rho_{Yi}.
	\end{align}
	So
	\begin{align}
		L^{\ijbar} \Phi_{\MY\ijbar}\geq
		-C(|\rho|_{C^3}, |\Phi|_{C^1})-L^{\ijbar} (\Phi_s^2)_{\ijbar}. \label{427E25}
	\end{align}
	The computation for $h$ is straightforward:
	\begin{align}
		L^{\ijbar} h_{\ijbar}\leq -L^{\ijbar} (\Phi_s^2)_{\ijbar}
		+A C(\MG)-B L^{\ijbar}\Psi_{\ijbar}. \label{427E26}
	\end{align}
	Using (\ref{427E3}), we know $L^{\ijbar}\Psi_{\ijbar}\geq C(\MG) \eqe_0$.  So, combining (\ref{427E25}) and (\ref{427E26}) we know when $\frac{B}{A}$ is large enough  (\ref{427E16}) is valid.
	To make (\ref{427E17}) valid, we only need to make $A$ and $B$ both large enough, since $(\Phi_s)^2$ is a second order small quantity around $0$. 
	
	Summing up, we find a uniform estimate on $|\Phi|_{C^2}$ as  $\eqe$ goes to zero.
\end{proof}
\section{Deformation}
\label{sec:Appendix_How_to_Deform}
In this section, we explain how to deform a \scconvex\  domain with a smooth boundary to a Euclidean ball. We need to prove the following Lemma:
\begin{lemma}
	[Shrinking to a Standard Ball]
	\label{lem:Deform_a_strongly_C-convex_domain_to_Ball}
	Suppose that $E_1$ is a \scconvex\  domain with a smooth boundary. Then we can find a family of \scconvex\  domains $E_t$ with smooth boundaries and with $t\in[0,1]$, so that they satisfy the following conditions:
	\begin{itemize}
		\item[ 1.] $E_0$ is a Euclidean ball of small radius contained in $E_1$.
		\item[ 2.]  $\overline{E_s}\subset E_t$, for any $s<t$.
		\item[ 3.]  $E_t$ smoothly deforms from $E_1$ to $E_0$. 
	\end{itemize}
\end{lemma}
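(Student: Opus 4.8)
The plan is to produce the family $\{E_t\}$ by explicit modifications of a single defining function, in three successive motions, and then reparametrize and smooth. The only input is the pointwise reformulation of \scconvexity: if $E=\{r<0\}$ with $dr\neq 0$ on $\partial E$ and $E$ lies on the side of each complex tangent plane that does not contain it, then $E$ is \scconvex\ at $\po\in\partial E$ precisely when the real Hessian of $r$ at $\po$, restricted to the real $2(n-1)$-dimensional complex tangent space $T^\EC_{\partial E,\po}$, is positive definite --- this is just (\ref{0514111}) written for the graph defining function $r=\rho_\po(z^\alpha,s)-\Ree(z^n)$. The decisive feature is that ``$\operatorname{Hess}(r)(\po)\big|_{T^\EC_{\partial E,\po}}>0$'' is both \emph{convex} and \emph{open} in the $2$-jet of $r$.

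\textbf{Step 1: a \scconvex\ defining function near $\partial E_1$.} Cover $\partial E_1$ by finitely many of the graph charts from the definition, with defining functions $r_j$, and take a partition of unity $\{\chi_j\}$ subordinate to them on a tubular neighbourhood $N$ of $\partial E_1$. Put $r_0=\sum_j\chi_j r_j$. All $r_j$ vanish on $\partial E_1$, so at $\po\in\partial E_1$ we get $dr_0(\po)=\sum_j\chi_j(\po)\,dr_j(\po)\neq 0$ (a positive combination of positively proportional conormals), and when $\operatorname{Hess}(r_0)(\po)$ is restricted to a real vector $v\in T^\EC_{\partial E_1,\po}$ the cross terms $(d\chi_j\cdot v)(dr_j\cdot v)$ drop out because $v\in\ker dr_j(\po)$, leaving $\operatorname{Hess}(r_0)(\po)|_{T^\EC_{\partial E_1,\po}}=\sum_j\chi_j(\po)\operatorname{Hess}(r_j)(\po)|_{T^\EC_{\partial E_1,\po}}>0$. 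By compactness there is $a>0$ with $dr_0\neq 0$ on $\{|r_0|<a\}\subset N$ and $\operatorname{Hess}(r_0)|_{T^\EC}\ge\mu_0\,\mathrm{Id}$ on every level $\{r_0=c\}$, $|c|\le a$, for a uniform $\mu_0>0$ (complex tangent planes of nearby levels are $C^1$-close to those of $\partial E_1$, so positivity with a uniform constant survives off the boundary). Letting $c$ decrease from $0$ to $-a/2$, the interior-side component of $\{r_0<c\}$ is a smooth \scconvex\ domain and these are strictly nested; this deforms $E_1$ to a \scconvex\ domain $E_1'$ with $\overline{E_1'}\subset E_1$ and $\partial E_1'=\{r_0=-a/2\}$.

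\textbf{Steps 2--3: shrink to a strongly convex domain, then to a ball.} Fix $\po_0\in E_1'$ and set $r^\theta:=r_0+\theta\big(|z-\po_0|^2+1\big)$. The term $|z-\po_0|^2$ has vanishing holomorphic Hessian and real Hessian $2\,\mathrm{Id}$, so it only \emph{raises} the restricted Hessian of the defining function while leaving its holomorphic Hessian (the ``$B$'' of (\ref{711121})) untouched; for $\theta$ small the zero set $\{r^\theta=0\}=\{r_0=-\theta(|z-\po_0|^2+1)\}$ lies in the shell $\{|r_0|<a\}$ and $dr^\theta\neq0$ there, so --- with $a$ chosen small at the outset --- $r^\theta$ is a \scconvex\ defining function on the shell, $\{r^\theta<0\}$, $\theta\in[0,\theta_1]$, is a strictly nested \scconvex\ family, and the new restricted $(1,1)$-Hessian is $\ge(\mu_0+2\theta)\,\mathrm{Id}$. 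Iterating this move finitely many times --- restarting each time from the defining function built by the previous step, which is again \scconvex\ on a shell of controlled width since all moduli only improve --- we add a definite total amount of $|z-\cdot|^2$, so by the elementary criterion following (\ref{711121}) (equivalently Lemma \ref{lemma_Metric_and_Q_convert_to_Degree}) the $A$-part eventually dominates the uniformly bounded $B$-part on every complex tangent space: the domain has become \emph{strongly convex}. Finally, a bounded strongly convex domain with smooth boundary has a global strictly convex defining function $\phi$, whose sublevel sets shrink through strongly convex (hence \scconvex) domains to an arbitrarily small smooth strongly convex set near $\min\phi$, which --- being $C^2$-close to an ellipsoid --- is carried to a round ball $B_0$ by an explicit affine path. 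Concatenating the three motions, reparametrizing time to $[0,1]$, and smoothing the finitely many transitions --- permissible since \scconvexity\ is open, so the smoothed family stays \scconvex --- gives $\{E_t\}_{t\in[0,1]}$, smooth in $t$, with $E_1$ the given domain, $E_0=B_0\subset E_1$ a small ball, and $\overline{E_s}\subset E_t$ for $s<t$ from the monotonicity and the strictness built into every motion.

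\textbf{Main obstacle.} The formal algebra above is routine; the genuine difficulty is \emph{uniformity across the deformation}. One must show that after each application of the $+\theta(|z-\po_0|^2+1)$ move the resulting defining function is still \scconvex\ on a tubular shell whose width does not collapse, so that the strongly convex regime is reached in finitely many steps. This is exactly where it is used that adding $|z-\po_0|^2$ improves \emph{every} quantity in sight --- the lower bound on the $(1,1)$-part, the upper bound on $B\overline{A^{-1}}\overline{B}A^{-1}$, and thus the margin in $dr^\theta\neq0$ --- so that nothing degenerates; converting this into a quantitative, uniform control of the shell is the crux. The secondary point to treat carefully is the cross-term cancellation and the uniform positivity on nearby levels in Step~1.
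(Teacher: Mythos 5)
Your outline is a genuinely different route from the paper's (which follows Lempert: sweep $E_1$ by the level sets of the pluricomplex Green's function with a logarithmic pole, use that these are smooth and \scconvex\ at every level, show via the blow-up/indicatrix analysis that sufficiently deep level sets are strongly convex, and finish by Minkowski interpolation with a small ball), but as written it has a gap exactly at the point you flag as the ``main obstacle'', and I do not think the gap can be closed within your framework. Your defining function $r_0$ comes from a partition of unity of local graph functions, so it is only defined, with $dr_0\neq 0$ and controlled Hessian, on a thin collar $\{|r_0|<a\}$ of $\partial E_1$, with $a$ small. Each move $r\mapsto r+\theta\bigl(|z-\po_0|^2+1\bigr)$ places the new zero set on $\{r_0=-\theta(|z-\po_0|^2+1)\}$, i.e.\ at least $\theta$ deeper into the collar measured in $r_0$; iterating, the zero set of $r_0+\sum_j\theta_j(|z-\po_j|^2+1)$ requires $r_0\leq-\sum_j\theta_j$, so the total parameter you can accumulate before the hypersurfaces leave the controlled region is at most about $a$, and the total $(1,1)$-gain is at most about $2a$. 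But to pass from \scconvexity\ to strong convexity you must create positivity of the restricted Hessian in the $\Imm(z^n)$-tangential direction, about which \scconvexity\ says nothing and where the Hessian of $r_0$ can be negative of order one (the $C^2$ size of the boundary data); that requires a total addition of definite size, independent of $a$. So ``iterating finitely many times'' cannot reach a strongly convex domain, and nothing in the construction continues the family through the interior of $E_1$ down to a small ball --- which is precisely the hard content of the lemma. Extending $r_0$ inward with nonvanishing gradient and \scconvex\ level sets would already amount to the statement being proved; the paper needs Lempert's theory for exactly this step.

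A secondary flaw is the monotonicity claim that powers the iteration (``all moduli only improve''): the zero set of $r+\theta(|z-\po_0|^2+1)$ is a different hypersurface, whose complex tangent planes are rotated by $O(\theta)$ relative to those of the level sets of $r$; restricting the Hessian to the rotated planes costs an error $C\theta$ with $C$ depending on the full second derivatives of $r$ and on the lower bound of $|\nabla r|$, so the restricted $(1,1)$-part is only bounded below by $\mu_0-C\theta+2\theta$, not $\mu_0+2\theta$, and the $B$-part is not literally untouched. For small $\theta$ this still gives preservation by openness, but it invalidates the bookkeeping by which you hoped to accumulate a definite gain on a shell ``of controlled width''. (Your Step 1 partition-of-unity argument and the final segment --- shrinking a strongly convex domain through strongly convex, hence \scconvex, domains to a small ball --- are fine in outline and close in spirit to the paper's last step.)
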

\begin{remark}
	\label{remark:smoothly_deform}
	For the third requirement, we mean the following: Let $\po$ be a point on $\partial \Omega_s$, for $s\in[0,1]$. Then around $\po$, $\partial \Omega_t$ is the graph of a function $\rho_t$, for $t$ close enough to $s$, and all the derivatives of $\rho_t$ are continuous functions of $t$.
\end{remark}
\begin{proof}
	[Proof of Lemma \ref{lem:Deform_a_strongly_C-convex_domain_to_Ball}] {\bf The following idea is provided to the author by L\'asz\'o Lempert through a private communication.}
	
	Given $E_1$, we can construct the pluricomplex Green's function $G$ of  $E_1$ with a logarithmic singularity at $0\in E_1$ \cite{Lempert84Bulgar}.  Then $G$ is smooth with $\nabla G\neq 0$ everywhere in $E_1\backslash \{0\}$, and all the level sets of $G$ are smooth and \scconvex, according to Lemma 5.3 of \cite{Lempert85_Duke_Symmetries}. 
	
	Let $\mu: {E_1^{\ast}}\rightarrow E_1 $ be the blowing up of $E_1$ at $0$, where
	\begin{align}
		{E_1^{\ast}}=
		\left\{
		(z_1, \ ...\ , z_n, \zeta)\ |\ (z_1, \ ...\ , z_n)\in E_1, \ \zeta=[\zeta_1,\ ...\ , \zeta_n]\in \EC P^{n-1},\ \text{with } z_i\zeta_j=\zeta_i z_j
		\right\}
	\end{align} and 
	\begin{align}
		\mu 	(z_1, \ ...\ , z_n, \zeta)=	(z_1, \ ...\ , z_n).
	\end{align}
	Denote $G^\ast=\mu^\ast G$; then it was proved in \cite{LempertDirichlet} that $e^{2G^\ast}$ is a smooth function on ${E_1^{\ast}}$. Here we assume that
	\begin{align}
		G(z)-\log(|z|)=O(1) \text{\ \  as } z\rightarrow 0.
	\end{align} Moreover, we have that
	\begin{align}
		H(z, \zeta)\triangleq \frac{1}{|z|^2} e^{2 G^\ast}
	\end{align} is smooth on ${E_1^{\ast}}$ and $H>0$. 
	Denote $\log(H)=2h$; then $h$ is smooth on ${E_1^{\ast}}$, and
	\begin{align}
		G^\ast=h+\log(|z|).
	\end{align}We will show
	\begin{align}
		\frac{1}{c} 
		\left\{
		G<\log c
		\right\}
	\end{align} converges to the indicatrix introduced in \cite{Lempert88_Annuals_Invariants}.
	Since 
	\begin{align}
		\left\{
		G<\log c
		\right\} =\mu \left\{
		G^\ast<\log c
		\right\},
	\end{align} we only need to study the level sets of $G^\ast$. We have
	\begin{align}
	\frac{1}{c}	\{G^\ast <\log(c)\}=\frac{1}{c}\left\{
		h(z, \zeta)+\log(|z|)<\log(c)
		\right\} =\left\{
		h(c z, \zeta)+\log(|z|)<0
		\right\}.
	\end{align} Here $\frac{1}{c}$ means rescaling in  $z$ direction. So $\frac{1}{c}\left\{
	G^\ast<\log c
	\right\}$ converges to 
	\begin{align}
		\left\{
		h(0, \zeta)+\log(|z|)<0
		\right\}=
		\left\{
		|z|^2\leq \frac{1}{H(0, \zeta)}
		\right\}.
	\end{align} Let $f_\zeta$ be an extremal disc so that  
	\begin{align}
		f_{\zeta}(0)=0 \text{ \ \ \ \ and \ \ \ \ } \pi f_{\zeta}'(0)=\zeta,
	\end{align}where $\pi$ is the projection from $\EC^n\backslash\{0\}$ to $\EC P^{n-1}$;
	then 
	\begin{align}
		\frac{1}{H(0, \zeta)}=\lim_{z\rightarrow 0}\frac{|z|^2}{e^{2 G^\ast}(z, \zeta)}=
		\lim_{\tau\rightarrow 0}\frac{|f_\zeta(\tau)|^2}{|\tau|^2}=|f_{\zeta}'(0)|^2.
	\end{align}
	So 
	$\mu \left\{
	|z|^2\leq \frac{1}{H(0, \zeta)} 
	\right\}$ is the indicatrix. 
	
	By slightly generalizing the argument on Page 47 of \cite{Lempert88_Annuals_Invariants},  we know the indicatrix for a \scconvex\ domain is strongly convex.
	So when  $\theta$ is large enough, $\{G\leq-\theta\}$ is a strongly convex domain. We choose $B_{r_0}\subset \{G\leq-\theta\}$, then 
	$\{G\leq-\theta\}$ is connected with $B_{r_0}$ by
	the following family of strongly convex domains
	\begin{align}
		s\{G\leq-\theta\}+(1-s)B_{r_0}.
	\end{align}
	Then  the family 
	\begin{align}
		E^t=\left\{
		\begin{array}{cc}
			\{G\leq -2\theta (1-t)\}& \text{ for }t\in [\frac{1}{2}, 1] ,   \\
			2 t \{G\leq -\theta\}+(1-2t) B_{r_0}& \text{ for }t\in [0,\frac{1}{2}] 
		\end{array}
		\right.
	\end{align}satisfies the condition of Lemma \ref{lem:Deform_a_strongly_C-convex_domain_to_Ball}.
\end{proof}
\section{Simplified Proof in Dimension $2$}
\label{sec:Formal_Computation_2dim}
In this appendix, we provide an estimate for the robustness of \qcconvexity\  in the case of $n=2$, under an unrealistic assumption that solutions to Problem \ref{prob:Main_Problem_HCMA_ring} are smooth. But the computations and arguments in this case are much simpler and straightforward, and they demonstrates the main idea of this paper. That is the purpose of this appendix.
    We will prove the following apriori estimate:
    \begin{proposition}
    	[Apriori Estimate in $2$-dimension with the Smoothness Assumption]
    	\label{prop:2d_apriori_estimate_with_smooth_Assumption}
    	Suppose $\Phi$ is a smooth solution to Problem \ref{prob:Main_Problem_HCMA_ring} in the case of $n=2$, and $\Phi$ is \sqcconvex. Then the robustness of the \qcconvexity\ of $\Phi$ is greater than $\epsilon$, where $\epsilon$ is a constant depending on the geometry of $\MR$.
    \end{proposition}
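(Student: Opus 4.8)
The plan is to carry over the arguments of Sections \ref{sec:AuxiliaryFunctions}--\ref{sec:Approximate_HCMA} with $\eqe=0$ throughout, exploiting the drastic simplifications that occur when $n=2$. Since $\Phi$ is smooth and \sqcconvex, $\nabla\Phi\neq 0$ on all of $\overline\MR$, so the level sets $\{\Phi=t\}$ are smooth hypersurfaces; the equation $\det(\Phi_{\ijbar})=0$ together with the strong convexity of the restricted Hessian forces $\iu\ddbar\Phi$ to have rank exactly $1$, and its kernel line field integrates to the Monge--Amp\`ere foliation, whose leaves are holomorphic curves along which $\Phi$ is harmonic. Because $0<\Phi<1$ in $\MR$ and $\Phi$ is harmonic on each leaf, no leaf is relatively compact in $\MR$; each leaf is a properly embedded holomorphic annulus with one end on $\partial\Omega_0$ and one on $\partial\Omega_1$. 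The key structural point, exactly as in Section \ref{sec:subsec:Convexity_Estimate_computation}, is that $\iu\ddbar q=0$ for a pluriharmonic quadratic $q$, so every $\Phi-q$ with $|\nabla q|_\MG$ smaller than $\inf_{\overline\MR}|\nabla\Phi|_\MG$ is again a smooth solution of the homogeneous equation, with the \emph{same} Monge--Amp\`ere foliation and non-vanishing gradient; this is what upgrades pointwise estimates to robustness.

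In dimension $2$ the Greek index takes the single value $1$, so at a point $\po$ with coordinates $(z,\tau)$ chosen so that $\Phi_z(\po)=0$, $\Phi_\tau(\po)\neq0$, the matrices $\MA,\MB$ of (\ref{516120})--(\ref{516121}) become a positive number $\MA$ and a complex number $\MB$, with $\MK=|\MB|^2\,\overline{\MA}^{-1}\MA^{-1}$ and $\sigma_\Phi=\tr(I-\MK)^{-1}=(1-|\MB|^2\MA^{-2})^{-1}$. The content of Proposition \ref{prop:Main_Computation}, in the homogeneous limit, is the pair of leaf-wise maximum principles: writing $\triangle_L$ for the leaf-wise Laplacian, one has $\triangle_L\sigma_\Phi\geq0$ wherever $\MK<1$, and $\triangle_L(-\log S)\geq0$ for the leaf-wise version $S$ of the gradient quantity. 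When $n=2$ the first of these is a one-line computation: differentiating the equation twice along $L$ shows the scalars $\MA,\MB$ satisfy the harmonic-map identities (\ref{RH_plain_A})--(\ref{RH_plain_B}) with $\plainL^{p\qbar}\partial_{p\qbar}=\triangle_L$ and \emph{no} error terms (since $\eqe=0$), after which the positivity is precisely the positive-definiteness of the single $2\times2$ matrix $M_{11}$ of (\ref{7143114}), which holds for $0\le\lambda<1$. The second identity gives, as in Section \ref{sec:Lower_Bound_Gradient_Norm} together with the boundary gradient bounds of Appendix \ref{sec:B_boundary_Barrier}, the uniform lower bound $|\nabla\Phi|_\MG\ge C$ on $\overline\MR$ with $C$ depending only on the geometry of $\MR$.

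With these two ingredients I would then reproduce the continuity argument of Section \ref{sec:Convexity_Estimate_GeneralDim_Def+Perturb} with $\eqe=0$. On $\partial\MR$, the strong $\EC$-convexity of $\Omega_0,\Omega_1$ and the two-sided gradient bounds give, via Lemma \ref{lemma:Boundary_Convex_implies_Modulus} and Lemma \ref{lemma:ModulusPositive_implies_RobustnessPositive}, a geometry-dependent lower bound for the robustness of \qcconvexity\ on $\partial\MR$ and in particular a geometry-dependent upper bound for $\sigma_{\Phi-q}$ there. Using Lemma \ref{lem:Deform_a_strongly_C-convex_domain_to_Ball} I would connect $(\Omega_0,\Omega_1)$ to a pair of concentric discs through a family of strongly $\EC$-convex rings $\MR^\lambda$, for which at $\lambda=0$ the explicit solution $\frac{\log|z|-\log r}{\log R-\log r}$ is manifestly \sqcconvex\ with robustness bounded below, and then show that $\{\lambda\in[0,1]:\ \text{robustness of }\Phi^\lambda>\sigma\}$ is all of $[0,1]$ for a small $\sigma$ depending only on the geometries involved. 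Right-closedness is the crux: at a boundary parameter $l$, Lemma \ref{lemma:Modulus_Estimate_with_Limit} first gives robustness $>\sigma-\delta$, so $\Phi^l-q$ is \sqcconvex\ for $q\in\spaceQ_{\sigma-2\delta}$; then (i) the leaf-wise maximum principle for $\sigma_{\Phi^l-q}$ plus the boundary bound gives $\MK^{\Phi^l-q}\le 1-\frac1C$, and (ii) the perturbation trick of Step 3.1 of Section \ref{sec:sub:Convexity_Deformation}, applied to $\Phi^l-q\pm mh$ and using $\iu\ddbar(\Phi^l-q)=\iu\ddbar\Phi^l\ge0$, gives $\MA^{\Phi^l-q}\ge\frac\sigma C$; combining (i) and (ii) with Lemma \ref{lemma_Metric_and_Q_convert_to_Degree} upgrades the robustness of $\Phi^l-q$ to $\ge\sigma^2/C_0$, hence the robustness of $\Phi^l$ to $>\sigma$ once $\delta=\sigma^2/(4C_0)$. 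A second continuity argument as in Section \ref{sec:sub:Improving_Convexity} then removes the dependence on the deformation, and $\eqe$ may be taken to be the resulting constant.

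The main obstacle is the leaf-wise subharmonicity $\triangle_L\sigma_\Phi\geq0$ and the structural facts that underpin it: one must check that in the genuinely degenerate setting the Monge--Amp\`ere foliation is smooth with every leaf reaching both components of $\partial\MR$, that the relevant second-order operator really is the leaf-wise Laplacian with no lower-order remainder, and that $\MA,\MB$ satisfy (\ref{RH_plain_A})--(\ref{RH_plain_B}) exactly. The second delicate point is the uniform lower bound for $\MA$: this cannot come from a maximum principle for $\MA$ itself --- along leaves $\log\MA$ turns out to be only subharmonic, giving an upper but not a lower bound --- which is why the perturbation argument, and hence the continuity scheme, are needed even when $n=2$.
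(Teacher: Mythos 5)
Your core analytic ingredients are the right ones and match the paper's Appendix F: the Monge--Amp\`ere foliation with $\Phi$ (and $\Phi_z$) harmonic along leaves, the gradient lower bound obtained from the leafwise superharmonicity of $\log S$ plus the boundary barrier of Appendix \ref{sec:B_boundary_Barrier}, the error-free two-dimensional analogue of (\ref{RH_plain_A})--(\ref{RH_plain_B}) for the scalars $\ca,\cb$, the leafwise subharmonicity of $\frac{1}{1-\cQ_\Phi}$, and the use of two-sided pluriharmonic perturbations $\pm h$ to control the degenerate direction because $\ca$ itself admits no lower-bound maximum principle. However, your global scheme has a genuine gap: you propose to ``reproduce the continuity argument of Section \ref{sec:Convexity_Estimate_GeneralDim_Def+Perturb} with $\eqe=0$,'' i.e.\ to deform $(\Omega_0,\Omega_1)$ to concentric balls via Lemma \ref{lem:Deform_a_strongly_C-convex_domain_to_Ball} and run a continuity argument in the deformation parameter $\lambda$. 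At $\eqe=0$ this cannot be justified: every step of that argument (the foliation, the third-derivative identities behind the leafwise maximum principles, Lemma \ref{lemma:Modulus_Estimate_with_Limit}, openness in $\lambda$) requires the solutions $\Phi^\lambda$ on the intermediate rings $\MR^\lambda$ to be smooth and a priori \sqcconvex, whereas the proposition grants smoothness and \sqcconvexity\ only for the given $\Phi$ on the given $\MR$; in general the homogeneous problem has only $C^{1,1}$ solutions, and there is no implicit-function-theorem continuity in $\lambda$ for the degenerate equation. The deformation is also unnecessary: the hypothesis that $\Phi$ is \sqcconvex\ is exactly the anchor that replaces it. The paper's proof (Section \ref{sec:A_Convexity_Estimate}) fixes the ring, chooses $\epsilon_0$ below the gradient bound and below the boundary robustness, and runs the continuity argument in the \emph{size of the pluriharmonic perturbation}: $I_{\epsilon_0}=\{\epsilon\in[0,\epsilon_0]:\Phi-q\ \text{is \sqcconvex\ for all }q\in\spaceQ_\epsilon\}$ contains $0$ by hypothesis, is open, and is right-closed by the maximum principle of Section \ref{sec:A_subharmonic_Q_along_leaf} applied to $\Phi-k^\theta$, $\Phi-k_\theta$ together with the triangle-inequality argument at a point where either $\ca_{\Phi-k}=0$ or $\cQ_{\Phi-k}=1$. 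If you replace your $\lambda$-continuity by this $\epsilon$-continuity on the fixed domain, your remaining ingredients assemble into essentially the paper's proof.

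A secondary point: your structural claim that every leaf is a properly embedded annulus with one end on each component of $\partial\MR$ does not follow from non-compactness and is explicitly disclaimed in the paper (a leaf ``may be an infinite strap''; there is no extremal cylinder, which is precisely why smooth solvability fails in general). Fortunately it is also not needed: to propagate the boundary bound on $\cQ_{\Phi-q}$ into the interior one does not integrate along global leaves; one assumes an interior maximum of $\frac{1}{1-\cQ}$, restricts to a \emph{local} leaf through that point, and contradicts the strong maximum principle. So you should delete the properness requirement from your list of things to verify rather than attempt to prove it.
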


In Section \ref{sec:A_Foliation}, we explain the foliation structure associated to a \sqcconvex\ solution of Problem \ref{prob:Main_Problem_HCMA_ring}, which is not available when we consider the perturbed problem, Problem \ref{prob:Perturbed_Problem_HessianQuotient}. 
In the proof of the above Proposition, we need to first estimate the gradient; this is  the content of Section \ref{sec:A_Lower Bound Estimate for Norms of Gradients}.
We then introduce the auxiliary function $\cQ_\Phi$ in Section \ref{sec:A_ConstructQ}, which is a simplification of the quantity $\MK$ we constructed in general dimensions.  In Section \ref{sec:A_Equation_for_a_b} and  \ref{sec:A_subharmonic_Q_along_leaf}, we show $\frac{1}{1-\cQ_\Phi}$ is subharmonic along each leaf, which is essential for the convexity estimate. In Section \ref{sec:A_Convexity_Estimate} we proved Proposition \ref{prop:2d_apriori_estimate_with_smooth_Assumption} using a continuity estimate.

In this appendix, we let $\MG=\MG_{\ijbar}dz^i\otimes dz^\jbar$ be a constant coefficient Hermitian metric on $\EC^2$ and 
\begin{align}
	\spaceQ_\epsilon=\left\{q| \text{quadratic pluriharmonic polynomials, } |\nabla q|_\MG\leq \epsilon\right\}.
\end{align}
\subsection{Foliation Structure}
\label{sec:A_Foliation}
Given a solution $\Phi$ to the homogenous complex Monge-Amp\`ere equation in $\EC^n$ with the  rank of $(\Phi_{\ijbar})$ being $n-1$, there is a foliation structure, formed by integrating kernels of $(\Phi_{\ijbar})$. This has been discussed in many works including \cite{Lempert81_La_metrique} \cite{DonaldsonHolomorphicDiscs} \cite{CFH}. In our case (n=2), according to the apriori assumption, given any interior point $\po$, we can choose a coordinate $(\tau, z)$ on $\EC^2$ so that $\Phi_{\zzbar}(\po)\neq 0$. Then we let 
\begin{align}
	Z=\partial _\tau-\frac{\Phitauzbar}{\Phizzbar}\partial_z,
\end{align}
and it satisfies
\begin{align}
	[Z, \overline Z]=0.
\end{align} According to the complex Frobienius Theorem, we can find a holomorphic map $\varphi(\zeta)=(\zeta, h(\zeta)),$ from a disc in $\EC$ to $\EC^2$ such that 
\begin{align}
	\varphi(0)=\po, \text{ and }
	\varphi_\ast(\partial_\zeta)=\partial_\tau-\frac{\Phitauzbar}{\Phizzbar}\partial_z.
	\label{0721C4}
\end{align}At a boundary point the map is from a half disc to $\EC^2.$ The image of $\varphi$ is called a local leaf (or, for simplicity, a leaf), and since through any point we can find a leaf, these leaves form a foliation in $\MR$. In the following, we use $\zeta$ to denote the complex coordinate on a leaf. 

With $\Phizzbar\neq 0$, the homogenous complex Monge-Amp\`ere  equation for $\Phi$ is equivalent to 
\begin{align}
	\Phitautaubar-\frac{\Phitauzbar\Phiztaubar}{\Phizzbar}=0
	\label{equation:A_HCMA_inversed_2d}
\end{align}and also
\begin{align}
	\Phi_{\zeta\zetabar}=0. \label{equation:A_Phizetazetabar0}
\end{align} This says $\Phi$ is a harmonic function when restricted to a leaf.

We apply $\partial_z$ to (\ref{equation:A_HCMA_inversed_2d}) and get
\begin{align}
	\Phi_{z\tautaubar}
	-\frac{\Phi_{z\tau\zbar}\Phiztaubar}{\Phizzbar}
	-\frac{\Phitauzbar\Phi_{zz\taubar}}{\Phizzbar}
	+\frac{\Phitauzbar\Phiztaubar\Phi_{z\zzbar}}{(\Phizzbar)^2}=0.
	\label{equation:A_Phiz_harmonic_leaf_full}
\end{align}
This is equivalent to 
\begin{align}
	\Phi_{z\zeta\zetabar}=0,
	\label{equation:A_Phizzetazetabar=0}
\end{align} which  says $\Phi_z$ is harmonic when restricted to a leaf.

\begin{remark} Let $G$ be the pluricomplex Green's function of a \scconvex\ domain $\Omega$ with a smooth boundary; then, the rank of $\iu\ddbar G$ is $n-1$ and the kernels of $\iu\ddbar G$ form an extremal disc (see  \cite{Lempert81_La_metrique} and \cite{Lempert84Bulgar}). For any point ${\bf z}$ in $\Omega\backslash\{ 0\}$, where $0$ is the singular point of $G$, we can find a holomorphic map $\varphi$ from the unit disc $D$ in $\EC$ to $\Omega$, with
	\begin{align}
	\varphi_\ast\partial_\zeta\in \text { kernel of }\iu\ddbar G,\ \ \ \ \ \text{ in }\Omega\backslash\{0\},
\end{align}
and
\begin{align}
	  \varphi(D)\ni 0, {\bf z}\ \ \ \ \  \text{and}\ \ \ \ \ \varphi(\partial D)\subset \partial\Omega.
	\end{align}
	However, in our situation, we would not have an ``extremal cylinder". An entire leaf may be an infinite strap. So we only consider local leaves. This is the main reason why Problem \ref{prob:Main_Problem_HCMA_ring} does not have a smooth solution in general. For geodesic problem in the space of K\"ahler potentials, we discussed a similar singular phenomenon in \cite{HuIMRN}.
\end{remark}

\subsection{Positive Lower Bound Estimate for Norms of Gradients}
\label{sec:A_Lower Bound Estimate for Norms of Gradients}
In this section, we show the norms of the gradients of $\Phi$ have a positive lower bound. We consider the following quantity:
\begin{align}
	S=\frac{\Phi_\zeta\Phi_\zetabar	}{\MG(\partial_\zeta, \partial_\zetabar)},
\end{align}
where $\zeta$ is the complex coordinate on a leaf and $\MG$ is the Hermitian metric on $\EC$. We know that $S$ is independent of the choice of coordinate on the leaf. Actually, it's the square of the norm of the gradient of the restriction of $\Phi$ to a leaf; as a consequence,
\begin{align}
	|\nabla \Phi|_\MG^2> S.
\end{align}

We will show $S$ has a positive lower bound in $\MR$. First, we estimate $S$ on the boundary; then, we show $\log(S)$ is a superharmonic function on a leaf. The desired estimate then follows from the maximum principle.

When estimating $S$ at a boundary point $\po$, we choose a set of coordinates $(\tau, z)$ on $\EC^2$, so that $ \MG_{\ijbar}=\delta_{ij}$ and $\Phi_z(\po)=0$. Then, at $\po$, 
\begin{align}
	S=\frac{\left|(\partial_\tau-\frac{\Phitauzbar}{\Phizzbar}\partial_z)\Phi\right|^2}
	{\left|\partial_\tau-\frac{\Phitauzbar}{\Phizzbar}\partial_z\right|^2_\MG}
	=\frac{|\Phi_\tau|^2}{1+\left|\frac{\Phitauzbar}{\Phizzbar}\right|^2}.
\end{align}
Therefore, for a constant $C$ depending on the $C^2$ norm of $\Phi$ and the lower bound of $\Phizzbar$, 
\begin{align}
	C|\Phi_{\bold n}|^2\leq S\leq |\Phi_{\bold n}|^2 ,
\end{align}
where ${\bold n}$ is the unit normal vector to $\partial_\MR$ at $\po$. According to Lemma \ref{lemma:Boundary_Convex_implies_Modulus} the lower bound of $\Phizzbar$ depends on the \cconvexity\  of the boundary and the gradient of $\Phi$ on the boundary. The lower bound estimate for $|\Phi_{\bold n}|$ can be find in Appendix \ref{sec:B_boundary_Barrier}. Thus the lower bound for
$S$ on the boundary is known.

To get the interior estimate for $S$, we need to show $\log(S)$ is superharmonic on each leaf. We have
\begin{align}
	(\log(S))_\zetazetabar=\left[\log(\Phi_\zeta\Phi_\zetabar)
	-\log \MG(\partial_\zeta, \partial_{\zetabar})\right]_{\zetazetabar}.
\end{align} 
In the following, we compute $\left[\log(\Phi_\zeta\Phi_\zetabar)\right]_{\zetazetabar}$ and $-\left[\log \MG(\partial_\zeta, \partial_{\zetabar})\right]_{\zetazetabar}$ seperately and show that they are both non-positive.

Using $\Phi_\zetazetabar=0$, which is (\ref{equation:A_Phizetazetabar0}), and $\Phi_{\zeta\zetazetabar}=0$, which follows from the previous one, we have
\begin{align}
	\left[\log(\Phi_\zeta\Phi_\zetabar)\right]_{\zetazetabar}
	=\left[\frac{\Phi_{\zeta\zeta}\Phi_\zetabar+\Phi_\zeta\Phi_{\zetazetabar}}{\Phi_\zeta \Phi_\zetabar}\right]_{\zetabar}
	=\frac{|\Phi_{\zeta\zeta}|^2}{|\Phi_\zeta|^2}-\frac{|\Phi_{\zeta\zeta}   \Phi_\zetabar|^2}{|\Phi_\zeta|^4}=0.
\end{align}
For the computation of $\left[\log \MG(\partial_\zeta, \partial_{\zetabar})\right]_{\zetazetabar}$, we choose a coordinate $(\tau, z)$, so that $g_{\ijbar}=\delta_{ij}$. Then 
\begin{align}
	\MG(\partial_\zeta, \partial_\zetabar)=1+\left|\frac{\Phi_{\tau\zbar}}{\Phi_{\zzbar}}\right|^2.
\end{align}
As a locally defined function $\frac{\Phi_{\tau\zbar}}{\Phi_{\zzbar}}$ is holomorphic on each leaf; actually it is $-h'$, where $h$ is the $\tau$-component of $\varphi$ in (\ref{0721C4}). 
This can be verified by applying $\partial_\zetabar$ to $\frac{\Phi_{\tau\zbar}}{\Phi_{\zzbar}}$. In the following, we denote $\frac{\Phi_{\tau\zbar}}{\Phi_{\zzbar}}$ by $f$, and then 
\begin{align}
	\MG(\partial_\zeta, \partial_\zetabar)=1+|f|^2;
\end{align}
we have
\begin{align}
	[\log (1+|f|^2)]_{\zeta\zetabar}=\left(\frac{f_\zeta \overline f}{1+|f|^2}\right)_\zetabar=\frac{\overline f_\zetabar f_\zeta}{(1+|f|^2)^2}\geq 0.
\end{align}
To sum up, we have
\begin{align}
	(\log S)_\zetazetabar\leq 0, 
\end{align} and combining with the boundary estimate, we have a positive lower bound for $S$.
Therefore, $ |\nabla \Phi|_\MG^2$ is also bounded from below by a positive constant since it's greater or equal to $S$. We denote
\begin{align}
	|\nabla \Phi|_\MG \geq C_{\MR}^g.
\end{align}
\begin{remark}A positive lower bound estimate for $|\nabla\Phi|_{\MG}$ is crucial in the study of level sets of $\Phi$; without it, we don't even know the level sets of $\Phi$ are submanifolds.   	It's interesting to note the following results. In \cite{MaOuZhang_CPAM}, Proposition 4.1, they proved that for the solution $u$ of the $p$-Laplace equation  $|\nabla u|$ strictly increases in the direction of $\nabla u$. In \cite{Lempert81_La_metrique}, for an extremal map $f: U\rightarrow D$, where $U$ is the unit disc in $\EC$ and $D$ is a strongly convex domain with a smooth boundary, it was proved that $f'\neq 0$ anywhere; as a consequence, for the pluricomplex Green's function $\Phi$, $\nabla\Phi\neq0$.
\end{remark}
\subsection{The Construction of $\cQ$ and Its Relation to $\EC$-Convexity}
\label{sec:A_ConstructQ}
Under the apriori assumption that $\Phi$ is \sqcconvex, the gradients of $\Phi$ never vanish; then, for any point $\po$,  we can choose a  coordinate $(\tau, z)$ so that $\Phi_\tau\neq0$, $\Phi_{z}=0$ and $\po=(0,0)$. That's to say, the plane $\{\tau=0\}$ is the complex tangent plane of the level set of $\Phi$ at the point $\po$. In such a coordinate chart, we define
\begin{align}
&	\cbphi=\Phizz-2\frac{\Phi_z}{\Phi_\tau}\Phitauz+\left(\frac{\Phi_z}{\Phi_\tau}\right)^2\Phitautau,
\label{equation:A_define_cbPhi}
\\
&	\caphi=\Phizzbar-\frac{\Phi_z}{\Phi_\tau}\Phitauzbar
							-\frac{\Phi_\zbar}{\Phi_\taubar}\Phiztaubar
	+\left|\frac{\Phi_z}{\Phi_\tau}\right|^2\Phitautaubar.
	\label{equation:A_define_caPhi}
\end{align} Because of the apriori assumption that $\Phi$ is \sqcconvex, when this neighborhood is small enough, $\caphi\neq 0$ and we can define
\begin{align}
	\cQphi=\frac{|\cbphi|^2}{\caphi^2}.
\end{align} Actually,  $\cQphi$ is invariant under a complex linear change of coordinate, even $\caphi$ and $\cbphi$ depend on the choice of coordinates. $\Phi$ is \sqcconvex\  is equivalent to that $\ca_\Phi>0$ and $\cQphi<1$. In Section \ref{sec:A_subharmonic_Q_along_leaf}, we will show $\frac{1}{1-\cQphi}$ is subharmonic along each leaf.

We also need to perturb $\Phi$ by pluriharmonic quadratic polynomials, which does not change the equation but will alter $\cQphi$. This gives us the estimate of the robustness of \qcconvexity\  of $\Phi$. For a quadratic polynomial $q$, assuming that $\Phi-q$ is \sqcconvex\  and $\partial_\tau(\Phi-q)\neq0$, we replace $\Phi$ by $\Phi-q$ in (\ref{equation:A_define_cbPhi}) and (\ref{equation:A_define_caPhi}) and let
\begin{align}
&	\cb_{\Phi-q}=(\Phi-q)_{zz}-2\frac{(\Phi-q)_z}{(\Phi-q)_\tau}(\Phi-q)_{\tau z}+\left(\frac{(\Phi-q)_z}{(\Phi-q)_\tau}\right)^2(\Phi-q)_{\tau \tau},\\
&	\ca_{\Phi-q}=(\Phi-q)_{\zzbar}-\frac{(\Phi-q)_z}{(\Phi-q)_\tau}(\Phi-q)_{\tau\zbar}
	-\frac{(\Phi-q)_\zbar}{(\Phi-q)_\taubar}(\Phi-q)_{z\taubar}
	+\left|\frac{(\Phi-q)_z}{(\Phi-q)_\tau}\right|^2(\Phi-q)_{\tautaubar}.
\end{align}
Then we let
\begin{align}
	\cQ_{\Phi-q}=\frac{|\cb_{\Phi-q}|^2}{\ca_{\Phi-q}^2}.
\end{align}
If $q$ is a pluriharmonic function, $\Phi-q$ is still a solution to the homogenous complex Monge-Amp\`ere equation, and the subharmonicity of $\cQ_\Phi$ implies the subharmonicity of $\cQ_{\Phi-q}$.

\subsection{Equations for $\ca$ and $\cb$ with a Good Coordinate Choice}
\label{sec:A_Equation_for_a_b}
At any point $\po$, because of the assumption that $\Phi$ is \sqcconvex, we have
\begin{align}
	\ker \sqrt{-1}\partial\overline{\partial}\Phi \oplus \ker \partial\Phi=\EC^2,
\end{align}
so we can choose a  coordinate $(\tau, z)$ so that $\Phi_z=0$ and $\Phi_{\tautaubar}=0$. Then the homogenous complex Monge-Amp\`ere equation implies $\Phiztaubar=0$.
We note that with this coordinate the coefficients of $\MG$ may be very large.

Then, the computations at $\po$ become simple:
  (\ref{equation:A_HCMA_inversed_2d}) becomes $\Phi_{\tau\taubar}=0$, and (\ref{equation:A_Phiz_harmonic_leaf_full}) becomes $\Phi_{z\tautaubar}=0.$
For any function $f$ defined around $\po$, we have $f_\zeta=f_\tau$, and \begin{align}
	f_{\zeta\zetabar}
	=\left(f_{\tau}-\frac{\Phitauzbar}{\Phizzbar}f_{z}\right)_\taubar
	=f_{\tautaubar}-\frac{\Phi_{\tau\zbar\taubar}}{\Phizzbar}f_{z}=f_{\tautaubar}.
\end{align}
In the last equality, we used $\Phi_{z\tautaubar}=\Phi_{\zbar\tautaubar}=0$.

In the following, we discuss the equations satisfied by $\ca_\Phi$ and $\cb_\Phi$ at $\po$ with the coordinate chosen above. We will drop the subindex $\Phi$ and just denote them by $\ca$ and $\cb$. We compute $\ca_{\zeta\zetabar}$, $\cb_{\zeta\zetabar}$, $\ca_{\zeta}$  and $\cb_{\zetabar}$ and show that they satisfy the following equations:
\begin{align}
	\cb_{\zeta\zetabar}&=2\frac{\cb_\zetabar \ca_\zeta}{\ca},
	\label{equation:A_cb_Phi}\\
	\ca_{\zeta\zetabar}&=\frac{\cb_\zetabar {\overline\cb}_\zeta}{\ca}
	   +\frac{\ca_\zeta\ca_\zetabar}{\ca}.
	\label{equation:A_ca_Phi}
\end{align}
Using these equations, we show, in Section \ref{sec:A_subharmonic_Q_along_leaf}, $\frac{1}{1-\cQ_\Phi}$ is subharmonic along any leaves.

To simplify the computation, let $b=\Phi_{zz}$ and $a=\Phi_{\zzbar}$. By applying $\partial_\zzbar$ and $\partial_{zz}$ to (\ref{equation:A_HCMA_inversed_2d}), we have 
\begin{align}
	b_{\zeta\zetabar}&=2\frac{b_\zetabar a_\zeta}{a},
	\label{equation:A_b_Phi}\\
	a_{\zeta\zetabar}&=\frac{b_\zetabar {\overline b}_\zeta}{a}+\frac{a_\zetabar a_\zeta}{a}.
		\label{equation:A_a_Phi}
\end{align}
The computation is the same as that of Section 2 of \cite{Hu22Nov} and Appendix A of \cite{HuC2Perturb}. In the following, we will show, at the point $\po$, $\cb_\zetabar=b_\zetabar$,  $\ca_\zeta=a_\zeta$, $\cb_{\zeta\zetabar}=b_{\zeta\zetabar}$, $\ca_\zetazetabar=a_\zetazetabar$. Thus, (\ref{equation:A_cb_Phi}) and (\ref{equation:A_ca_Phi}) follow from (\ref{equation:A_b_Phi}) and (\ref{equation:A_a_Phi}).

Subtracting $a=\Phi_{\zzbar}$ from (\ref{equation:A_define_caPhi}), we get
\begin{align}
	\ca_\zeta-a_\zeta=\left(-\frac{\Phi_z}{\Phi_\tau}\Phitauzbar
								-\frac{\Phi_\zbar}{\Phi_\taubar}\Phiztaubar
								+\left|\frac{\Phi_z}{\Phi_\tau}\right|^2\Phitautaubar\right)_\zeta=0, \ \ \ \ \ \ \text{ at } p.\label{414C19}
\end{align}
It's zero because $\Phi_{z\taubar}$, $\Phi_{\tau \zbar}$ and $\Phi_z$ all vanish at the point $\po$.
Similarly, we subtract $b=\Phi_{zz}$ from (\ref{equation:A_define_cbPhi}) and get 
\begin{align}
	\cb_\zetabar-b_\zetabar=\left(-2\frac{\Phi_z}{\Phi_\tau}\Phitauz+\left(\frac{\Phi_z}{\Phi_\tau}\right)^2\Phitautau\right)_\zetabar.
	\label{414_C20}
\end{align}
On the right-hand side of (\ref{414_C20}), the only non-zero terms are produced by $\partial_\zetabar$ acting on $\Phi_z$ because $\Phi_z$ vanishes at $\po$. However,
$\partial_{\zetabar}\Phi_z=\Phi_{z\taubar}=0$, at $\po$, so $\cb_\zetabar=b_\zetabar$. Applying $\partial_\zetabar$ to 
(\ref{414C19}),
we get
\begin{align} 	\ca_{\zeta\zetabar}-a_{\zeta\zetabar}=
	\left(-\frac{\Phi_z}{\Phi_\tau}\Phitauzbar
	-\frac{\Phi_\zbar}{\Phi_\taubar}\Phiztaubar
	+\left|\frac{\Phi_z}{\Phi_\tau}\right|^2\Phitautaubar\right)_\zetazetabar.
\end{align}
On the right-hand side of the equation above, $\Phi_z$,  $\Phiztaubar$ and $\Phitautaubar$ all vanish at $\po$, so
\begin{align}
		\left(\left|\frac{\Phi_z}{\Phi_\tau}\right|^2\Phitautaubar\right)_\zetazetabar
		=0,
\end{align}
since the item in the bracket is a third order small quantity around $\po$, and
\begin{align}
	\ca_{\zeta\zetabar}-a_{\zeta\zetabar}=
	-\frac{\Phi_{z\tau}}{\Phi_\tau}\Phi_{\tau\zbar\taubar}
		-\frac{\Phi_{z\taubar}}{\Phi_\tau}\Phi_{\tau\zbar\tau}
	-\frac{\Phi_{\zbar\tau}}{\Phi_\taubar}\Phi_{z\taubar\taubar}
	-\frac{\Phi_{\zbar\taubar}}{\Phi_\taubar}\Phi_{z\taubar\tau}.
	\label{414C23}
\end{align}
In (\ref{414C23}), $\Phi_{z\taubar}$ and $\Phi_{z\tau\taubar}$ all vanish at $\po$, so $	\ca_{\zeta\zetabar}=a_{\zeta\zetabar}$.
Applying $\partial_\zeta$ to (\ref{414_C20}) gives
\begin{align}
	\cb_{\zeta\zetabar}-b_{\zeta\zetabar}=\left(-2\frac{\Phi_z}{\Phi_\tau}\Phitauz+\left(\frac{\Phi_z}{\Phi_\tau}\right)^2\Phitautau\right)_{\zeta\zetabar}.
\end{align}
It vanishes at $\po$ since $\Phi_z$, $\Phiztaubar$ and $\Phi_{z\tautaubar}$ all vanish at $\po$, similar to the previous argument.

Therefore, (\ref{equation:A_cb_Phi}) and (\ref{equation:A_ca_Phi}) are valid at the point $\po$. However, we point out that they depend on the choice of coordinates, but in the next section, using (\ref{equation:A_cb_Phi}) and (\ref{equation:A_ca_Phi}), we will show $\frac{1}{1-\cQ_\Phi}$ is subharmonic along a leaf, which is a fact independent of the choice of coordinates.
\subsection{Subharmonicity of $\frac{1}{1-\cQ_\Phi}$ Along Leaves} 
\label{sec:A_subharmonic_Q_along_leaf}

In this section, we will show $\frac{1}{1-\cQ_\Phi}$ is subharmonic along a leaf when $\cQ_\Phi<1$. We will drop the subindex $\Phi$ and denote $\cQ_\Phi$ by $\cQ$ since we don't consider  perturbations of $\Phi$ in this section. We will continue to use the coordinate introduced in Section \ref{sec:A_Equation_for_a_b}, where $\partial_\zeta=\partial_\tau$ and $\partial_{\zetazetabar}=\partial_{\tautaubar}$ at $\po$.

We directly apply $\partial_{\tautaubar}$ to $\frac{1}{1-\cQ}$ and get
\begin{align}
	\left(\frac{1}{1-\cQ}\right)_{\tau\taubar}=\left[\frac{1}{(1-\cQ)^2}\cQ_\tau\right]_{\taubar}=\frac{1}{(1-\cQ)^3}\left[2\cQ_\tau\cQ_\taubar +\cQ_{\tautaubar}(1-\cQ)\right].
	\label{420C27}
\end{align}
To simplify the computation, we introduce a quantity:
\begin{align}
	\beta=\cb_\tau-2\frac{\cb  \ca_\tau}{\ca}.
\end{align} With $\beta$, we have
\begin{align}
	\left(\frac{\cb}{\ca^2}\right)_\tau=\frac{\beta}{\ca^2} \label{equation:A_introducing_beta}
\end{align}
and
\begin{align}
	\beta_\taubar=\underbrace{\cb_\tautaubar-2\frac{\cb_\taubar \ca_\tau}{\ca}}_{=0,\ \text{ by (\ref{equation:A_cb_Phi})}}\underbrace{-2\frac{\ca_\tautaubar \cb}{\ca}
	+2\frac{ \ca_\tau \ca_\taubar \cb}{\ca^2}=-2\frac{ \overline{\cb}_\tau \cb_\taubar \cb}{\ca^2}}_{\text{by (\ref{equation:A_ca_Phi})}}.
\label{414c29}
\end{align}
In addition, we observe that (\ref{equation:A_cb_Phi}) is equivalent to 
\begin{align}
	\left(\frac{\cb_\taubar}{\ca^2}\right)_\tau=0.
	\label{equation:A_cb_simplifed}
\end{align}
Then
\begin{align}
	\cQ_\tau=\left(\frac{\cb}{\ca^2}\cdot \overline\cb\right)_\tau
	=\left(\frac{\cb}{\ca^2}\right)_\tau \overline\cb+\frac{\cb}{\ca^2}\cdot \overline\cb_\tau=\frac{\beta}{\ca^2}\overline\cb+\frac{\cb}{\ca^2}\cdot \overline\cb_\tau, \label{414c31}
\end{align}
and
\begin{align}
	\cQ_\tau\cQ_\taubar=\left(\frac{\beta}{\ca}, \frac{\overline{\cb}_\tau}{\ca}\right) 
\left(
\begin{array}{cc}
	\cQ&\frac{\overline{\cb}^2}{\ca^2}\\
	\ &\ \\
	\frac{\cb^2}{\ca^2}&\cQ
\end{array}
\right)	
\left(
\begin{array}{c}
	\frac{\overline\beta}{\ca}\\  \\  \frac{\cb_\taubar}{\ca}
\end{array}
\right).\label{414C32}
\end{align}
For $\cQ_{\tautaubar}$, we also make it into a quadratic form of $\frac{\beta}{\ca}$ and $\frac{\overline\cb_\tau}{\ca}$.
Applying $\partial_\taubar$ to (\ref{414c31}), we have
\begin{align}
	\cQ_{\tautaubar}=
	\left( \beta\cdot \frac{\overline \cb}{\ca^2}+\cb \cdot \frac{\overline \cb_\tau}{\ca^2}
	\right)_\taubar= 
	\beta_\taubar\cdot \frac{\overline \cb}{\ca^2}
	+\beta\cdot\left( \frac{\overline \cb}{\ca^2}\right)_\taubar
	+\cb_\taubar \cdot \frac{\overline \cb_\tau}{\ca^2}
	+\cb \cdot \left(\frac{\overline \cb_\tau}{\ca^2}\right)_\taubar.
	\label{414C33}
\end{align}
Using the conjugate of (\ref{equation:A_cb_simplifed}), the last term of (\ref{414C33}) is zero; for  $\left( \frac{\overline \cb}{\ca^2}\right)_\taubar$\ , we use the conjugate of (\ref{equation:A_introducing_beta}); for $\beta_\taubar$, we use the equation (\ref{414c29}). Then we have
\begin{align}
	\cQ_{\tautaubar}=
	\left(\frac{\beta}{\ca}, \frac{\overline{\cb}_\tau}{\ca}\right) 
	\left(
	\begin{array}{cc}
		1&\ \\
		\ &1-2\cQ
	\end{array}
	\right)	
	\left(
	\begin{array}{c}
		\frac{\overline\beta}{\ca}\\ \\  \frac{\cb_\taubar}{\ca}
	\end{array}
	\right).\label{414C34}
\end{align}
Combining (\ref{414C32}) (\ref{414C34}) with (\ref{420C27}), we know
\begin{align}
	\left(\frac{1}{1-\cQ}
	\right)_\tautaubar=\frac{1}{(1-\cQ)^3} 
	\left(\frac{\beta}{\ca}, \frac{\overline{\cb}_\tau}{\ca}\right) 
M
	\left(
		\frac{\overline\beta}{\ca},\frac{\cb_\taubar}{\ca}
	\right)',
\end{align}
where $M$ is a $2\times 2$ matrix:
\begin{align}
	M=
		\left(
	\begin{array}{cc}
		1+\cQ&2\frac{\overline \cb^2}{\ca^2} \\
		\ \\
		2\frac{ \cb^2}{\ca^2}   &1-\cQ+2\cQ^2
	\end{array}
	\right)	
\end{align}
Direct computation gives 
\begin{align}
	\det(M)&=(1-\cQ)^2(1+2\cQ),\\
	\tr(M)&=2+2\cQ^2.
\end{align} By the definition of $\cQ$, it's always nonnegative, so $1+2\cQ>0$.
Therefore, $	\left(\frac{1}{1-\cQ}
\right)_\tautaubar\geq 0$, at $\po$, providing $\cQ<1$. 
We can choose $\po$ to be any point in $\MR$, so $\cQ$, as a globally defined function in $\MR$, satisfies
\begin{align}
		\left(\frac{1}{1-\cQ}
	\right)_{\zetazetabar}\geq 0,
\end{align}
along a leaf, providing $\cQ<1$.
Then, 
\begin{align}
	\frac{1}{1-\cQ}\leq \max_{\partial_\MR}	\frac{1}{1-\cQ} \ \ \ \ \ \ \text{ in }\MR\text{, \ providing }\cQ<1 \text{ in }\overline{\MR}.
	\label{420230720}
\end{align}This is because the maximum of $\frac{1}{1-\cQ}$ cannot be achieved at any interior point in $\MR$. Suppose $\frac{1}{1-\cQ}$ achieves its maximum at $\po\in\MR$.
Let $\ML$ be a local leaf passing $\po$. Then  the restriction of $\frac{1}{1-\cQ}$ on $\ML$ also achieves a local maximum at $\po$, but this contradicts with the strong maximum principle. It follows that
\begin{align}
	\cQ\leq \max_{\partial_\MR}\cQ \ \ \ \ \ \ \text{ in }\MR\text{, \ providing }\cQ<1 \text{ in }\overline{\MR}.
	\label{420C41}
\end{align}

\subsection{Convexity Estimate}
\label{sec:A_Convexity_Estimate}
In this section we will show for a small constant $\epsilon_0$, $\Phi-q$ is \sqcconvex \ for any $q\in \spaceQ_{\epsilon_0}$. The constant $\epsilon_0$ will depend on the geometry of $\MR$. In this section, we only do unitary change of coordinate, so we assume the metric is $\sum_i dz^i\otimes \overline{dz^i}$.

Firstly, we need to choose $\epsilon_0$ small enough, so that it's smaller than the lower bound estimate for $|\nabla\Phi|_\MG$ in Section \ref{sec:A_Lower Bound Estimate for Norms of Gradients}: we choose
\begin{align}
	\epsilon_0< C_{\MR}^g. \label{eq:A_epsilon_small_than_Gradient}
\end{align}
This implies $\nabla(\Phi-q)$ is non-zero for $q\in \spaceQ_{\epsilon_0}$ and the level sets of $\Phi$ are smooth. Then we  make $\epsilon_0$ smaller so that 
\begin{align}
	\Phi-q \text{ is \sqcconvex},\text{ on }\partial\MR \text{ for any }q\in \spaceQ_{\epsilon_0}.
	\label{eq:A_Q<1Boundary}
\end{align}
Then using a continuity argument, we 
 will prove
$\Phi-q$ is actually \sqcconvex\   in $\MR$ for any $q\in\spaceQ_{\epsilon_0}$.
 Let 
\begin{align}
	I_{\epsilon_0}=\left\{
	\epsilon\in [0,\epsilon_0] |
	 \text{ for any }q\in \spaceQ_\epsilon, \Phi-q
	  \text{ is \sqcconvex}
	    \right\}.
	    \label{420C56}
\end{align}
We will show $I_{\epsilon_0}$ is non-empty, open and right-closed; thus, $I_{\epsilon_0}=[0,\epsilon_0]$.

It's obvious that $I_{\epsilon_0}$ is non-empty since it contains $0$; this is because of the apriori assumption. The openness can also be proved easily:  if $\Phi-q_1$ is \sqcconvex, then we have 
$\ca_{\Phi-q_1}>0$; so, $\cQ_{\Phi-q_1-q_2}$ is close to $\cQ_{\Phi-q_1}$ for $q_2\in \spaceQ_{\epsilon_2}$ with small enough $\epsilon_2$. 

The right-closeness is more complicated, and we need to use the subharmonicity of $\frac{1}{1-\cQ_{\Phi-q}}$. Here, we notice that to derive (\ref{420C41}) we only need that $\Phi$ is a solution to the homogenous complex Monge-Amp\`ere equation and that $\Phi$ is \sqcconvex. For a pluriharmonic function $q$, $\Phi-q$ is still the solution to the homogenous complex Monge-Amp\`ere equation; so, if $\Phi-q$ is also \sqcconvex, we have 
\begin{align}
	\cQ_{\Phi-q}\leq \max_{\partial_\MR}\cQ_{\Phi-q} \ \ \ \ \ \ \text{ in }\MR.
	\label{420C57}
\end{align}
Suppose $[0,\epsilon_1)\subset I_{\epsilon_0}$ and $\epsilon_1\notin I_{\epsilon_0}$. Then we will derive a contradiction. According to the definition of $I_{\epsilon_0}$ (\ref{420C56}), there is a $k\in \spaceQ_{\epsilon_1}$ so that $\Phi-k$ is not \sqcconvex. This means we can find a point $\po\in\MR$ so that if we choose a  coordinate $(\tau, z)$ around $\po$ such that $(\Phi-k)_z(\po)=0$, then either
\begin{align}
	(\Phi-k)_{\zzbar}(\po)=\Phi_{\zzbar}(\po)=0
	\label{Possibility_metric_Dege}
\end{align} or 
\begin{align}
	\cQ_{\Phi-k}(\po)=1.
	\label{Possibility_Q}
\end{align} In the following we will argue none of these can happen.

Let $h$ be an element of $\spaceQ_{\frac{\epsilon_1}{2}}$ . For $\theta\in (0,1]$ we consider the following polynomials
\begin{align}
	k^\theta=(1-\theta)k-\theta h\ \ \ \ \ \text{and}\ \ \ \ \ 
	k_\theta=(1-\theta)k+\theta h.
\end{align}They all belong to the interior of $\spaceQ_{\epsilon_1}$. So
\begin{align}
	\Phi^\theta=\Phi-k^\theta\ \ \ \ \ \text{and}\ \ \ \ \ 
    \Phi_\theta=\Phi-k_\theta
\end{align}are both \sqcconvex.
Then using a continuity argument, we have
\begin{align}
	\cQ_{\Phi^\theta}, \cQ_{\Phi_\theta} \leq \max_{q\in \spaceQ_{\epsilon_0}}\max_{\partial \MR}\cQ_{\Phi-q}<1\ \ \ \ \ \ \text{for }\theta\in(0,1].
	\label{421C62}
\end{align}
Because of (\ref{eq:A_epsilon_small_than_Gradient}), we know the gradient of $\Phi-k$ does not vanish anywhere. We denote $(\Phi-k)_\tau(\po)=\nu$; then $\nu>0$. The gradients of $\Phi^\theta$ and $\Phi_\theta$ are both close to the gradient of $\Phi-k$, providing $\theta$ very small. We choose $h$ so that $\nabla h(\po)=0$; then, we have
\begin{align}
&	\partial_\tau\Phi^\theta=	\partial_\tau\Phi_\theta  =(\Phi-k)_\tau+\theta(k+h)_{\tau}=\nu+\theta k_\tau,\\
	&	\partial_z\Phi^\theta=	\partial_z\Phi_\theta  =(\Phi-k)_z+\theta(k+h)_z=\theta k_z,
\end{align} at the point $\po$.
 Let 
 \begin{align}
 	\slopev=\frac{\theta k_z(\po)}{\nu+\theta k_\tau(\po)};
 \end{align}then $\partial_z-\slopev\partial_\tau$ is a complex tangential vector to the level sets of $\Phi-k$, with $\slopev=O(\theta)$.  We compute $\cQ$ in the complex tangential direction and get
\begin{align}
	\cQ_{\Phi^\theta}(\po)=\left|\frac{K_\theta+\theta(h_{zz}+2 \slopev h_{z\tau}+(\slopev)^2h_{\tau\tau})}
	{\Phi_{\zzbar}-\slopev \Phi_{\tau\zbar}-\overline{\slopev}\Phi_{z\taubar}+|\slopev|^2\Phi_{\tautaubar}}\right|^2,\\
		\cQ_{\Phi^\theta}(\po)=\left|\frac{K_\theta-\theta(h_{zz}+2 \slopev h_{z\tau}+(\slopev)^2h_{\tau\tau})}
	{\Phi_{\zzbar}-\slopev \Phi_{\tau\zbar}-\overline{\slopev}\Phi_{z\taubar}+|\slopev|^2\Phi_{\tautaubar}}\right|^2,
\end{align}
where we denote
\begin{align}
	(\Phi-(1-\theta)k)_{zz}-2(\Phi-(1-\theta)k)_{z\tau} \slopev +(\Phi-(1-\theta)k)_{\tau\tau}(\slopev)^2=K_\theta.
\end{align} Here $k$ and $h$ don't appear in the denominator because they are both pluriharmonic. Equations above imply
\begin{align}
	\lim_{\theta\rightarrow 0}\cQ_{\Phi^\theta}=\lim_{\theta\rightarrow 0}\cQ_{\Phi^\theta}=\cQ_{\Phi-k};
\end{align}therefore, (\ref{Possibility_Q}) is impossible.

(\ref{421C62}) implies 
\begin{align}
	\sqrt{	\cQ_{\Phi^\theta}(\po)}+	\sqrt{	\cQ_{\Phi_\theta}(\po)}\leq 2, \ \ \ \ \ \ \text{ for }\theta\in(0,1].
\end{align} The triangle inequality implies
\begin{align}
	\left|\frac{\theta(h_{zz}+2 \slopev h_{z\tau}+(\slopev)^2h_{\tau\tau})}
	{\Phi_{\zzbar}-\slopev \Phi_{\tau\zbar}-\overline{\slopev}\Phi_{z\taubar}+|\slopev|^2\Phi_{\tautaubar}}\right|\leq 1 \text{\ \ \ \ \ at $\po$}.
	\label{0511D70}
\end{align} If (\ref{Possibility_metric_Dege}) is true, then the subharmonicity of $\Phi$  implies \begin{align}
	\Phi_{z\taubar}=\Phi_{\tau\zbar}=0,\ \ \ \ \  \ \ \ \text{ at }\po.
\end{align}
So we get
\begin{align}
	\frac{\theta|h_{zz}(\po)|+O(\theta^2)}{O(\theta^2)}\leq 2
\end{align} when $\theta$ is small enough. This is a contradiction since we can choose $h$ so that $h_{zz}(\po)\neq 0$
 Here we used $\Phi_{\zzbar}=\Phi_{z\taubar}=\Phi_{\tau\zbar}=0$ at $\po$  and $\slopev=O(\theta)$. 

 So we have proved that $\Phi-k$ is \sqcconvex, and as a consequence,  $I_{\epsilon_0}$ is right-closed. Therefore, $I_{\epsilon_0}=[0,\epsilon_0]$. This says the robustness of \qcconvexity \ of $\Phi$ is greater than $\epsilon_0$.

Jingchen Hu\\
jingchenhoo@gmail.com
\end{document}